\newcommand*\circled[1]{\tikz[baseline=(char.base)]{
    \node[shape=circle,draw,inner sep=0.5pt] (char) {#1};}}
\newtheorem{proposition}{Proposition}[section]
\newtheorem{theorem}[proposition]{Theorem}
\newtheorem{lemma}[proposition]{Lemma}
\newtheorem{example}[proposition]{Example}
\newtheorem{definition}[proposition]{Definition}
\newtheorem{definition-lemma}[proposition]{Definition-Lemma}
\newtheorem{remark}[proposition]{Remark}
\newtheorem{statement}[proposition]{Statement}
\newtheorem{corollary}[proposition]{Corollary}
\newtheorem{question}[proposition]{Question}
\newtheorem*{question*}{Questions}
\newtheorem*{lemma*}{Lemma}
\newtheorem{conjecture}[proposition]{Conjecture}
\newtheorem{notation}[proposition]{Notation}
\numberwithin{equation}{proposition}
\newcommand{\etale}{\'etal\@ifstar{\'e}{e\xspace}}
\newcommand\blfootnote[1]{%
  \begingroup
  \renewcommand\thefootnote{}\footnote{#1}%
  \addtocounter{footnote}{-1}%
  \endgroup
}
\newcommand{\Addresses}{{
  \bigskip
\noindent\textsc{Faculty of Mathematics, University of Regensburg, Germany,}\par\nopagebreak
\noindent Email: \texttt{tong.g.h.zhou@gmail.com}
  }}
\newcommand{\isoto}{\xrightarrow{\raisebox{-0.5ex}[0ex][0ex]{$\sim$}}}
\newcommand{\isoot}{\xleftarrow{\raisebox{-0.5ex}[0ex][0ex]{$\sim$}}}
\newcommand{\RR}{\mathbf{R}}
\newcommand{\ZZ}{\mathbf{Z}}
\newcommand{\NN}{\mathbf{N}}
\newcommand{\FF}{\mathbf{F}}
\newcommand{\PP}{\mathbf{P}}
\newcommand{\CF}{\mathcal{F}}
\newcommand{\CG}{\mathcal{G}}
\newcommand{\CB}{\mathcal{B}}
\newcommand{\CalC}{\mathcal{C}}
\newcommand{\Gm}{\mathbf{G}_m}
\newcommand{\X}{\times}
\newcommand{\CCC}{\mathcal{C}}
\newcommand{\CH}{\mathcal{H}}
\newcommand{\CL}{\mathcal{L}}
\newcommand{\AAA}{\mathbf{A}}
\newcommand{\DD}{\mathbb{D}}
\newcommand{\CA}{\mathcal{A}}
\newcommand{\CO}{\mathcal{O}}
\newcommand{\RHom}{R\underline{Hom}}
\newcommand{\Spa}{\mathrm{Spa}}
\newcommand{\DX}{D^{(b)}_{zc}(X)}
\newcommand{\Dbzc}{D^{(b)}_{zc}}
\newcommand{\CX}{\mathcal{X}}
\newcommand{\dotF}{\{\dddot\CF\}}
\newcommand{\tE}{\widetilde{E}}
\newcommand{\F}{\mathrm{F}}
\newcommand{\K}{\mathrm{K}}
\newcommand{\Bl}{\mathrm{Bl}}
\newcommand{\cone}{\mathrm{cone}}
\newcommand{\Sh}{\mathrm{Sh}}
\newcommand{\miwa}{(-1)^\tau}
\newcommand{\supp}{\mathrm{supp}}
\newcommand{\pr}{\mathrm{pr}}
\newcommand{\id}{\mathrm{id}}
\newcommand{\adj}{\mathrm{adj}}
\newcommand{\pt}{\mathrm{pt}}
\newcommand{\cDbzc}{\mathcal{D}^{(b)}_{zc}}
\newcommand{\cDb}{\mathcal{D}^{(b)}}
\newcommand{\prbar}{\overline{\pr}}
\newcommand{\et}{{\mathrm{\acute{e}t}}}
\title{A Microlocal Theory for Zariski-Constructible Sheaves on Rigid Analytic Varieties}
\author{Tong Zhou}
\date{}
\begin{document}
\maketitle
\begin{abstract}
\blfootnote{July 2025}
We develop a microlocal theory, in the sense of Kashiwara-Schapira, for Zariski-constructible sheaves on rigid analytic varieties. We define and study monodromic sheaves, the monodromic Fourier transform, specialisation, microlocalisation, micro-hom, and singular support in this context. Some questions and conjectures are formulated in the end. The appendix contains infinity-categorical characterisations of monodromic sheaves.
\end{abstract}
\renewcommand{\baselinestretch}{1.0}\normalsize
\setcounter{tocdepth}{1}
\tableofcontents
\renewcommand{\baselinestretch}{1.0}\normalsize
\section{Introduction}\label{sec_intro}
In the 1960s, Mikio Sato, when studying partial differential equations, introduced the microlocal point of view, which systematically studies objects on a manifold via constructions involving the cotangent bundle. This point of view spread to other fields. In the 1970s–90s, Masaki Kashiwara and Pierre Schapira developed a microlocal theory for sheaves on real and complex manifolds, and studied their relation to D-modules (\cite{kashiwara_sheaves_1990}). It has since had applications to partial differential equations, symplectic geometry, geometric representation theory, real geometric Langlands, and other fields.\\

On real manifolds, the basic constructions of microlocal sheaf theory are as follows. Let $Z\hookrightarrow X$ be a closed immersion of manifolds, and consider $\RR$-valued sheaves for definiteness. Recall that a sheaf on a vector bundle is called \underline{conic} if it is constant on each $\RR_{>0}$-orbit. (1) The specialisation $\nu_Z(-)$ is a functor from sheaves on $X$ to conic sheaves on the normal bundle $T_ZX$. It is a version of nearby cycles for higher codimensions and without the choice of a function. (2) The microlocalisation $\mu_Z(-)$ is a functor from sheaves on $X$ to conic sheaves on the conormal bundle $T^*_ZX$, defined as the Fourier-Sato transform of $\nu_Z$. It is a version of vanishing cycles for higher codimensions and without the choice of a function. (3) The micro-hom $\mu hom(-,-)$ is a functor from a pair of sheaves on $X$ to conic sheaves on the cotangent bundle $T^*X$. It is a sheaf-theoretic analogue of the micro-hom of D-modules. (4) For every sheaf $\CF$ on $X$, the singular support $SS(\CF)$ is a closed conical subset in $T^*X$, measuring the directions in which $\CF$ is not locally constant. (5) $\mu sh$ is a stack on $T^*X$, which is a sheaf-theoretic analogue of microdifferential-modules on $T^*X$. (6) For every constructible sheaf $\CF$ on $X$, the characteristic cycle $CC(\CF)$ is a $\ZZ$-cycle supported on $SS(\CF)$, with coefficients equal to the “local Euler characteristics” of $\CF$. There are various relations among these constructions, most notably, we have $\mu hom(\underline{\RR}_Z,-)\simeq\mu_Z(-)$, $SS(\CF)=\supp\,\mu hom(\CF,\CF)$, and $H^0(\mu hom(\CG,\CF)_v)\simeq Hom_{\mu sh_v}(\CG,\CF)$ for $v\in T^*X$. The last relation is crucial in Kashiwara and Schapira's proof of the theorem of coisotropicity of the singular support\textemdash a theorem of central importance. These together make precise that a sheaf $\CF$ on $X$ gives a “microlocal-sheaf $\mu\CF$” on $T^*X$: $SS(\CF)$ is the support of $\mu\CF$ and $\mu hom$’s are homomorphisms between them. This is the sheaf-theoretic analogue of obtaining a microdifferential-module on $T^*X$ from a D-module on $X$, and has precise relations to the constructions on the D-module side through Riemann-Hilbert.\\

A natural question is if one can develop microlocal theories in other sheaf-theoretic contexts. For $\ell$-adic \etale sheaves on algebraic varieties over a field, a complete theory of $SS$ and $CC$ has been developed by Alexander Beilinson and Takeshi Saito for finite coefficients (\cite{beilinson_constructible_2016}, \cite{saito_characteristic_2017}), and extended to $\ell$-adic coefficients in \cite{barrett_singular_2023} and \cite{umezaki_characteristic_2020}. No satisfactory theory of the specialisation, microlocalisation, micro-hom and $\mu sh$ exists so far, due to the difficulty of wild ramifications in positive characteristic (the specialisation defined in \cite{verdier_specialisation_1983} does not see wild ramifications, \textit{c.f.} \cite{abbes_microlocal_2006}).\\

In this work, we develop a microlocal theory for $\Lambda$-valued Zariski-constructible sheaves on rigid analytic varieties over $K$, where $K$ is a non-archimedean field which is non-trivially valued, complete, algebraically closed, and of characteristic $(0,p)$, $p\geq0$, and $\Lambda$ is either $\FF_{\ell^r}$ or $\ZZ/\ell^r$, $p\neq\ell$. This is based on the six-functor formalism of \cite{bhatt_six_2022} and a theory of nearby and vanishing cycles developed in \cite{zhou_vanishing_2025} (reviewed in §\ref{sec_defofvan}).\\

In §\ref{sec_monsheav} and §\ref{sec_fourier}, as preparation for later sections, we study the analogues of conic sheaves and the Fourier-Sato transform: monodromic sheaves ($D_{mon}$)(\cite{verdier_specialisation_1983}) and the monodromic Fourier transform ($\F$) (\cite{wang_new_2015}). We give several equivalent definitions of $D_{mon}$, show it is compatible with the usual and perverse t-structures, study its functoriality, show the $\Gm$-contraction lemma, study in detail the case of a rank-$1$ trivial vector bundle, and show that $\F$ is perverse t-exact and commutes with duality (after shifts and twists). In the appendix, we give some $\infty$-categorical characterisations of monodromic sheaves suggested by Bhargav Bhatt.\\

In §\ref{sec_special}, §\ref{sec_microlocal} and §\ref{sec_muhom}, we define and study the basic properties of the specialisation ($\nu_Z$), microlocalisation ($\mu_Z$) and micro-hom ($\mu hom$). We show that $\nu_Z$ (hence $\mu_Z$ and $\mu hom$, by definition) lands in monodromic sheaves, study their functoriality, show the perverse t-exactness and commutativity with duality (after shifts and twists), and study in detail the case when $Z$ is a hypersurface.\\

We highlight the following results (see \cite[\nopp 8.6.3, 4.4.3]{kashiwara_sheaves_1990} for the classical analogues), which make precise that the specialisation and microlocalisation generalise nearby and vanishing cycles, and $\mu hom$ generalises the microlocalisation. They also make precise that $\mu hom$ can be viewed as a replacement in the analytic world of the vanishing cycle with general bases, with two notably differences: (1) it is most useful when one maps testing sheaves into the sheaf to be understood (Proposition \ref{prop_1.1}.3, Lemma \ref{lem_muhom_functoriality}), (2) it only records information of “\emph{linear} specialisations”. Nevertheless, it is expected that, in our context, which does not have local wild ramifications, linear information suffices for microlocal purposes, and there should be a fairly complete microlocal theory similar to that in the real or complex setting.

\begin{proposition}[Proposition \ref{lem_nu&psi}, Lemma \ref{lem_microlocalisation_is_vanishingcycle}, Lemma \ref{lem_muhom_is_microlocalisation}]\label{prop_1.1}
    Let $Z=V(f)\hookrightarrow X$ be a smooth hypersurface in a quasi-separated smooth rigid analytic variety, and $\CF\in\DX$. $f: X\rightarrow\AAA^1$ induces a map $\widetilde{f}: T_ZX\rightarrow T_0\AAA^1\simeq\AAA^1$, which induces an isomorphism $T_ZX\isoto Z\times\AAA^1$. Denote the section $Z\hookrightarrow T_ZX\simeq Z\times\AAA^1$, $z\mapsto(z,1)$ by $s_f$, and the section $Z\hookrightarrow T^*_ZX$, $z\mapsto (z,df_z)$ by $s'_f$. Then:\\
    (1) there is a canonical isomorphism $s_f^*\nu_Z(\CF)\isoto\psi_f(\CF)$, under which the equivariant monodromy of $s_f^*\nu_Z(\CF)$ coincides with the opposite of the monodromy of $\psi_f(\CF)$;\\
    (2) there is an isomorphism\footnote{We only assert the existence of an isomorphism here, as we work with triangulated categories except in the appendix, and we took cones in the proof of (2) (Proposition \ref{lem_rank1monbasic}.2).} $s'^*_f(\mu_Z(\CF))\cong\Phi_f(\CF)(-1)$, under which the equivariant monodromy of $s'^*_f(\mu_Z(\CF))$ coincides with the usual monodromy of $\Phi_f(\CF)(-1)$;\\
    (3) there is a canonical isomorphism $\mu hom(i_*\underline{\Lambda}_Z,\CF)\simeq\widetilde{i}_*\mu_Z(\CF)$, where $\widetilde{i}$ denotes the closed immersion $T^*_ZX\hookrightarrow T^*X$.
\end{proposition}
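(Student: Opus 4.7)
All three parts are proved in detail in their respective later sections; here I sketch the unifying strategy. The common thread is that when $Z=V(f)$ is smooth, the function $f$ trivialises the normal and conormal geometry of $Z$ in $X$, so the questions reduce to explicit computations on rank-one trivial bundles combined with the definitions $\mu_Z = \F\,\nu_Z$ and the Kashiwara--Schapira-style formula $\mu hom(\CF,\CG) = \mu_\Delta\,\RHom(p_2^*\CF,p_1^!\CG)$.

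For part~(1), the plan is to use the deformation to the normal cone $D_ZX$, which fibres over $\AAA^1$ with special fibre $T_ZX$ and generic fibre $X$ and carries a projection $p$ to $X$ away from the special fibre; by construction, $\nu_Z(\CF)$ is the nearby cycles of $p^*\CF$ along $D_ZX\to\AAA^1$ restricted to $T_ZX$. When $Z=V(f)$ is smooth, $f$ identifies, locally around $T_ZX$, this deformation with $X\times\AAA^1$ so that the map to $\AAA^1$ becomes the second projection; under the identification the section $s_f$ picks out the fibre over $t=1$. Smooth base change then gives $s_f^*\nu_Z(\CF)\isoto\psi_f(\CF)$. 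The monodromy comparison is more delicate: the equivariant monodromy on the left comes from the $\Gm$-scaling of the $\AAA^1$-factor of $T_ZX$, while the right carries the standard inertia-generated monodromy on $\psi_f$; tracing through the identification above shows the two are opposite.

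For part~(2), use $\mu_Z=\F\,\nu_Z$. Under the trivialisations $T_ZX\simeq T^*_ZX\simeq Z\times\AAA^1$, $s'^*_f\mu_Z(\CF)$ is the restriction to the fibre at $1$ of the monodromic Fourier transform of $\nu_Z(\CF)$. The key input is Proposition~\ref{lem_rank1monbasic}.2 from §\ref{sec_fourier}, which for any monodromic $\CG$ on $Z\times\AAA^1$ realises $s_1^*\F(\CG)$ as a cone built from $s_1^*\CG$ and $0^*\CG$, with a Tate twist coming from the normalisation of $\F$. Substituting $\CG=\nu_Z(\CF)$ and using part~(1) to identify $s_1^*\nu_Z(\CF)\simeq\psi_f(\CF)$ and $0^*\nu_Z(\CF)\simeq i^*\CF$, then comparing with the cone description $\Phi_f(\CF)\simeq\mathrm{cone}(i^*\CF\to\psi_f(\CF))$ from \cite{zhou_vanishing_2025}, yields the desired $s'^*_f\mu_Z(\CF)\cong\Phi_f(\CF)(-1)$. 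Monodromy compatibility is then formal: $\F$ transports the $\Gm$-equivariance on the source to the monodromic structure on the dual side, whose value at $1$ is the usual $\Phi_f$-monodromy.

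For part~(3), base change along $\id\times i$ moves the inner sheaf $\RHom(p_2^*i_*\underline{\Lambda}_Z,p_1^!\CF)$ to one supported on $Z\times X$, and the diagonal intersects $Z\times X$ in $\Delta_Z$, so only the normal geometry of $\Delta_Z\hookrightarrow\Delta_X\hookrightarrow X\times X$ contributes to $\mu_\Delta$. The normal bundle of $\Delta_X$ in $X\times X$ is $TX$, and it restricts along $\Delta_Z$ to $TX|_Z$, inside which $T_ZX$ is the normal bundle of $\Delta_Z$ in $Z\times X$. Compatibility of $\nu$ and $\F$ with these closed immersions collapses the $\mu_\Delta$-computation to $\widetilde i_*\mu_Z(\CF)$. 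The main obstacle throughout is bookkeeping the $\Gm$-equivariant structures, Tate twists and sign conventions through these identifications; part~(2) is additionally constrained because the argument constructs isomorphisms via cones in a triangulated category, which is why only the existence of an isomorphism (rather than a canonical choice) is asserted there.
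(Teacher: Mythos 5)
Parts~(2) and~(3) of your sketch track the paper's actual arguments closely (the paper computes $s'^*_f\mu_Z(\CF)\cong\Phi_{\widetilde f}(\nu_Z(\CF))(-1)\cong\Phi_f(\CF)(-1)$ via Lemma~\ref{lem_rank1monbasic_F} and Proposition~\ref{lem_nu&psi}.2, and Lemma~\ref{lem_muhom_is_microlocalisation} is proved by pushing $\RHom((i\times\id)_*\underline\Lambda,q_2^!\CF)$ through the functoriality statements of Lemma~\ref{lem_mu_functoriality}), though you gloss over the compatibility of the specialisation maps that Diagram~\ref{eqn_i_psi_phi} in Proposition~\ref{lem_nu&psi}.2 is there to secure.

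Part~(1), however, has a genuine gap. You claim that when $Z=V(f)$ is smooth, ``$f$ identifies, locally around $T_ZX$, this deformation with $X\times\AAA^1$ so that the map to $\AAA^1$ becomes the second projection,'' and then ``smooth base change gives $s_f^*\nu_Z(\CF)\isoto\psi_f(\CF)$.'' Even in local coordinates where $\widetilde{T_ZX}\simeq\Spa(\CO_X[t,s]/(ts-f))$ does become a product $X\times\AAA^1_t$ (by identifying the $s$-coordinate with the $f$-coordinate of $X$), the deformation projection $\bar p$ does \emph{not} become the first projection: it becomes $(x',s,t)\mapsto(x',st)$, a genuinely twisted map that degenerates at $t=0$. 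Consequently $\bar p^*\CF$ is not a pullback along a projection, and smooth base change does not apply; the nearby cycle $\psi_t(\bar p^*\CF)$ remembers the twist. The paper handles this by passing instead to the isomorphism $\alpha: X\times\Gm\isoto\widetilde{T_ZX}-(Z\times\AAA^1)$ of Diagram~\ref{eqn_alpha_f_cross} (which identifies $\bar p^*\CF$ with $\mathrm{pr}_1^*\CF$ but turns $t$ into $f^\times(x,\lambda)=\lambda f(x)$, a scaled version of $f$), and then invokes Verdier's delicate analysis to get $\psi_{f^\times}(\CF^\times)|_{Z\times 1}\isoto\psi_f(\CF)$. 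Separately, the assertion that ``$s_f$ picks out the fibre over $t=1$'' is mistaken: $s_f(Z)$ lies in the \emph{special} fibre $t=0$ (it is the $s=1$ or $\widetilde f=1$ locus inside $T_ZX$), not in the general fibre. Your monodromy claim (``opposite'') is stated but not derived; in the paper it follows from the observation that $\alpha|_{Z\times\Gm}$ reverses the $\Gm$-direction, which is exactly the twist your argument suppresses.
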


In §\ref{sec_SS}, we define and study the singular support ($SS$). Here is a summary of our main results:

\begin{theorem}
    Let $X$ be a smooth rigid analytic variety, and $\CF\in\Dbzc(X)$. Assume $\Lambda=\FF_{\ell^r}$. Then: \\
    (1) $SS(\CF)$ is a closed conical subset of $T^*X$, whose base equals the support of $\CF$.\\
    (2) Let $p: Y\rightarrow X$ (resp. $g: X\rightarrow Z$) be a smooth (resp. proper) map of smooth rigid analytic varieties, and $\CF\in \DX$. Then $p^\circ SS(\CF)\subseteq SS(p^*\CF)$ (resp. $SS(g_*\CF)\subseteq g_\circ SS(\CF)$). If $p$ is \etale (resp. $g$ is a closed immersion), then equality holds.\\
    (3) If ($X$, $\CF$) is the analytification of ($\mathcal{X}$, $\CG$), where $\CX$ is a smooth finite type scheme over $K$ and $\CG\in D^b_c(\CX)$, then $(SS(\CG))^{an}\subseteq SS(\CG^{an})$.\\
    (4) $SS(\CF)=SS(\DD\CF)$.\\
    (5) If $\CF$ has finitely many irreducible perverse constituents $\{\CF_\alpha\}$, then $SS(\CF)=\cup SS(\CF_\alpha)$.\\
    (6) $SS(\CF)$ equals the $0$-section $T_X^*X$ if and only if $\CF$ is a non-zero local system.\\
    (7) Let $H$ be a simple normal crossings divisor, \textit{i.e.}, $H=H_1\cup H_2\cup...\cup H_r$ where $H_i$ are smooth divisors such that for each subset $I\subseteq \{1,2,...,r\}$, we have $H_I:=\cap_{i\in I}H_i$ is smooth. Let $\CL$ be a non-zero local system on $X-H$, and $j: X-H\hookrightarrow X$ be the open immersion. Then $SS(j_{!*}\CL)\subseteq SS(j_!\CL)=SS(j_*\CL)=\cup T^*_{H_I}X$, where $I$ ranges through subsets of $\{1,2,...,r\}$ (for $I=\varnothing$, $T^*_{H_I}X:=T_X^*X$).
\end{theorem}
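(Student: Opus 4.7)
The plan is to base the entire theorem on the formula $SS(\CF)=\supp\,\mu hom(\CF,\CF)$ and to deduce each item from the corresponding properties of $\mu hom$, $\mu_Z$ and $\nu_Z$ established in §\ref{sec_special}--§\ref{sec_muhom}.

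For (1), closedness and conicity of $SS(\CF)$ are automatic from the fact that $\mu hom(\CF,\CF)$ is a monodromic complex on $T^*X$. Equality of its base with $\supp\CF$ is obtained by restricting to the zero section $T_X^*X=X$: the special case $Z=X$ of Proposition \ref{prop_1.1}.3 gives an identification of this restriction with $R\underline{Hom}(\CF,\CF)$, whose support equals $\supp\CF$. For (4), since $\mu hom$ commutes with Verdier duality up to shift and twist (§\ref{sec_muhom}), the complexes $\mu hom(\CF,\CF)$ and $\mu hom(\DD\CF,\DD\CF)$ differ only by a duality operation on $T^*X$ and hence share support. For (5), the bi-triangulated nature of $\mu hom$ yields $SS(\CF)\subseteq SS(\CF')\cup SS(\CF'')$ for any exact triangle $\CF'\rightarrow\CF\rightarrow\CF''$; inducting on the length of a Jordan--Hölder filtration gives one inclusion, while the other follows because each simple perverse constituent contributes a non-zero $\mu hom$-diagonal term.

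For (2), I would combine the isomorphism $\mu hom(i_*\underline{\Lambda}_Z,\CF)\simeq\widetilde{i}_*\mu_Z(\CF)$ of Proposition \ref{prop_1.1}.3 with the base-change and functoriality of $\mu_Z$ from §\ref{sec_microlocal}. Under a smooth $p:Y\rightarrow X$, specialisation transforms via the canonical correspondence $Y\times_X T^*X\rightarrow T^*Y$, yielding $p^\circ SS(\CF)\subseteq SS(p^*\CF)$; dually, proper pushforward is controlled by $g_\circ$. The sharper equalities for \etale $p$ and closed immersions $g$ hold because $p^*$ and $g_*$ are fully faithful and the correspondences become isomorphisms onto their images. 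For (3), the analytification functor commutes with the six operations, with $\nu_Z$ and with the monodromic Fourier transform $\F$, hence with $\mu_Z$ and $\mu hom$, so the inclusion $(SS\CG)^{an}\subseteq SS(\CG^{an})$ is immediate.

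The principal obstacles are (6) and (7). For (6), the forward direction follows because on a non-zero local system $\CF$ the specialisation $\nu_Z(\CF)$ is concentrated along the zero section of each $T_ZX$ (by the definition of $\nu_Z$ applied to a smooth sheaf), whence $\mu hom(\CF,\CF)$ is concentrated along $T_X^*X$. For the converse, assume $SS(\CF)=T_X^*X$; then Proposition \ref{prop_1.1}.2--.3 imply that for every local defining function $f$ the vanishing cycles $\Phi_f(\CF)$ are trivial off the zero direction, hence vanish, and this is exactly the local-constancy criterion for $\CF$. For (7), the plan is to stratify $X$ by the locally closed strata $H_I^\circ:=H_I\setminus\bigcup_{J\supsetneq I}H_J$ and compute $SS$ stratum by stratum. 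At a point of $H_I^\circ$, \etale-local coordinates cutting out the $H_i$ for $i\in I$ put the situation in the form of an external product of one-variable pieces $j_!\underline{\Lambda}_{\Gm}\hookrightarrow\AAA^1$ (twisted by a local system), for which the explicit rank-$1$ hypersurface computation of §\ref{sec_microlocal} gives $SS=T_X^*X\cup T^*_{V(f_i)}X$; a Künneth/Thom--Sebastiani-style formula for $\mu hom$ in this context then assembles these local computations into $\bigcup_I T^*_{H_I}X$. The equality $SS(j_!\CL)=SS(j_*\CL)$ follows from $\DD j_!\CL\simeq j_*\DD\CL$ together with (4), and $SS(j_{!*}\CL)\subseteq SS(j_!\CL)$ is immediate from (5) applied to the canonical factorisation $j_!\CL\twoheadrightarrow j_{!*}\CL\hookrightarrow j_*\CL$ of perverse sheaves.
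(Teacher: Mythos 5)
Your plan founders on a circular dependency: you take the identity $SS(\CF)=\supp\,\mu hom(\CF,\CF)$ as your starting point, but in the paper this identity is not a proved result; it appears in \S\ref{sec_open} as an open \emph{Conjecture}, and the paper explicitly remarks that establishing it would resolve Questions~\ref{que_smoothpull} and~\ref{que_ana}. In particular, had you proved your starting formula you would in passing have shown that $SS(\CF)$ is an analytic closed subset of $T^*X$ (being the support of a Zariski-constructible complex), which is precisely the open Question~\ref{que_ana}; and likewise you would have obtained \emph{equality} in the smooth pullback statement of~(2), which is the open Question~\ref{que_smoothpull}. The fact that the theorem as stated only asserts an inclusion in~(2) should have been a signal that your framework is proving more than is known. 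The paper's definition of $SS$ is entirely in terms of vanishing cycles over \emph{all} test functions (Definition~\ref{def_SS}); the gap between that and the supremum over linear test data encoded in $\mu hom(\CF,\CF)$ is the whole content of the conjecture.

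In the paper's actual proof, items~(1)--(5) are deduced directly from the vanishing-cycle definition: closedness is definitional, conicity is the scaling invariance of Lemma~\ref{lem_psietalebasechange}, and~(2)--(5) follow from the compatibility of $\phi_f$ with smooth pullback, proper pushforward, analytification and duality, plus (for~(5)) the alternative characterisation via Definition-Lemma~\ref{def_microsupport} and perverse exactness of $\Phi_f$. Items~(6) and~(7) are the hard ones and your sketch substantially underestimates them. For the converse in~(6) you claim that vanishing of $\Phi_f(\CF)$ for all linear $f$ is ``exactly the local-constancy criterion,'' but this is not at all automatic: the paper's argument reduces to an irreducible perverse sheaf $j_{!*}\CL$, proves the lisse locus is a Zariski open (Lemma~\ref{lem_max_liss_open_is_zariski_open}), disposes of codimension $\geq 2$ boundary by purity, and in codimension $1$ invokes Abhyankar's Lemma in the rigid setting to produce a specific nonzero vanishing cycle. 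For~(7) you invoke a ``K\"unneth/Thom--Sebastiani-style formula for $\mu hom$'' that is nowhere established in the paper; the actual proof again uses Abhyankar to algebraise the local system, imports the algebraic SS computation via the comparison Lemma~\ref{lem_comparisonSS} and~\cite[\nopp 4.11]{saito_characteristic_2017}, and then bounds $SS$ from above by a d\'evissage in the style of SGA~7. So even granting your formula, the heavy lifting for~(6) and~(7) is missing.
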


\begin{remark}\label{rmk_to_thm_1.2}
    (1) One expects that $SS(\CF)$ is an analytic closed subset, $p^\circ SS(\CF)\subseteq SS(p^*\CF)$ in (2), and $(SS(\CG))^{an}= SS(\CF)$ in (3). These are discussed in §\ref{sec_open}.\\
    (2) Note that, in (3), for a general $\CG\in D^{+}(\CX)$, if $\CG^{an}$ is Zariski-constructible, then $SS(\CG^{an})$ usually contains more information than $SS(\CG)$.\footnote{We remark that $SS(\CG)$ can be defined for all $\CG\in D^{+}(\CX)$: Beilinson worked with constructible sheaves in \cite{beilinson_constructible_2016} but his definition and method apply to all sheaves $
    \CG\in D^{+}(\CX)$. In fact, one can write $
    \CG\in D^{+}(\CX)$ as a filtered colimit of $\CG_\alpha\in D^b_c(\CX)$, then show $\CG$ is micro-supported on $\cup_\alpha SS(\CG_\alpha)$ and the analogues of \cite[\nopp 1.6.1.(i), 3.3]{beilinson_constructible_2016}. One may also show $\supp(\CG)$ is contained in the base of every closed conical $C\in T^*\CX$ on which $\CG$ is micro-supported. Then, one can show 2.1.(ii), 2.1.(iv), 2.2, 2.3, 2.5, 3.9.(i) as in \cite{beilinson_constructible_2016}. The existence of $SS(\CG)$ then follows from these and the argument in \cite[\nopp 3.5]{beilinson_constructible_2016}. Note that the main result of \cite{abbes_holonomic_2024}, which is the algebraic analogue of \cite[\nopp 8.4.2, 8.5.5]{kashiwara_sheaves_1990}, can then be phrased as: for $\CX$ a connected smooth algebraic variety and $\CG\in D^{+}(\CX)$, we have $\dim SS(\CG)\leq\dim \CX$ if and only if $\dim SS(\CG)=\dim \CX$ if and only if $\CG$ lies in $D^b_c(\CX)$. Here $\dim$ denotes the maximum of the dimensions of irreducible components.} For example, consider $\CX=\AAA_K^1$, and $\CG=\oplus\underline{\Lambda}_{a_i}$, where $\{a_i\}$ is a subset of $K$ which is discrete in $X=\CX^{an}$. Then $SS(\CG)=T^*\CX$ while $SS(\CG^{an})=\cup T^*_{a_i}X$.
\end{remark}

We note a distinction between the real or complex analytic and the rigid analytic settings: the Microlocal Morse Lemma \cite[\nopp 5.4.19]{kashiwara_sheaves_1990} fails in the rigid world. See Remark \ref{ex_SS_compute}.\\

In §\ref{sec_open}, we discuss some questions and conjecture some deeper properties of the microlocal constructions, which, together with $CC$, are subjects for future work. 
 
\section*{Conventions}
We use the six-functor formalism of \etale sheaves on adic spaces constructed in \cite{huber_etale_1996}. The derived categories $D(X,\Lambda)$ are in the triangulated sense. $\Sh(X,\Lambda)$ denotes the abelian category of sheaves. The coefficient $\Lambda$ is often omitted. All sheaf-theoretic functors are derived. By a “local system” $\CL$ on $X$ we mean an object of $\DX$ such that each $\CH^i(\CL)$ is locally constant with finite type stalks. For $\CF\in D(X)$ and $d\in\ZZ$, we use $\CF\langle d\rangle$ to denote $\CF(d)[2d]$, where $(-)$ (resp. $[-]$) denotes the Tate twist (resp. shift). \\\\
By a “rigid analytic variety” we mean an adic space locally topologically of finite type over $\Spa(K,K^\circ)$. We use the terms “analytic closed subset” and “Zariski-closed subset” interchangeably. For a Huber ring $A$ topologically of finite type over $K$, we abbreviate $\Spa(A,A^\circ)$ as $\Spa(A)$. For $r\in|K^\X|$, $D(r):=\Spa(K\langle \frac{T}{r}\rangle)$ denotes the closed unit disc of radius $r$, $D(1)$ is abbreviated as $D$, punctured discs $D(r)-\{0\}$ are denoted by $D^\X(r)$. By a “classical point” of $X$ we mean a $K$-point, \textit{i.e.}, an element of $X(K)$. For $\lambda\in K^\X$ and $n\in\ZZ_{\geq1}$, $\mu_n(\lambda)$ denotes the $n$-th roots of $\lambda$ in $K$. We abbreviate $\mu_n(1)$ by $\mu_n$, and denote the profinite group $\varprojlim\mu_n(1)$ by $\mu$. The spaces $\Gm, \AAA^1$ and fibre products are over $K$ unless otherwise specified by subscripts.\\\\
We use “$\simeq$” to denote a canonical isomorphism, with the morphism clear from the context, and “$\cong$” to denote an isomorphism without naturality or canonicity assertions.


\section*{Acknowledgements}
I am very grateful to Bhargav Bhatt for numerous valuable discussions, especially for showing me Lemma \ref{lem_glue_morph}, Lemma \ref{lem_max_liss_open_is_zariski_open}, and suggesting Definition \ref{def_infinity_monodromic}. I sincerely thank Grigory Andreychev, Denis-Charles Cisinski, Yash Deshmukh, Bradley Dirks, David Hansen and Bogdan Zavyalov for their interest and suggestions. I also thank Takeshi Saito very much for discussion related to the footnote to Remark \ref{rmk_to_thm_1.2}.2. This work is done during my stay at the Institute for Advanced Study, and the manuscript is completed at the University of Regensburg where I am a Humboldt Research Fellow. I want to thank the Institute and Regensburg for providing beautiful environments and the Humboldt Foundation for its support.

\section{Nearby and vanishing cycles}\label{sec_defofvan}
Nearby and vanishing cycles serve as the foundation for defining the specialisation, microlocalisation and other microlocal functors. In this section, we recall the theory from \cite{zhou_vanishing_2025} of nearby and vanishing cycles in our context. The set-up is as in §\ref{sec_intro}.

\begin{definition}\label{def_van}
    Let $f: X\rightarrow \AAA^1$ be a map of rigid analytic varieties, $X_0:=X\times_{\AAA^1} 0$, $X^\times:=X\times_{\AAA^1}\Gm$, and $\CF\in D^{}(X^\times)$. The \underline{nearby cycle} $\psi_f(\CF)$ of $\CF$ with respect to $f$ is an object of $D^{}(X_0)$ carrying a continuous action (the \underline{monodromy action}) of $\pi^{\mathrm{fin}}_1(\Gm,1)=\hat{\ZZ}(1)=\mu:=\varprojlim\mu_n$ defined as follows\footnote{Here, $\pi^{\mathrm{fin}}_1$ denotes the fundamental group classifying finite \etale surjective maps, and $\mu_n$ denotes the $n$-th roots of unity in $K$. In the following, we use $\pi^{\mathrm{fin}}_1(\Gm,1)$, $\hat{\ZZ}(1)$ and $\mu$ interchangeably.}: consider the diagrams, where the second is the base change to $X$ of the first:
\[\begin{tikzcd}
	&& {\mathbf{G}_m(n)=\mathbf{G}_m} &&&& {X^{\times}(n)} \\
	0 & {\mathbf{A}^1} & {\mathbf{G}_m} && {X_0} & X & {X^{\times}}
	\arrow[from=1-3, to=2-2]
	\arrow["{e_n:z\mapsto z^n}", from=1-3, to=2-3]
	\arrow["{j_n}"', from=1-7, to=2-6]
	\arrow["{p_n}", from=1-7, to=2-7]
	\arrow[hook, from=2-1, to=2-2]
	\arrow[hook', from=2-3, to=2-2]
	\arrow["i"', hook, from=2-5, to=2-6]
	\arrow["j", hook', from=2-7, to=2-6]
\end{tikzcd}\]
    Then $\psi_f(\CF):=\varinjlim_{n\rightarrow\infty} i^*j_*p_{n*}p_n^*\CF$. The $\pi^{\mathrm{fin}}_1(\Gm,1)$-action is induced by the natural actions of $\mathrm{Aut}(e_n)=\mu_n$ on $p_{n*}p_n^*\CF$. We will often view $\psi_f$ as a functor from $D(X)$ to $D(X_0)$ via $\psi_f j^*$.\\\\
    For $\CG\in D^{}(X)$, the \underline{vanishing cycle} $\phi_f(\CG)$ is an object of $D^{}(X_0)$ carrying a continuous action of $\pi^{\mathrm{fin}}_1(\Gm,1)$ defined as $\cone(i^*\CG\xrightarrow{\mathrm{sp}}\psi_f(\CG))$,\footnote{Strictly speaking, one takes the cone on the $\infty$-categorical level, which induces a functor on the triangulated level.} where $\mathrm{sp}: i^*\CG\rightarrow\psi_f(\CG)$ is induced by the adjunctions $\CG\rightarrow j_{n*}j_n^*\CG$ and called the \underline{specialisation map}. The $\pi^{\mathrm{fin}}_1(\Gm,1)$-action is induced from that on $\psi_f(\CG)$.
\end{definition}

\begin{remark}
    Throughout this work, by an isomorphism of nearby or vanishing cycles, we mean an isomorphism commuting with the monodromy actions.
\end{remark}

\begin{lemma}[scaling invariance]\label{lem_psietalebasechange}
    Let $f: X\rightarrow\AAA^1$ be a map of rigid analytic varieties, and $\CF\in D^{}(X)$. Then, for every $\lambda\in K^\times$, we have a natural isomorphism $\psi_f(\CF)\isoto\psi_{\lambda f}(\CF)$. This isomorphism is canonical up to a choice of $\bar{\lambda}=\{\lambda_n\}_{n\in\ZZ_{\geq1}}\in\varprojlim\mu_n(\lambda)$ with $\lambda_1=\lambda$, where $\mu_n(\lambda)$ denotes the $n$-th roots of $\lambda$ in $K$. Similarly for $\phi$.
\end{lemma}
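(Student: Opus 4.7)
The plan is to construct the isomorphism at the level of the finite \etale covers of $X^\times$ used in Definition~\ref{def_van}, and then pass to the filtered colimit. First observe that multiplication by $\lambda$ is an automorphism of $\AAA^1$ fixing $0$ and restricting to an automorphism of $\Gm$, so replacing $f$ by $\lambda f$ leaves $X_0$, $X^\times$, and the maps $i,j$ unchanged. The essential input is that, for any $\lambda_n\in K^\times$ with $\lambda_n^n=\lambda$, the square
\[
\begin{tikzcd}
\Gm \arrow[r,"z\mapsto\lambda_n z"] \arrow[d,"e_n"'] & \Gm \arrow[d,"e_n"] \\
\Gm \arrow[r,"z\mapsto\lambda z"] & \Gm
\end{tikzcd}
\]
commutes (as $(\lambda_n z)^n=\lambda z^n$), with both horizontal arrows isomorphisms.

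I would then base change this square along $f$: the bottom row becomes the identity on $X^\times$ (viewed as $X^\times$ for $f$ on the left and for $\lambda f$ on the right), while the top row becomes an isomorphism of covers $X^\times(n)_f\isoto X^\times(n)_{\lambda f}$, where the subscripts record whether $e_n$ is pulled back along $f$ or along $\lambda f$. Applying $i^*j_*(-)_*(-)^*\CF$ yields an isomorphism between the $n$-th terms of the colimits defining $\psi_f(\CF)$ and $\psi_{\lambda f}(\CF)$. For this to pass to the colimits, one needs compatibility under the transitions for $n\mid m$, writing $m=nk$: the transition on the $f$-side is induced by $e_k:\Gm\to\Gm$ between the sources of $e_m$ and $e_n$, and a direct check shows that it intertwines scaling by $\lambda_m$ and scaling by $\lambda_n$ exactly when $\lambda_m^k=\lambda_n$. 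This is precisely the condition that $\bar\lambda=\{\lambda_n\}$ defines an element of $\varprojlim_n\mu_n(\lambda)$, so such a choice yields the desired isomorphism $\psi_f(\CF)\isoto\psi_{\lambda f}(\CF)$ after passing to the colimit.

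Monodromy compatibility then follows because scaling by $\lambda_n$ commutes with the $\mu_n$-deck transformations of $e_n$ (both are multiplications in the commutative group $\Gm$), so the constructed isomorphism of covers is $\mu_n$-equivariant for each $n$, and hence $\mu$-equivariant in the limit. The assertion for $\phi_f$ follows from the naturality of the specialisation map $i^*\CF\to\psi_{\lambda f}(\CF)$, which is induced by unit maps of adjunctions and therefore intertwined with the isomorphism constructed above, together with the functoriality of $\cone$. Rather than a genuine obstacle, the main technical point is the bookkeeping across the filtered colimit, which is what forces the choice of $\bar\lambda$ to live in the inverse limit rather than in an individual $\mu_n(\lambda)$.
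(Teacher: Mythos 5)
Your proof is correct. A small clarification about the framing: this paper does not actually prove Lemma~\ref{lem_psietalebasechange}; §2 merely recalls it from \cite{zhou_vanishing_2025}, so there is no in-paper proof to compare against. That said, your argument is the natural one and I have checked the details.

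The key square $e_n\circ(\lambda_n\cdot)=(\lambda\cdot)\circ e_n$ is correct, and base-changing it along $f|_{X^\times}$ gives an isomorphism of covers $X^\times(n)_f\isoto X^\times(n)_{\lambda f}$, $(x,z)\mapsto(x,\lambda_n z)$, over $X^\times$ (and hence over $X$ after composing with $j$). Your transition-compatibility check is exactly right: for $m=nk$ the transition $e_k$ intertwines the two scalings iff $\lambda_m^k=\lambda_n$, which is precisely the coherence condition defining $\bar\lambda\in\varprojlim\mu_n(\lambda)$ (and such $\bar\lambda$ exist since $K$ is algebraically closed). Since the deck group $\mu_n$ acts by multiplication in $\Gm$, it commutes with multiplication by $\lambda_n$, so the isomorphism is $\mu_n$-equivariant at each finite level and hence $\mu$-equivariant after passing to the colimit; this is what the paper means by an isomorphism of nearby cycles (Remark following Definition~\ref{def_van}). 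Finally, your reduction of the $\phi$-case to naturality of $\mathrm{sp}$ and functoriality of $\cone$ is fine, granted the paper's footnote that $\cone$ is taken $\infty$-categorically.

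One optional streamlining: rather than constructing the isomorphism cover-by-cover and checking the colimit compatibility, you could observe that the automorphism $(\lambda\cdot)$ of the pro-object ``$\varprojlim$''$(\Gm,e_n)$ lifting $(\lambda\cdot)$ on $\Gm$ is exactly a choice of $\bar\lambda\in\varprojlim\mu_n(\lambda)$, and then base-change that single pro-isomorphism. This packages the bookkeeping, but it is the same argument.
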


\begin{lemma}[functoriality]\label{lem_smoothqcqs}
    (1) Let $g: Y\rightarrow X$, $f: X\rightarrow\AAA^1$ be maps of rigid analytic varieties with $g$ smooth, and $\CF\in D^{}(X)$. Then there is a canonical isomorphism $g^*\psi_f(\CF)\isoto\psi_{fg}(g^*\CF)$. Similarly for $\phi$.\\
    (2) Let $g: Y\rightarrow X$, $f: X\rightarrow\AAA^1$ be maps of rigid analytic varieties with $g$ quasi-compact quasi-separated, and $\CG\in D^{}(Y)$. Then there is a canonical isomorphism $g_*\psi_{fg}(\CG)\isoot\psi_{f}(g_*\CG)$. Similarly for $\phi$.
\end{lemma}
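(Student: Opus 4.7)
The plan is to exploit the explicit formula $\psi_f(\CF)=\varinjlim_n i^*j_*p_{n*}p_n^*\CF$ and, in each case, commute the external functor ($g^*$ in (1), respectively $g_*$ in (2)) past each of the five ingredients $i^*$, $j_*$, $p_{n*}$, $p_n^*$ and the filtered colimit. Each commutation produces a canonical base-change morphism (by adjunction or transitivity), and assembling them yields the natural transformation whose invertibility is claimed. Compatibility with the $\mu$-action is automatic from the naturality of the base change maps with respect to the $\mu_n$-action on $p_{n*}p_n^*$. The statement for $\phi$ then follows by taking cones of the specialisation maps.

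For part (1), pull back the diagram in Definition \ref{def_van} via the smooth morphism $g$ to obtain the analogous diagram over $Y$. The commutations $g^*\circ i^*\simeq i^*\circ g^*$ and $g^*\circ p_n^*\simeq p_n^*\circ g^*$ are transitivity of $*$-pullback. Smooth base change applied to the Cartesian square with vertical sides $g,g|_{Y^\times}$ and horizontal sides $j^X,j^Y$ gives $g^*j_*\simeq j_*(g|_{Y^\times})^*$; proper base change for the finite \'etale morphism $p_n^X$ gives $(g|_{Y^\times})^*p_{n*}\simeq p_{n*}(g|_{Y^\times(n)})^*$. Finally, $g^*$ always commutes with filtered colimits. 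Composing yields the desired $g^*\psi_f(\CF)\isoto\psi_{fg}(g^*\CF)$.

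For part (2), the natural transformation goes the other way and is constructed from the adjoint base change morphisms. The commutation of $g_*$ with $j_*$ is automatic (both sides equal pushforward along the composite $j^Y\circ g|_{Y^\times}=g\circ j^X$). Proper base change along the finite \'etale (hence proper) morphism $p_n^X$ gives a canonical isomorphism $p_n^*(g|_{Y^\times})_*\isoto (g|_{Y^\times(n)})_* p_n^*$, valid for arbitrary qcqs $g$; combined with the trivial commutation of the two pushforwards, this handles the $p_{n*}p_n^*$ layer. The commutation of $g_*$ with $i^*$ is proper base change in Huber's framework, and the commutation of $g_*$ with the filtered colimit over $n$ relies on the continuity of $g_*$ for qcqs $g$ and torsion coefficients. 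Assembling these and invoking naturality yields the desired $\psi_f(g_*\CG)\isoot g_*\psi_{fg}(\CG)$.

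The main technical input, and the step requiring the most care, is that for a merely qcqs morphism $g$ of rigid analytic varieties, $g_*$ still satisfies proper base change against closed immersions and commutes with filtered colimits of torsion sheaves. These hold in much greater generality in the adic-space setting than in the scheme-theoretic one, essentially because the specialisation topology of adic spaces makes qcqs morphisms behave like proper ones with respect to \'etale cohomology; these facts are due to Huber and are recalled, for our specific use, in \cite{zhou_vanishing_2025}. Granting them, the remainder is a routine diagram chase.
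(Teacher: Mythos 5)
The paper does not prove this lemma; it is quoted from the companion paper \cite{zhou_vanishing_2025}, so there is no in-text argument to compare your proposal against. Your dévissage---commuting $g^*$ (resp.\ $g_*$) past each of $i^*$, $j_*$, $p_{n*}$, $p_n^*$, and the filtered colimit via base change maps---is the natural argument and appears correct, and you have correctly isolated the two non-formal inputs: in part (1), smooth base change for the (generally non-quasi-compact) open immersion $j$; and in part (2), the commutation of qcqs pushforward both with $*$-pullback along closed analytic immersions and with filtered colimits of torsion sheaves. It is worth stressing that the last point is genuinely special to rigid geometry: for schemes the open immersion $\Gm\hookrightarrow\AAA^1$ is qcqs and the corresponding base change $i^*g_*\simeq g_{0*}i^*$ along the closed point fails, whereas in the rigid world the analogous $D^\X(1)\hookrightarrow D$ is \emph{not} quasi-compact, so the qcqs hypothesis on $g$ has real content and Huber's base change theorem for closed analytic subsets does apply.
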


\begin{lemma}[commute with analytification]\label{lem_comparisonphi}
    Let $f: \CX\rightarrow\AAA^1$ be a map of finite type schemes over $K$, and $\CF\in D^b_c(\CX^\X)$. Let $f^{an}: X\rightarrow\AAA^1$ and $\CF^{an}$ be their analytifications. Then there is a canonical isomorphism $(\psi_f(\CF))^{an}\isoto\psi_{f^{an}}(\CF^{an})$. Similarly for $\phi$.
\end{lemma}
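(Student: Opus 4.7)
The plan is to first reduce the vanishing cycles statement to the nearby cycles statement. Since analytification is exact on the bounded derived category of constructible sheaves, it preserves cones, and the specialisation map $i^*\mathcal{G}\to\psi_f(\mathcal{G})$ is natural in all inputs. Thus a canonical isomorphism $(\psi_f\mathcal{F})^{an}\isoto\psi_{f^{an}}(\mathcal{F}^{an})$ compatible with $\mathrm{sp}$ will immediately give the analogous isomorphism for $\phi$.

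For the nearby cycles, I would unwind Definition \ref{def_van} on both sides and establish the isomorphism term-by-term in the expression $\psi_f(\mathcal{F})=\varinjlim_n i^*j_*p_{n*}p_n^*\mathcal{F}$. First, analytification commutes tautologically with the pullbacks $i^*$, $j^*$, and $p_n^*$, since pullback is defined via the analytification morphism of sites. Second, each $p_n$ is the base change of the finite \'etale map $e_n:\Gm\to\Gm$, hence finite, so $p_{n*}=p_{n!}$ and compatibility with analytification reduces to the proper base change / comparison statement for finite morphisms in Huber's formalism \cite{huber_etale_1996}. Third, and this is the crucial step, one needs the comparison $(j_*\mathcal{H})^{an}\simeq j_*^{an}\mathcal{H}^{an}$ for the open immersion $j:X^\times\hookrightarrow X$ applied to the constructible input $\mathcal{H}=p_{n*}p_n^*\mathcal{F}$; this is the analytification comparison theorem for $j_*$ of constructible sheaves along open immersions of finite type $K$-schemes, available from \cite{huber_etale_1996} together with the extensions to the Zariski-constructible setting recorded in \cite{hansen_vanishing_2020} and \cite{bhatt_six_2022}. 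Finally, analytification commutes with the filtered colimit over $n$, which is formal since the underlying transition maps are natural and analytification is a left adjoint-compatible functor on the relevant categories.

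It then remains to check that the constructed isomorphism is equivariant for the monodromy. The $\mu_n$-action on $p_{n*}p_n^*\mathcal{F}$ comes from the automorphisms of $e_n$, which are algebraic and hence compatible with analytification; all the term-by-term comparisons above are natural in the sheaf argument, so each is equivariant for the $\mu_n$-actions, and passing to the colimit yields a $\mu$-equivariant isomorphism. Equivariance of the specialisation map follows from the naturality of adjunction units under analytification, giving the required compatibility for $\phi$ as well.

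The main obstacle is the $j_*$ comparison in the third step: for an open immersion $j$ of finite type $K$-schemes and $\mathcal{H}\in D^b_c$, the statement $(j_*\mathcal{H})^{an}\simeq j_*^{an}\mathcal{H}^{an}$ is nontrivial because $j_*$ is not preserved by colimits of sites in general; it requires invoking the algebraic-to-analytic comparison theorem. The other steps (finite pushforward, pullback, cones, filtered colimits, monodromy) are formal once this input is in place.
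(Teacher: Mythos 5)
The paper recalls this lemma from \cite{zhou_vanishing_2025} without reproducing its proof, so there is no in-document argument to compare against directly. Your proposal is nonetheless correct and follows what is surely the intended argument: you unwind the colimit definition $\psi_f(\mathcal{F})=\varinjlim_n i^*j_*p_{n*}p_n^*\mathcal{F}$ and compare term-by-term, with pullbacks commuting tautologically, $p_{n*}=p_{n!}$ handled by the proper/finite comparison since $p_n$ is finite \'etale, and the one genuinely non-formal input correctly isolated as the algebraic-to-analytic comparison $(Rj_*\mathcal{H})^{an}\simeq Rj^{an}_*\mathcal{H}^{an}$ for $\mathcal{H}\in D^b_c(\mathcal{X}^\times)$ along the open immersion $j$; passing through the filtered colimit is formal because analytification is a colimit-preserving pullback, and both $\mu$-equivariance and compatibility with $\mathrm{sp}$ (hence the $\phi$-case) follow from naturality of every comparison map involved. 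One point you leave implicit and should state: the lemma compares the analytic $\psi_f$, defined by the Kummer colimit, against the \emph{algebraic} $\psi_f(\mathcal{F})$, and for the term-by-term argument to compare like with like you need the algebraic nearby cycles to be computed by the same colimit formula on the scheme side. This is true here because $K$ has characteristic zero, so all local monodromy along $\Gm$ (equivalently, over the strictly henselian local ring at $0\in\AAA^1_K$) is of Kummer type and the colimit over $n$-th power covers already computes the full nearby cycle; in positive characteristic this identification would fail and the proof would need a different shape. Recording this explicitly, and being slightly careful that the cited comparison theorem for $Rj_*$ covers the coefficient rings $\FF_{\ell^r}$ and $\ZZ/\ell^r$ with $\ell\neq p$ over an algebraically closed complete $K$ (Hansen's result does), closes the argument.
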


\begin{proposition}[preserve Zariski-constructibility]\label{prop_phipreserveszc}
    Let $f: X\rightarrow\AAA^1$ be a map of rigid analytic varieties, and $\CF\in \DX$. Then $\psi_f(\CF)$ and $\phi_f(\CF)$ lie in $D^{(b)}_{zc}(X_0)$.
\end{proposition}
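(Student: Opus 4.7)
The plan is to reduce the statement to a local computation where nearby cycles can be computed explicitly. Since $\phi_f(\CF)$ fits into a distinguished triangle $i^*\CF \to \psi_f(\CF) \to \phi_f(\CF)$ with $i^*\CF \in \Dbzc(X_0)$ (pullback along closed immersions preserves Zariski-constructibility), it suffices to treat $\psi_f(\CF)$. Replacing $\CF$ by $j^*\CF$, we may assume $\CF \in \Dbzc(X^\times)$. For each fixed $n$, the approximation $\CF_n := i^*j_*p_{n*}p_n^*\CF$ is Zariski-constructible: $p_n$ is finite \'etale, so $p_n^*\CF$ is Zariski-constructible and $p_{n*}p_n^*\CF$ is too by finite pushforward; the open immersion $j: X^\times \hookrightarrow X$ is the complement of the Zariski-closed subset $X_0 = f^{-1}(0)$, so $j_*$ preserves Zariski-constructibility by the six-functor formalism of \cite{bhatt_six_2022}; and $i^*$ along the closed immersion preserves it. Thus $\psi_f(\CF) = \varinjlim_n \CF_n$ is a filtered colimit (over positive integers ordered by divisibility) of Zariski-constructible sheaves.

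The main obstacle, and the reason the statement is nontrivial, is that $\Dbzc$ is not closed under arbitrary filtered colimits, so we must show the system $\{\CF_n\}$ stabilizes locally on $X_0$. I would handle this by devissage combined with an explicit local model. First, stratify $X^\times$ by smooth Zariski-locally closed pieces on which $\CF$ restricts to a local system, so that by triangles and the functoriality in Lemma~\ref{lem_smoothqcqs} one reduces to pushforwards of local systems from smooth subvarieties. Then apply Temkin's functorial resolution of singularities~\cite{temkin_functorial_2018} to the pair $(X, f)$ and invoke the proper pushforward compatibility of $\psi_f$ to reduce to the case where $X$ is smooth, $\CF$ is a local system, and $f$ is locally a monomial in \'etale coordinates. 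In this semistable model, $\psi_f(\CF)$ can be computed explicitly via Kummer covers of the monomial coordinate, and one checks directly that the outcome is bounded and Zariski-constructible on $X_0$.

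The hardest step is verifying that the system $\{\CF_n\}$ stabilizes at some finite $n$ in this explicit model; this is a rigid-analytic incarnation of Grothendieck's local monodromy theorem. Since $K$ has characteristic $0$, the local monodromy of any Zariski-constructible local system around $\{f = 0\}$ is expected to be quasi-unipotent (hence tame), so that after pulling back along $p_n$ for $n$ divisible enough the Kummer contribution trivializes and the colimit stabilizes. This step can be carried out by invoking Ramero's results on local monodromy in $p$-adic analytic geometry~\cite{ramero_local_2005, ramero_hasse-arf_2012}, or by reducing to the algebraic case via the analytification comparison in Lemma~\ref{lem_comparisonphi} after approximating the local model by an algebraic one.
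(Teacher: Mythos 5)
Your opening reductions are sound: it suffices to treat $\psi_f$, and each finite-level term $\CF_n=i^*j_*p_{n*}p_n^*\CF$ lies in $\Dbzc(X_0)$ because $p_n$ is finite \'etale, $j$ is a Zariski-open immersion so $j_*$ preserves Zariski-constructibility by \cite{bhatt_six_2022}, and $i^*$ clearly does. You also correctly pinpoint that the nontrivial content of the proposition is the behaviour of the filtered colimit $\varinjlim_n\CF_n$.

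The gap is the claim that $\{\CF_n\}$ ``stabilizes at some finite $n$.'' It does not, even in the simplest example $X=\AAA^1$, $f=\mathrm{id}$, $\CF=\underline{\Lambda}_{\Gm}$: there $\CF_n\cong\Lambda\oplus\Lambda(-1)[-1]$ for every $n$, and the transition $\CF_n\to\CF_m$ ($n\mid m$) multiplies $H^1$ by $m/n$, so no transition map is an isomorphism; the colimit is $\Lambda$ in degree $0$ only because the $H^1$'s vanish in the filtered colimit (the coefficient ring being $\ell$-power torsion), not because the system becomes constant. This is what one actually has to control after d\'evissage, and quasi-unipotence (Grothendieck's local monodromy theorem, Ramero's wild ramification theory) is the wrong tool for it: with finite coefficients, every continuous $\hat\ZZ(1)$-action has finite image, so quasi-unipotence is vacuous and tells you nothing about the colimit. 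The genuinely analytic input is L\"utkebohmert's Abhyankar lemma \cite{lutkebohmert_riemanns_1993}, which this paper itself uses in the proofs of Proposition~\ref{prop_0seclisse} and Example~\ref{ex_SS_compute}: a finite \'etale cover of a punctured polydisc becomes Kummer after shrinking, and this is what makes $p_n^*\CL$ eventually constant and reduces the colimit to the explicit one just described. Two further caveats: \cite{temkin_functorial_2018} concerns schemes, so your reduction to a monomial model needs a rigid-analytic (or formal/Berkovich) resolution, or a local algebraization; and pushing through a proper modification $\pi$ must account for $\pi_*\pi^*\CF\neq\CF$, which is fixable since $\psi_f$ only depends on $\CF|_{X^\times}$, where $\pi$ can be arranged to be an isomorphism. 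Note finally that this paper states the proposition without proof, referring to \cite{zhou_vanishing_2025}, so I am assessing your argument on its own terms rather than against a displayed proof.
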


\begin{proposition}[Milnor fibre interpretation of stalks]\label{prop_nearbyfibinterp}
    Let $f: X=\Spa(A)\rightarrow\AAA^1$ be a map of rigid analytic varieties, $\CF\in D^b_{zc}(X)$, $x\in X_0$ be a classical point, and $f_1,...,f_r\in A$ such that $x=V(f_1,...,f_r)$ set-theoretically. Then there exists $\epsilon_0\in |K^\times|$ such that for every $0<\epsilon<\epsilon_0$ with $\epsilon\in|K^\times|$, there exists $\eta_0\in |K^\times|$ such that for every $0<\eta<\eta_0$ with $\eta\in|K^\times|$, and every classical point $a\in D^\times(\eta)$, there exists an isomorphism $\psi_f(\CF)_x\cong R\Gamma(U_{\epsilon,a},\CF)$, where $U_{\epsilon,a}:=\{f=a,|f_i|\leq\epsilon,\forall i\}\subseteq X$ is the “Milnor fibre”. Similarly for $\phi_f(\CF)_x$: there exist (possibly different) constants $(\epsilon_0,\epsilon,\eta_0,\eta)$ as above such that there exists an isomorphism $\phi_f(\CF)_x\cong \cone(\CF_x\rightarrow R\Gamma(U_{\epsilon,a},\CF))$ for every classical point $a\in D^\times(\eta)$.
\end{proposition}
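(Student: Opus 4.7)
The plan is to compute $\psi_f(\CF)_x$ directly from the definition, by first establishing that for $\CF$ Zariski-constructible the restriction of $Rf_*\CF$ to a sufficiently small punctured disc is a local system, and then observing that the colimit $\varinjlim_n$ over tame covers kills the extra $H^1$ contribution from such a disc.

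I would first unwind the stalk. The formula $\psi_f(\CF) := \varinjlim_n i^* j_* p_{n*} p_n^* \CF$, together with the standard computation of stalks of $Rj_*$ at a classical point of an adic space as a filtered colimit of derived sections over a cofinal system of open neighborhoods, gives
\[
\psi_f(\CF)_x \;=\; \varinjlim_n \varinjlim_{\epsilon \to 0}\, R\Gamma\bigl((T_\epsilon \cap X^\times)(n),\, \CF\bigr),
\]
where the tubes $T_\epsilon := \{|f_i| \leq \epsilon \text{ for all } i\}$ form the cofinal system, and I have used that $p_n$ is finite \'etale to rewrite $R\Gamma(U \cap X^\times, p_{n*}p_n^*\CF)$ as $R\Gamma((U \cap X^\times)(n), \CF)$. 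Since $f(x) = 0$, for $\epsilon$ small one has $f(T_\epsilon) \subseteq D(\eta_1)$ for some $\eta_1 = \eta_1(\epsilon) \to 0$, so $\tilde f \colon T_\epsilon \cap X^\times \to D^\times(\eta_1)$ is well-defined.

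The key technical input is that after choosing $\epsilon$ small, there exists $\eta_0 \leq \eta_1$ such that $R\tilde f_* \CF|_{D^\times(\eta_0)}$ is a local system. I would derive this from Zariski-constructibility of $\CF$, Huber's finiteness theorem \cite{huber_finiteness_1998}, and the nearby-cycle machinery of \cite{zhou_vanishing_2025}: the derived pushforward $R\tilde f_*\CF$ is Zariski-constructible on the smooth rigid analytic curve $D^\times(\eta_1)$, hence locally constant off a finite set of classical points, and a sufficiently small $\eta_0$ avoids that set. Proper base change then gives $(R\tilde f_*\CF)_a \cong R\Gamma(U_{\epsilon, a}, \CF)$ for every classical $a \in D^\times(\eta_0)$, since the fibre of $\tilde f$ over $a$ is precisely $U_{\epsilon, a}$.

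To finish, I would apply Leray to rewrite the integrand as $R\Gamma(D^\times(\eta_0)(n), p_n^*R\tilde f_*\CF)$ (after replacing the cofinal system by the subsets $T_\epsilon \cap f^{-1}(D^\times(\eta_0))$ for $\epsilon$ small enough that $f(T_\epsilon) \subseteq D(\eta_0)$). For $n$ sufficiently divisible that the $\mu$-monodromy of the local system $R\tilde f_*\CF$ factors through $\mu_n$, the pullback $p_n^*R\tilde f_*\CF$ is constant on $D^\times(\eta_0^{1/n})$ with value $M := R\Gamma(U_{\epsilon, a}, \CF)$, hence the integrand equals $R\Gamma(D^\times(\eta_0^{1/n}), \underline M) \simeq M \oplus M(-1)[-1]$. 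The transition maps in $\varinjlim_n$ act on the $H^1$-summand as multiplication by the cover degree (the standard computation that $p_m^*$ is multiplication by $m$ on $H^1(\Gm)$), so for $n$ divisible by a large enough power of $\ell$ this summand vanishes, and the colimit isolates $M$, yielding $\psi_f(\CF)_x \simeq R\Gamma(U_{\epsilon, a}, \CF)$. The vanishing-cycle statement then follows by taking the cone with the specialisation map and using $\CF_x \simeq R\Gamma(T_\epsilon, \CF)$ for $\epsilon$ small (a consequence of Zariski-constructibility). The main obstacle is the local-system claim: rigorously showing $R\tilde f_*\CF|_{D^\times(\eta_0)}$ is a local system requires the rigid-analytic analogue of Thom-Whitney stratification/generic smoothness, which ultimately rests on semistable reduction for rigid curves and Huber's finiteness theorems as developed in \cite{zhou_vanishing_2025}.
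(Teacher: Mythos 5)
The paper does not prove this proposition itself; §2 recalls it from \cite{zhou_vanishing_2025}. Your strategy—unwind the defining colimit, push forward to a small punctured disc to reduce to a local-system computation, and let the Kummer colimit kill the spurious $H^1$—is the natural one and surely close in spirit to what \cite{zhou_vanishing_2025} does, but two of your steps rest on tools that do not apply as stated, and one genuinely nontrivial input is missing.

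The appeal to proper base change to identify $(R\tilde f_*\CF)_a$ with $R\Gamma(U_{\epsilon,a},\CF)$ does not apply: $\tilde f\colon T_\epsilon\cap X^\times\to D^\times(\eta_1)$ is not proper ($T_\epsilon$ is an affinoid with boundary, not partially proper over its image). The correct tool at a classical point $a$ is Huber's stability theorem \cite[3.6]{huber_finiteness_1998} for sections of Zariski-constructible complexes on affinoids over shrinking tubes; this is also what you need to justify the cofinality of the $T_\epsilon$ and—which you never address—to show the outer $\epsilon$-colimit in your double colimit stabilises (note that the sheaf $j_*p_{n*}p_n^*\CF$ changes with $n$, so the stabilisation radius for a single $n$ is not obviously uniform). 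Similarly, the Zariski-constructibility of $R\tilde f_*\CF$ on the curve is not automatic: the six functors of \cite{bhatt_six_2022} only guarantee preservation under \emph{proper} pushforward, so this deserves more than the passing remark you give it.

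The more substantial gap: when you pass from ``$R\tilde f_*\CF$ is a local system'' to ``$p_n^*R\tilde f_*\CF$ is constant for $n$ sufficiently divisible,'' you are quietly assuming that its monodromy factors through the Kummer quotient $\mu$. This is not true for a general local system on a rigid punctured disc or annulus, whose finite-\'etale fundamental group is considerably larger than $\hat{\ZZ}(1)$—the paper itself emphasises exactly this point in the remark following Example \ref{ex_SS_compute}. The missing ingredient is Lütkebohmert's Riemann existence theorem \cite[3.2]{lutkebohmert_riemanns_1993}: after shrinking $\eta_0$ further, depending on the local system, any given local system on $D^\times(\eta_0)$ becomes trivialised by a Kummer cover. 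The paper invokes precisely this in the proof of Proposition \ref{prop_0seclisse}. Without it, the identification $R\Gamma(D^\times(\eta_0^{1/n}),\underline M)\simeq M\oplus M(-1)[-1]$ and the subsequent $n$-colimit computation are unjustified, and your argument does not go through.
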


\begin{theorem}[perverse t-exactness, commute with duality]\label{thm_main_psi_phi}
Let $f: X\rightarrow\AAA^1$ be a map of rigid analytic varieties. Denote $\psi_f[-1]$ (resp. $\phi_f[-1]$) by $\Psi_f$ (resp. $\Phi_f$). Then:\\
    (1) $\Psi_f$ is perverse t-exact in the following sense: 
    for every $\CF\in$ $^p\!D^{\leq 0}_{zctf}(X^\times)$ (resp. $^p\!D^{\geq 0}_{zctf}(X^\times)$) which extends to an object of $D^{(b)}_{zc}(X)$, we have $\Psi_f(\CF)\in$ $^p\!D^{\leq 0}_{zctf}(X_0)$ (resp. $^p\!D^{\geq 0}_{zctf}(X_0)$);\\
    (2) $\Phi_f$ is perverse t-exact in the following sense: for every $\CF\in$ $^p\!D^{\leq 0}_{zctf}(X)$ (resp. $^p\!D^{\geq 0}_{zctf}(X)$), we have $\Phi_f(\CF)\in$ $^p\!D^{\leq 0}_{zctf}(X_0)$ (resp. $^p\!D^{\geq 0}_{zctf}(X_0)$);\\
    (3) for $\CF\in$ $D^{(b)}_{zc}(X^\X)$ which extends to an object of $\DX$, there is a canonical isomorphism $g: \Psi_f\mathbb{D}\CF\isoto(\mathbb{D}\Psi_f\CF)(1)$ in $D(X_0\bar\X B\mu)$;\\
    (4) for $\CF\in$ $D^{(b)}_{zc}(X)$, there is a natural isomorphism $h: \Phi_f\mathbb{D}\CF\isoto(\mathbb{D}\Phi_f\CF)\miwa(1)$ in $D(X_0\bar\X B\mu)$.\\\\
    Furthermore, the $\mathrm{can}$ and $\mathrm{var}$ triangles are dual to each other in the sense that we have a commutative diagram:
\[\begin{tikzcd}
	{(i^*\mathbb{D}\mathcal{F})[-1]} && {\Psi\mathbb{D}\mathcal{F}} && {\Phi\mathbb{D}\mathcal{F}} \\
	{(\mathbb{D}i^!\mathcal{F})[-1]} && {(\mathbb{D}\Psi\mathcal{F})(1)} && {(\mathbb{D}\Phi\mathcal{F})(-1)^\tau(1)}
	\arrow["\mathrm{sp}", from=1-1, to=1-3]
	\arrow["\alpha", from=1-1, to=2-1]
	\arrow["\mathrm{can}", from=1-3, to=1-5]
	\arrow["g", from=1-3, to=2-3]
	\arrow["\simeq"', from=1-3, to=2-3]
	\arrow["h", from=1-5, to=2-5]
	\arrow["{\mathbb{D}(\mathrm{cosp}(1)^\tau(-1))}", from=2-1, to=2-3]
	\arrow["{\mathbb{D}(\mathrm{var}(1)^\tau(-1))}", from=2-3, to=2-5]
\end{tikzcd}\]
\end{theorem}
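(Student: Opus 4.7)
The plan is to follow Beilinson's treatment of nearby and vanishing cycles in the algebraic setting (see \cite{illusie_around_2017,morel_beilinson_2018}), transporting each step to the rigid analytic context via three tools: the six-functor formalism for Zariski-constructible sheaves from \cite{bhatt_six_2022}, the Milnor-fibre interpretation of stalks (Proposition \ref{prop_nearbyfibinterp}), and the analytification comparison (Lemma \ref{lem_comparisonphi}) to import the corresponding algebraic statements where applicable.

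For parts (1) and (2), I would unravel $\Psi_f = \varinjlim_n i^* j_* p_{n*} p_n^*[-1]$. The functor $p_{n*}p_n^*$ is perverse t-exact since $p_n$ is finite \etale. The composite $i^* j_*[-1]$ is perverse right t-exact by standard six-functor considerations for the open/closed decomposition $(X_0,X^\times) \subset X$. For perverse left t-exactness (the harder half), the Milnor-fibre description reduces matters to an Artin-type vanishing statement on the local models $U_{\epsilon,a}$, which is available in the Zariski-constructible rigid setting via \cite{hansen_vanishing_2020,bhatt_six_2022}. Finiteness of $\Lambda$ combined with Zariski-constructibility ensures the colimit over $n$ stabilises at each stalk, so it preserves the perverse t-structure. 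Part (2) then follows by feeding (1) into the triangle $i^*\CF \to \Psi_f\CF[1] \to \Phi_f\CF[1] \xrightarrow{+1}$ together with the standard t-exactness of $i^*[-1]$ for a closed immersion into a smooth space.

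For parts (3) and (4), I would construct the duality isomorphism $g$ stage by stage in the colimit. The termwise compatibilities one needs are $\DD(p_{n*}p_n^*) \simeq p_{n*}p_n^* \DD$ (as $p_n$ is finite \etale), $\DD j_* \simeq j_! \DD$ for the quasi-compact open immersion $j$, and $\DD i^* \simeq i^! \DD$ for the closed immersion $i$. The systematic discrepancy between $j_!$ and $j_*$ collects into the comparison between $\Psi_f$ and its ``co-nearby'' variant, producing the Tate twist $(1)$ via the trace map for the degree-$n$ cover $e_n: \Gm \to \Gm$. For (4), write $\Phi_f$ as $\mathrm{cone}(i^* \to \Psi_f)$; dualising a cone swaps cone with fibre and introduces a shift, producing the extra factor $(-1)^\tau$ reflecting the inversion of the $\mu$-action under duality. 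The $\mu$-equivariance of the resulting isomorphisms in $D(X_0 \bar\X B\mu)$ is automatic from the functoriality of each step.

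The compatibility diagram is then a diagram chase from the constructions of $g$ and $h$: the morphisms $\mathrm{sp}, \mathrm{cosp}, \mathrm{can}, \mathrm{var}$ all arise from adjunction units/counits for $(j^*,j_*)$, $(i^*,i_*)$, $(p_n^*,p_{n*})$, and these adjunctions are self-dual up to the appropriate Tate twist, making the squares commute by naturality. The principal obstacle I anticipate is the careful bookkeeping of Tate twists and the monodromy-sign convention $(-1)^\tau$ in part (4), and verifying that the constructed isomorphisms genuinely live in the equivariant category $D(X_0 \bar\X B\mu)$ rather than only non-equivariantly. A secondary obstacle is justifying that duality commutes with the colimit over $n$; this is handled either by invoking a ``unipotent'' truncation so that only finitely many $n$ contribute to each perverse cohomology sheaf, or by cohomological-amplitude bounds uniform in $n$, both guaranteed by Zariski-constructibility together with the vanishing results of \cite{bhatt_six_2022,hansen_vanishing_2020}.
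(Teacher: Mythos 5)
First, a framing note: this theorem is not proved in the present paper but recalled from \cite{zhou_vanishing_2025} (``We refer to [\dots] for the details of the constructions involved''), so there is no internal proof to match your proposal against; the comparison is against the argument in that prior work.

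The central gap is the perverse-amplitude estimate in parts (1)--(2). Artin vanishing for the weakly Stein open immersion $j\colon X^\times\hookrightarrow X$ gives that $j_*$ preserves ${}^p\!D^{\leq 0}$, and $i^*$ also preserves ${}^p\!D^{\leq 0}$; so each term $i^*j_*p_{n*}p_n^*\CF$ lies in ${}^p\!D^{\leq 0}$ for $\CF$ perverse. But this only yields $\Psi_f\CF = \psi_f\CF[-1] \in {}^p\!D^{\leq 1}$, one degree short of the claimed ${}^p\!D^{\leq 0}$. The sharpening by one degree comes entirely from the colimit over $n$: the ``excess'' top perverse cohomology of each term is killed in the colimit because the transition maps on the $H^1(\Gm,-)$-type contribution are multiplications by increasing integers, eventually zero modulo $\ell^r$. (Test case: for $\CF=\underline{\Lambda}[1]$ on $\Gm$, each $i^*j_*p_{n*}p_n^*\CF$ has cohomology in degrees $-1$ and $0$, yet $\psi_{\mathrm{id}}(\underline\Lambda[1])$ is concentrated in degree $-1$.) Your remark that the colimit ``stabilises at each stalk'' does not address this sharpening, which is the crux of (1). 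Your treatment of the ``harder half'' (left t-exactness) via the Milnor-fibre description and Artin vanishing is likewise insufficient as written: Proposition \ref{prop_nearbyfibinterp} describes stalks, while left t-exactness concerns costalks, and you do not indicate how one accesses these in the rigid setting.

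The same difficulty reappears in (3)--(4). Dualising $\psi_f = \varinjlim_n i^*j_*p_{n*}p_n^*$ produces $\varinjlim_n i^!j_!p_{n*}p_n^*$, and the entire content of (3) is that these two colimits agree up to a Tate twist. This is the rigid-analytic analogue of Gabber's theorem on duality for nearby cycles (cf.\ \cite{lu_duality_2019} in the algebraic setting) and is \emph{not} a consequence of ``systematic discrepancy collecting'' or of the self-duality of adjunction units: one must construct a comparison map and prove it invertible, which again turns on exactly the same colimit vanishing phenomenon. Without that input, the final compatibility diagram is not a formal diagram chase.
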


Here $X_0\bar\X B\mu$ is the topos of sheaves on $X_0$ equipped with a $\mu$-action, $\miwa$ is the Iwasawa twist, and $^p\!D^{\leq 0}_{zctf}(X^\times)$ \textit{etc.} denote $^p\!D^{\leq 0}_{zc}(X^\X)\cap D^{(b)}_{zctf}(X^\X)$ \textit{etc.} We refer to \cite[\nopp §2 and §5]{zhou_vanishing_2025} for the details of the constructions involved.

\begin{example}\label{ex_phicomp}
    (1) Let $f: X\rightarrow\AAA^1$ be a smooth map of rigid analytic varieties, and $\CL$ a local system on $X$. Then $\phi_f(\CL)=0$.\\
    (2) Let $\CL$ be a local system in degree $0$ on the punctured disc $\Gm$, corresponding to a representation $\pi^{\mathrm{fin}}_1(\Gm,1)\rightarrow \mathrm{Aut}_K(\CL_1)$. Then there exists a canonical isomorphism $\psi_{\id}(\CL)\simeq\CL_1$, with the monodromy action identified with the representation.
\end{example}

For later use, we also recall a general form of Artin-Grothendieck Vanishing. We refer to \cite[\nopp §4]{zhou_vanishing_2025} for details.

\begin{definition}\label{def_w_stein}
    A rigid analytic variety $X$ is called \underline{weakly Stein} if there exist affinoid opens $U_0\subseteq U_1\subseteq ...\subseteq X$, $i\in\NN$, such that $X=\cup_i U_i$. A map $f: X\rightarrow Y$ of rigid analytic varieties is called \underline{weakly Stein} if every $y\in Y$ has a weakly Stein open neighbourhood $V$ such that $f^{-1}(V)=X\X_YV$ is weakly Stein.
\end{definition}

\begin{proposition}[General Artin-Grothendieck Vanishing]\label{prop_artinvanishing}
    Let $f: X\rightarrow Y$ be a weakly Stein map of rigid analytic varieties. Then, $f_*$ (resp. $f_!$) is perverse right (resp. left) t-exact, in the following sense: for every $\CF\in$ $^p\!D^{\leq 0}_{zc}(X)$ (resp. $^p\!D^{\geq 0}_{zc}(Y)$) such that $f_*\CF$ (resp. $f_!\CF$) lies in $D^{(b)}_{zc}(Y)$, we have $f_*\CF\in$ $^p\!D^{\leq 0}_{zc}(Y)$ (resp. $f_!\CF\in$ $^p\!D^{\geq 0}_{zc}(Y)$).
\end{proposition}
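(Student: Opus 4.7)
The plan is to combine Verdier duality on $\Dbzc$ with a reduction to the affinoid Artin-Grothendieck vanishing of Hansen \cite{hansen_vanishing_2020} and Bhatt--Hansen \cite{bhatt_six_2022}, using the exhaustion structure implicit in the weakly Stein hypothesis.

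\emph{Duality reduction and localisation.} The identifications $\DD\circ f_* \simeq f_!\circ\DD$ and $\DD({}^p\!D^{\leq 0}_{zc}) = {}^p\!D^{\geq 0}_{zc}$, both available for Zariski-constructible sheaves in rigid geometry by \cite{bhatt_six_2022}, interchange the two halves of the proposition, so it suffices to prove the assertion for $f_*$. Since being in ${}^p\!D^{\leq 0}_{zc}(Y)$ is local on $Y$, the definition of a weakly Stein map allows me to replace $Y$ by an affinoid open and assume $X = f^{-1}(Y) = \bigcup_{i\in\NN} U_i$ is a nested union of affinoid opens $U_i \subseteq U_{i+1}$. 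Write $j_i: U_i\hookrightarrow X$ and $f_i := f\circ j_i$.

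\emph{Affinoid case and exhaustion.} For each $i$, $f_i: U_i \to Y$ is a morphism between affinoids; the affinoid Artin-Grothendieck vanishing of Bhatt--Hansen gives $f_{i*}(\CF|_{U_i}) \in {}^p\!D^{\leq 0}_{zc}(Y)$. By the exhaustion, one has $f_*\CF \simeq R\lim_i f_{i*}(\CF|_{U_i})$ in $D(Y)$, so $f_*\CF$ is realised as a derived inverse limit of objects in ${}^p\!D^{\leq 0}_{zc}(Y)$.

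\emph{Main obstacle.} A priori, the $R\lim$ of a system in ${}^p\!D^{\leq 0}_{zc}$ only lies in ${}^p\!D^{\leq 1}_{zc}$, because of the $R^1\!\lim$ shift. To eliminate this extra degree, the plan is to establish a Mittag-Leffler/Runge-type property for the induced transition maps on perverse cohomology sheaves ${}^p\CH^k(f_{(i+1)*}(\CF|_{U_{i+1}})) \to {}^p\CH^k(f_{i*}(\CF|_{U_i}))$, which should be essentially surjective for $i\gg 0$. This is a rigid analytic Runge-style input, made possible by the affinoid exhaustion structure of weakly Stein spaces, and the hypothesis that $f_*\CF$ is already Zariski-constructible and bounded is precisely what forces the Mittag-Leffler behaviour at the perverse cohomology level, ruling out the $R^1\!\lim$-shift. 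An alternative route is to verify the dimension-of-support bound $\dim\supp\CH^i(f_*\CF)\leq -i$ directly, by testing on stalks at classical points $y\in Y$, where the stalk is computed as a filtered colimit of $R\Gamma(f^{-1}(V),\CF)$ over affinoid neighbourhoods $V$ of $y$; since each $f^{-1}(V)$ is again weakly Stein, the absolute vanishing applies there and feeds into the required bound.
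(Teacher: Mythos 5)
Your duality reduction and the localisation to an affinoid target are fine, but the step that does the real work, namely the assertion that ``the affinoid Artin--Grothendieck vanishing of Bhatt--Hansen gives $f_{i*}(\CF|_{U_i})\in{}^p\!D^{\leq 0}_{zc}(Y)$'', is not a theorem in the cited sources. Hansen's vanishing theorem in \cite{hansen_vanishing_2020} is the \emph{absolute} statement (target a point): for $U_i$ affinoid and $\CF\in{}^p\!D^{\leq 0}_{zc}(U_i)$ one has $R\Gamma(U_i,\CF)\in D^{\leq 0}(\Lambda)$. Promoting this to a bound on the \emph{perverse} amplitude of $f_{i*}$ for a non-proper map of affinoids is a genuinely new step: one must stratify $Y$ using the Zariski-constructibility of $f_*\CF$, control on which dimension strata each $\CH^k$ can live, and use some form of generic base change or transverse slicing to relate the restriction of $f_{i*}\CF$ to a $d$-dimensional stratum to the cohomology of a family of fibres or tubes of the right codimension. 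That passage from absolute to relative \emph{is} the nontrivial content of the affinoid case of the proposition, so as written your argument assumes the result it is trying to prove for quasi-compact $f$.

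Two further points. The fallback of ``testing on stalks at classical points'' cannot establish the dimension-of-support bound: classical points of $Y$ are closed, so showing $(f_*\CF)_y\in D^{\leq 0}(\Lambda)$ for all $y\in Y(K)$ only gives $f_*\CF\in D^{\leq 0}_{zc}(Y)$, which is strictly weaker than $f_*\CF\in{}^p\!D^{\leq 0}_{zc}(Y)$; detecting $\dim\supp\CH^i(f_*\CF)\leq -i$ requires probing along Zariski subvarieties of every codimension, not just at closed points. Conversely, what you single out as the ``main obstacle'', the possible $R^1\!\lim$ shift, is not an obstacle at all once the preceding steps are in place: over each qcqs open $W\subseteq Y$, Huber's finiteness theorem \cite[\nopp 2.1]{huber_finiteness_1998} gives that $H^q(W\cap U_i,\CF)$ is a finitely generated $\Lambda$-module, and $\varprojlim$ is exact on pro-systems of such modules by Matlis duality (exactly as exploited in Lemma~\ref{lem_glue_morph}), so the Mittag--Leffler condition holds automatically with no Runge-type input. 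The genuine gap is the relative affinoid vanishing, not the derived limit.
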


\section{Monodromic sheaves}\label{sec_monsheav}

The target of the microlocal functors will be monodromic sheaves on certain vector bundles. In this section, we define and study monodromic sheaves in our context. The set-up is as in §\ref{sec_intro}.\\

Monodromic sheaves were defined by Jean-Louis Verdier (\cite{verdier_specialisation_1983}) in the algebraic context. Our context is similar, but one needs to (1) deal with the non-quasi-compactness of a general rigid analytic variety, and (2) check Zariski-constructibility is preserved in various operations. For (1), working with triangulated categories, we deal with it by a gluing lemma (Lemma \ref{lem_glue_morph}), which requires the underlying space to be quasi-separated. In the appendix, we give an $\infty$-categorical characterisation of monodromic sheaves, which removes the quasi-separatedness assumption but requires $p>0$.

\begin{notation}\label{notations}
    Let $E$ be a vector bundle over a rigid analytic variety $X$. For each $\lambda\in K$, we denote by $\theta_\lambda$ the $\lambda$-scaling map: $E\rightarrow E$, $v\mapsto \lambda v$. For each $n\in\ZZ_{\geq1}$, we denote by $\theta(n)$ the map $\Gm\X E\rightarrow E:$ $(\lambda,v)\mapsto \lambda^nv$, and by $\pr$ the projection $\Gm\X E\rightarrow E$. For $n, m\in\ZZ_{\geq1}$, $m|n$, we denote by $e_{n,m}$ the map $\Gm\X E\rightarrow\Gm\X E$, $(\lambda,v)\mapsto (\lambda^{n/m},v)$.
\end{notation}

\begin{proposition}[{\cite[\nopp 5.1]{verdier_specialisation_1983}}]\label{prop_fundamental_of_mon_qc}
    Let $E$ be a vector bundle over a quasi-compact rigid analytic variety $X$. Let $\CF\in\Dbzc(E)$. Then, the following are equivalent:\\
    (1) for every $i\in\ZZ$ and every $\lambda\in K^\X$, there exists an isomorphism $\theta_\lambda^*\CH^i(\CF)\isoto \CH^i(\CF)$;\\
    (2) for every $\lambda\in K^\X$, there exists an isomorphism $\theta_\lambda^*\CF\isoto \CF$;\\
    (3) there exists an $n\in\ZZ_{\geq1}$ and an isomorphism $\iota(n): \theta(n)^*\CF\isoto \pr^*\CF$ which restricts to $\id$ on $1\X E$.
\end{proposition}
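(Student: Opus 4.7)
The plan is to prove $(3)\Rightarrow(2)\Rightarrow(1)\Rightarrow(3)$. For $(3)\Rightarrow(2)$: given $\mu\in K^\times$, algebraic closedness of $K$ provides $\lambda\in K^\times$ with $\lambda^n=\mu$; restricting $\iota(n)$ to $\{\lambda\}\times E$ yields $\theta_\mu^*\CF\isoto\CF$, since $\theta(n)|_{\{\lambda\}\times E}=\theta_\mu$ and $\pr|_{\{\lambda\}\times E}=\id_E$. For $(2)\Rightarrow(1)$: $\theta_\lambda$ is an automorphism of $E$, so $\theta_\lambda^*$ is t-exact and commutes with $\CH^i$.

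The substantive direction is $(1)\Rightarrow(3)$. The plan is to produce $\iota(n)$ as a global section of
\[
\CK_n := R\pi_*R\mathcal{H}om\bigl(\theta(n)^*\CF,\pr^*\CF\bigr)\in D(\Gm),
\]
where $\pi:\Gm\times E\to\Gm$ is the projection, whose stalk at $1\in\Gm$ recovers $\id_\CF\in\mathrm{Hom}_{D(E)}(\CF,\CF)$. Using the $\Gm$-equivariance of the setup (multiplication by $\lambda_0$ on the $\Gm$-factor intertwines $\theta(n)$ with $\theta_{\lambda_0^n}\circ\theta(n)$ while preserving $\pr$), combined with condition (1) extended from cohomology sheaves via truncation triangles, the expectation is that $\CK_n$ is a local system on $\Gm$ with stalk $R\mathrm{Hom}_{D(E)}(\CF,\CF)$. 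Granting this, since $\pi_1^{\mathrm{fin}}(\Gm,1)=\hat{\ZZ}(1)$ is prodiscrete, for a sufficiently divisible $n'$ the monodromy acting on $\id_\CF\in\CH^0(\CK_n)|_{\{1\}}$ trivializes after pulling back by $[n']:\Gm\to\Gm$, $\lambda\mapsto\lambda^{n'}$. Since $\pi\circ e_{nn',n}=[n']\circ\pi$ fits into a Cartesian square with $[n']$ finite étale, proper base change yields $[n']^*\CK_n\simeq\CK_{nn'}$, so $\id_\CF$ extends to a global section $\iota(nn')$ of $\CH^0(\CK_{nn'})$, i.e., a morphism $\theta(nn')^*\CF\to\pr^*\CF$ restricting to $\id_\CF$ at $\{1\}\times E$. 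By $\Gm$-equivariance of the construction, the locus in $\Gm$ where $\iota(nn')|_{\{\lambda\}\times E}$ is an isomorphism is $\Gm$-invariant; it contains $1$, hence equals all of $\Gm$, so $\iota(nn')$ is a global isomorphism.

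The main obstacle is establishing the local-system structure on $\CK_n$. Passing from condition (1) at the level of cohomology sheaves to a local-system statement for the complex $\CK_n$ requires propagating the property through the truncation triangles of $\CF$: by induction on amplitude and replacing $n$ by a multiple, the extension-class monodromy arising from connecting maps can be absorbed. A second technical point is that $X$ is quasi-compact but $E$ (a vector bundle) is only quasi-separated; hence $R\pi_*$ must be controlled by localizing on $X$ to the affinoid (quasi-compact) case, where the six-functor formalism of \cite{bhatt_six_2022} applies, and the resulting local morphisms $\iota(nn')$ glued via Lemma \ref{lem_glue_morph}, which is precisely where quasi-separatedness is used.
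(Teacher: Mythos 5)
Your (3)$\Rightarrow$(2) and (2)$\Rightarrow$(1) directions match the paper. For (1)$\Rightarrow$(3), your route---viewing $\CK_n = R\pi_*R\mathcal{H}om(\theta(n)^*\CF,\pr^*\CF)$ as a sheaf on $\Gm$, trivializing its monodromy by a Kummer cover, and extracting $\iota$ from a global section---is genuinely different in organization from the paper, which instead proves the ``Claim'' that $\pr^*$ identifies $RHom(\CF,\CG)$ with the ind-object $\{RHom(\pr^*\CF,\pr^*\CG)\}_n$ and then runs a direct dévissage on the amplitude of $\CF$, constructing the isomorphism $\theta(n)^*\CF\isoto\pr^*\CH$ explicitly on each $\pr$-fibre in the degree-$0$ case. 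However, your write-up has real gaps.

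First, Zariski-constructibility of $\CK_n$: the projection $\pi\colon\Gm\times E\to\Gm$ has non-quasi-compact fibres (copies of $E$), so $R\pi_*$ does not a priori preserve $\Dbzc$. Without this, $\CK_n$ may not have finite-type stalks, and the step ``monodromy factors through a finite quotient, so trivializes after pulling back by some $[n']$'' has nothing to bite on---prodiscreteness of $\hat\ZZ(1)$ alone does not give this, finiteness of the module does. Fixing it requires a compactification argument along the lines of Remark \ref{rmk_defofmon}.2.(a), but that remark already presupposes monodromicity, so one must be careful not to be circular. Second, the local-system structure of $\CK_n$ is the real content, and you treat it as an ``expectation.'' Concretely, identifying the stalk of $\CK_n$ at $\lambda$ with that at $1$ requires $\theta_{\lambda^n}^*\CF\cong\CF$, which is condition (2)---not the hypothesis (1). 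Your remark that ``extension-class monodromy arising from connecting maps can be absorbed'' gestures at exactly the inductive bootstrap the paper carries out via its Claim, but it needs to be proved, not asserted; as written this is where the argument would stop. Third, you conclude that $\iota(nn')$ is an isomorphism because ``the locus where it is an isomorphism is $\Gm$-invariant.'' That $\Gm$-invariance requires an argument that again leans on the constancy of $\CK_{nn'}$; it is not a formal consequence of equivariance of the construction. The paper sidesteps this by producing an isomorphism directly from $\theta(n)^*\CF\to\CH^0(\pr^!\pr_!(\theta(n)^*\CF))$, verified on fibres, and only normalizing at $1\times E$ afterward. In short: the strategy is a reasonable alternative and would likely succeed, but the three points above are genuine gaps, and filling the second one essentially reproduces the paper's Claim.
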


\begin{proof}
    It is clear that (2)$\implies$(1). For (3)$\implies$(2), just observe that the restriction of $\iota(n)$ to $\lambda_{n}\X E$ gives a desired isomorphism, where $\lambda_n$ is any $n$-th root of $\lambda$ in $K$. We now show (1)$\implies$(3). A preliminary observation: if $\CF$ satisfies (1), then $\CF$ is constructible with respect to a $\Gm$-stable Zariski stratification. Indeed, condition (1) implies that the maximal Zariski open on which $\CF$ is lisse is $\Gm$-stable, iterate this on its complement, we get a desired Zariski stratification. As $X$ is quasi-compact, it follows that for each $\CF\in\Dbzc(E)$ satisfying (1), $\CF$ in fact lies in $D^b_{zc}(E)$.\\\\
    We do induction on the amplitude of $\CF$. If $\CF$ is concentrated in degree $0$, then it follows from the above observation that there exists an $n\in\ZZ_{\geq1}$ such that $\theta(n)^*\CF$ is constant on $\Gm\X v$ for every $v\in E$. Indeed: on each stratum in $E-X$, $\CF$ is trivialised by some finite \etale covering, which restricts to some covering of Kummer type on each $\Gm$-orbit, their degrees are uniformly bounded as there are only finitely many strata. It follows that $\theta(n)^*\CF\isoto \pr^*\CH$ for some $\CH\in D^b_{zc}(E)$: in fact, as one can verify on each geometric $\pr$-fibre, the composition $\theta(n)^*\CF\rightarrow \pr^!\pr_!(\theta(n)^*\CF)\rightarrow \CH^0(\pr^!\pr_!(\theta(n)^*\CF))$ is an isomorphism. Restricting to $1\X E$, we see $\CH\cong\CF$. We have shown that there exists an isomorphism $\alpha: \theta(n)^*\CF\isoto \pr^*\CF$. Compose with $\pr^*$ of the inverse of $\alpha|_{1\X E}$, we get an isomorphism $\iota(n): \theta(n)^*\CF\isoto \pr^*\CF$ which restricts to $\id$ on $1\X E$.\\\\
    Now consider a general $\CF$.\\\\
    \underline{Claim}.$\,\,\forall \CF, \CG\in D^b_{zc}(E)$, $\pr^*$ induces an isomorphism $RHom(\CF,\CG)\isoto (RHom(\pr^*\CF, \pr^*\CG),\rho_n)$ as ind-objects of $D^b_c(\Lambda)$ indexed by $\ZZ_{\geq1}$ partially ordered by divisibility. Here $\rho_n$ are transition maps induced by $e_{n_1,n_2}^*\pr^*=\pr^*$.\\\\
    \underline{Proof}. First note $RHom(\pr^*\CF, \pr^*\CG)\simeq R\Gamma(\Gm\X E, \RHom(\pr^*\CF, \pr^*\CG))\simeq R\Gamma(\Gm\X E, \pr^*\RHom(\CF, \CG))$, where in the last step we used the general formula $\pr^!\RHom(\CF, \CG)\simeq\RHom(\pr^*\CF, \pr^!\CG)$ and $\pr^!=\pr^*\langle1\rangle$. Denote the projection to $\Gm$ by $q$, we then get $R\Gamma(\Gm\X E, \pr^*\RHom(\CF, \CG))\simeq R\Gamma(\Gm$, $q_*\pr^*\RHom(\CF, \CG)))\simeq R\Gamma(\Gm, \underline{RHom(\CF, \CG)})\simeq RHom(\CF, \CG)\otimes R\Gamma(\Gm,\Lambda)$, where in the second step we used smooth base change, and in the last step we used Lemma \ref{lem_coh_purity_consequence}. Combine the above, we get $RHom(\pr^*\CF, \pr^*\CG)\simeq RHom(\CF, \CG)\otimes R\Gamma(\Gm,\Lambda)$, and $\rho_n$ is induced by the maps among $R\Gamma(\Gm,\Lambda)$ induced by $e_n$. Finally, as $\varinjlim_{n\rightarrow\infty}(R\Gamma(\Gm,\Lambda), e_n)=\Lambda$ (in degree $0$), we get the Claim.\\\\
    Return to the proof. Let $\CA\xrightarrow{u}\CB\rightarrow\CF\rightarrow$ be a distinguished triangle with $\CA, \CB\in D^b_{zc}(E)$ each having strictly smaller amplitude than $\CF$. By induction hypothesis, there exists an $n\in\ZZ_{\geq1}$ and isomorphisms $\iota_{\CA}(n)$ and $\iota_{\CB}(n)$ restricting to id on $1\X E$. Consider the following diagram, where $v$ is a composition of an inverse of $\iota_{\CA}(n)$ with $\theta(n)^*u$ and $\iota_{\CB}(n)$:
\[\begin{tikzcd}
	{\theta(n)^*\mathcal{A}} & {\theta(n)^*\mathcal{B}} \\
	{\pr^*\mathcal{A}} & {\pr^*\mathcal{B}}
	\arrow["{\theta(n)^*u}", from=1-1, to=1-2]
	\arrow["{\iota_\mathcal{A}(n)}", from=1-1, to=2-1]
	\arrow["\cong"', from=1-1, to=2-1]
	\arrow["\cong"', from=1-2, to=2-2]
	\arrow["{\iota_\mathcal{B}(n)}", from=1-2, to=2-2]
	\arrow["v"', from=2-1, to=2-2]
\end{tikzcd}\]
By the Claim, by increasing $n$ we may assume that $v=\pr^*v'$, for some $v'\in Hom(\CA,\CB)$. Taking cones, we get an isomorphism $\theta(n)^*\CF\isoto \pr^*\CH$, where $\CH\in D^b_{zc}(E)$ is a cone of $v'$. Argue as in the end of the second paragraph above, we get an $\iota(n): \theta(n)^*\CF\isoto \pr^*\CF$ which restricts to $\id$ on $1\X E$.
\end{proof}

We used the following lemma in the proof, which is a consequence of the projection formula, the recollement distinguished triangles and Poincaré duality (\cite[\nopp 1.3.2]{zavyalov_poincare_2023}).

\begin{lemma}\label{lem_coh_purity_consequence}
    Let $M\in D(\Lambda)$, $p_1: \Gm\rightarrow \mathrm{pt}(=\Spa(K))$, $p_2: \AAA^1\rightarrow \mathrm{pt}$, and $p_3: \PP^1\rightarrow \mathrm{pt}$. Then $p_{1*}p_1^*M\simeq R\Gamma(\Gm,\Lambda)\otimes M\simeq M\oplus M(-1)[-1]$, $p_{2*}p_2^*M\simeq M$, and $p_{3*}p_3^*M\simeq R\Gamma(\PP^1,\Lambda)\otimes M\simeq M\oplus M\langle -1\rangle$.
\end{lemma}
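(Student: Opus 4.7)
The plan is to first reduce each formula $p_{i*}p_i^*M$ to the computation of the étale cohomology $p_{i*}\Lambda$ of $\Gm$, $\AAA^1$, or $\PP^1$ with constant coefficients, and then to carry out those three computations using the open-closed recollement combined with Poincaré duality.

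For the reduction step: $p_3: \PP^1\to\pt$ is proper, so the projection formula for $p_{3!}=p_{3*}$ directly gives $p_{3*}p_3^*M \simeq p_{3*}\Lambda \otimes_\Lambda^L M$. For $p_1$ and $p_2$, properness fails, but both $p_{i*}p_i^*(-)$ and $(-)\otimes_\Lambda^L p_{i*}\Lambda$ are $\Lambda$-linear triangulated endofunctors of $D(\Lambda)$ commuting with filtered colimits (for $p_{i*}$, this uses that $p_i$ is qcqs and that $\Lambda$ is finite). They tautologically agree on the compact generator $\Lambda\in D(\Lambda)$, hence everywhere; alternatively one can simply dévissage on the truncation triangles of $M$ to reduce to $M=\Lambda[n]$, where the assertion is a tautology.

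It remains to compute the three cohomologies. First, $R\Gamma(\AAA^1, \Lambda)=\Lambda$ follows from Huber's comparison of rigid analytic and algebraic étale cohomology applied to $\AAA^1_K$ in characteristic zero. For $\PP^1$, apply $R\Gamma(\PP^1, -)$ to the recollement triangle
$$i_*i^!\Lambda_{\PP^1} \to \Lambda_{\PP^1} \to Rj_*\Lambda_{\AAA^1}$$
where $j: \AAA^1\hookrightarrow \PP^1$ and $i:\{\infty\}\hookrightarrow\PP^1$; Poincaré duality \cite[1.3.2]{zavyalov_poincare_2023} identifies $i^!\Lambda\simeq\Lambda\langle -1\rangle$, so we obtain a triangle $\Lambda\langle -1\rangle \to R\Gamma(\PP^1,\Lambda) \to \Lambda$, which splits because $\mathrm{Ext}^1_\Lambda(\Lambda,\Lambda\langle -1\rangle)=0$ ($\Lambda(-1)$ is free of rank one over $\Lambda$, so all positive Exts vanish). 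The $\Gm$-case is entirely parallel: apply $R\Gamma(\AAA^1, -)$ to the analogous triangle for $i:\{0\}\hookrightarrow\AAA^1$, $j:\Gm\hookrightarrow\AAA^1$, again with $i^!\Lambda\simeq\Lambda\langle -1\rangle$; the resulting connecting map $\Lambda\langle -1\rangle \to \Lambda$ vanishes for the same Ext reason, yielding $R\Gamma(\Gm,\Lambda)\simeq \Lambda\oplus \Lambda(-1)[-1]$.

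The main subtlety is really bookkeeping in the reduction step for the non-proper maps $p_1,p_2$; this is formal but must be stated with some care. The core geometric inputs are Huber's cohomology of the rigid $\AAA^1$ and Poincaré duality supplying $i^!\Lambda\simeq \Lambda\langle -1\rangle$ for a closed point on a smooth curve.
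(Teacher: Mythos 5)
Your route — reduce to $M = \Lambda$ and then compute the constant-coefficient cohomologies by recollement and Poincar\'e duality — matches what the paper gestures at, and the recollement/duality computations for $\Gm$, $\AAA^1$ and $\PP^1$ are fine. The gap is in the reduction step for $p_1$ and $p_2$. You justify the commutation of $p_{i*}$ with filtered colimits by asserting that $p_i$ is qcqs, but $\AAA^1$ and $\Gm$ are \emph{not} quasi-compact rigid analytic spaces: each is a countable increasing union of closed discs (resp.\ annuli) with no finite affinoid cover, so $p_1$ and $p_2$ are not qcqs. The fallback d\'evissage to $M = \Lambda[n]$ also does not cover all of $D(\Lambda)$ in general: when $\Lambda = \ZZ/\ell^r$ with $r>1$, a module such as $\FF_\ell$ has infinite projective dimension and is not a perfect complex, hence lies outside the thick subcategory of $D(\Lambda)$ generated by $\Lambda$.

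The repair is to route through a qcqs exhaustion: write $\AAA^1 = \bigcup_n D(r_n)$ and $\Gm = \bigcup_n A(1/r_n, r_n)$, so that $R\Gamma(\AAA^1, -) \simeq R\varprojlim_n R\Gamma(D(r_n), -)$ and likewise for $\Gm$. Each affinoid piece is qcqs of finite cohomological dimension, so on each piece the natural map $R\Gamma(-,\Lambda)\otimes_\Lambda M \to R\Gamma(-, \underline{M})$ is an isomorphism (both sides are $\Lambda$-linear exact functors commuting with filtered colimits and agreeing on $\Lambda$). Once one has $R\Gamma(D(r_n),\Lambda)\simeq\Lambda$ and $R\Gamma(A(1/r_n,r_n),\Lambda)\simeq\Lambda\oplus\Lambda(-1)[-1]$, the transition maps in the towers are isomorphisms, so $R\varprojlim$ returns the constant value; this supplies the projection formulas for $p_1$ and $p_2$, and the rest of your argument then goes through as written. (For $p_3$, $\PP^1$ is quasi-compact and $p_3$ is proper, so your appeal to the projection formula there is fine.)
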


\begin{lemma}[{\cite[\nopp 5.1]{verdier_specialisation_1983}}]\label{lem_mon_ess_uni_fun_of_iota}
    Same set-up as in Proposition \ref{prop_fundamental_of_mon_qc}. Then, the $\iota(n)$ is essentially unique and functorial in the following sense:\\
    (1) for two such $\iota(n_1)$, $\iota(n_2)$, there exists an $n$ divisible by $n_1$ and $n_2$ such that $e_{n,n_1}^*\iota(n_1)=e_{n,n_2}^*\iota(n_2)$ as elements of $Hom(\theta(n)^*\CF, \pr^*\CF)$;\\
    (2) let $\CG\in \Dbzc(E)$ also satisfy the conditions in Proposition \ref{prop_fundamental_of_mon_qc}, and $u: \CF\rightarrow\CG$ be a map. Then there exist $n\in\ZZ_{\geq1}$ and $\iota_\CF(n)$ and $\iota_\CG(n)$ as in Proposition \ref{prop_fundamental_of_mon_qc}.3 such that the following diagram commutes:
\[\begin{tikzcd}
	{\theta(n)^*\mathcal{F}} & {\pr^*\mathcal{F}} \\
	{\theta(n)^*\mathcal{G}} & {\pr^*\mathcal{G}}
	\arrow["{\iota_{\mathcal{F}}(n)}", from=1-1, to=1-2]
	\arrow["\sim"', from=1-1, to=1-2]
	\arrow["{\theta(n)^*u}"', from=1-1, to=2-1]
	\arrow["{\pr^*u}", from=1-2, to=2-2]
	\arrow["{\iota_{\mathcal{G}}(n)}", from=2-1, to=2-2]
	\arrow["\sim"', from=2-1, to=2-2]
\end{tikzcd}\]
\end{lemma}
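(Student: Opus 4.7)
The plan is to deduce both assertions from the key Claim established in the proof of Proposition \ref{prop_fundamental_of_mon_qc}: that $\pr^*$ induces an isomorphism $Hom(\CF,\CG) \isoto \varinjlim_n Hom(\pr^*\CF,\pr^*\CG)$, where the colimit is taken along the transition maps $\rho_n$ induced by pullback along $e_{n_1,n_2}$. Throughout, I will use the identities $\theta(m) \circ e_{n,m} = \theta(n)$ and $\pr \circ e_{n,m} = \pr$, so that $e_{n,m}^*$ sends isomorphisms $\theta(m)^*\CF \isoto \pr^*\CF$ (and morphisms $\pr^*\CF \to \pr^*\CG$) at level $m$ to the corresponding objects at level $n$, preserving restriction to $1 \X E$.

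For (1), take $n$ divisible by both $n_1$ and $n_2$. Then $e_{n,n_1}^*\iota(n_1)$ and $e_{n,n_2}^*\iota(n_2)$ are both isomorphisms $\theta(n)^*\CF \isoto \pr^*\CF$, and their difference
\[
\alpha := (e_{n,n_1}^*\iota(n_1)) \circ (e_{n,n_2}^*\iota(n_2))^{-1} \in \mathrm{Aut}(\pr^*\CF)
\]
restricts to the identity on $1 \X E$, since each $\iota(n_i)$ does and $e_{n,n_i}$ fixes $1 \X E$ pointwise. Applying the Claim to $\CF = \CG$, after a further pullback by some $e_{n',n}$, one has $e_{n',n}^*\alpha = \pr^*u$ for some $u \in \mathrm{End}(\CF)$; restricting to $1 \X E$ forces $u = \id$, hence $e_{n',n}^*\alpha = \id$, which gives the desired equality $e_{n',n_1}^*\iota(n_1) = e_{n',n_2}^*\iota(n_2)$.

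For (2), first choose arbitrary initial isomorphisms $\iota_\CF(m_\CF)$, $\iota_\CG(m_\CG)$ from Proposition \ref{prop_fundamental_of_mon_qc}.3 and pull them back to a common $n_0$ divisible by $m_\CF$ and $m_\CG$; the pullbacks are again isomorphisms $\theta(n_0)^*(-) \isoto \pr^*(-)$ that restrict to the identity on $1 \X E$. Consider the two morphisms $\pr^*\CF \to \pr^*\CG$ given by
\[
v_1 := \pr^*u, \qquad v_2 := \iota_\CG(n_0) \circ \theta(n_0)^*u \circ \iota_\CF(n_0)^{-1}.
\]
Both restrict to $u$ on $1 \X E$, since $\iota_\CF(n_0)$ and $\iota_\CG(n_0)$ restrict to the identity and $\theta(n_0)|_{1 \X E}$ agrees with $\pr|_{1 \X E}$. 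By the Claim, after pullback by some $e_{n,n_0}$ each $v_i$ lies in the image of $\pr^*: Hom(\CF,\CG) \to Hom(\pr^*\CF, \pr^*\CG)$, and the preimage is determined by restriction to $1 \X E$; therefore $e_{n,n_0}^*v_1 = e_{n,n_0}^*v_2 = \pr^*u$. Setting $\iota_\CF(n) := e_{n,n_0}^*\iota_\CF(n_0)$ and $\iota_\CG(n) := e_{n,n_0}^*\iota_\CG(n_0)$, which satisfy the hypotheses of Proposition \ref{prop_fundamental_of_mon_qc}.3 at level $n$, yields the required commutative square.

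The only real obstacle is notational bookkeeping — tracking $\theta(n)$, $\pr$, $e_{n,m}$ and their behaviour on $1 \X E$ — since the conceptual work has already been carried out in the Claim, itself resting on the cohomology of $\Gm$ computation of Lemma \ref{lem_coh_purity_consequence}.
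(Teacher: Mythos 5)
Your proof is correct and takes essentially the same approach as the paper: both parts are deduced from the Claim in the proof of Proposition \ref{prop_fundamental_of_mon_qc} together with the normalisation along the $1$-section $1\X E$, which is a left inverse of $\pr^*$. Your part (2) compares $v_1=\pr^*u$ with $v_2=\iota_\CG\circ\theta(n)^*u\circ\iota_\CF^{-1}$ where the paper shows directly that $v_2=\pr^*u$ after increasing $n$, but these are the same argument.
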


\begin{proof}
(1) Take any $n\in\ZZ_{\geq1}$ dividing both $n_1$ and $n_2$. Then $u:=e_{n,n_1}^*\iota(n_1)$ and $v:=e_{n,n_2}^*\iota(n_2)$ are isomorphisms $\theta(n)^*\CF\isoto \pr^*\CF$. Consider the map $u\circ v^{-1}: \pr^*\CF\rightarrow \pr^*\CF$. By the Claim in the proof of Proposition \ref{prop_fundamental_of_mon_qc}, by increasing $n$, we may assume $u\circ v^{-1}$ is of the form $\pr^*w$ for some $w: \CF\rightarrow\CF$. Look at the restriction to $1\X E$, we see $w=\id$. So $u=v$.\\\\
(2) Take any $n\in\ZZ_{\geq1}$, and any $\iota_\CF(n), \iota_\CG(n)$. Apply the same argument as in (1) to the map $\iota_\CG(n)\circ\theta(n)^*u\circ\iota_\CF(n)^{-1}: \pr^*\CF\rightarrow \pr^*\CG$, we see it equals $\pr^*u$ after possibly increasing $n$. 
\end{proof}

\begin{corollary}\label{cor_fundamental_of_mon}
    Let $E$ be a vector bundle over a quasi-separated rigid analytic variety $X$. Let $\CF\in\Dbzc(E)$. Then, the following are equivalent:\\
    (1) for every $i\in\ZZ$ and every $\lambda\in K^\X$, there exists an isomorphism $\theta_\lambda^*\CH^i(\CF)\isoto \CH^i(\CF)$;\\
    (2) for every $\lambda\in K^\X$, there exists an isomorphism $\theta_\lambda^*\CF\isoto \CF$;\\
    (3) over each quasi-compact open $U$ in $X$, there exists an $n_U\in\ZZ_{\geq1}$ and an isomorphism $\iota_U(n_U): \theta_U(n_U)^*\CF_{E_U}\isoto \pr_U^*\CF_{E_U}$ which restricts to $\id$ on $1\X E_U$.
\end{corollary}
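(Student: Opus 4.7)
The plan is to dispatch the easy implications quickly and then reduce the main content, $(3) \Rightarrow (2)$, to a gluing problem on a cover by quasi-compact opens.

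First, $(2) \Rightarrow (1)$ is immediate by passing to cohomology sheaves, using that $\theta_\lambda$ is an isomorphism (so $\theta_\lambda^*$ is t-exact). For $(1) \Rightarrow (3)$, I would cover $X$ by quasi-compact opens $U$: condition (1) clearly restricts to the same condition on $\CF|_{E_U}$, and Proposition~\ref{prop_fundamental_of_mon_qc} then supplies the required $\iota_U(n_U)$.

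The heart of the proof is $(3) \Rightarrow (2)$. Fix $\lambda \in K^\times$; the first step is to fix a compatible system of roots $(\lambda_n)_{n \geq 1} \in \varprojlim_n \mu_n(\lambda)$ (so $\lambda_n^n = \lambda$ and $\lambda_{nm}^m = \lambda_n$), cover $X$ by quasi-compact opens $\{U_i\}$, and note that each pairwise intersection $U_{ij} := U_i \cap U_j$ is again quasi-compact by quasi-separatedness. On each $U_i$, hypothesis (3) gives $\iota_i(n_i) : \theta(n_i)^* \CF|_{E_{U_i}} \isoto \pr^* \CF|_{E_{U_i}}$; restricting to $\{\lambda_{n_i}\} \times E_{U_i}$ yields a local isomorphism
\[
\alpha_i : \theta_\lambda^* \CF|_{E_{U_i}} \isoto \CF|_{E_{U_i}}.
\]
The key observation is that these $\alpha_i$ match on double overlaps: Lemma~\ref{lem_mon_ess_uni_fun_of_iota}(1), applied over the quasi-compact $U_{ij}$ to the two isomorphisms $\iota_i(n_i)|_{E_{U_{ij}}}$ and $\iota_j(n_j)|_{E_{U_{ij}}}$, yields an integer $N$ divisible by both $n_i$ and $n_j$ such that $e_{N,n_i}^*\iota_i(n_i) = e_{N,n_j}^*\iota_j(n_j)$ on $\Gm \times E_{U_{ij}}$; restricting this equality along $\{\lambda_N\} \times E_{U_{ij}}$ and using $\lambda_N^{N/n_i} = \lambda_{n_i}$ (and the analogous identity for $j$), which is precisely the purpose of fixing a coherent system of roots, forces $\alpha_i|_{E_{U_{ij}}} = \alpha_j|_{E_{U_{ij}}}$.

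With agreement on double overlaps established, I would invoke the gluing lemma (Lemma~\ref{lem_glue_morph}) on the quasi-separated rigid analytic variety $E$ to patch $(\alpha_i)$ into a global morphism $\alpha : \theta_\lambda^* \CF \to \CF$, which is automatically an isomorphism since this can be checked locally. The principal obstacle is precisely this final step: morphisms in the triangulated category $\DX$ do not glue freely along open covers, so one must rely both on quasi-separatedness (to keep overlaps quasi-compact and thereby access the essential-uniqueness statement of Lemma~\ref{lem_mon_ess_uni_fun_of_iota}) and on a dedicated gluing lemma for morphisms. This is exactly the limitation that the $\infty$-categorical reformulation presented in the appendix is designed to circumvent.
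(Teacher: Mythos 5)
Your treatment of $(2)\Rightarrow(1)$, $(1)\Rightarrow(3)$, and the essential-uniqueness computation on overlaps are all correct: fixing a coherent system of roots $(\lambda_n)$ and invoking Lemma~\ref{lem_mon_ess_uni_fun_of_iota}(1) over a quasi-compact open is exactly the right way to see that the locally defined $\alpha$'s are forced to agree.

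However, the final gluing step has a gap. Lemma~\ref{lem_glue_morph} is \emph{not} a \v{C}ech gluing statement: it applies to a cover $X=\bigcup_{\alpha\in I}U_\alpha$ by qcqs opens where $I$ is a \emph{filtered} poset and $U_\alpha\subseteq U_{\alpha'}$ for $\alpha\leq\alpha'$, and it requires a system of morphisms compatible with \emph{restriction along these inclusions}. Your cover $\{U_i\}$ with pairwise intersections $U_{ij}$ is not of this shape, and agreement on double overlaps is not the hypothesis the lemma consumes — in a triangulated category, gluing a morphism along an arbitrary open cover from double-overlap agreement is precisely the kind of operation that can fail (there is a potential $\operatorname{Hom}^{-1}$ obstruction on overlaps), which is why the lemma is stated only for filtered exhaustions and proved via an $R\varprojlim$ spectral sequence. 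So the sentence "invoke the gluing lemma to patch $(\alpha_i)$" does not follow from what you have set up.

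The fix is small and follows the paper's route. Since hypothesis (3) already supplies $\iota_U$ on \emph{every} quasi-compact open $U\subseteq X$, there is no need to fix a cover at all: take $I$ to be the poset of \emph{all} quasi-compact opens of $X$, ordered by inclusion (this is filtered, since a finite union of quasi-compact opens is quasi-compact). For each such $V$ put $\rho_V:=\iota_V(n_V)|_{\lambda_{n_V}\times E_V}$. The compatibility required by Lemma~\ref{lem_glue_morph} is that $\rho_W|_{E_V}=\rho_V$ whenever $V\subseteq W$; this is exactly the computation you carried out on $U_{ij}$, applied now on the quasi-compact $V$ to the two choices $\iota_W(n_W)|_{E_V}$ and $\iota_V(n_V)$. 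With that compatible system in hand, Lemma~\ref{lem_glue_morph} produces the global $\rho(\bar\lambda):\theta_\lambda^*\CF\isoto\CF$. So the substantive work in your argument is sound; what needs to change is only the indexing of the opens, from a \v{C}ech cover to the filtered poset of all quasi-compact opens, so that the hypotheses of the gluing lemma are actually met.
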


\begin{proof}
    (2)$\implies$(1) is clear, (1)$\implies$(3) is a direct consequence of Proposition \ref{prop_fundamental_of_mon_qc}. We show (3)$\implies$(2). Fix a $\bar{\lambda}=\{\lambda_n\}_{n\in\ZZ_{\geq1}}\in\varprojlim\mu_n(\lambda)$ with $\lambda_1=\lambda$. Write $X=\cup_{\alpha\in I}U_\alpha$ as the filtered union of all its quasi-compact open subsets. On each $U_\alpha$, we have an isomorphism $\rho_\alpha: \theta_\lambda^*\CF\isoto \CF$, which is the restriction of $\iota(n_\alpha)$ to $\lambda_{n_\alpha}\X E$ (we omit the subscripts $\alpha$ on $\theta$, $\CF$, $\iota$ and $E$). By Lemma \ref{lem_mon_ess_uni_fun_of_iota}, this is independent of the choice of $\iota(n_\alpha)$, and $\rho_\alpha$ is compatible with restrictions along $U_\alpha\subseteq U_{\alpha'}$ for $\alpha\leq \alpha'$. Apply Lemma \ref{lem_glue_morph} to $\{U_\alpha\}$ and $\{\rho_\alpha\}$, we get a unique isomorphism $\rho(\bar{\lambda}): \theta_\lambda^*\CF\isoto\CF$ over $X$ which restricts to $\rho_\alpha$ on $U_\alpha$ for each $\alpha$.
\end{proof}

We learned the following lemma from Bhargav Bhatt.

\begin{lemma}\label{lem_glue_morph}
    Let $X$ be a rigid analytic variety, and $X=\cup_{\alpha\in I}U_\alpha$ be a cover by quasi-compact quasi-separated open subsets $U_\alpha$, where $I$ is a filtered index set and $U_\alpha\subseteq U_{\alpha'}$ for $\alpha\leq \alpha'$. Suppose given $\CF$, $\CG\in \Dbzc(X)$, and $\{\varphi_{\alpha}\in Hom_{U_\alpha}(\CF_{U_\alpha},\CG_{U_\alpha})\}_{\alpha\in I}$ compatible with the transition maps, then there is a unique $\varphi\in Hom_X(\CF,\CG)$ whose restriction to $U_\alpha$ coincides with $\varphi_{\alpha}$ for each $\alpha$.
\end{lemma}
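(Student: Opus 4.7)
The plan is to set $\mathcal{H} := \RHom(\CF, \CG) \in D(X, \Lambda)$, so that $\mathrm{Hom}_{D(V)}(\CF|_V, \CG|_V) = H^0 R\Gamma(V, \mathcal{H})$ for every open $V \subseteq X$, and then to study descent for this single complex under the filtered cover $X = \bigcup_\alpha U_\alpha$. The first step is to establish the derived limit formula $R\Gamma(X, \mathcal{H}) \simeq R\varprojlim_\alpha R\Gamma(U_\alpha, \mathcal{H})$: picking a K-injective resolution $\mathcal{H} \to \mathcal{I}^\bullet$ by flasque sheaves, the restriction maps $\Gamma(U_{\alpha'}, \mathcal{I}^j) \twoheadrightarrow \Gamma(U_\alpha, \mathcal{I}^j)$ are surjective for $\alpha \leq \alpha'$ by flasqueness, so the derived limit is computed termwise by the ordinary limit, which in turn equals $\Gamma(X, \mathcal{I}^j)$ by the sheaf property.

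From this one extracts the Milnor-type short exact sequence
$$0 \to R^1\varprojlim_\alpha H^{-1}(U_\alpha, \mathcal{H}) \to H^0(X, \mathcal{H}) \to \varprojlim_\alpha H^0(U_\alpha, \mathcal{H}) \to 0$$
(together with analogous higher $R^p\varprojlim$ terms if $I$ has uncountable cofinality, handled in parallel). The given compatible system $\{\varphi_\alpha\}$ determines an element of the right-hand term and lifts to $\varphi \in \mathrm{Hom}_X(\CF, \CG)$ by surjectivity, giving existence; uniqueness reduces to the vanishing of $R^1\varprojlim$ on $\alpha \mapsto H^{-1}(U_\alpha, \mathcal{H})$.

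The main obstacle is therefore this Mittag-Leffler-type vanishing, and I expect the argument there to rely on the constructibility and finiteness hypotheses. Since $\CF, \CG \in \Dbzc(X)$ are locally bounded with Zariski-constructible cohomology and each $U_\alpha$ is quasi-compact and quasi-separated, the restrictions $\CF|_{U_\alpha}, \CG|_{U_\alpha}$ lie in $D^b_{zc}(U_\alpha)$; by the six-functor formalism of \cite{bhatt_six_2022}, $\mathcal{H}|_{U_\alpha}$ is again in $D^b_{zc}(U_\alpha)$, and finiteness results such as \cite{huber_finiteness_1998} make the groups $H^q(U_\alpha, \mathcal{H}) = \mathrm{Hom}_{D(U_\alpha)}(\CF|_{U_\alpha}, \CG|_{U_\alpha}[q])$ finite $\Lambda$-modules. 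A filtered inverse system of finite abelian groups automatically satisfies the Mittag-Leffler condition — in any fixed stage, the descending chain of images of the transition maps from larger stages consists of subgroups of a finite group and hence stabilizes — so $R^1\varprojlim$ vanishes, the restriction map $H^0(X, \mathcal{H}) \to \varprojlim_\alpha H^0(U_\alpha, \mathcal{H})$ becomes an isomorphism, and one obtains the unique $\varphi$.
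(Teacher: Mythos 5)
Your overall strategy mirrors the paper's: reduce to a single complex $\mathcal{H}=\RHom(\CF,\CG)$, express $R\Gamma(X,\mathcal{H})$ as a derived limit of $R\Gamma(U_\alpha,\mathcal{H})$, exploit finiteness of the groups $H^q(U_\alpha,\mathcal{H})$ (via \cite{huber_finiteness_1998}), and kill the higher $R^p\varprojlim$ terms. The difference is in the last step, and that is where you have a genuine gap.

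The Mittag-Leffler theorem as you invoke it ("images stabilize $\Rightarrow R^1\varprojlim=0$") is a theorem about inverse systems indexed by $\mathbf{N}$, or more generally by a directed set of countable cofinality. For an arbitrary filtered poset $I$ (which is the hypothesis here --- in the application, $I$ is the poset of \emph{all} quasi-compact open subsets of $X$, which has no countability assumption), the Mittag-Leffler condition does not imply $R^1\varprojlim=0$; there are standard counterexamples. Moreover, even when $R^1\varprojlim$ vanishes, the contributions $R^p\varprojlim H^{-p}(U_\alpha,\mathcal{H})$ for $p\geq 2$ are not addressed by your argument --- "handled in parallel" does not supply the missing mechanism, because the Milnor sequence and the ML theorem both live in a world where $\varprojlim$ has cohomological dimension $\leq 1$, which fails for general $I$. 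What actually makes the derived limit collapse here is not Mittag-Leffler but the finiteness of the modules: the paper uses the full derived-limit spectral sequence $E^{p,q}_2 = R^p\varprojlim_\alpha H^q(U_\alpha,\CA) \Rightarrow H^{p+q}(X,\CA)$ and then appeals to Matlis duality (citing \cite[\nopp 4.10]{bhatt_globally_2023}) to show $\varprojlim$ is exact on finitely generated $\Lambda$-modules over an arbitrary filtered poset, so that \emph{all} $R^p\varprojlim$ with $p\neq 0$ vanish. If you want to argue without duality, you would need something like a Tychonoff compactness argument to get exactness of $\varprojlim$ on finite modules over an arbitrary directed set; as written, the appeal to the Mittag-Leffler criterion alone does not close the argument.
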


\begin{proof}
    Denote $\RHom_{X}(\CF,\CG)$ by $\CA$. Note $\CA\in D^{(+)}_{zc}(X)$ (\cite[item (8) after 3.29]{bhatt_six_2022}). We have $Hom_{X}(\CF,\CG)=H^0(X,\CA)$ and $Hom_{U_\alpha}(\CF,\CG)=H^0(U_\alpha,\CA)$. There is a spectral sequence $E^{p,q}_2=R^p\varprojlim_\alpha H^q(U_\alpha,\CA)\Rightarrow H^{p+q}(X,\CA)$. As $\CA$ is in $D^{(+)}_{zc}(X)$ and $U_\alpha$ is quasi-compact and quasi-separated, we get that each $H^q(U_\alpha,\CA)$ is a finitely generated $\Lambda$-module (\cite[\nopp 2.1]{huber_finiteness_1998}). By Matlis Duality, $\varprojlim_\alpha$ is exact on finitely generated $\Lambda$-modules (\textit{c.f.} \cite[\nopp 4.10]{bhatt_globally_2023}), so $R^p\varprojlim_\alpha H^q(U_\alpha,\CA)=0$ for all $p\neq 0$ and all $q$. In particular, $H^0(X,\CA)\simeq \varprojlim_\alpha H^0(U_\alpha,\CA)$. So $\{\varphi_{\alpha}\}\in\varprojlim_\alpha H^0(U_\alpha,\CA)$ corresponds to some unique element $\varphi\in H^0(X,\CA)$.
\end{proof}

\begin{definition}\label{def_monsheav}
    Let $E$ be a vector bundle over a quasi-separated rigid analytic variety $X$. We say $\CF\in\Dbzc(E)$ is \underline{monodromic} if it satisfies any of the equivalent conditions in Corollary \ref{cor_fundamental_of_mon}. The full sub-category of monodromic sheaves is denoted by $D_{mon}(E)$.
\end{definition}

As we saw in the proof of (3)$\implies$(2) in Corollary \ref{cor_fundamental_of_mon}, for $\lambda\in K^\X$, the isomorphism $\rho(\bar{\lambda}): \theta_\lambda^*\CF\isoto\CF$ is canonical up to a choice of $\bar{\lambda}=\{\lambda_n\}\in\varprojlim\mu_n(\lambda)$ with $\lambda_1=\lambda$. One may think of choosing $\bar\lambda$ as choosing a path from $1$ to $\lambda$ on $\Gm$, and $\rho(\bar{\lambda})$ is induced by “transporting” the stalk $\CF_{\lambda x}$ to the stalk $\CF_x$ along this path, for $x\in E$. In particular, for $\lambda=1$, we get

\begin{definition}\label{def_equimonodromy}
    Let $E$ be a vector bundle over a quasi-separated rigid analytic variety $X$. The \underline{monodromy} of $\CF\in D_{mon}(E)$ is the following action of $\mu=\varprojlim\mu_n$ on $\CF$: for $X$ quasi-compact and $\bar\lambda=\{\lambda_n\}\in\varprojlim\mu_n$, $\rho(\bar\lambda): \CF\isoto\CF$ is the restriction of any $\iota(n)$ as in Proposition \ref{prop_fundamental_of_mon_qc}.3 to $\lambda_n\X E$; for general $X$, $\rho(\bar\lambda)$ is the isomorphism constructed in the proof of Corollary \ref{cor_fundamental_of_mon}. To distinguish from other notions of monodromy, we will also refer to this monodromy as the \underline{equivariant monodromy}.
\end{definition}

\begin{remark}\label{rmk_defofmon}
    (1) Monodromic sheaves can be defined in much greater generality on spaces with a torus action. We will not pursue this further here.\\
    (2) Let $E$ be a vector bundle over a quasi-separated rigid analytic variety $X$, and $\CF\in D_{mon}(E)$. The observation in the beginning of the proof of Proposition \ref{prop_fundamental_of_mon_qc}, applied locally on $X$, implies there exists a Zariski stratification of $\PP(E)$ such that $\CF$ is constructible with respect to its pullback under $(E-X)\rightarrow\PP(E)$ union a Zariski stratification of $X$, where $X$ is viewed as the $0$-section of $E$. Further consequences of this: \\
    (a) $j_!\CF$ (hence also $j_*\CF$) is Zariski-constructible, where $j$ is the inclusion of $E$ to $\PP(E\oplus 1)$; \\
    (b) if $E$ is a finite dimensional vector space over $K$, viewed as an algebraic variety. Then, analytification induces an equivalence $D_{mon}(E)\rightarrow D_{mon}(E^{an})$ between monodromic sheaves in the algebraic and analytic worlds.\\
    (3) The perverse t-structure on $\Dbzc(E)$ restricts to a perverse t-structure on $D_{mon}(E)$ (\textit{i.e.}, $^p\!D^{\leq 0}_{mon}(E):=$ $^p\!D^{\leq 0}_{zc}(E)\cap D_{mon}(E)$ and similarly for $^p\!D^{\geq 0}_{mon}(E)$ form a t-structure on $D_{mon}(E)$). This follows directly from Corollary \ref{cor_fundamental_of_mon}.3 and the shifted perverse t-exactness of $\theta(n)^*$ and $\pr^*$.
\end{remark}

\begin{corollary}\label{cor_dualmono}
    Let $E$ be a vector bundle over a quasi-separated rigid analytic variety $X$. Then,\\
    (1) $D_{mon}(E)$ is a triangulated subcategory of $\Dbzc(E)$;\\ 
    (2) $\DD$ is a contravariant self-equivalence on $D_{mon}(E)$.
\end{corollary}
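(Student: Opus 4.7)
For part (1), closure under shifts and containment of $0$ are immediate from any of the equivalent characterisations in Corollary \ref{cor_fundamental_of_mon}. For closure under cones, let $\CF \to \CG \to \CH \to$ be a distinguished triangle with $\CF, \CG \in D_{mon}(E)$. Since monodromicity is characterised locally on quasi-compact opens $U \subseteq X$ via characterisation (3), I would fix such a $U$ and apply Lemma \ref{lem_mon_ess_uni_fun_of_iota}(2) to the morphism $\CF_{E_U} \to \CG_{E_U}$: after enlarging to a common $n = n_U$, there exist isomorphisms $\iota_\CF(n), \iota_\CG(n)$ as in Proposition \ref{prop_fundamental_of_mon_qc}(3) whose naturality square against this morphism commutes. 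Applying TR3 to the pullback of the triangle along $\theta_U(n)$ and $\pr_U$ then yields a morphism $\iota(n): \theta_U(n)^* \CH_{E_U} \to \pr_U^* \CH_{E_U}$, which is an isomorphism by the five-lemma. This $\iota(n)$ restricts on $1 \X E_U$ to some automorphism $\alpha$ of $\CH_{E_U}$, not necessarily the identity, because cones are not functorial. I would then replace $\iota(n)$ by $\pr_U^*(\alpha^{-1}) \circ \iota(n)$, still an isomorphism, whose restriction to $1 \X E_U$ is now the identity. Hence $\CH$ satisfies characterisation (3) on every quasi-compact open and lies in $D_{mon}(E)$.

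For part (2), the key observation is that each scaling map $\theta_\lambda: E \to E$ with $\lambda \in K^\X$ is an isomorphism, so $\theta_\lambda^* = \theta_\lambda^!$, and both commute with the duality functor $\DD$ on $\Dbzc(E)$ via $\DD \theta_\lambda^* \cong \theta_\lambda^! \DD$. Given $\CF \in D_{mon}(E)$ and $\lambda \in K^\X$, characterisation (2) of Corollary \ref{cor_fundamental_of_mon} provides an isomorphism $\theta_\lambda^* \CF \cong \CF$; applying $\DD$ yields $\theta_\lambda^* \DD \CF \cong \DD \CF$, so $\DD \CF$ is monodromic. Combined with biduality $\DD \circ \DD \cong \id$ on $\Dbzc(E)$ from the six-functor formalism of \cite{bhatt_six_2022}, this shows that $\DD$ restricts to a contravariant self-equivalence on $D_{mon}(E)$.

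The main obstacle is the cone step in (1): as triangulated categories lack functorial cones, the morphism $\iota(n)$ produced by TR3 is only defined up to non-canonical choices, and its restriction to $1 \X E_U$ can fail to be the identity. The remedy is the combination of Lemma \ref{lem_mon_ess_uni_fun_of_iota}(2), which aligns $\iota_\CF(n)$ and $\iota_\CG(n)$ after enlarging $n$ so that TR3 is applicable to a commutative square, and the normalisation trick of post-composing the resulting isomorphism by $\pr_U^*(\alpha^{-1})$. Part (2), by contrast, is essentially formal, reducing to the fact that $\theta_\lambda$ is an automorphism.
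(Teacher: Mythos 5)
Your proposal is correct and follows essentially the same route as the paper: for (1), closure under cones by applying Lemma \ref{lem_mon_ess_uni_fun_of_iota}.2 to align the trivialisations, invoking TR3 and the five lemma, then normalising the resulting isomorphism by post-composing with $\pr_U^*(\alpha^{-1})$ (the same normalisation trick already used at the end of the degree-$0$ case in the proof of Proposition \ref{prop_fundamental_of_mon_qc}); for (2), commuting $\DD$ past $\theta_\lambda^*$ via $\theta_\lambda^*=\theta_\lambda^!$ and using biduality. The paper's proof of Corollary \ref{cor_dualmono} simply cites these ingredients without unpacking them, so your write-up supplies exactly the intended details.
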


\begin{proof}
    (1). That $D_{mon}(E)$ is closed under taking cones follows from Corollary \ref{cor_fundamental_of_mon}.3 and Lemma \ref{lem_mon_ess_uni_fun_of_iota}.2. (2). This follows from Corollary \ref{cor_fundamental_of_mon}.2, $\theta_\lambda^*\DD\simeq\DD\theta_\lambda^*$, and the fact that $\DD$ is a contravariant self-equivalence on $\Dbzc(E)$.
\end{proof}

\begin{example}\label{ex_mono_reverse}
    Let $\CL$ be a local system in degree $0$ on $\Gm$, denote $j: \Gm\hookrightarrow\AAA^1$. Then $j_!\CL$ is monodromic, with equivariant monodromy the opposite of the natural monodromy of $\CL$. More precisely: if $\CL$ corresponds to a representation $\rho: \pi^{\mathrm{fin}}_1(\Gm,1)\rightarrow \mathrm{Aut}_\Lambda(\CL_1)$ under the $\{\mathrm{Rep}(\pi_1)\}-\{\mathrm{Local\,\, system}\}$ correspondence, then, under the identification $\pi^{\mathrm{fin}}_1(\Gm,1)=\mu$, the equivariant monodromy $j_!\CL$ restricts at $1$ to $\rho^{-1}: \mu\rightarrow\mathrm{Aut}_\Lambda(\CL_1)$, $\bar\lambda\mapsto \rho(\bar\lambda)^{-1}$. This is because the equivariant monodromy is induced from $\rho(\bar{\lambda'}): \theta_{\lambda'}^*\CF\isoto\CF$ for $\bar{\lambda'}\in(\varprojlim\Gm)(K)$ and this map is in the opposite direction of the natural monodromy (\textit{c.f.} the discussion above Definition \ref{def_equimonodromy}). We leave the details to the reader.
\end{example}

\begin{lemma}\label{lem_monfunctoriality}
    Let $\alpha: \tE\rightarrow E$ be a map of vector bundles over a quasi-separated rigid analytic variety $X$, $\CF\in D_{mon}(\tE)$, and $\CG\in D_{mon}(E)$. Then,\\
    (1) $\alpha_*\CF$ and $\alpha_!\CF$ lie in $D_{mon}(E)$;\\
    (2) $\alpha^*\CG$ and $\alpha^!\CG$ lie in $D_{mon}(\tE)$.
\end{lemma}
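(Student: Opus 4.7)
The approach is to verify the scaling-invariance criterion of Corollary \ref{cor_fundamental_of_mon}.2: a sheaf in $\Dbzc$ on a vector bundle is monodromic if and only if $\theta_\lambda^*(-)\simeq(-)$ for every $\lambda\in K^\times$. The key geometric input is that $\alpha$, being a morphism of vector bundles, is $\CO_X$-linear and hence $\Gm$-equivariant for scaling: $\alpha\circ\theta_\lambda=\theta_\lambda\circ\alpha$. Moreover, since each $\theta_\lambda$ is an automorphism of the ambient space, the resulting commutative square
\[\begin{tikzcd}
	\tE & \tE \\
	E & E
	\arrow["\theta_\lambda", from=1-1, to=1-2]
	\arrow["\alpha"', from=1-1, to=2-1]
	\arrow["\alpha", from=1-2, to=2-2]
	\arrow["\theta_\lambda"', from=2-1, to=2-2]
\end{tikzcd}\]
is automatically Cartesian (the horizontal arrows admit the common inverse $\theta_{\lambda^{-1}}$), so both $*$- and $!$-base change apply along it.

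I would then propagate the scaling isomorphism through each of the four functors. For $\alpha^*\CG$: the tautological $\theta_\lambda^*\alpha^*\simeq\alpha^*\theta_\lambda^*$, combined with $\theta_\lambda^*\CG\simeq\CG$, yields $\theta_\lambda^*\alpha^*\CG\simeq\alpha^*\CG$. For $\alpha^!\CG$: the same argument works after noting that $\theta_\lambda^!\simeq\theta_\lambda^*$ since $\theta_\lambda$ is an iso. For $\alpha_*\CF$ and $\alpha_!\CF$: invoking the $*$- and $!$-base change isomorphisms along the Cartesian square above (both applicable since $\theta_\lambda$ is smooth, in fact an iso), one gets $\theta_\lambda^*\alpha_{*/!}\CF\simeq\alpha_{*/!}\theta_\lambda^*\CF\simeq\alpha_{*/!}\CF$, using monodromicity of $\CF$.

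The main obstacle is not the scaling invariance but rather confirming that each output actually lies in $\Dbzc$, as required by the definition of $D_{mon}$. For $\alpha^*$ and $\alpha^!$ this is part of the stability of $\Dbzc$ under the six operations in the Bhatt--Hansen formalism. For $\alpha_*\CF$ and $\alpha_!\CF$, however, this is not automatic, since a general morphism of vector bundles is neither proper nor smooth. Here I would exploit the extra structure of monodromic sheaves---in particular Remark \ref{rmk_defofmon}.2(a), which ensures $j_!\CF\in\Dbzc(\PP(\tE\oplus 1))$ for monodromic $\CF$---to reduce $\alpha_{*/!}\CF$ to a proper pushforward between projective completions of $\tE$ and $E$, possibly after a blow-up along the locus where $\alpha$ fails to extend. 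Making this reduction precise, and thereby importing the preservation of $\Dbzc$ under proper push from Bhatt--Hansen, is the technical heart of the proof.
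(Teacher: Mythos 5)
Your outline has the right skeleton and, importantly, you correctly identify where the real work is: not the scaling-invariance criterion (which is indeed trivial to propagate, since $\alpha$ is $\Gm$-equivariant and $\theta_\lambda$ is an isomorphism, so $*$-, $!$-, push- and pull-functors all commute with it), but the Zariski-constructibility of $\alpha_{*/!}\CF$. However, you stop exactly at that point — "making this reduction precise... is the technical heart of the proof" — and your sketch of the reduction is not quite what is needed.

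What the paper actually does is cleaner than a blow-up. For $\alpha_!\CF$, take $\overline{E}=\PP(\tE\oplus 1)$, $j:\tE\hookrightarrow\overline{E}$, and consider the graph $\Gamma\subseteq\tE\times_X E$ of $\alpha$ together with its closure $\overline{\Gamma}\subseteq\overline{E}\times_X E$. The crucial claim is that $\overline{\Gamma}$ is Zariski-closed; this is checked locally on $X$ by a direct computation: if $\alpha$ is represented by a matrix $M$ with entries in $A$ and $[t:\widetilde{\mathbf{x}}]$ are projective coordinates on $\overline{E}$, then $\Gamma=V(\widetilde{\mathbf{x}}=M\mathbf{x})$ has Zariski closure $\overline{\Gamma}=V(t\widetilde{\mathbf{x}}=M\mathbf{x})$, cut out by bihomogeneous equations. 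One then writes $\alpha_!\CF\simeq q_*\overline{i}_*\overline{i}^*\bar{p}^*j_!\CF$, where $\bar{p},q$ are the two projections out of $\overline{E}\times_X E$ and $\overline{i}:\overline{\Gamma}\hookrightarrow\overline{E}\times_XE$ is the closed immersion; $j_!\CF$ is Zariski-constructible by Remark \ref{rmk_defofmon}.2.(a), and $q\circ\overline{i}$ is proper (closed immersion followed by a projective bundle projection), so Bhatt--Hansen applies. No blow-up or resolution of the rational extension of $\alpha$ at infinity is needed; the graph closure already has the required properness over $E$. This is the step your proposal leaves undone, and it is the nontrivial one.

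Two further points of comparison. First, the paper reduces $\alpha_*$ to $\alpha_!$ and $\alpha^!$ to $\alpha^*$ via Verdier duality (Corollary \ref{cor_dualmono}.2 and $\DD\alpha_!\simeq\alpha_*\DD$, $\DD\alpha^*\simeq\alpha^!\DD$), whereas you run all four in parallel; your version is harmless, but the duality reduction halves the work, and in particular spares you having to control $j_*$ rather than $j_!$ at the boundary when arguing ZC for $\alpha_*$ directly. Second, for part (2) the paper runs a dévissage against the triangle $h_!h^*\CF\to\CF\to i_*i^*\CF$ (with $i$ the zero section and $h$ its open complement) to establish monodromicity of $\alpha^*\CG$, while your one-line computation $\theta_\lambda^*\alpha^*\CG\simeq\alpha^*\theta_\lambda^*\CG\simeq\alpha^*\CG$ gives the same conclusion directly by Corollary \ref{cor_fundamental_of_mon}.2; here your argument is, if anything, more economical.
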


\begin{proof}
    (1) It suffices to show $\alpha_!\CF\in D_{mon}(E)$, the other one follows by duality (Corollary \ref{cor_dualmono}.2 and $\DD\alpha_!\simeq\alpha_*\DD$). We first show $\alpha_!\CF$ is Zariski-constructible.  Let $\Gamma\hookrightarrow \tE\X_X E$ be the graph of $\alpha$ (over $X$), and $\overline{E}=\PP(\widetilde{E}\oplus 1)$. We claim that the closure $\overline{\Gamma}$ of $\Gamma$ in $\overline{E}\X_X E$ is Zariski closed. Accepting this, we have the following diagram:
\[\begin{tikzcd}
	& {\overline{\Gamma}} & \Gamma \\
	E & {\overline{E}\times_XE} & {\widetilde{E}\times_XE} \\
	& {\overline{E}} & {\widetilde{E}}
	\arrow["{\overline{i}}"', hook, from=1-2, to=2-2]
	\arrow["j"', hook', from=1-3, to=1-2]
	\arrow["\lrcorner"{anchor=center, pos=0.125, rotate=-90}, draw=none, from=1-3, to=2-2]
	\arrow["i"', hook, from=1-3, to=2-3]
	\arrow["q"', from=2-2, to=2-1]
	\arrow["{\bar{p}}"', from=2-2, to=3-2]
	\arrow["j"', hook', from=2-3, to=2-2]
	\arrow["\lrcorner"{anchor=center, pos=0.125, rotate=-90}, draw=none, from=2-3, to=3-2]
	\arrow["{p}"', from=2-3, to=3-3]
	\arrow["{\simeq, \sigma}"', curve={height=30pt}, from=3-3, to=1-3]
	\arrow["j", hook', from=3-3, to=3-2]
\end{tikzcd}\]
Here, $p$, $\bar{p}$, $q$ are projections, $\sigma$ is the isomorphism of $\tE$ to its graph, and $\overline{i}$, $i$ are closed immersions. Then, $\alpha_!\CF\simeq q_*\overline{i}_*\overline{i}^*\bar{p}^*j_!\CF$. As $j_!\CF$ is Zariski-constructible (Remark \ref{rmk_defofmon}.2), and proper pushforward preserves Zariski-constructible sheaves, we get $q_*\overline{i}_*\overline{i}^*\bar{p}^*j_!\CF$ is Zariski-constructible.\\\\
To see the claim, we may work locally on $X$, thus may assume $X=\Spa(A)$ and $\tE, E$ are trivial bundles. Fix linear coordinates $\widetilde{\mathbf{x}}=(\widetilde{x}_1,...,\widetilde{x}_n)$ (resp. $\mathbf{x}=(x_1,...,x_m)$) on $\tE$ (resp. $E$), $\alpha$ is then represented by a matrix $M\in M_{n\times m}$ with entries in $A$. On $\PP(\widetilde{E}\oplus 1)$ we have the coordinate $[t:\widetilde{x}_1:...:\widetilde{x}_n]$. Then, $\Gamma=V(\widetilde{\mathbf{x}}=M\mathbf{x})$ and $\overline{\Gamma}=V(t\widetilde{\mathbf{x}}=M\mathbf{x})$, which are Zariski-closed in $\widetilde{E}\X_X E$ and $\overline{E}\X_X E$, respectively.\\\\
It remains to show $\alpha_!\CF$ is monodromic. This follows directly from Corollary \ref{cor_fundamental_of_mon}.2 and smooth (or proper) base change.\\\\
(2) By duality, it suffices to show $\alpha^*\CF\in D_{mon}(E)$. Zariski-constructible sheaves are preserved under pullbacks, so it suffices to show $\alpha^*\CF$ is monodromic. Consider the following diagram:
\[\begin{tikzcd}
	{\widetilde{Z}} & {\widetilde{E}} & {\widetilde{U}} \\
	X & E & {U=E-X}
	\arrow["{\widetilde{i}}", hook, from=1-1, to=1-2]
	\arrow[from=1-1, to=2-1]
	\arrow["\lrcorner"{anchor=center, pos=0.125}, draw=none, from=1-1, to=2-2]
	\arrow["\alpha", from=1-2, to=2-2]
	\arrow["{\widetilde{h}}"', hook', from=1-3, to=1-2]
	\arrow["\lrcorner"{anchor=center, pos=0.125, rotate=-90}, draw=none, from=1-3, to=2-2]
	\arrow[from=1-3, to=2-3]
	\arrow["i"', hook, from=2-1, to=2-2]
	\arrow["h", hook', from=2-3, to=2-2]
\end{tikzcd}\]
Here $i: X\hookrightarrow E$ is the $0$-section. We have $h_!h^*\CF\rightarrow\CF\rightarrow i_*i^*\CF\rightarrow$ and $\alpha^*h_!h^*\CF\rightarrow\alpha^*\CF\rightarrow \alpha^*i_*i^*\CF\rightarrow$. By Corollary \ref{cor_fundamental_of_mon}.2, we see $\alpha^*h_!h^*\CF$ is monodromic. As $\alpha^*i_*i^*\CF$ is also monodromic (being the pullback from $X$ to a $\Gm$-stable subspace in $\tE$), we conclude that $\alpha^*\CF$ is monodromic (using Corollary \ref{cor_dualmono}.1).
\end{proof}

\begin{lemma}\label{lem_hyp_localisation}
    Let $E$ be a vector bundle over a quasi-separated rigid analytic variety $X$, and $\CF\in D_{mon}(E)$. Denote by $i: X\hookrightarrow E$ the zero section, and $p: E\rightarrow X$ the projection. Then, there are canonical isomorphisms $p_*\CF\isoto i^*\CF$ and $p_!\CF\isoot i^!\CF$.
\end{lemma}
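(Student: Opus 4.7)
By Corollary~\ref{cor_dualmono}.2 and the standard identities $\DD p_* \simeq p_! \DD$, $\DD i^* \simeq i^! \DD$, the two isomorphisms are Verdier dual to each other, so I prove the first, $p_*\CF \isoto i^*\CF$. The canonical morphism is built from the adjunction unit $\CF \to i_*i^*\CF$ together with $p_*i_* = (pi)_* = \id$ (since $pi = \id_X$). By Lemma~\ref{lem_glue_morph}, whether a morphism in $\Dbzc(X)$ is an isomorphism can be checked on a cover of $X$ by quasi-compact opens, so I assume $X$ is quasi-compact; Proposition~\ref{prop_fundamental_of_mon_qc}.3 then supplies $n \geq 1$ and an isomorphism $\iota(n)\colon \theta(n)^*\CF \isoto \pr^*\CF$ on $\Gm\times E$, normalised to $\id$ on $\{1\}\times E$.

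The plan is to interpolate via the family of scaling maps $\mu_n\colon \AAA^1\times E \to E$, $(t,v)\mapsto t^n v$, extending $\theta(n)$. Set $\tilde p = \id_{\AAA^1}\times p\colon \AAA^1\times E \to \AAA^1\times X$, $\pi_X\colon \AAA^1\times X \to X$, and write $\pi_E\colon \AAA^1\times E \to E$ for the projection. The sheaf $\mu_n^*\CF$ restricts to $\CF$ at $\{1\}\times E$ and to $p^*i^*\CF$ at $\{0\}\times E$ (using $\mu_n(0,v)=i(p(v))$), while on $\Gm\times E$ the isomorphism $\iota(n)$ identifies $\mu_n^*\CF$ with $\pi_E^*\CF$. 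I aim to show $\tilde p_*(\mu_n^*\CF) \simeq \pi_X^*p_*\CF$ on all of $\AAA^1\times X$; restricting to $\{0\}\times X$ then yields $p_*\CF \simeq i^*\CF$, using the calculation $p_*(p^*i^*\CF) \simeq i^*\CF \otimes p_*\Lambda \simeq i^*\CF$ at $t=0$ (via the projection formula together with $p_*\Lambda \simeq \Lambda$, a consequence of Lemma~\ref{lem_coh_purity_consequence} after reducing locally to a trivial bundle). Over $\Gm\times X$, combining $\iota(n)$ with smooth base change for the Cartesian square having smooth base $\pi_X$ gives the desired isomorphism $\tilde p_*(\mu_n^*\CF)|_{\Gm\times X} \simeq \pi_X^*p_*\CF|_{\Gm\times X}$.

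To conclude, I would construct the canonical comparison map $\alpha\colon \pi_X^*p_*\CF \to \tilde p_*(\mu_n^*\CF)$ by adjunction, corresponding to the composite $\pi_E^*p^*p_*\CF \xrightarrow{\mathrm{counit}} \pi_E^*\CF \xrightarrow{\iota(n)^{-1}} \mu_n^*\CF$ (with the second arrow genuinely defined on $\Gm\times E$ and then glued across $\{0\}\times E$), verify that it restricts to the isomorphism above over $\Gm\times X$, and show its cone, supported on $\{0\}\times X$, vanishes. The main obstacle is precisely this last step: $\tilde p_*$ does not satisfy unconditional base change at the closed fibre, so the behaviour at $t=0$ has to be argued directly. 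I expect to handle it by applying $\tilde p_*$ to the localisation triangle $k_!k^*\mu_n^*\CF \to \mu_n^*\CF \to i_{0*}i_0^*\mu_n^*\CF$ on $\AAA^1\times E$ (with $k$, $i_0$ the evident open and closed immersions) and comparing term by term with the localisation triangle of $\pi_X^*p_*\CF$ on $\AAA^1\times X$: $\iota(n)$ and smooth base change identify the outer terms, and tracing the connecting maps through the adjunctions gives the identification in the middle. An alternative, should this prove technically delicate, is to invoke the nearby-cycle machinery of §\ref{sec_defofvan}: $\psi_{\pi_{\AAA^1}}(\mu_n^*\CF) \simeq \psi_{\pi_{\AAA^1}}(\pi_E^*\CF) \simeq \CF$ with trivial monodromy (using $\iota(n)$ and Lemma~\ref{lem_smoothqcqs} applied to the smooth map $\pi_E$), which rigidifies the extension of the isomorphism across $t=0$.
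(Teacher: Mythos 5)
You correctly identify the obstacle your argument would run into: base change for $\tilde p_*$ at the closed fibre $\{0\}\times X$. Unfortunately, neither of the two fallbacks you sketch closes this gap, and the paper's proof takes a genuinely different route that circumvents it.

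\textbf{Why the proposed fallbacks do not close the gap.} In the localisation-triangle plan, the third terms of the two triangles are $i_{0,X*}\,p_*p^*i^*\CF\simeq i_{0,X*}\,i^*\CF$ and $i_{0,X*}\,p_*\CF$; an identification of these is \emph{precisely the statement being proved}, so you cannot use it as an input. You must instead match the first terms, namely $\tilde p_*k_!\bigl(\pi_E^*\CF|_{\Gm\times E}\bigr)$ against $k_{X!}\bigl((\tilde p|_{\Gm})_*(\pi_E^*\CF|_{\Gm\times E})\bigr)$; the natural comparison $k_{X!}(\tilde p|_{\Gm})_*\to \tilde p_*k_!$ is not an isomorphism in general, since $\tilde p$ is neither proper nor quasi-compact, so one is back to exactly the same base-change problem at $t=0$. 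The nearby-cycle variant has the same difficulty: the isomorphism $\psi_{\pi_{\AAA^1}}(\mu_n^*\CF)\simeq\CF$ lives on $E$, and to convert it into information about $p_*\CF$ on $X$ one would need to commute $\tilde p_*$ with $\psi$, which requires $\tilde p$ to be qcqs (Lemma~\ref{lem_smoothqcqs}.2); a vector-bundle projection is not quasi-compact. (Also, as stated, Lemma~\ref{lem_smoothqcqs}.1 applied to the smooth map $\pi_E$ does not directly yield the $\psi$-computation, since $\pi_{\AAA^1}$ does not factor through $\pi_E$; the computation is fine by a direct argument, but it does not by itself give what you need.)

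\textbf{What the paper does instead.} The paper avoids ever having to show $\tilde p_*\mu_n^*\CF\simeq\pi_X^*p_*\CF$. After the dual reduction and passing to $X$ quasi-compact, it first performs a dévissage to the case $i^*\CF=0$ (via the triangle $h_!h^*\CF\to\CF\to i_*i^*\CF$, where $h$ is the open complement of the zero section), and then shows $p_*\CF=0$ by a Springer-style homotopy contraction: using $\iota(n)$ and the interpolating map $(t,v)\mapsto t^n v$ it builds $\gamma\in\mathrm{End}_{\AAA^1_X}(\overline{\pr}^*p_*\CF)$ which is $\mathrm{id}$ over $1_X$ and $0$ over $0_X$. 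Since every endomorphism of $\overline{\pr}^*p_*\CF$ is of the form $\overline{\pr}^*u$ (Lemma~\ref{lem_coh_purity_consequence}), this forces $\mathrm{id}=0$ on $p_*\CF$. The crucial point sidestepping the base-change issue is that although $\gamma$ factors through the base-change-ill-behaved object $\tilde p_*\bar\theta^*\CF\simeq\tilde p_* j_!j^*\overline{\pr}^*\CF$, the \emph{target} of $\gamma$ is $\overline{\pr}^*p_*\CF\simeq\tilde p_*\overline{\pr}^*\CF$, whose $i_0^*$ \emph{does} commute with base change (as it is a smooth pullback); so $i_0^*\gamma$ factors through $p_*i_{0,E}^*j_!j^*\overline{\pr}^*\CF=0$. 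If you want to salvage your plan, the most economical fix is to import this dévissage-to-$i^*\CF=0$ step and replace the attempted direct identification of $\tilde p_*\mu_n^*\CF$ by the homotopy-contraction argument.
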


Note that all four terms here are Zariski-constructible (use Remark \ref{rmk_defofmon}.2.(a) for the left hand sides). A more general statement is true in a slightly different setting: \cite[\nopp IV.6.5]{fargues_geometrization_2021}. The following proof is an adaptation of \cite[Proposition 1]{springer_purity_1984}.

\begin{proof}
      The map $p_*\CF\rightarrow i^*\CF$ (resp. $p_!\CF\leftarrow i^!\CF$) is $p_*$ (resp. $p_!$) applied to $\CF\rightarrow i_*i^*\CF$ (resp. $\CF\leftarrow i_!i^!\CF$). It suffices to show $p_*\CF\rightarrow i^*\CF$ is an isomorphism, the other follows from duality. We may assume $X$ is quasi-compact, and, by dévissage, assume $i^*\CF=0$. We will show $p_*\CF=0$ by constructing a homotopy between $\id$ and $0$ in $End_{X}(p_*\CF)$, \textit{i.e.}, a $\gamma$ in $End_{\AAA^1_X}(\prbar^*p_*\CF)$ which restricts to $0$ on the $0$-section $0_X$ and $\id$ on the $1$-section $1_X$, where $\prbar$ is the projection $\AAA^1_X\rightarrow X$. This is sufficient, because $\gamma$ is necessarily of the form $\prbar^*u$ for some $u\in End_{X}(p_*\CF)$,\footnote{This follows from a similar argument as for the Claim in the proof of Proposition \ref{prop_fundamental_of_mon_qc}. Namely, for $A$, $B\in D^{(b)}(X)$, we have $RHom(\overline{\pr}^*A,\overline{\pr}^*B)\simeq R\Gamma(\AAA^1_X, \RHom(\overline{\pr}^*A,\overline{\pr}^*B))\simeq R\Gamma(\AAA^1_X, \overline{\pr}^*\RHom(A,B))\simeq R\Gamma(\AAA^1_K,\underline{RHom(A,B)})\simeq RHom(A,B)$, where in the second and third step we used the smoothness of $\overline{\pr}$, and in the last step we used Lemma \ref{lem_coh_purity_consequence}.} so $\cone(\gamma)\cong\prbar^*\cone(u)$, consequently $\cone(u)$ is $0$ (by looking at $1_X$) and isomorphic to $p_*\CF$ (by looking at $0_X$), hence $p_*\CF=0$.\\\\
      To construct $\gamma$, let $n\in\ZZ_{\geq1}$, $\theta(n)$ and $\iota(n)$ be as in Corollary \ref{cor_fundamental_of_mon}.3. We will omit $n$ from the notations. Consider the following diagram:
\[\begin{tikzcd}
	{\mathbf{G}^1_{m,X}} & {\mathbf{G}^1_{m}\times E} \\
	{\mathbf{A}^1_X} & {\mathbf{A}^1\times E} \\
	X & E
	\arrow[hook, from=1-1, to=2-1]
	\arrow[from=1-2, to=1-1]
	\arrow["\lrcorner"{anchor=center, pos=0.125, rotate=-90}, draw=none, from=1-2, to=2-1]
	\arrow["j", hook, from=1-2, to=2-2]
	\arrow["\prbar"', from=2-1, to=3-1]
	\arrow["p"', from=2-2, to=2-1]
	\arrow["\lrcorner"{anchor=center, pos=0.125, rotate=-90}, draw=none, from=2-2, to=3-1]
	\arrow["\prbar"', from=2-2, to=3-2]
	\arrow["{\bar{\theta}}", shift left=3, curve={height=-6pt}, from=2-2, to=3-2]
	\arrow["p", from=3-2, to=3-1]
\end{tikzcd}\]
where $j$ is the usual inclusion, $\prbar$ and $p$ are projections, and $\bar{\theta}:=\prbar\circ\tau$, where $\tau: \AAA^1\X E\rightarrow\AAA^1\X E, (t,v)\mapsto(t,t^nv)$. As $i^*\CF$ is zero and $\bar{\theta}$ restricts to $p$ on $0\X E$, $\iota$ induces an isomorphism $\bar{\theta}^*\CF\isoto j_!j^*\prbar^*\CF$. Compose with the adjunction $j_!j^*\prbar^*\CF\rightarrow\prbar^*\CF$, and apply $p_*$, we get a map $\alpha: p_*\bar{\theta}^*\CF\rightarrow p_*\prbar^*\CF$. On the other hand, we have $\bar{\theta}^*\CF\simeq\tau^*\prbar^*\CF$, apply $p_*$ and compose with the adjunction  $p_*\rightarrow p_*\tau^*(\simeq p_*\tau_*\tau^*)$, we get a map $\beta: p_*\prbar^*\CF\rightarrow p_*\bar{\theta}^*\CF$. Compose $\alpha$ and $\beta$, we get a map $p_*\overline{\pr}^*\CF\rightarrow p_*\overline{\pr}^*\CF$. By smooth base change, this gives a map $\gamma: \prbar^*p_*\CF\rightarrow \prbar^*p_*\CF$. It follows from the construction that $\gamma$ is the identity on the $1$-section $1_X$, and zero on the $0$-section $0_X$. 
\end{proof}

\begin{proposition}\label{lem_rank1monbasic}
    Let $E=\AAA_X^1$ be the trivial bundle over a quasi-separated rigid analytic variety $X$, and $\CF\in D_{mon}(E)$. Denote by $i_0$ (resp. $i_1$) the closed immersion $0_X\hookrightarrow \AAA^1_X$ (resp. $1_X\hookrightarrow \AAA^1_X$) of the 0-section (resp. 1-section), by $s: 0_X\isoto 1_X$ the canonical isomorphism, by $t$ the projection $\AAA_X^1\rightarrow \AAA^1_K$, and by $p$ the projection $\AAA^1_X\rightarrow X$. Then:\\
    (1) there is a canonical isomorphism $\psi_t(\CF)\isoto s^*i^*_1\CF$, under which the monodromy of $\psi_t(\CF)$ coincides with the opposite of the equivariant monodromy of $s^*i^*_1\CF$. Furthermore, the following diagram commutes: 
\begin{equation}\label{eqn_rank1monbasic}
\begin{tikzcd}
	{i_0^*\mathcal{F}} & {\psi_t\mathcal{F}} & {s^*i_1^*\mathcal{F}\simeq p_*i_{1*}i_1^*\mathcal{F}} \\
	& {p_*\mathcal{F}}
	\arrow["{\mathrm{sp}}", from=1-1, to=1-2]
	\arrow["\sim", from=1-2, to=1-3]
	\arrow["{p_*(\mathrm{adj}_0^*)}", curve={height=-12pt}, from=2-2, to=1-1]
	\arrow["\sim"', curve={height=-12pt}, from=2-2, to=1-1]
	\arrow["{p_*(\mathrm{adj}_1^*)}"'{pos=0.6}, curve={height=12pt}, from=2-2, to=1-3]
\end{tikzcd}
\end{equation}
Here $\mathrm{adj}_0^*$ (resp. $\mathrm{adj}_1^*$) denotes the adjunction $\mathrm{id}\rightarrow i_{0*}i_0^*$ (resp. $\mathrm{id}\rightarrow i_{1*}i_1^*$), and the bottom left arrow is an isomorphism by Lemma \ref{lem_hyp_localisation}.\\
    (2) There is a distinguished triangle $s^*i_1^!\CF\rightarrow i_0^!\CF\rightarrow(\Phi_t\CF)(-1)\rightarrow$, under which the monodromy of $(\Phi_t\CF)(-1)$ coincides with the opposite of that induced by the equivariant monodromy of $s^*i^!_1\CF$. Here the first arrow is defined by the maps $s^*i_1^!\CF\simeq p_!i_{1*}i_1^!\CF\xrightarrow{p_!(\mathrm{adj}_1^!)}p_!\CF\xleftarrow[\sim]{p_!(\mathrm{adj}_0^!)} i^!_0\CF$, where $\mathrm{adj}_1^!$ (resp. $\mathrm{adj}_0^!$) denotes the adjunction $i_{1*}i_1^!\rightarrow\mathrm{id}$ (resp. $i_{0*}i_0^!\rightarrow\mathrm{id}$), and we have used Lemma \ref{lem_hyp_localisation}.
\end{proposition}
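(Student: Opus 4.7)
For part (1), my plan is to use the monodromic trivialisation to transport nearby cycles at $0_X$ to the generic stalk at $1_X$. Starting from $\iota(n): \theta(n)^*\CF \isoto \pr^*\CF$ on $\Gm \times \AAA^1_X$, I will restrict to the slice $\Gm \times 1_X$ to obtain $j_n^*\CF \simeq \bar\pr_X^* s^* i_1^*\CF$, where $j_n := j \circ p_n: \Gm_X \to \AAA^1_X$, $(\lambda, x) \mapsto (x, \lambda^n)$, and $\bar\pr_X: \Gm_X \to X$ is the projection. Applying $i^* j_{n*} = i^* j_* p_{n*}$, using $\bar\pr_X = r \circ j_n$ with $r: \AAA^1_X \to X$, and invoking a projection-formula argument (which applies since $r^* s^* i_1^*\CF$ is lisse in the $\AAA^1$-direction), I expect an identification
\[
i^* j_{n*} j_n^*\CF \simeq s^* i_1^*\CF \otimes i^* j_* p_{n*}\Lambda.
\]
Passing to the colimit and using $\varinjlim_n i^* j_* p_{n*}\Lambda \simeq \psi_t(\underline{\Lambda}_{\AAA^1_X}) \simeq \underline{\Lambda}_{0_X}$ (by Example \ref{ex_phicomp}.1 applied to the smooth map $t$) yields $\psi_t\CF \simeq s^* i_1^*\CF$. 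An alternative verification on stalks uses the Milnor fibre interpretation (Proposition \ref{prop_nearbyfibinterp}): for a classical point $x \in 0_X$, the Milnor fibre $U_{\epsilon, a}$ is a polydisc in the slice $\{t = a\}$ which $\theta_{a^{-1}}$ identifies with a polydisc in $\{t = 1\} = 1_X$, so $R\Gamma(U_{\epsilon, a}, \CF) \simeq (s^* i_1^*\CF)_x$.

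For the monodromy compatibility in (1), I will trace the $\mu_n = \mathrm{Aut}(e_n)$-action on $p_n^*\CF$ through the identification via $\iota(n)$: since $\iota(n)$ trivialises along the $\pr$-fibres, the scaling action in the $\Gm$-direction corresponds to the equivariant monodromy of $s^*i_1^*\CF$ but with reversed direction, exactly as in Example \ref{ex_mono_reverse}. For the commutative diagram (\ref{eqn_rank1monbasic}), I will unwind the construction: the right triangle comes from the adjunction $\mathrm{id} \to i_{1*} i_1^*$ composed with $p_*$, the top identification is the one constructed above, and the left triangle uses Lemma \ref{lem_hyp_localisation}. Commutativity reduces to checking the compatibility of the restrictions of $\iota(n)$ to the $0$- and $1$-sections, which follows from the functoriality and essential uniqueness of $\iota(n)$ established in Lemma \ref{lem_mon_ess_uni_fun_of_iota}.

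For part (2), the plan is to dualise part (1) using Theorem \ref{thm_main_psi_phi}. Applying part (1) to the monodromic sheaf $\DD\CF$ (Corollary \ref{cor_dualmono}.2) gives $\psi_t\DD\CF \simeq s^* i_1^*\DD\CF$. Dualising this, using $\DD i_1^* \simeq i_1^!\DD$, and invoking Theorem \ref{thm_main_psi_phi}.3 to express $\DD\Psi_t$ in terms of $\Psi_t\DD$ (with Tate twist and shift) produces an isomorphism $s^* i_1^!\CF \simeq \Psi_t\CF(-1)[-1]$. Dualising the defining triangle $i_0^*\CF \to \psi_t\CF \to \phi_t\CF \to$ and using Theorem \ref{thm_main_psi_phi}.4 (which introduces the Iwasawa twist) then produces the distinguished triangle $s^* i_1^!\CF \to i_0^!\CF \to \Phi_t\CF(-1) \to$; the Iwasawa twist from Theorem \ref{thm_main_psi_phi}.4 precisely realises the ``opposite monodromy'' claimed in the statement. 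The map $s^* i_1^!\CF \to i_0^!\CF$ obtained by duality is identified with the one described in the statement (via $p_!$-adjunctions and Lemma \ref{lem_hyp_localisation}) by naturality of the adjunctions.

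The main obstacle will be the careful tracking of monodromy signs, Tate twists, and Iwasawa twists through the duality isomorphism of Theorem \ref{thm_main_psi_phi}, and the verification of the commutative diagram (\ref{eqn_rank1monbasic}). The projection-formula step in part (1) also requires some care when $X$ is only quasi-separated (not quasi-compact), where local identifications need to be glued via Lemma \ref{lem_glue_morph}.
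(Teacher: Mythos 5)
Your proposal is essentially the paper's proof. For part (1), you restrict the monodromic trivialisation $\iota(n)$ to the $1$-slice $\Gm\times 1_X$ exactly as the paper does; where the paper then writes $\psi_t\CF\simeq\psi_t(\underline{\CF_1})\xleftarrow[\mathrm{sp}]{\sim}\CF_1$ and uses that the specialisation map is an isomorphism for the constant-in-$t$ sheaf $\underline{\CF_1}$, you reach the same identification via a projection-formula decomposition $i^*j_{n*}j_n^*\CF\simeq s^*i_1^*\CF\otimes i^*j_*p_{n*}\Lambda$ — the same computation, packaged differently. For part (2), both you and the paper dualise the triangle from Diagram (\ref{eqn_rank1monbasic}) and invoke Theorem \ref{thm_main_psi_phi}.4 (you take a minor detour through Theorem \ref{thm_main_psi_phi}.3 to first extract $s^*i_1^!\CF$, whereas the paper substitutes $\DD\CF$ and applies Theorem \ref{thm_main_psi_phi}.4 directly, but the content is identical). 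One place where your sketch is thinner than the paper: for the commutativity of Diagram (\ref{eqn_rank1monbasic}), the paper does not just use essential uniqueness of $\iota(n)$ but rather the normalisation $\iota(m)|_{1\times E}=\id$ together with an explicit identification $p_*p_{m*}p_m^*\underline{\CF_1}\simeq\CF_1\oplus\CF_1(-1)[-1]$ via cohomological purity and the $\Gm$-contraction lemma (Lemma \ref{lem_hyp_localisation}); you would need to fill in this computation.
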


\begin{remark}
    In (2), we have ignored the Iwasawa twist as we are working with triangulated categories and not concerned with the canonicity of the triangle.
\end{remark}

\begin{proof}
    (1) For convenience, we denote $i_1^*\CF$ by $\CF_1$ and $p^*s^*i^*_1\CF$ by $\underline{\CF_1}$, and view $\CF_1$ as a sheaf on both $0_X$ and $1_X$. First assume $X$ is quasi-compact. Recall Definition \ref{def_van} (same notations as there): $\psi_t(\CF):=\varinjlim_{n\rightarrow\infty} i^*j_*p_{n*}p_n^*\CF$. Fix an $m\in\ZZ_{\geq1}$ and $\iota(m): \theta(m)^*\CF\isoto \mathrm{pr}^*\CF$ as in Corollary \ref{prop_fundamental_of_mon_qc}.3. $\iota(m)$ restricts on $\Gm\X 1_X$ to an isomorphism $\iota_m: p_m^*\CF\isoto \mathrm{pr}^*\CF_1\simeq p_m^*\underline{\CF_1}$, where $\mathrm{pr}$ here is the projection $\Gm\X 1_X\rightarrow 1_X$. So $\psi_t(\CF)\simeq\varinjlim_{m|n} i^*j_*p_{n*}p_n^*\CF\isoto\varinjlim_{m|n} i^*j_*p_{n*}p_n^*\underline{\CF_1}\simeq \psi_t(\underline{\CF_1})\xleftarrow[\mathrm{sp}]{\sim} \CF_1$. It follows from the essential uniqueness and functoriality of $\iota(m)$ (Lemma \ref{lem_mon_ess_uni_fun_of_iota}) that this isomorphism is independent of $m$ and $\iota(m)$, and is functorial. That the monodromy is reversed can be seen similarly as in Example \ref{ex_mono_reverse}.\\\\
    For a general $X$, write $X=\cup_\alpha U_\alpha$ as the filtered union of all its quasi-compact opens subsets. Over each $U_\alpha$ we have the isomorphism constructed above, and they are compatible with restrictions by the essential uniqueness of $\iota(m)$. Apply Lemma \ref{lem_glue_morph}, we get a canonical isomorphism $\psi_t(\CF)\isoto \CF_1$ over $X$.\\\\
    To see the commutativity of Diagram \ref{eqn_rank1monbasic}, we may work locally and hence assume $X$ is quasi-compact. Let $m$ and $\iota_m$ be as above. It suffices to show the following diagram is commutative:
\[\begin{tikzcd}
	{i_0^*\mathcal{F}} & {i^*j_*p_{m*}p_m^*\CF\isoto i^*j_*p_{m*}p_m^*\underline{\CF_1}} & {\CF_1} \\
	& {p_*\mathcal{F}}
	\arrow["{\mathrm{sp}}", from=1-1, to=1-2]
	\arrow["{\mathrm{sp}}"', from=1-3, to=1-2]
	\arrow["{p_*(\mathrm{adj}_0^*)}", curve={height=-12pt}, from=2-2, to=1-1]
	\arrow["\sim"', curve={height=-12pt}, from=2-2, to=1-1]
	\arrow["{p_*(\mathrm{adj}_1^*)}"'{pos=0.6}, curve={height=12pt}, from=2-2, to=1-3]
\end{tikzcd}\]
where the last term on the first line is identified with $i^*_0\underline{\CF_1}$ and $p_*\underline{\CF_1}$. Note that $j_*p_{m*}p_m^*\CF$ (resp. $j_*p_{m*}p_m^*\underline{\CF_1}$) is monodromic, so $i^*j_*p_{m*}p_m^*\CF\simeq p_*p_{m*}p_m^*\CF$ (resp. $i^*j_*p_{m*}p_m^*\underline{\CF_1}\simeq p_*p_{m*}p_m^*\underline{\CF_1}$). Then, for the left half of the diagram, the composition $p_*\CF\isoto i_0^*\CF\rightarrow p_*p_{m*}p_m^*\CF$ coincides with $p_*(\adj)$, where $\adj$ denotes $\id\rightarrow p_{m*}p_m^*$. Note that (i) the map $\iota_m$ restricts to $\id$ on $1\X 1_X$, and (ii) $p_*p_{m*}p_m^*\underline{\CF_1}\simeq \CF_1\oplus \CF_1(-1)[-1]$ (for example, by cohomological purity). It follows that the further composition $p_*\CF\rightarrow p_*p_{m*}p_m^*\CF\simeq p_*p_{m*}p_m^*\underline{\CF_1}\simeq \CF_1\oplus \CF_1(-1)[-1]$ coincides with $p_*\CF\xrightarrow{p_*(\adj_1^*)}\CF_1\hookrightarrow\CF_1\oplus \CF_1(-1)[-1]$, where the second arrow is inclusion to the first factor. This is just the right half of the diagram, we are done.\\\\
    (2) Diagram \ref{eqn_rank1monbasic} gives a distinguished triangle $i_0^*\CF\rightarrow s^*i_1^*\CF\rightarrow\phi_t(\CF)\rightarrow$, where the first arrow is define by $i_0^*\CF\xleftarrow[\sim]{p_*(\mathrm{adj}_0^*)}p_*\CF\xrightarrow{p_*(\mathrm{adj}_1^*)}s^*i^*_1\CF$. Apply $\DD$, we get $s^*i_1^!\DD\CF\rightarrow i_0^!\DD\CF \rightarrow\DD\Phi_t(\CF)\rightarrow$, where the first arrow coincides with the one defined by $s^*i_1^!\DD\CF\simeq p_!i_{1*}i_1^!\DD\CF\xrightarrow{p_!(\mathrm{adj}_1^!)}p_!\DD\CF\xleftarrow[\sim]{p_!(\mathrm{adj}_0^!)} i^!_0\DD\CF$. Replace $\CF$ by $\DD\CF$ and apply Theorem \ref{thm_main_psi_phi}.4, we get $s^*i_1^!\CF\rightarrow i_0^!\CF\rightarrow(\Phi_t\CF)(-1)\rightarrow$.
\end{proof} 

\section{Monodromic Fourier transform}\label{sec_fourier}

The microlocalisation will be defined as the monodromic Fourier transform of the specialisation. In this section, we study this Fourier transform. The set-up is as in §\ref{sec_intro}.\\

In the complex D-module and positive characteristic $\ell$-adic contexts, where wild ramifications exist, one has the “full” Fourier transform, using as kernels the exponential D-module and an Artin–Schreier sheaf, respectively. In the context of real manifolds, where wild ramifications do not exist, ones does not have a full Fourier transform\footnote{However, see \cite{scholze_wild_2025}.}, but has the Fourier-Sato transform for conic sheaves, which is sufficient for microlocal purposes. Our situation is similar: as we are working with finite coefficients, there is no local wild ramification\footnote{See \cite{ramero_hasse-arf_2012} for the ind-finite coefficient case, where there are local wild ramifications and a full Fourier transform is defined and studied.}, but restricted to monodromic sheaves, we do have the following monodromic Fourier transform, see \cite{wang_new_2015} and the references therein.

\begin{definition-lemma}[\cite{wang_new_2015}]\label{def_fourier}
    Let $E$ be a vector bundle over a quasi-separated rigid analytic variety $X$. The \underline{monodromic Fourier transform} is the functor $\F: D_{mon}(E)\rightarrow D_{mon}(E')$, $\CF\mapsto p'_!(p^*\CF\otimes m^*B)$. The maps are as in the following diagram, $E'$ is the dual bundle of $E$, $B$ is the sheaf $u_*u^*\underline{\Lambda}$ on $\AAA^1$ where $u$ is the open immersion $\AAA^1-1\hookrightarrow\AAA^1$.
\[\begin{tikzcd}
	& {E\times_XE'} && {\AAA^1} \\
	E && {E'}
	\arrow["{m\,\,\mathrm{(pairing)}}", from=1-2, to=1-4]
	\arrow["p"', from=1-2, to=2-1]
	\arrow["{p'}", from=1-2, to=2-3]
\end{tikzcd}\]
\end{definition-lemma}

\begin{proof}
    The statement to be proved is that $\F\CF$ is monodromic for monodromic $\CF$. First we show $\F\CF$ is Zariski-constructible. Let $\overline{E}=\PP(E\oplus 1)$. Consider the following diagram:
\[\begin{tikzcd}
	{E'} & {\overline{E}\times_X E'} & {E\times_X E'} \\
	& {\overline{E}} & E
	\arrow["{\bar{p}'}"', from=1-2, to=1-1]
	\arrow["{\bar{p}}"', from=1-2, to=2-2]
	\arrow["{\overline{j}}"', hook', from=1-3, to=1-2]
	\arrow["\lrcorner"{anchor=center, pos=0.125, rotate=-90}, draw=none, from=1-3, to=2-2]
	\arrow["p", from=1-3, to=2-3]
	\arrow["j", hook', from=2-3, to=2-2]
\end{tikzcd}\]
Since $j_!\CF$ is Zariski-constructible (Remark \ref{rmk_defofmon}.2.(a)), $\overline{j}_!p^*\CF\simeq\bar{p}^*j_!\CF$ also. Since $\overline{j}_!m^*B$ is Zariski-constructible (this is clear when $X=\mathrm{pt}$ because then the situation algebraisable, the general case is locally on $X$ a pullback from the point case), we get $\overline{j}_!(p^*\CF\otimes m^*B)\simeq\overline{j}_!p^*\CF\otimes\overline{j}_!m^*B$ is Zariski-constructible. So $p'_!(p^*\CF\otimes m^*B)\simeq\bar{p}'_!\overline{j}_!(p^*\CF\otimes m^*B)$ is Zariski-constructible.\\\\
Next we show $\F\CF$ is monodromic by verifying Corollary \ref{cor_fundamental_of_mon}.2. For $\lambda\in K^\X$, let $\theta_\lambda$ (resp. $\theta'_\lambda$) denote the $\lambda$-scaling on $E$ (resp. $E'$). We will also use $\theta_\lambda$ and $\theta'_\lambda$ to denote the scaling on $E\X_X E'$ in the $E$ and $E'$ directions, respectively. Then $\theta'^*_\lambda p'_!(p^*\CF\otimes m^*B)\simeq p'_!\theta'^*_\lambda(p^*\CF\otimes m^*B)\simeq p'_!(p^*\CF\otimes m^*\lambda^*B)\simeq p'_!\theta^*_\lambda(p^*\theta^*_{\lambda^{-1}}\CF\otimes m^*B)\simeq p'_!(p^*\theta^*_{\lambda^{-1}}\CF\otimes m^*B)\isoto p'_!(p^*\CF\otimes m^*B)$. Here $\lambda^*$ denotes the pullback on $\AAA^1$ under $\lambda$-scaling, and we used the monodromicity of $\CF$ in the last step.
\end{proof}

\begin{theorem}[{\cite[\nopp 3.8]{wang_new_2015}}]
    Let $E$ be a vector bundle with constant rank $d$ over a quasi-separated rigid analytic variety $X$. Let $I^0\in \mathrm{Pro}(\Sh(\mathbf{G}_{m}))$ be the pro-object \textup{“}$\varprojlim_n\!\!\!$\textup{”}$e_{n_!}e_n^!\underline{\Lambda}$, where $e_n: \Gm\rightarrow\Gm$ is the $n$-th power map and $j:\mathbf{G}_{m}\hookrightarrow\AAA^1$ is the inclusion. Define the functor $\K: D_{mon}(E')\rightarrow \mathrm{Pro}(D_{mon}(E))$, $\CG\mapsto p_!(p'^*\CG\otimes m^*j_* I^0)$. Then, $\K$ lands in $D_{mon}(E)$ (i.e., $\K\CG$ is essentially constant for $\CG\in D_{mon}(E')$), and $\K(d+1)[2d+1]$ is an inverse to $\F$.
\end{theorem}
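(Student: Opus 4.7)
The plan is to adapt \cite[Theorem 3.8]{wang_new_2015}, whose core is an explicit kernel computation on $\AAA^1$, to the rigid analytic setting. Two separate assertions need to be established: first, that $\K\CG$ is essentially constant so that $\K$ lands in $D_{mon}(E)$; second, the inversion formula itself.

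For essential constancy, I would work locally on $X$ (using Lemma \ref{lem_glue_morph} to globalise) and invoke Proposition \ref{prop_fundamental_of_mon_qc} to obtain $n_0 \in \ZZ_{\geq 1}$ with $\theta(n_0)^*\CG \simeq \pr^*\CG$. For $n_0 \mid n \mid n'$, the transition map $e_{n'!}e_{n'}^!\underline{\Lambda} \to e_{n!}e_n^!\underline{\Lambda}$, after pullback along $m$, tensor product with $p'^*\CG$, and pushforward along $p$, becomes an isomorphism, since $\CG$ is trivialised along $\mathbf{G}_m$-orbits by the degree-$n_0$ Kummer covering. Zariski-constructibility and monodromicity of the resulting ordinary object then follow from the same dévissage as in Definition-Lemma \ref{def_fourier}.

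For the inversion formula, I would compute $\K\F\CF$ by composing the two integral transforms on $P := E \times_X E' \times_X E$. Write $\pi_1, \pi_3$ for the two $E$-projections, $\pi_2$ for the $E'$-projection, $\mu_i := m(\pi_i, \pi_2)$ for $i \in \{1, 3\}$, and $\sigma := (\pi_1, \pi_3) : P \to E \times_X E$. Base change and the projection formula yield
$$\K\F\CF \simeq p_{2!}\bigl(p_1^* \CF \otimes L\bigr), \qquad L := \sigma_!\bigl(\mu_1^* B \otimes \mu_3^* j_* I^0\bigr),$$
where $p_1, p_2 : E \times_X E \to E$ are the projections. The task reduces to showing $L \simeq \Delta_! \underline{\Lambda}(-d-1)[-2d-1]$, where $\Delta : E \hookrightarrow E \times_X E$ is the diagonal; combined with the projection formula and the identities $p_i \circ \Delta = \mathrm{id}_E$, this gives $\K\F\CF \simeq \CF(-d-1)[-2d-1]$. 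The reverse composition $\F\K$ is symmetric, interchanging the roles of $E \leftrightarrow E'$ and $B \leftrightarrow j_* I^0$.

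The main obstacle is the kernel computation for $L$. Over a classical point $(e, e'') \in E \times_X E$, the fibre of $\sigma$ is $E'_x$ (with $x$ the image in $X$), and $(\mu_1, \mu_3)$ restricts to the linear map $\ell : E'_x \to \AAA^2$, $e' \mapsto (m(e, e'), m(e'', e'))$. Off the diagonal ($e \neq e''$), $\ell$ has a non-zero component orthogonal to the diagonal line in $\AAA^2$, so the fibre integral $R\Gamma_c(E'_x, \ell^*(B \boxtimes j_* I^0))$ reduces (via Fubini and a linear change of variables) to integrals on $\AAA^1$ that vanish by the essential ``Fourier-dual'' relation between $B$ and $j_*I^0$ under convolution established in \cite[§3]{wang_new_2015}. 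On the diagonal ($e = e''$), $\ell$ factors through the diagonal $\AAA^1 \hookrightarrow \AAA^2$, and the integral becomes a compactly-supported cohomology on the rank-$d$ fibre $E'_x$ against the internal tensor $B \otimes j_* I^0$ pulled back via $e' \mapsto m(e,e')$; this yields the claimed $\underline{\Lambda}(-d-1)[-2d-1]$ by Lemma \ref{lem_coh_purity_consequence} applied to the linear fibre, together with a direct computation of the stalk of $B \otimes j_* I^0$ at $0 \in \AAA^1$. The monodromicity of $L$ for the simultaneous $\mathbf{G}_m$-scaling on $E \times_X E$ (visible via a change of variables in the $E'$-integration) ensures these fibrewise computations assemble coherently.
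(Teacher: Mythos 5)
Your proposal attempts to reprove Wang's inversion theorem from scratch via an explicit kernel decomposition on $E\times_X E'\times_X E$, whereas the paper's own proof simply cites Wang's argument and checks the two technical points needed for it to transfer to the rigid analytic setting: that the algebraic-to-analytic equivalence $D_{mon}(V)\simeq D_{mon}(V^{an})$ for $V$ a vector space over $K$ (Remark \ref{rmk_defofmon}.2.(b)) transports Wang's results on a point, and that the auxiliary sheaf $\rho_!\mathrm{pr}_1^*j^*B$ appearing in Wang's Theorem 2.1 is Zariski-constructible. Your route is conceptually the same as Wang's — it is his kernel computation — but your outline has several genuine gaps.

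The most serious gap is the essential-constancy argument. You assert that for $n_0 \mid n \mid n'$ the transition map $\K_{n'}\CG \to \K_n\CG$ is an isomorphism "since $\CG$ is trivialised along $\Gm$-orbits by the degree-$n_0$ Kummer covering." This is not a proof: essential constancy of the pro-object $\K\CG$ is precisely the non-trivial content of Wang's Theorem 2.1, and establishing it requires the explicit analysis of how the trace maps $e_{n'!}e_{n'}^!\underline{\Lambda}\to e_{n!}e_n^!\underline{\Lambda}$ interact with the convolution kernel $B$ (in particular, the relevant compactly supported cohomology on $\AAA^1$ must eventually vanish or stabilize, and which occurs depends delicately on the structure of $B\otimes j_*e_{n*}\underline{\Lambda}$). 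The paper addresses exactly this via its technical point (b). A second issue is the kernel computation: you compute stalks of $L$ at classical points, but you produce no comparison map to $\Delta_!\underline{\Lambda}(-d-1)[-2d-1]$; stalk agreement at classical points only forces an isomorphism once you have a map of Zariski-constructible sheaves, which in turn presupposes the essential constancy you have not yet established — the logic is circular as written. Third, your claim that "the reverse composition $\F\K$ is symmetric, interchanging $B\leftrightarrow j_*I^0$" is too glib: $B$ is an ordinary sheaf while $j_*I^0$ is a pro-object, so the two compositions are not formally exchanged and must be treated separately (or one deduced from the other via a duality/adjunction argument). Finally, you invoke the "Fourier-dual relation" from Wang's §3 without verifying that it holds in the rigid analytic setting; this transfer is exactly what the paper's Remark \ref{rmk_defofmon}.2.(b) is designed to secure, and leaving it implicit leaves the argument incomplete.
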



\begin{proof}
    The proof in \cite[\nopp 3.8]{wang_new_2015} is applicable in our context, taking into account the following comments: (a) Remark \ref{rmk_defofmon}.2.(b) implies results in Appendix A and 3.9 in \textit{loc. cit.} hold in our context; (b) the sheaf $\rho_!\mathrm{pr}_1^*j^*B$ in the proof of Theorem 2.1 in \textit{loc. cit.} lies in $\Dbzc(X)$. This can be seen as follows: it suffices to check locally on $X$, so we may assume $E$ is a trivial bundle. One can then further reduce to the case $X=\pt$ using proper base change. The situation is now algebraisable, and the statement is clear.
\end{proof}

\begin{lemma}\label{lem_F_base_change}
    Let $f: Y\rightarrow X$ be a map of quasi-separated rigid analytic varieties, $E\rightarrow X$ be a vector bundle, $E_Y$ be the base change to $Y$, and $\widetilde{f}: E_Y\rightarrow E$ and $\widetilde{f}': E'_Y\rightarrow E'$ be the induced maps. Then, for $\CF\in D_{mon}(E)$, there is a canonical isomorphism $\widetilde{f}'^*\F\CF\simeq\F\widetilde{f}^*\CF$; for $\CG\in D_{mon}(E_Y)$ such that $\widetilde{f}_!\CG\in\Dbzc(E)$, there is a canonical isomorphism $\widetilde{f}'_!\F\CG\simeq \F\widetilde{f}_!\CG$.
\end{lemma}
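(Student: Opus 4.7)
\medskip

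\noindent\textbf{Proof plan.} Both isomorphisms will follow from a straightforward application of proper base change and the projection formula, once the diagram is set up correctly. Write $F := E\times_X E'$ and $F_Y := E_Y\times_Y E'_Y$, and note that the square
\[
\begin{tikzcd}
F_Y \arrow[r, "\widetilde{\widetilde{f}}"] \arrow[d] & F \arrow[d] \\
Y \arrow[r, "f"] & X
\end{tikzcd}
\]
is Cartesian, so $\widetilde{\widetilde{f}}$ is simultaneously the base change of $\widetilde{f}$ along $p:F\to E$ and of $\widetilde{f}'$ along $p':F\to E'$. Moreover, the pairing $m_Y:F_Y\to \AAA^1$ factors as $m\circ\widetilde{\widetilde{f}}$, so $\widetilde{\widetilde{f}}^*m^*B\simeq m_Y^*B$.

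\medskip

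\noindent\textbf{First isomorphism.} Starting from the definition,
\[
\widetilde{f}'^{\,*}\F\CF = \widetilde{f}'^{\,*}p'_!(p^*\CF\otimes m^*B),
\]
I will apply proper base change to the Cartesian square through $p'$ and $\widetilde{f}'$ to move $\widetilde{f}'^{\,*}$ inside the $p'_!$, obtaining $p'_{Y!}\widetilde{\widetilde{f}}^{\,*}(p^*\CF\otimes m^*B)$. Then $\widetilde{\widetilde{f}}^{\,*}$ commutes with $\otimes$, and the identifications $\widetilde{\widetilde{f}}^{\,*}p^*\CF\simeq p_Y^*\widetilde{f}^{\,*}\CF$ and $\widetilde{\widetilde{f}}^{\,*}m^*B\simeq m_Y^*B$ yield exactly $\F\widetilde{f}^{\,*}\CF$.

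\medskip

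\noindent\textbf{Second isomorphism.} Assuming $\widetilde{f}_!\CG\in\Dbzc(E)$, one must first check that it is monodromic so that the right-hand side $\F\widetilde{f}_!\CG$ is defined; this is immediate because $\widetilde{f}$ intertwines the fibrewise $\Gm$-scalings on $E_Y$ and $E$, so Corollary~\ref{cor_fundamental_of_mon}.(2) is preserved by $\widetilde{f}_!$. Then I compute
\begin{align*}
\F\widetilde{f}_!\CG &= p'_!\bigl(p^*\widetilde{f}_!\CG\otimes m^*B\bigr) \\
&\simeq p'_!\bigl(\widetilde{\widetilde{f}}_!p_Y^*\CG\otimes m^*B\bigr) \\
&\simeq p'_!\widetilde{\widetilde{f}}_!\bigl(p_Y^*\CG\otimes \widetilde{\widetilde{f}}^{\,*}m^*B\bigr) \\
&\simeq \widetilde{f}'_!\,p'_{Y!}\bigl(p_Y^*\CG\otimes m_Y^*B\bigr) = \widetilde{f}'_!\F\CG,
\end{align*}
where the second step is proper base change in the Cartesian square through $p$ and $\widetilde{f}$, the third is the projection formula for $\widetilde{\widetilde{f}}$, and the fourth uses $p'\widetilde{\widetilde{f}}=\widetilde{f}'p'_Y$ together with $\widetilde{\widetilde{f}}^{\,*}m^*B\simeq m_Y^*B$.

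\medskip

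\noindent\textbf{Expected obstacles.} The computation is essentially formal; the only non-trivial point is ensuring that every step stays within the $\Dbzc$-framework of \cite{bhatt_six_2022,huber_etale_1996}. Proper base change and the projection formula for $\widetilde{\widetilde{f}}_!$ (along a base-changed morphism of analytic spaces) are available in that formalism, and the intermediate objects $p^*\CF\otimes m^*B$, $p_Y^*\widetilde{f}^{\,*}\CF\otimes m_Y^*B$, etc., are Zariski-constructible by the same argument as in the proof of Definition-Lemma~\ref{def_fourier} (extending via $\overline{E}=\PP(E\oplus 1)$ so that $m^*B$ becomes a constructible sheaf one can tensor against freely). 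No finer information — e.g.\ compatibility with the equivariant monodromy — is asserted in the statement, so once the chain of canonical isomorphisms is in place the proof is complete.
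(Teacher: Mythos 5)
Your argument is correct and fills in exactly the details the paper omits; the published proof is a one-liner asserting the statements are formal consequences of base change and the projection formula, and your computation (base changing $F=E\times_X E'$ to $F_Y$, then chasing $p'_!$, $\otimes$, $p^*$ through the Cartesian squares) is the intended one. One small thing worth noting explicitly, since you invoke it repeatedly as "proper base change": the vertical maps $p'$ and $p'_Y$ are vector-bundle projections and hence not proper, so what you are really using is the general base-change theorem for $!$-pushforward along separated taut maps locally of finite type in Huber's formalism (of which proper base change is a special case). Your check that $\widetilde f_!\CG$ is monodromic via commutation of $\widetilde f$ with the scalings is also needed before writing $\F\widetilde f_!\CG$ and you supply it, so the proof is complete.
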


\begin{proof}
    These are formal consequences of proper base change and the projection formula. We omit the details.
\end{proof}

\begin{lemma}\label{lem_fourierfunctoriality}
    Let $\alpha: \tE\rightarrow E$ be a map of vector bundles over a quasi-separated rigid analytic variety $X$. Assume $\tE$ (resp. $E$) has constant rank $\widetilde{d}$ (resp. $d$). Denote $\delta=\widetilde{d}-d$. Let $\CF\in D_{mon}(\tE)$ and $\CG\in D_{mon}(E)$. Then there are canonical isomorphisms: 
    (1) $\F\alpha_!\CF\simeq\alpha'^*\widetilde{\F}\CF$,
    (2) $\F\alpha_*\CF\simeq\alpha'^!\widetilde{\F}\CF\langle\delta\rangle$,
    (3) $\widetilde{\F}\alpha^!\CG\simeq\alpha'_*\F\CG$,
    (4) $\widetilde{\F}\alpha^*\CG\langle\delta\rangle\simeq\alpha'_!\F\CG$.
\end{lemma}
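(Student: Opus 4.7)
The plan is to prove (1) directly by a double base-change argument and then deduce (3), (4), (2) by formal manipulations.

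The key geometric input is the adjointness of pairings: setting $h = \alpha \times_X \id_{E'}$ and $k = \id_{\widetilde E} \times_X \alpha'$ as maps out of $\widetilde E \times_X E'$, the pairings satisfy $m \circ h = \widetilde m \circ k$. Together with the natural projections $\pi: \widetilde E \times_X E' \to \widetilde E$ and $\pi': \widetilde E \times_X E' \to E'$, this gives two Cartesian squares: one sitting over $\alpha$ (with top edge $h$, vertical arrows $\pi$ and $p$), and one sitting over $\alpha'$ (with top edge $k$, vertical arrows $\pi'$ and $\widetilde p'$). Similarly, $p'\circ h = \pi'$ and $\widetilde p\circ k = \pi$.

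For (1), I would unwind $\F \alpha_! \CF = p'_!(p^* \alpha_! \CF \otimes m^* B)$: proper base change on the first Cartesian square replaces $p^* \alpha_!$ by $h_! \pi^*$; the projection formula pulls $h_!$ outside the tensor, leaving $\pi'_!(\pi^* \CF \otimes h^* m^* B)$; the adjointness identity rewrites $h^* m^* B = k^* \widetilde m^* B$, and $\pi^* \CF = k^* \widetilde p^* \CF$; finally proper base change on the second Cartesian square in the form $\pi'_! k^* \simeq \alpha'^* \widetilde p'_!$ produces $\alpha'^* \widetilde p'_!(\widetilde p^* \CF \otimes \widetilde m^* B) = \alpha'^* \widetilde \F \CF$.

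Since $\F$ and $\widetilde \F$ are equivalences by the Fourier inversion theorem recalled above, they preserve adjoint pairs. Taking right adjoints of both sides of (1), and using $\alpha_!^R = \alpha^!$ with $(\alpha'^*)^R = \alpha'_*$, yields (3) immediately. Analogously, once (4) is established, taking right adjoints using $(\alpha^*)^R = \alpha_*$, $(\alpha'_!)^R = \alpha'^!$, and the self-duality of shifts/twists under adjunction ($\langle \delta \rangle^R = \langle -\delta \rangle$), delivers (2).

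The substantive remaining task is (4). A parallel base-change computation, using the Cartesian square sitting over $\alpha$ with top edge $k' = \alpha \times \id_{\widetilde E'}$ and vertical arrows $\widetilde p$ and $r: E \times_X \widetilde E' \to E$, rewrites both sides of (4) as $P'_!$-pushforwards from $E \times_X \widetilde E'$ (with $P'$ the projection to $\widetilde E'$), reducing (4) to a kernel identity $k'_! \widetilde m^* B \langle \delta \rangle \simeq \ell_! m^* B$ on $E \times_X \widetilde E'$, where $\ell = \id_E \times_X \alpha'$. The main obstacle is this kernel identity together with the correct appearance of the $\langle \delta \rangle$ shift: both sides are supported on $\{\bar v \in \mathrm{im}(\alpha)\} \cap \{\bar w \in \mathrm{im}(\alpha')\}$ (the vanishing on the complement uses $R\Gamma_c(\AAA^1, B) = 0$, which follows from the adjunction triangle $i_{1*}\Lambda(-1)[-2] \to \underline{\Lambda}_{\AAA^1} \to B \to$), and on this locus the fibres of $k'$ and $\ell$ are affine spaces of dimensions $\dim \ker \alpha$ and $\dim \ker \alpha - \delta$ respectively, over which the relevant pairings take the same constant value, so the compactly supported cohomologies on the two fibres differ by exactly $\langle \delta \rangle$. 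Promoting this pointwise comparison to a canonical sheaf isomorphism is cleanest either by factoring $\alpha$ as a closed immersion followed by a smooth projection and verifying (4) in each case separately, or alternatively by deriving (4) from (3) via Verdier duality and the commutation $\F \circ \DD \simeq \DD \circ \F \langle d \rangle$ indicated in the introduction, the shift $\langle \delta \rangle = \langle \widetilde d - d \rangle$ then appearing as the difference between the duality twists for $\widetilde \F$ and $\F$.
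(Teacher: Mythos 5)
Your proof of (1) is correct and is the same base-change computation as the paper's; your derivation of (3) from (1) by taking right adjoints (legitimate since $\F, \widetilde\F$ are equivalences), and of (2) from (4) by the same device, are both valid and are formally parallel to the Yoneda/$RHom$ computations the paper writes out for (2) and (3).

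The genuine gap is (4). You correctly flag it as ``the substantive remaining task'' and correctly identify the difficulty (the kernel identity on $E\times_X\widetilde E'$ and the origin of the $\langle\delta\rangle$), but you do not actually resolve it. Of your two suggested routes, the one via $\DD\F\simeq\F\DD(-1)\langle d\rangle$ is circular in the paper's logical ordering: that duality commutation (Corollary~\ref{cor_F_perverse_D}.2) rests on the lemma identifying $\F\CF$ with $(\F_{\AAA^1_{E'}}\omega_!p^*\CF)|_{1'_{E'}}$, whose proof invokes \emph{both} parts (1) and (2) of the present lemma---and you obtain (2) only as a consequence of (4). So that path would need an independent proof of the duality commutation from, say, a computation of $\DD B$, which you don't supply. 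The other route (factor $\alpha$ through its graph into a closed immersion and a smooth projection, then chase the fibrewise cohomology of $B$ and keep track of the $\langle\delta\rangle$) is workable in principle but is left entirely as a sketch.

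The paper sidesteps the kernel computation altogether. Its proof of (4) reruns the base-change argument of (1) with the \emph{inverse} kernel $\K$ in place of $\F$: the same diagram gives $\widetilde\K\alpha'_!\simeq\alpha^*\K$ (a priori in $\mathrm{Pro}(D_{mon}(\widetilde E))$, but both sides are essentially constant). Substituting $\CH=\F\CG$, applying $\widetilde\F$, and invoking the inversion formulas $\K\F\simeq\id(-d-1)[-2d-1]$ and $\widetilde\F\widetilde\K\simeq\id(-\widetilde d-1)[-2\widetilde d-1]$ produces (4) at once, with the shift $\langle\delta\rangle$ falling out automatically as the discrepancy between the two normalisations. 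This is the trick you are missing: no support analysis and no factorisation of $\alpha$ is needed, because the $\langle\delta\rangle$ is entirely an artifact of Fourier inversion, not of the geometry of $\ker\alpha$.
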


In this proof and the following, we will implicitly use Lemma \ref{lem_monfunctoriality} and Definition-Lemma \ref{def_fourier}, which imply all relevant sheaves are monodromic.

\begin{proof}
    (1). This is standard, using the following diagram (the top left and bottom right “squares” are Cartesian), proper base change, and the projection formula. 
\[\begin{tikzcd}
	{\widetilde{E}'} && {E'} \\
	{\widetilde{E}\times_X\widetilde{E}'} & {\widetilde{E}\times_X E'} & {E\times_X E'} \\
	{\widetilde{E}} && E
	\arrow["{\alpha'}"', from=1-3, to=1-1]
	\arrow[from=2-1, to=1-1]
	\arrow[from=2-1, to=3-1]
	\arrow["\lrcorner"{anchor=center, pos=0.125, rotate=180}, draw=none, from=2-2, to=1-1]
	\arrow[from=2-2, to=1-3]
	\arrow[from=2-2, to=2-1]
	\arrow[from=2-2, to=2-3]
	\arrow[from=2-2, to=3-1]
	\arrow["\lrcorner"{anchor=center, pos=0.125}, draw=none, from=2-2, to=3-3]
	\arrow[from=2-3, to=1-3]
	\arrow[from=2-3, to=3-3]
	\arrow["\alpha"', from=3-1, to=3-3]
\end{tikzcd}\]\\
(4). A similar argument as in (1) gives $\widetilde{\K}\alpha'_!\CH\simeq\alpha^*\K\CH$ in $\mathrm{Pro}(D_{mon}(\widetilde{E}))$, for $\CH\in D_{mon}(E')$. As both sides are essentially constant, this is actually an isomorphism in $D_{mon}(\widetilde{E})$. Take $\CH=\F\CG$, apply $\widetilde{\F}$ on both sides, use $KF\simeq\id (-d-1)[-2d-1]$ and $\widetilde{F}\widetilde{K}\simeq\id(-\widetilde{d}-1)[-2\widetilde{d}-1]$, we get (4). Note a similar argument gives $\alpha_!\widetilde{\K}\CH\langle\delta\rangle\simeq\K\alpha'^*\CH$ for $\CH\in D_{mon}(\widetilde{E}')$.\\\\
(2). This follows from the following computation: $\forall A\in D_{mon}(E')$, $RHom(A, \F\alpha_*\CF)\simeq RHom(\K A,$ $\alpha_*\CF[-2d-1](-d-1))\simeq RHom(\alpha^*\K A,\CF[-2d-1](-d-1))\simeq RHom(\widetilde{\K}\alpha'_!A,\CF[-2d-1](-d-1))\simeq RHom(\alpha'_!A, \widetilde{\F}\CF\langle\delta\rangle)\simeq RHom(A, \alpha'^!\widetilde{\F}\CF\langle\delta\rangle)$.\\\\
(3). A similar computation as in (2) proves (3). Note one needs to use $\alpha_!\widetilde{\K}\langle\delta\rangle\simeq\K\alpha'^*$ mentioned in the proof of (4).
\end{proof}

\begin{lemma}\label{lem_rank1monbasic_F}
    Let $E=\AAA_X^1$ be the trivial bundle over a quasi-separated rigid analytic variety $X$, and $\CF\in D_{mon}(E)$. Use the same notations as in Proposition \ref{lem_rank1monbasic}. Denote the 1-section of the dual bundle of $\AAA^1_X$ by $1'_X$. Then there is an isomorphism $(\F\CF)_{1'_X}\cong(\Phi_t\CF)(-1)$, under which the equivariant monodromy of $(\F\CF)_{1'_X}$ coincides with the monodromy of $(\Phi_t\CF)(-1)$.
\end{lemma}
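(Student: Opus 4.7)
The strategy is to realise $(\F\CF)_{1'_X}$ as the third vertex of a distinguished triangle matching the one in Proposition \ref{lem_rank1monbasic}.2, and then trace both monodromies back to the equivariant monodromy of $\CF$. First, proper base change along the $1'$-section $X\hookrightarrow E'$ identifies the slice $(E\X_X E')\X_{E'}X$ with $\AAA^1_X$, and the pairing $m|_{y=1}$ with the map $t:\AAA^1_X\to\AAA^1$ from the statement. Thus
\[
(\F\CF)_{1'_X}\simeq p_!(\CF\otimes t^*B).
\]
Next, apply the closed-open recollement triangle on $\AAA^1$ at the point $1$, namely $(1_{\AAA^1})_*(1_{\AAA^1})^!\underline{\Lambda}\to\underline{\Lambda}\to B\to$, pulled back along $t$ (using smooth base change for $t^*u_*\simeq v_*t^*$, where $v$ denotes the pullback of $u$), and then tensored with $\CF$ (using the projection formula and absolute purity along $i_1$), to obtain a distinguished triangle
\[
i_{1*}i_1^!\CF\to\CF\to\CF\otimes t^*B\to
\]
on $\AAA^1_X$.

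Applying $p_!$ gives $p_!i_{1*}i_1^!\CF\to p_!\CF\to p_!(\CF\otimes t^*B)\to$. By Lemma \ref{lem_hyp_localisation}, the adjunction $i_{0*}i_0^!\to\id$ induces an isomorphism $i_0^!\CF\isoto p_!\CF$. Since $p\circ i_1$ is an isomorphism $1_X\to X$ identified via $s:0_X\isoto 1_X$ (both $1_X$ and $0_X$ being canonically identified with $X$), we have $p_!i_{1*}i_1^!\CF\simeq s^*i_1^!\CF$. This produces a distinguished triangle
\[
s^*i_1^!\CF\to i_0^!\CF\to p_!(\CF\otimes t^*B)\to,
\]
and a diagram chase shows that the first map agrees with the one defined in Proposition \ref{lem_rank1monbasic}.2 (both are $p_!$ of $\mathrm{adj}^!_1$ composed with the inverse of $p_!(\mathrm{adj}^!_0)$). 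Comparing cones yields the asserted isomorphism $(\F\CF)_{1'_X}\cong(\Phi_t\CF)(-1)$.

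For the monodromy compatibility, I would unwind the chain of isomorphisms in the proof of Definition-Lemma \ref{def_fourier}: the equivariant monodromy on $\F\CF$ is built from the equivariant monodromy on $\CF$ together with the $\Gm$-equivariance of the pairing $m$. Restricting to $1'_X$ and tracing through the base change of Step 1, the induced automorphism of $p_!(\CF\otimes t^*B)$ is, through the triangle above, inherited from the equivariant monodromy of $s^*i_1^!\CF$ (while $i_0^!\CF$ carries the trivial monodromy, the $0$-section being pointwise $\Gm$-fixed). By Proposition \ref{lem_rank1monbasic}.2 this is precisely the monodromy on $(\Phi_t\CF)(-1)$, up to the inversion built into the equivariant convention recorded in Example \ref{ex_mono_reverse}. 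I expect the main obstacle to lie not in any conceptual difficulty but in the bookkeeping of this last step—keeping track of the chain of canonical isomorphisms used to build the equivariant structure on $\F\CF$ together with the inversion and Tate-twist conventions, so that the induced $\mu$-actions on the two triangles above are verified to match term by term.
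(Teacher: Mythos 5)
Your proposal is correct and follows essentially the same route as the paper: restrict the Fourier kernel along the $1'$-section to obtain $p_!(\CF\otimes t^*B)$, build the recollement/purity triangle $i_{1*}i_1^!\CF\to\CF\to\CF\otimes t^*B\to$, apply $p_!$, and identify the resulting triangle with that of Proposition \ref{lem_rank1monbasic}.2. The monodromy verification is left at roughly the same level of detail as the paper's, and you correctly flag the delicate point — tracking the inversion coming from the $\Gm$-equivariance of the Fourier kernel (as noted in the proof of Definition-Lemma \ref{def_fourier}) against the inversion appearing in Proposition \ref{lem_rank1monbasic}.2, which cancel to give the agreement asserted.
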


\begin{proof}
    Note, by the definition of $\F$ and the proof of the monodromicity of $\F\CF$, we have $(\F\CF)_{1'_X}\simeq p_!(\CF\otimes t^*B)$ with monodromies reversed. The standard recollement and cohomological purity give a distinguished triangle $i_1^!\underline{\Lambda}_{\AAA^1_X}\rightarrow \underline{\Lambda}_{\AAA^1_X}\rightarrow t^*B\rightarrow$. Apply $\CF\otimes(-)$ and $p_!$, we get $p_!(\CF\otimes i_1^!\underline{\Lambda}_{\AAA^1_X})\rightarrow p_!\CF\rightarrow p_!(\CF\otimes t^*B)\rightarrow$. Note $i^!_1\CF\simeq \CF\otimes i_1^!\underline{\Lambda}_{\AAA^1_X}$ and $i_0^!\CF\isoto p_!\CF$, we get $s^*i_1^!\CF\rightarrow i_0^!\CF\rightarrow p_!(\CF\otimes t^*B)\rightarrow$, commuting with monodromies. Tracing through the definitions, one sees that the first arrow coincides with the one in Proposition \ref{lem_rank1monbasic}.2. The conclusion then follows from that proposition.
\end{proof}

\begin{lemma}
    Let $E$ be a vector bundle over a quasi-separated rigid analytic variety $X$. Consider the following diagram 
\[\begin{tikzcd}
	& {E\times_XE'} & {\mathbf{A}^1\times E'} & {\mathbf{A}^1} & {} \\
	E && {E'}
	\arrow["\omega", from=1-2, to=1-3]
	\arrow["p"', from=1-2, to=2-1]
	\arrow["{p'}", from=1-2, to=2-3]
	\arrow["t", from=1-3, to=1-4]
	\arrow[from=1-3, to=2-3]
\end{tikzcd}\]
where $\omega: (v,v')\mapsto (\langle v,v'\rangle, v')$ and $t$ is the projection to $\AAA_K^1$. Then, for $\CF\in D_{mon}(E)$, there are natural isomorphisms $\F\CF\simeq (\F_{\AAA_{E'}^1}\omega_!p^*\CF)|_{1'_{E'}}\simeq (\F_{\AAA_{E'}^1}\omega_*p^*\CF)|_{1'_{E'}}$. Here, $\F$ denotes the Fourier transform on $E$ over $X$, $\F_{\AAA_{E'}^1}$ denotes the Fourier transform on the trivial bundle $\AAA_{E'}^1$ \emph{over $E'$}, and $1_{E'}$ denotes the 1-section on the dual of $\AAA^1_{E'}$, identified with $E'$. Furthermore, there are isomorphisms $(\F_{\AAA_{E'}^1}\omega_!p^*\CF)|_{1'_{E'}}\cong (\Phi_t\omega_!p^*\CF)(-1)$ and $(\F_{\AAA_{E'}^1}\omega_*p^*\CF)|_{1'_{E'}}\cong (\Phi_t\omega_*p^*\CF)(-1)$.
\end{lemma}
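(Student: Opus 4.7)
My plan is to treat the two chains of isomorphisms separately. The last two assertions, $(\F_{\AAA_{E'}^1}\omega_!p^*\CF)|_{1'_{E'}}\cong (\Phi_t\omega_!p^*\CF)(-1)$ and its $\omega_*$-analogue, are immediate applications of Lemma \ref{lem_rank1monbasic_F} to the monodromic sheaves $\omega_!p^*\CF$ and $\omega_*p^*\CF$ on the trivial line bundle $\AAA^1_{E'}\to E'$; these are monodromic by Lemma \ref{lem_monfunctoriality}, since $p^*\CF$ is monodromic on $E\X_XE'\to E'$ (inheriting from $\CF$) and $\omega$ is a morphism of vector bundles over $E'$ (linear in the fibre direction). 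For the first isomorphism $\F\CF\simeq (\F_{\AAA_{E'}^1}\omega_!p^*\CF)|_{1'_{E'}}$, denote by $q:\AAA^1_{E'}\to E'$ the bundle projection, extract from the proof of Lemma \ref{lem_rank1monbasic_F} the formula $(\F_{\AAA_{E'}^1}\CG)|_{1'_{E'}}\simeq q_!(\CG\otimes t^*B)$ for monodromic $\CG$, and combine it with the projection formula using $q\circ\omega=p'$ and $t\circ\omega=m$:
\[q_!(\omega_!p^*\CF\otimes t^*B)\simeq q_!\omega_!(p^*\CF\otimes\omega^*t^*B)=p'_!(p^*\CF\otimes m^*B)=\F\CF.\]

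For the second, harder isomorphism $\F\CF\simeq (\F_{\AAA_{E'}^1}\omega_*p^*\CF)|_{1'_{E'}}$, I will view $\omega$ as a morphism of vector bundles of ranks $n:=\mathrm{rank}(E)$ and $1$ over the base $E'$ and apply Lemma \ref{lem_fourierfunctoriality}.2:
\[\F_{\AAA_{E'}^1}\omega_*p^*\CF\simeq (\omega')^!\widetilde{\F}(p^*\CF)\langle n-1\rangle,\]
where $\widetilde{\F}$ is the relative Fourier on $E\X_X E'\to E'$ and $\omega':\AAA^1_{E'}\to E'\X_X E'$, $(c,v')\mapsto (cv',v')$, is the dual of $\omega$. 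By Lemma \ref{lem_F_base_change} applied to the bundle projection $E'\to X$, there is an identification $\widetilde{\F}(p^*\CF)\simeq \pi^*\F\CF$ for the smooth projection $\pi:E'\X_X E'\to E'$ chosen so that $\pi\circ\omega'=\mu$, where $\mu:\AAA^1_{E'}\to E'$ is $(c,v')\mapsto cv'$. Since $\pi$ is smooth of relative dimension $n$, Poincaré duality gives $\pi^*=\pi^!\langle -n\rangle$, so $(\omega')^!\widetilde{\F}(p^*\CF)\simeq\mu^!\F\CF\langle -n\rangle$.

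The crux is that, although $\mu$ is not smooth globally (it collapses $\AAA^1\times\{0\}$ to the origin of $E'$), it is smooth of relative dimension $1$ on a neighbourhood of the $1$-section $1'_{E'}$: on the open $\{c\neq 0\}\subset \AAA^1_{E'}$, $\mu$ factors as the automorphism $(c,v')\mapsto (c,cv')$ followed by the smooth projection $\Gm\X E'\to E'$. Hence $\mu^!=\mu^*\langle 1\rangle$ near $1'_{E'}$, and since $\mu\circ i_{1'_{E'}}=\mathrm{id}_{E'}$,
\[i^*_{1'_{E'}}\mu^!\F\CF= (\mu\circ i_{1'_{E'}})^*\F\CF\langle 1\rangle = \F\CF\langle 1\rangle.\]
Collecting shifts, $(\F_{\AAA_{E'}^1}\omega_*p^*\CF)|_{1'_{E'}}\simeq \F\CF\langle 1-n+n-1\rangle=\F\CF$. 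The main obstacle is precisely this second isomorphism: it demands interlocking Lemma \ref{lem_fourierfunctoriality}.2, base change for the relative Fourier, and the non-obvious geometric observation that the blow-up-like map $\mu$ becomes smooth near $1'_{E'}$, while the three Tate twists $\langle n-1\rangle$, $\langle -n\rangle$, $\langle 1\rangle$ conspire to cancel and reproduce exactly $\F\CF$.
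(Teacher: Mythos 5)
Your proof is correct, and it takes a route that differs from the paper's in both halves, most substantially in the second isomorphism.

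For the first isomorphism $\F\CF\simeq (\F_{\AAA_{E'}^1}\omega_!p^*\CF)|_{1'_{E'}}$, the paper applies Lemma \ref{lem_fourierfunctoriality}.1 (giving $\omega'^*\widetilde{\F}p^*\CF$), then identifies the restriction to $1'_{E'}$ with the restriction of $\widetilde{\F}p^*\CF$ to the diagonal $\Delta\subset E'\X_X E'$, which is $\F\CF$ by base change. You instead unwind the kernel formula $(\F\CG)|_{1'_{E'}}\simeq q_!(\CG\otimes t^*B)$ (extracted from the proof of Lemma \ref{lem_rank1monbasic_F}) and apply the projection formula together with $q\circ\omega=p'$, $t\circ\omega=m$. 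Both are short and natural; yours is more elementary, the paper's is more in line with the formal Fourier calculus it has set up.

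For the second isomorphism $\F\CF\simeq (\F_{\AAA_{E'}^1}\omega_*p^*\CF)|_{1'_{E'}}$, the difference is more substantial. The paper again uses Lemma \ref{lem_fourierfunctoriality}.2, but computes the $!$-restriction $(\F_{\AAA_{E'}^1}\omega_*p^*\CF)|^!_{1'_{E'}}\simeq\F\CF\langle -1\rangle$, and then appeals to cohomological purity to pass from $!$- to $*$-restriction. This purity step tacitly uses that a monodromic sheaf on $\AAA^1_{E'}$ is, near the $1$-section, ULA over the base (because after a finite \'etale Kummer cover it becomes a pullback from $E'$). Your argument avoids this: you identify $\widetilde{\F}p^*\CF\simeq\pi^*\F\CF$ via Lemma \ref{lem_F_base_change}, reduce to restricting $\mu^!\F\CF\langle -1\rangle$ along $1'_{E'}$, and then use the geometric observation that $\mu(c,v')=cv'$ is smooth of relative dimension $1$ on $\{c\neq 0\}$ (factoring through the automorphism $(c,v')\mapsto(c,cv')$), so that $\mu^!=\mu^*\langle 1\rangle$ near $1'_{E'}$ and $\mu\circ i_{1'_{E'}}=\mathrm{id}$. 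This sidesteps purity entirely and makes the cancellation of twists $\langle n-1\rangle$, $\langle -n\rangle$, $\langle 1\rangle$ completely explicit. I verified the twist bookkeeping and the identification $\pi\circ\omega'=\mu$ (with $\pi=\widetilde{f}'$ from Lemma \ref{lem_F_base_change}, the projection onto the dual factor); both are correct. Your treatment of the furthermore assertion coincides with the paper's.
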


\begin{proof}
    We may assume $E$ has constant rank $d$. View $\omega$ as a map of vector bundles over $E'$, apply Lemma \ref{lem_fourierfunctoriality}.1 to $\F_{\AAA_{E'}^1}\omega_!p^*\CF$, we get $(\F_{\AAA_{E'}^1}\omega_!p^*\CF)|_{1'_{E'}}$ $\simeq (\omega'^*\F_{E_{E'}}p^*\CF)|_{1'_{E'}}\simeq(\F_{E_{E'}}p^*\CF)|_\Delta\simeq \F\CF$. Here $E_{E'}$ is $E\X_XE'$ viewed as a bundle over $E'$, and $\Delta$ denotes the diagonal in $E'_{E'}:=E'\X_XE'$, identified with $1_{E'}$ (via $\omega'$) and $E'$. A similar argument, using Lemma \ref{lem_fourierfunctoriality}.2, shows $\F\CF\langle-1\rangle\simeq(\F_{\AAA_{E'}^1}\omega_*p^*\CF)|^!_{1'_{E'}}$, where $(-)|^!_{1'_{E'}}$ denotes !-restriction to ${1'_{E'}}$. Cohomological purity (\cite[\nopp 3.9.1]{huber_etale_1996}, \cite[\nopp 3.8, 3.10.(a)]{SGA4}) then gives $\F\CF\simeq(\F_{\AAA_{E'}^1}\omega_*p^*\CF)|_{1'_{E'}}$. Finally, the “Furthermore” part follows from Lemma \ref{lem_rank1monbasic_F}.
\end{proof}

\begin{corollary}\label{cor_F_perverse_D}
    Let $E$ be a vector bundle of constant rank $d$ over a quasi-separated rigid analytic variety $X$.\\
    (1) Assume $\Lambda=\FF_{\ell^r}$. Then $\F[d]$ is perverse t-exact.\\
    (2) For $\CF\in D_{mon}(E)$, there is an isomorphism $\DD\F\CF\cong(\F\DD\CF)(-1)\langle d\rangle$.
\end{corollary}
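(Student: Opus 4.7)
The plan for part (1) is to extract the perverse t-exactness of $\F[d]$ from the two presentations $\F\CF\simeq(\Phi_t\omega_!p^*\CF)(-1)\simeq(\Phi_t\omega_*p^*\CF)(-1)$ supplied by the preceding lemma. Three ingredients feed in: the projection $p\colon E\times_XE'\to E$ is smooth of relative dimension $d$, so $p^*[d]$ is perverse t-exact; $\Phi_t$ is perverse t-exact by Theorem \ref{thm_main_psi_phi}.2; and, once $\omega\colon E\times_XE'\to\AAA^1\times E'$ is shown to be weakly Stein, Proposition \ref{prop_artinvanishing} makes $\omega_*$ perverse right t-exact and $\omega_!$ perverse left t-exact. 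Since Tate twists preserve perverse degree, chaining these via the $\omega_*$-presentation produces right t-exactness of $\F[d]$, and chaining via the $\omega_!$-presentation produces left t-exactness.

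The main obstacle in (1) is verifying that $\omega$ is weakly Stein in the sense of Definition \ref{def_w_stein}, together with checking that $\omega_?p^*\CF$ remains Zariski-constructible so that Proposition \ref{prop_artinvanishing} may be invoked. The plan for both is to reduce to quasi-compact $X$ with trivialised $E,E'$, in which case $\omega$ becomes the explicit polynomial map $(v,v')\mapsto(\langle v,v'\rangle,v')$: the preimage of an affinoid polydisc in the target is not itself affinoid---the condition $\langle v,v'\rangle\in D_a$ does not bound $v$---but it is visibly the filtered union of the affinoids cut out by additionally requiring $v$ to lie in polydiscs of increasing radius, which gives the weakly Stein structure and extends locally over $X$. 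For Zariski-constructibility, stratify $E'$ as $0_X\sqcup(E'-0_X)$: over $E'-0_X$ the map $\omega$ is smooth of relative dimension $d-1$, whereas over $0_X$ it factors as the bundle projection $E\times_X 0_X\to X$ followed by a closed immersion; in both strata $\omega_?p^*\CF$ is manifestly Zariski-constructible, and the two pieces glue.

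For part (2), the plan is a direct six-functor calculation from the same two presentations. Starting from $\F\CF\simeq\Phi_t(\omega_!p^*\CF)(-1)$ and applying $\DD$: Verdier duality converts $(-1)$ to $(1)$, $\DD\omega_!\simeq\omega_*\DD$ swaps the extension functors, $\DD p^*\simeq p^!\DD=p^*\DD\langle d\rangle$ introduces the required $\langle d\rangle$ (since $p$ is smooth of relative dimension $d$), and Theorem \ref{thm_main_psi_phi}.4 lets $\DD$ commute with $\Phi_t$ up to an Iwasawa twist whose underlying Tate component cancels. Re-identifying the resulting $\Phi_t(\omega_*p^*\DD\CF)$ via the second presentation $\F\DD\CF\simeq\Phi_t(\omega_*p^*\DD\CF)(-1)$ then yields $\DD\F\CF\cong(\F\DD\CF)(-1)\langle d\rangle$. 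This part is bookkeeping of shifts and twists once the preceding lemma and Theorem \ref{thm_main_psi_phi}.4 are in hand; no essential difficulty is expected.
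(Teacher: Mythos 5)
Your argument for both parts follows the same route as the paper's: write $\F\CF\cong(\Phi_t\omega_!p^*\CF)(-1)\cong(\Phi_t\omega_*p^*\CF)(-1)$, combine the perverse t-exactness of $p^*[d]$ and of $\Phi_t$ (Theorem \ref{thm_main_psi_phi}.2) with the perverse half-t-exactness of $\omega_!$ and $\omega_*$ from Proposition \ref{prop_artinvanishing}, and for (2) chain the duality isomorphisms for $\Phi_t$, for $\omega_!/\omega_*$, and for $p^*$. Your filtration of $\omega^{-1}(\text{affinoid})$ by the affinoids obtained by additionally bounding $v$ is a correct and more explicit verification of the weak-Steinness of $\omega\colon E\X_XE'\to\AAA^1\times E'$ than the one-sentence remark the paper offers (which, as you note, only records that source and target are weakly Stein and does not by itself check the preimage condition in Definition \ref{def_w_stein}).

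The Zariski-constructibility discussion, however, has a gap. Over the open stratum $E'-0_X$ the map $\omega$ is smooth of relative dimension $d-1$, but the $!$- or $*$-pushforward of a Zariski-constructible sheaf along a smooth non-proper map is not ``manifestly'' Zariski-constructible; that is precisely the kind of statement that requires an argument. The input you are missing is the monodromic structure: $p^*\CF$ is monodromic on $E\X_XE'$ regarded as a vector bundle over $E'$, $\omega$ is a linear bundle map over $E'$, and Lemma \ref{lem_monfunctoriality}.1 then places $\omega_!p^*\CF$ and $\omega_*p^*\CF$ in $D_{mon}(\AAA^1_{E'})\subseteq\Dbzc(\AAA^1_{E'})$. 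This is already used (implicitly) in the proof of the unnumbered lemma supplying your two presentations of $\F\CF$, so the stratification detour can simply be deleted in favour of a citation of that lemma or of Lemma \ref{lem_monfunctoriality}.
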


Recall the perverse t-structure on $D_{mon}(E)$ is the one induced from $\Dbzc(E)$ (Remark \ref{rmk_defofmon}.3).

\begin{proof}
    (1) First note that the map $\omega: E\X_XE'\rightarrow E'$ is weakly Stein (Definition \ref{def_w_stein}). Indeed, $X$ can be covered by affinoid opens $U$ over which $E$ is trivial, consequently $E_U\X_UE'_U$ and $E'_U$ are both weakly Stein. The perverse t-exactness of $\F\CF[d]$ then follows from $\F\CF[d]\cong\Phi_t\omega_!p^*\CF[d]\cong\Phi_t\omega_*p^*\CF[d]$, the perverse t-exactness of $p^*[d]$ and $\Phi_t$ (Theorem \ref{thm_main_psi_phi}.2), and the perverse right (resp. left) t-exactness of $\omega_!$ (resp. $\omega_*$) (Proposition \ref{prop_artinvanishing}).\\\\
    (2) This follows from $\F\CF\cong(\Phi_t\omega_!p^*\CF)(-1)\cong(\Phi_t\omega_*p^*\CF)(-1)$, the commutativity of $\Phi_t$ and $\DD$ (Theorem \ref{thm_main_psi_phi}.4), $\DD\omega_!\simeq\omega_*\DD$, and $\DD p^*\simeq p^!\DD\simeq p^*\langle d\rangle \DD$.
\end{proof}

\section{Specialisation}\label{sec_special}

In this and the next two sections, we study the specialisation, microlocalisation and micro-hom. The classical references are Chapter IV, §VIII.6, and Chapter X of \cite{kashiwara_sheaves_1990}. The set-up is as in §\ref{sec_intro}.\\

The specialisation is a version of nearby cycles for higher codimensions and without the choice of a function. This is made precise by Proposition \ref{lem_nu&psi}.

\begin{definition-lemma}[{\cite[\nopp §8]{verdier_specialisation_1983}}]\label{def_specialisation}
    Let $Z\hookrightarrow X$ be a closed immersion of quasi-separated smooth rigid analytic varieties. The \underline{specialisation} functor is defined as $\nu_Z$: $\DX\rightarrow D_{mon}(T_ZX)$, $\CF\mapsto \psi_t\bar{p}^*\CF$, where $T_ZX$ is the normal bundle of $Z$ in $X$. The maps are as in the following diagram of deformation to the normal cone.
\begin{equation}\label{eqn_deformation_normal_cone}      
\begin{tikzcd}
	{T_ZX} & {\widetilde{T_ZX}} & {X\times\mathbf{G}_m} & X \\
	0 & {\mathbf{A}^1} & {\mathbf{G}_m}
	\arrow["\iota", hook, from=1-1, to=1-2]
	\arrow[from=1-1, to=2-1]
	\arrow["\lrcorner"{anchor=center, pos=0.125}, draw=none, from=1-1, to=2-2]
	\arrow["{\overline{p}}", curve={height=-18pt}, from=1-2, to=1-4]
	\arrow["t", from=1-2, to=2-2]
	\arrow[hook', from=1-3, to=1-2]
	\arrow["p", from=1-3, to=1-4]
	\arrow["\lrcorner"{anchor=center, pos=0.125, rotate=-90}, draw=none, from=1-3, to=2-2]
	\arrow[from=1-3, to=2-3]
	\arrow[hook, from=2-1, to=2-2]
	\arrow[hook', from=2-3, to=2-2]
\end{tikzcd}
\end{equation}
\end{definition-lemma}

Recall $\widetilde{T_ZX}:=\Bl_{\{t=f=0\}}(X\X\AAA^1)-\overline{(X-Z)\X0}=\underline{\Spa}_X(\oplus_{n\in\ZZ}t^nI^{-n})$, where $\underline{\Spa}$ denotes the relative $\Spa$,\footnote{Over an affinoid $X=\Spa(A)$, $\underline{\Spa}_X(-):=(\underline{\mathrm{Spec}}_{\mathrm{Spec}(A)}(-))^{an/X}$; over a general $X$, it is glued from the local ones.} $I$ is the coherent ideal corresponding to $Z$ in $X$, and $I^{-n}:=\CO_X$ for $n\geq 0$.

\begin{proof}
    We need to show $\nu_Z(\CF)$ is indeed monodromic. That $\nu_Z(\CF)$ is Zariski-constructible follows from the Zariski-constructibility of $\bar{p}^*\CF$ and the fact that nearby cycles preserve Zariski-constructible sheaves (Proposition \ref{prop_phipreserveszc}). To show monodromicity, we first reduce to the case when $Z$ is a hypersurface. Consider the following blow-up diagram:
\[\begin{tikzcd}
	{T_H\overline{X}-H} & {T_H\overline{X}} & H & {\overline{X}:=\mathrm{Bl}_ZX} \\
	{T_ZX-Z} & {T_ZX} & Z & X
	\arrow[hook, from=1-1, to=1-2]
	\arrow["{\mathring{\pi}}"', from=1-1, to=2-1]
	\arrow["\simeq", from=1-1, to=2-1]
	\arrow["\lrcorner"{anchor=center, pos=0.125}, draw=none, from=1-1, to=2-2]
	\arrow[from=1-2, to=1-3]
	\arrow["{\widetilde{\pi}}"', from=1-2, to=2-2]
	\arrow["\lrcorner"{anchor=center, pos=0.125}, draw=none, from=1-2, to=2-3]
	\arrow[hook, from=1-3, to=1-4]
	\arrow[from=1-3, to=2-3]
	\arrow["\lrcorner"{anchor=center, pos=0.125}, draw=none, from=1-3, to=2-4]
	\arrow["\pi"', from=1-4, to=2-4]
	\arrow[hook, from=2-1, to=2-2]
	\arrow[from=2-2, to=2-3]
	\arrow[hook, from=2-3, to=2-4]
\end{tikzcd}\]
The map $\pi$ induces a proper map $\widetilde{T_H\overline{X}}\rightarrow\widetilde{T_ZX}$ over $\AAA^1$ (see the proof of Lemma \ref{lem_nufunctoriality}.1 for the proof of properness in a more general situation). Since nearby cycles commute with proper pushforwards, we get $\widetilde{\pi}_*\nu_H(\pi^*\CF)\simeq \nu_Z(\pi_*\pi^*\CF)$. As $\nu_Z(-)|_{T_ZX-Z}$ only depends on $(-)|_{X-Z}$, and $\pi$ is an isomorphism on over $X-Z$, we get $j_!(\nu_Z(\CF)|_{T_ZX-Z})\simeq\widetilde{\pi}_*\overline{j}_!(\nu_H(\pi^*\CF)|_{T_H\overline{X}-H})$. If $\nu_H(\pi^*\CF)$ satisfies the condition in Corollary \ref{cor_fundamental_of_mon}.2, so does $\overline{j}_!(\nu_H(\pi^*\CF)|_{T_H\overline{X}-H})$, hence $j_!(\nu_Z(\CF)|_{T_ZX-Z})$ (since $\mathring{\pi}$ is an isomorphism commuting with scaling). As $\nu_Z(\CF)|_Z$ is monodromic, this will imply $\nu_Z(\CF)$ is monodromic by Corollary \ref{cor_dualmono}.1.\\\\
We now prove the case when $Z$ is a hypersurface. We may work locally, and assume $Z\hookrightarrow X$ is of the form $V(f)\hookrightarrow \Spa(A)$, for some $f\in A$ ($f$ exists by the smoothness of $X$). Consider the following diagram:
\begin{equation}\label{eqn_alpha_f_cross}
\begin{tikzcd}
	{X\times\mathbf{G}_m} & {\widetilde{T_ZX}-(Z\times\mathbf{A}^1)} \\
	& {\mathbf{A}^1}
	\arrow["\alpha", from=1-1, to=1-2]
	\arrow["{f^\times}"', shorten >=7pt, from=1-1, to=2-2]
	\arrow["t", from=1-2, to=2-2]
\end{tikzcd}
\end{equation}
where $\alpha: (x,\lambda)\mapsto (x,\lambda f(x), \lambda)$ (the second and third entries refer to $t$ and $\frac{t}{f}$ coordinates on $\widetilde{T_ZX}=\Bl_{\{t=f=0\}}(X\X\AAA^1)-\overline{(X-Z)\X0}$, respectively), and $f^\X$ is the pullback of $t$ to $X\X\Gm$. The map $\alpha$ is an isomorphism, so $\psi_{f^\X}(\alpha^*\bar{p}^*\CF)\simeq \alpha^*\psi_t(\bar{p}^*\CF)$, where the $\alpha$ on the right hand side is the restriction to $Z\X\Gm\isoto T_ZX-Z$. Denote $\alpha^*\bar{p}^*\CF\simeq \mathrm{pr}_1^*\CF$ by $\CF^\X$, where $\mathrm{pr}_1: X\X\Gm\rightarrow X$ is the projection. It suffices to show $\psi_{f^\X}(\CF^\X)$ is monodromic\footnote{More precisely, to show the !-extension of $\psi_{f^\X}(\CF^\X)$ to $X\X\AAA^1$ is monodromic.}. We verify Corollary \ref{cor_fundamental_of_mon}.2: let $\lambda\in K^\X$, then $\theta_\lambda^*\psi_{f^\X}(\CF^\X)\simeq \psi_{\lambda f^\X}(\theta_\lambda^*\CF^\X)\simeq\psi_{\lambda f^\X}(\CF^\X)\cong\psi_{f^\X}(\CF^\X)$. Here the second step uses the canonical isomorphism $\theta_\lambda^*\CF^\X\simeq\CF^\X$, and the third step uses Lemma \ref{lem_psietalebasechange}.
\end{proof}

\begin{lemma}\label{lem_nu&pervdual}
    Let $Z\hookrightarrow X$ be a closed immersion of quasi-separated smooth rigid analytic varieties.\\
    (1) Assume $\Lambda=\FF_{\ell^r}$. Then $\nu_Z$ is perverse t-exact.\\
    (2) There is a canonical isomorphism $\DD\nu_Z\simeq\nu_Z\DD$.
\end{lemma}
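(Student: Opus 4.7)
The plan is to reduce the analysis of $\nu_Z$ to the smoothness properties of the auxiliary projection $\mathrm{pr}_1: X\times\Gm\to X$. Although $\bar{p}: \widetilde{T_ZX}\to X$ fails to be smooth over $Z$ (the fibers become singular there, even though $\widetilde{T_ZX}$ itself is smooth), its restriction to the open subset $\{t\ne 0\}\subseteq\widetilde{T_ZX}$, which is canonically identified with $X\times\Gm$, is precisely $\mathrm{pr}_1$. Since $\psi_t$ depends only on the restriction of its input to this open, we may rewrite $\nu_Z(\CF)=\psi_t(\bar{p}^*\CF)=\psi_t(\mathrm{pr}_1^*\CF)$, where on the right $\psi_t$ is viewed as the functor $D(X\times\Gm)\to D(T_ZX)$ afforded by the extension $\widetilde{T_ZX}\to\AAA^1$. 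The proof then reduces to tracking shifts and Tate twists: the discrepancies introduced by $\mathrm{pr}_1^*$ (smooth pullback of relative dimension $1$) and by $\psi_t$ (related to the perverse-t-exact $\Psi_t=\psi_t[-1]$) will cancel on the nose.

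For (1), since $\mathrm{pr}_1$ is smooth of relative dimension $1$, $\mathrm{pr}_1^*[1]$ is perverse t-exact, so $\mathrm{pr}_1^*$ carries ${}^p\!D^{\leq 0}_{zc}(X)$ into ${}^p\!D^{\leq 1}_{zc}(X\times\Gm)$ (and similarly for the $\geq 0$ half). Theorem \ref{thm_main_psi_phi}.1 asserts that $\Psi_t$ is perverse t-exact; this applies here because $\mathrm{pr}_1^*\CF$ extends Zariski-constructibly to $\bar{p}^*\CF$ on $\widetilde{T_ZX}$, while the torsion-freeness hypothesis is vacuous under $\Lambda=\FF_{\ell^r}$. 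Hence $\psi_t=\Psi_t[1]$ carries ${}^p\!D^{\leq 1}$ back into ${}^p\!D^{\leq 0}$ on $T_ZX$, and the two shifts cancel to give perverse t-exactness of $\nu_Z$.

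For (2), I would combine three canonical isomorphisms. Smooth base change yields $\DD\mathrm{pr}_1^*\simeq\mathrm{pr}_1^!\DD$, and smoothness of relative dimension $1$ gives $\mathrm{pr}_1^!\simeq\mathrm{pr}_1^*\langle 1\rangle$, whence $\mathrm{pr}_1^*\DD\simeq\DD\mathrm{pr}_1^*\langle -1\rangle$. Theorem \ref{thm_main_psi_phi}.3 furnishes $\Psi_t\DD\simeq(\DD\Psi_t)(1)$, which under $\Psi_t=\psi_t[-1]$ and $\DD[n]=[-n]\DD$ rearranges to $\psi_t\DD\simeq\DD\psi_t\langle 1\rangle$. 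Plugging these in and letting the twists cancel,
\[
\nu_Z\DD \;=\; \psi_t\,\mathrm{pr}_1^*\,\DD \;\simeq\; \psi_t\,\DD\,\mathrm{pr}_1^*\langle -1\rangle \;\simeq\; \DD\,\psi_t\langle 1\rangle\,\mathrm{pr}_1^*\langle -1\rangle \;=\; \DD\,\psi_t\,\mathrm{pr}_1^* \;=\; \DD\nu_Z.
\]

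The potentially tricky point\textemdash and the one I would verify most carefully\textemdash is that the non-smoothness of $\bar{p}$ over $Z$ does not obstruct perverse t-exactness or the commutation with duality. This is precisely what the rewriting via $\mathrm{pr}_1$ sidesteps, since perverse t-exactness and duality compatibility for smooth pullback of known relative dimension are standard. The remainder is routine bookkeeping of shifts, Tate twists, and canonical isomorphisms provided by Theorem \ref{thm_main_psi_phi}.
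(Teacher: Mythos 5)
Your argument is correct and is essentially the paper's proof, which is stated very tersely: since $\psi_t$ depends only on the restriction to $\{t\neq 0\}\cong X\times\Gm$, one has $\nu_Z\simeq\psi_t p^*$ with $p$ the smooth projection, and the claims then follow from the perverse t-exactness and duality compatibility of $\Psi_t=\psi_t[-1]$ (Theorem \ref{thm_main_psi_phi}) together with the standard facts $p^*[1]$ t-exact and $\DD p^*\simeq p^*\langle 1\rangle\DD$. You have simply spelled out the shift and twist bookkeeping that the paper leaves to the reader.
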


\begin{proof}
    As $\nu_Z\simeq\psi_tp^*$, the claims follow from: (a) $\psi_t[-1]$ is perverse t-exact and commutes with duality, (b) $p$, being smooth, is shifted perverse t-exact and $\DD p^*\simeq p^!\DD\simeq p^*\langle1\rangle\DD$.
\end{proof}

\begin{lemma}\label{lem_sommetdenu}
    Let $i: Z\hookrightarrow X$ be a closed immersion of quasi-separated smooth rigid analytic varieties, $\CF\in \DX$, and $i_0: Z\hookrightarrow T_ZX$ be the 0-section. Then there are canonical isomorphisms $i^*\CF\isoto i_0^*\nu_Z(\CF)$ and $i_0^!\nu_Z(\CF)\isoto i^!\CF$.
\end{lemma}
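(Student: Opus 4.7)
The plan is to construct the map $i^*\CF\to i_0^*\nu_Z\CF$ from the specialisation map of nearby cycles, prove it is an isomorphism by d\'evissage with respect to the recollement on $X-Z\hookrightarrow X\hookleftarrow Z$, and deduce the dual statement by Verdier duality.

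For the construction, the deformation to the normal cone (Diagram \ref{eqn_deformation_normal_cone}) yields $\bar p\circ\iota\circ i_0 = i$, so $i^*\CF\simeq i_0^*\iota^*\bar p^*\CF$; the canonical specialisation map $\mathrm{sp}\colon\iota^*\bar p^*\CF\to\psi_t(\bar p^*\CF) = \nu_Z\CF$ from Definition \ref{def_van} then induces, upon applying $i_0^*$, the desired morphism.

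To prove the isomorphism, writing $j\colon X-Z\hookrightarrow X$ and using the recollement triangle $j_!j^*\CF\to\CF\to i_*i^*\CF\to$, by naturality it suffices to treat $\CF = i_*\CG$ and $\CF = j_!\CL$ separately. For $\CF = i_*\CG$, proper base change gives $\bar p^*i_*\CG\simeq i'_*\bar p'^*\CG$, supported on $\bar p^{-1}(Z)$, which locally near the zero section decomposes as $T_ZX\cup\widetilde Z$ with $\widetilde Z\simeq Z\times\AAA^1_t$ meeting $T_ZX$ along $i_0(Z)$; the generic fibre of $t$ on this support lies in $\widetilde Z$, and $t$ restricts there to the smooth projection to $\AAA^1_t$, so Lemma \ref{lem_smoothqcqs}.1 together with the trivial-monodromy case of Example \ref{ex_phicomp}.2 yield $\nu_Z(i_*\CG)\simeq i_{0*}\CG$, and applying $i_0^*$ recovers $\CG = i^*i_*\CG$ compatibly with $\mathrm{sp}$. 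For $\CF = j_!\CL$, we have $i^*j_!\CL = 0$ tautologically, and the vanishing of $i_0^*\nu_Z(j_!\CL)$ can be checked on stalks at classical points $z\in Z$ via Proposition \ref{prop_nearbyfibinterp}: the Milnor fibre of $t$ at $(z,0)$ maps under $\bar p$ isomorphically onto a small tubular neighbourhood $T\subset X$ of $Z$ that is thin in the normal direction (of radius $|a|\epsilon$ as $a\to 0$), and the resulting cohomology $R\Gamma(T, j_!\CL)$ is to be shown to vanish as the tube contracts, which can be handled by splicing with the triangle $j_!\CL\to Rj_*\CL\to i_*(i^*Rj_*\CL)\to$ and computing both right-hand terms in the limit.

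The second isomorphism $i_0^!\nu_Z\CF\isoto i^!\CF$ then follows formally by applying $\DD$ to the first for $\DD\CF$, using Corollary \ref{cor_dualmono}.2 ($\DD$ preserves monodromicity), Lemma \ref{lem_nu&pervdual}.2 ($\DD\nu_Z\simeq\nu_Z\DD$), and the standard dualities $\DD i^*\simeq i^!\DD$ and $\DD i_0^*\simeq i_0^!\DD$. The main obstacle I anticipate is the analytic step in the $\CF = j_!\CL$ case: controlling the tubular neighbourhood cohomology of $j_!\CL$ as the tube shrinks requires understanding how $\CL$ behaves near $Z$; while the na\"ive limit argument is transparent for local systems extended by zero, a careful d\'evissage or nearby-cycle interpretation is needed to handle general Zariski-constructible $\CL$ with non-trivial ramification along $Z$.
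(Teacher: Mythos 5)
Your construction of the map and the basic d\'evissage strategy match the paper's: both take $i_0^*(\mathrm{sp})$ as the morphism and reduce to showing $i_0^*\nu_Z(\CF) = 0$ when $i^*\CF = 0$. But the proposal has a genuine gap precisely where you flag it: the analytic convergence argument in the $j_!\CL$ (and more generally the $i^*\CF = 0$) case is not just an anticipated obstacle but the core of the proof, and you have not supplied it. The paper resolves it as follows: after reducing to $Z = V(f)$ and choosing coordinates so that $Z = V(T_1)\subset\Spa(K\langle T_1,\dots,T_n\rangle)$ and $x = 0$, it applies Proposition \ref{prop_nearbyfibinterp} (the Milnor fibre interpretation, applied in $\widetilde{T_ZX}$ in the coordinates $t,\frac{T_1}{t},T_2,\dots,T_n$) to identify $\psi_t(\bar p^*\CF)_x$ with $R\Gamma(V_{\epsilon,\eta},\CF)$ for the tube $V_{\epsilon,\eta} = \{|T_1|\le\epsilon\eta,\ |T_i|\le\epsilon\}$, and then applies Huber's continuity theorem (\cite[3.6]{huber_finiteness_1998}) twice — first as $\eta\to 0$ to get $R\Gamma(V_{\epsilon,0},\CF)$, then as $\epsilon\to 0$ to get $\CF_x$. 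These two invocations of the continuity of cohomology for Zariski-constructible sheaves are the ingredient your proposal is missing. Your suggested workaround, splicing with the triangle $j_!\CL\to Rj_*\CL\to i_*i^*Rj_*\CL\to$, does not circumvent the problem: evaluating $R\Gamma$ of $Rj_*\CL$ on the shrinking tubes requires precisely the same convergence estimate, so no d\'evissage of this kind removes the need for Huber's theorem.

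Two smaller remarks. First, the paper inserts a reduction to the hypersurface case via the blow-up argument already used in the proof of Definition-Lemma \ref{def_specialisation}; your proposal omits this, but since Proposition \ref{prop_nearbyfibinterp} works in any codimension this is a matter of convenience rather than a gap. Second, your computation $\nu_Z(i_*\CG)\simeq i_{0*}\CG$ in the $\CF = i_*\CG$ case is correct and spells out what the paper leaves implicit ("the result holds for $\CF$ supported on $Z$"), but note the support analysis needs care: the closure of $Z\times\Gm$ in $\widetilde{T_ZX}$ meets $t = 0$ exactly in the zero section $i_0(Z)$, and it is Lemma \ref{lem_smoothqcqs}.1 applied to the smooth projection to $\AAA^1_t$, not Example \ref{ex_phicomp}.2, that gives the identification. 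The duality step for $i_0^!$ is correct and identical to the paper's.
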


\begin{proof}
    It suffices to construct $i^*\CF\isoto i_0^*\nu_Z(\CF)$, then $i_0^!\nu_Z(\CF)\isoto i^!\CF$ follows from duality (Lemma \ref{lem_nu&pervdual}.2, Corollary \ref{cor_dualmono}.2). The map $i^*\CF\rightarrow i_0^*\nu_Z(\CF)$ is $i^*_0$ applied to $\sp: (\bar{p}^*\CF)|_{t=0}\rightarrow\psi_t(\bar{p}^*\CF)$. We show it is an isomorphism. As the result holds for $\CF$ supported on $Z$, it suffices to show $i_0^*\nu_Z(\CF)=0$ for $i^*\CF=0$. A similar argument as in the proof of Definition \ref{def_specialisation} reduces us to the case $Z=V(f)\hookrightarrow X=\Spa(A)$. Since $i_0^*\nu_Z(\CF)$ is Zariski-constructible, it suffices to show its stalk is $0$ at each classical point $x\in Z(K)$. By choosing a local coordinate system, we may then assume to be in the situation $Z=V(T_1)\hookrightarrow X=\Spa(K\langle T_1,...,T_n\rangle)$, $x=0$.\\\\
    We will apply Proposition \ref{prop_nearbyfibinterp} and the method of its proof. Note $\{t, \frac{T_1}{t}, (\text{pullbacks of})\,T_2,...,T_n\}$ form a coordinate system at $x=x\X\{0\}\in \widetilde{T_ZX}$. We have $\psi_t(\bar{p}^*\CF)_x\simeq R\Gamma(U_{\epsilon,0},\psi_t(\bar{p}^*\CF))\simeq\psi_{\id}(t_*\bar{p}_{U_\epsilon}^*\CF)$ for all small enough $\epsilon\in|K^\X|$. Here $U_\epsilon:=\{|\frac{T_1}{t}|\leq\epsilon, |T_2|\leq\epsilon,..., |T_n|\leq\epsilon\}\subseteq\widetilde{T_ZX}$, $U_{\epsilon,0}:=\{t=0, |\frac{T_1}{t}|\leq\epsilon, |T_2|\leq\epsilon,..., |T_n|\leq\epsilon\}$, and $\bar{p}_{U_\epsilon}$ is the restriction of $\bar{p}$ to $U_\epsilon$. By Proposition \ref{prop_nearbyfibinterp}, $\psi_{\id}(t_*\bar{p}_{U_\epsilon}^*\CF)\cong (t_*\bar{p}_{U_\epsilon}^*\CF)_\eta\simeq  R\Gamma(U_{\epsilon,\eta},\bar{p}_{U_\epsilon}^*\CF)\simeq R\Gamma(V_{\epsilon,\eta},\CF)$ for all small enough $\eta\in|K^\X|$. Here, $U_{\epsilon,\eta}:=\{t=\eta, |\frac{T_1}{t}|\leq\epsilon, |T_2|\leq\epsilon,..., |T_n|\leq\epsilon\}$, and is identified with $V_{\epsilon,\eta}:=\{|T_1|\leq\epsilon\eta, |T_2|\leq\epsilon,..., |T_n|\leq\epsilon\}\subseteq X$. Consider $x\in V_{\epsilon,0}:=\{T_1=0, |T_2|\leq\epsilon,..., |T_n|\leq\epsilon\}\subseteq V_{\epsilon,\eta}$. Apply \cite[\nopp 3.6]{huber_finiteness_1998} to $(V_{\epsilon,\eta}, T_1)$ and to $(V_{\epsilon,0},\{T_2,...,T_n\})$, we get: for small enough $\eta$, $R\Gamma(V_{\epsilon,\eta},\CF)\simeq R\Gamma(V_{\epsilon,0},\CF)$; and for small enough $\epsilon$, $R\Gamma(V_{\epsilon,0},\CF)\simeq\CF_x$. Combine the above, we conclude $\psi_t(\bar{p}^*\CF)_x\cong\CF_x=0$.
\end{proof}

\begin{proposition}\label{lem_nu&psi}
    Let $Z=V(f)\hookrightarrow X$ be a smooth hypersurface in a quasi-separated smooth rigid analytic variety. $f: X\rightarrow\AAA^1$ induces a map $\widetilde{f}: T_ZX\rightarrow T_0\AAA^1\simeq\AAA^1$, which induces an isomorphism $T_ZX\isoto Z\times\AAA^1$. Denote the section $Z\hookrightarrow Z\times\AAA^1$, $z\mapsto(z,1)$ by $s_f$. Then:\\
    (1) for $\CF\in\DX$, there is a canonical isomorphism $s_f^*\nu_Z(\CF)\isoto\psi_f(\CF)$, under which the equivariant monodromy of $s_f^*\nu_Z(\CF)$ coincides with the opposite of the monodromy of $\psi_f(\CF)$;\\
    (2) there is a canonical isomorphism $\psi_{\widetilde{f}}(\nu_Z(\CF))\isoto\psi_f(\CF)$ and an isomorphism $\phi_{\widetilde{f}}(\nu_Z(\CF))\cong\phi_f(\CF)$, both commuting with the usual monodromies.
\end{proposition}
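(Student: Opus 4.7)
The plan is to reduce (1) to (2) using Proposition \ref{lem_rank1monbasic}.1, and to prove (2) by combining smooth base change with the monodromic structure of $\nu_Z(\CF)$. For the reduction, note that under the identification $T_Z X \simeq Z \times \AAA^1$ via $\tilde{f}$, the monodromic sheaf $\nu_Z(\CF)$ on the trivial rank-$1$ bundle, with $\tilde{f}$ in the role of ``$t$'' and $s_f$ in the role of the ``$1$-section'', fits Proposition \ref{lem_rank1monbasic}.1; the proposition yields a canonical iso $\psi_{\tilde{f}}(\nu_Z(\CF)) \isoto s_f^*\nu_Z(\CF)$ under which the equivariant monodromy on the right is the opposite of the monodromy on the left, and composing with the iso of (2) gives (1) with the claimed monodromy reversal.

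For (2), I will work locally and assume $X = \Spa(K\langle T_1,\ldots,T_n\rangle)$, $f = T_1$, $Z = V(T_1)$, using that $Z$ is smooth in $X$ so $f$ extends to a coordinate system. Chart A of $\widetilde{T_Z X}$ is then $\Spa(K\langle u,T_2,\ldots,T_n,t\rangle)$ with $\bar{p}(u,T_2,\ldots,T_n,t) = (tu,T_2,\ldots,T_n)$ and $\tilde{f} = u$ on $\{t = 0\}$. Under the isomorphism $\alpha: X \times \Gm \isoto \widetilde{T_Z X} - (Z \times \AAA^1)$ of equation \ref{eqn_alpha_f_cross}, the function $t$ corresponds to $f^\times(x,\lambda) = \lambda f(x)$ and $\bar{p}$ to $\pr_1$. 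Introducing the smooth scaling map $m: X \times \Gm \to X$, $(T_1,T_2,\ldots,T_n,\lambda) \mapsto (\lambda T_1,T_2,\ldots,T_n)$, one checks $f \circ m = f^\times$, so Lemma \ref{lem_smoothqcqs}.1 gives $\psi_{f^\times}(m^*\CF) \simeq m^*\psi_f(\CF)$. Combining this with the Milnor-fibre interpretation of Proposition \ref{prop_nearbyfibinterp} (which shows that the Milnor fibres of $\psi_{f^\times}(\pr_1^*\CF)$ and $\psi_{f^\times}(m^*\CF)$ at a point $(z,\lambda_0) \in Z \times \Gm$ are canonically the Milnor fibre of $f$ at $z$), one obtains a canonical iso $\nu_Z(\CF)|_{T_Z X - Z} \simeq q^*\psi_f(\CF)$, where $q: T_Z X - Z \to Z$ is the bundle projection; the equivariant monodromy of $\nu_Z$ corresponds here to the opposite of the monodromy of $\psi_f(\CF)$.

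With this identification, $\psi_{\tilde{f}}(\nu_Z(\CF)) = \psi_u(\nu_Z(\CF))$ computes as $\psi_u(q^*\psi_f(\CF))$, and an analogue of Example \ref{ex_phicomp}.2 (the nearby cycle in $u$ of a sheaf pulled back from $Z$ along $Z \times \AAA^1_u \to Z$ is the original sheaf with the inherited monodromy) yields the iso $\psi_{\tilde{f}}(\nu_Z(\CF)) \simeq \psi_f(\CF)$ with matching usual monodromies, proving (2). The $\phi$-statement follows from the octahedral axiom applied to the specialisation triangles $i^*\CF \to \psi_f(\CF) \to \phi_f(\CF)$ and $i_0^*\nu_Z(\CF) \to \psi_{\tilde{f}}(\nu_Z(\CF)) \to \phi_{\tilde{f}}(\nu_Z(\CF))$, combined with the iso $i_0^*\nu_Z(\CF) \simeq i^*\CF$ of Lemma \ref{lem_sommetdenu} and the $\psi$-iso just built. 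The main technical obstacle is verifying that the identification of $\psi_{f^\times}(\pr_1^*\CF)$ with $m^*\psi_f(\CF)$ works at the sheaf level, not just pointwise, with the correct $\mu$-action; this I expect to handle by tracking the constructions systematically through the $n$-th power covers $e_n: \Gm \to \Gm$ used to define the monodromy.
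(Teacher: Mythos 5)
Your overall route — first identify $s_f^*\nu_Z(\CF)$ with $\psi_f(\CF)$ via the isomorphism $\alpha: X\times\Gm\isoto\widetilde{T_ZX}-(Z\times\AAA^1)$, then derive the $\psi$-part of (2) using Proposition \ref{lem_rank1monbasic}.1, and finally obtain the $\phi$-part by comparing specialisation triangles — is the same route the paper takes. (Your opening sentence phrases this as reducing (1) to (2), but as you unfold the argument you in fact establish the content of (1) first and then use Proposition \ref{lem_rank1monbasic}.1 to get (2), so the logical flow ends up agreeing with the paper's.) However, there are two genuine gaps.

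First, the identification $\psi_{f^\times}(\pr_1^*\CF)|_{Z\times\lambda_0}\cong\psi_f(\CF)$ is not produced by your argument. Smooth base change applied to $m$ gives $\psi_{f^\times}(m^*\CF)\simeq m^*\psi_f(\CF)$, which concerns $m^*\CF$ and not $\pr_1^*\CF$; since these are different sheaves on $X\times\Gm$, no map between the two nearby cycles falls out. You propose to bridge the gap with the Milnor-fibre interpretation (Proposition \ref{prop_nearbyfibinterp}), but that proposition only produces isomorphisms of individual stalks, with no canonicity, naturality, or compatibility with the $\mu$-action — so stalkwise agreement does not yield a sheaf isomorphism. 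You flag this yourself as the "main technical obstacle", but it is the heart of the matter: without a globally defined map one cannot even state the compatibility of monodromies, nor glue over a cover of $X$. The paper handles this by constructing the map globally — it is $\psi_{f^\times}$ applied to the adjunction $\CF^\times\to l_*l^*\CF^\times$ for $l: X\times 1\hookrightarrow X\times\Gm$ — and then invokes Verdier's argument (\cite[350--351]{verdier_specialisation_1983}) to show this map is an isomorphism; the canonicity and the monodromy comparison are then checkable because the map exists globally.

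Second, for the $\phi$-isomorphism in (2), you appeal to the octahedral axiom. But to get an induced isomorphism on cones, you need to know that the square formed by $i_0^*\nu_Z(\CF)\simeq i^*\CF$ (from Lemma \ref{lem_sommetdenu}), the $\psi$-isomorphism, and the two specialisation maps $\mathrm{sp}$ commutes. This is not automatic and is not a consequence of the octahedral axiom by itself; one has to unwind how both isomorphisms were built and check the compatibility. The bulk of the paper's proof of (2) is precisely this verification (the chase through $\pi_*\iota^*(\bar\CF_1)$, $\pi_*\psi_t(\bar\CF_1)$, and the commutative diagram of Proposition \ref{lem_rank1monbasic}.1). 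Omitting it leaves the $\phi$-isomorphism unjustified.
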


\begin{proof}
    (1) We will use Diagram \ref{eqn_alpha_f_cross} and the same notations in that paragraph. Note $s^*_f\nu_Z(\CF)=s^*_f\psi_t(\bar{p}^*\CF)\simeq \psi_{f^\X}(\CF^\X)|_{Z\X1}$ (ignoring monodromies). The same proof as in \cite[350-351]{verdier_specialisation_1983} (until before the last paragraph) applies in our case and gives an isomorphism $\psi_{f^\X}(\CF^\X)|_{Z\X1}\isoto\psi_f(\CF)$ commuting with the nearby cycle monodromies. Here the map is induced by $\psi_{f^\X}$ applied to the adjunction: $\CF^\X\rightarrow l_*l^*\CF^\X$, where $l: X\X1\hookrightarrow X\X\Gm$ is the closed immersion. Combine the above, we get $s^*_f\nu_Z(\CF)\isoto\psi_f(\CF)$.\\\\ To see the claim about monodromies: on $\psi_{f^\X}(\CF^\X)$, we have (i) the equivariant monodromy, which is induced by $\theta_\lambda^*\psi_{f^\X}(\CF^\X)\simeq \psi_{\lambda f^\X}(\theta_\lambda^*\CF^\X)\simeq\psi_{\lambda f^\X}(\CF^\X)\isoto\psi_{f^\X}(\CF^\X)$ (the last map depends on the choice of a $\bar{\lambda}\in\varprojlim\mu_n(\lambda)$); and (ii) the usual nearby cycle monodromy. These two monodromies agree. On the other hand, the isomorphism $s^*_f\nu_Z(\CF)=s^*_f\psi_t(\bar{p}^*\CF)\simeq \psi_{f^\X}(\CF^\X)|_{Z\X1}$ induced by $\alpha$ reverses the equivariant monodromies, because $\alpha|_{Z\X\Gm}: Z\X\Gm\rightarrow (T_ZX-Z)=Z\X\Gm$ reverses the “$\Gm$-direction”. We conclude that the equivariant monodromy of $s_f^*(\nu_Z(\CF))$ is identified with the opposite of the usual monodromy of $\psi_f(\CF)$.\\\\
    (2) We have $T_ZX\simeq Z\X\AAA^1$ via the map induced by $\widetilde{f}$. Apply Proposition \ref{lem_rank1monbasic}.1 to the monodromic sheaf $\nu_Z(\CF)$, and use $s_f^*\nu_Z(\CF)\isoto\psi_f(\CF)$ from (1), we get $\psi_{\widetilde{f}}(\nu_Z(\CF))\isoto s^*_f\nu_Z(\CF)\isoto\psi_f(\CF)$, with each isomorphism reversing the monodromies. Denote the composition by $b$. The isomorphism $\phi_{\widetilde{f}}(\nu_Z(\CF))\cong\phi_f(\CF)$ is then obtained by taking cones of the left square in the following diagram, to be discussed below: 
    \begin{equation}\label{eqn_i_psi_phi}
\begin{tikzcd}
	{i^*_0\nu_Z(\mathcal{F})} & {\psi_{\widetilde{f}}(\nu_Z(\mathcal{F}))} & {\phi_{\widetilde{f}}(\nu_Z(\mathcal{F}))} \\
	{\mathcal{F}|_Z} & {\psi_f(\mathcal{F})} & {\phi_f(\mathcal{F})}
	\arrow["{\mathrm{sp}}", from=1-1, to=1-2]
	\arrow["\simeq", "a"', from=1-1, to=2-1]
	\arrow[from=1-2, to=1-3]
	\arrow["\simeq", "b"', from=1-2, to=2-2]
	\arrow["\cong", dashed, from=1-3, to=2-3]
	\arrow["{\mathrm{sp}}", from=2-1, to=2-2]
	\arrow[from=2-2, to=2-3]
\end{tikzcd}
    \end{equation}
Here, $i_0$ is the closed immersion of the 0-section in $T_ZX$, $\CF|_Z$ is the restriction of $\CF$ to $Z$, and $a$ is the inverse of the isomorphism in Lemma \ref{lem_sommetdenu}. We need to show the left square is commutative. In addition to the notations in Diagram \ref{eqn_deformation_normal_cone}, we will also use the following notations: $\bar\CF$ denotes $\bar p^*\CF$, $\pi$ denotes the projection $T_ZX\rightarrow Z$, $i_1$ denotes the closed immersion (see Diagram \ref{eqn_alpha_f_cross}) $X\times 1=\{(x,f(x),1)\}\hookrightarrow\widetilde{T_ZX}$ as well as its restriction on $\{t=0\}$: $Z\X1=\{(z,0,1)\}\hookrightarrow T_ZX$.\\\\
First note we have the following two commutative diagrams:
\[\begin{tikzcd}
	{\iota^*{\bar{\CF}}} & {\psi_t\bar{\CF}} & {(\psi_t\bar{\CF})_1} && {i^*_0\iota^*{\bar{\CF}}} & {i^*_0\psi_t\bar{\CF}} \\
	{\iota^*(\bar{\CF}_1)} & {\psi_t(\bar{\CF}_1)} &&& {\pi_*\iota^*{\bar{\CF}}} & {\pi_*\psi_t\bar{\CF}}
	\arrow["{\mathrm{sp}}", from=1-1, to=1-2]
	\arrow["{\iota^*(\mathrm{adj}^*_1)}"', from=1-1, to=2-1]
	\arrow[from=1-2, to=1-3]
	\arrow["{\psi_t(\mathrm{adj}^*_1)}"', from=1-2, to=2-2]
	\arrow[from=1-3, to=2-2]
	\arrow["{i^*_0(\mathrm{sp})}", from=1-5, to=1-6]
	\arrow["{\mathrm{sp}}"', from=2-1, to=2-2]
	\arrow["\simeq", from=2-5, to=1-5]
	\arrow["{\pi_*(\mathrm{sp})}"', from=2-5, to=2-6]
	\arrow["\simeq"', from=2-6, to=1-6]
\end{tikzcd}\]
Here, $\bar\CF_1$ denotes $i_{1*}i_1^*\bar\CF$, similarly for $(\psi_t\bar\CF)_1$, the top right arrow in the left diagram is $\id\rightarrow i_{1*}i_1^*$, the slant arrow is the isomorphism in (1), and the vertical isomorphisms in the right diagram are as in Lemma \ref{lem_hyp_localisation}. The commutativity of the squares follows from the commutativity of $\mathrm{sp}$ with $\mathrm{adj}^*_1$ and with the first isomorphism in Lemma \ref{lem_hyp_localisation}. We leave the details to the reader.\\\\
Apply $\pi_*$ to the left diagram and use the right diagram, we get a commutative diagram:
\[\begin{tikzcd}
	{i^*_0\iota^*{\bar{\CF}}} & {i^*_0\psi_t\bar{\CF}} \\
	{\pi_*\iota^*{\bar{\CF}}} & {\pi_*\psi_t\bar{\CF}} & {\pi_*((\psi_t\bar{\CF})_1)} \\
	{\pi_*\iota^*(\bar{\CF}_1)} & {\pi_*\psi_t(\bar{\CF}_1)}
	\arrow["{i^*_0(\mathrm{sp})}", from=1-1, to=1-2]
	\arrow["\simeq", from=2-1, to=1-1]
	\arrow["{\pi_*(\mathrm{sp})}", from=2-1, to=2-2]
	\arrow["\simeq"', from=2-1, to=3-1]
	\arrow["{i^*_0\psi_t\bar{\CF}}", from=2-2, to=1-2]
	\arrow[from=2-2, to=2-3]
	\arrow[from=2-2, to=3-2]
	\arrow[from=2-3, to=3-2]
	\arrow["{\pi_*(\mathrm{sp})}"', from=3-1, to=3-2]
\end{tikzcd}\]
We have canonical isomorphisms $\pi_*\iota^*(\bar\CF_1)\simeq \CF|_Z$, and $\pi_*\psi_t(\bar\CF_1)\simeq \psi_f\CF$, compatible with $\mathrm{sp}$. Combined with the above diagram, we get a commutative diagram:
\[\begin{tikzcd}
	{i^*_0\iota^*{\bar{\CF}}} & {\pi_*((\psi_t\bar{\CF})_1)} \\
	{\CF|_Z} & {\psi_f\CF}
	\arrow[from=1-1, to=1-2]
	\arrow["\simeq"', from=1-1, to=2-1]
	\arrow["\simeq", from=1-2, to=2-2]
	\arrow["{\mathrm{sp}}"', from=2-1, to=2-2]
\end{tikzcd}\]
Identify $\pi_*((\psi_t\bar{\CF})_1)$ with $\psi_{\widetilde{f}}(\nu_Z(\mathcal{F}))$ via Proposition \ref{lem_rank1monbasic}.1 and use the commutative diagram there, one checks that the above diagram coincides with the left square in Diagram \ref{eqn_i_psi_phi}. This completes the proof.
\end{proof}

Let 
\[\begin{tikzcd}
	N & Y \\
	M & X
	\arrow[hook, from=1-1, to=1-2]
	\arrow["{f_N}"', from=1-1, to=2-1]
	\arrow["f", from=1-2, to=2-2]
	\arrow[hook, from=2-1, to=2-2]
\end{tikzcd}\] be a commutative diagram of quasi-separated smooth rigid analytic varieties, where the horizontal maps are closed immersions. This induces the following two diagrams: 
\begin{equation}\label{eqn_TX_TY_correspondence_diagrams}
\begin{tikzcd}
	& {N\times_XT_MX} &&&& {N\times_MT^*_MX} \\
	{T_NY} && {T_MX} && {T^*_NY} && {T^*_MX}
	\arrow["{v'}", from=1-2, to=2-3]
	\arrow["u"', from=1-6, to=2-5]
	\arrow["v", from=1-6, to=2-7]
	\arrow["{u'}", from=2-1, to=1-2]
\end{tikzcd}
\end{equation}
If $f$ is proper, then $v$ and $v'$ are proper; if $f$ and $f_N$ are smooth, then $v$ and $v'$ are smooth, $u$ is a closed immersion, and $u'$ is a smooth surjection.

\begin{lemma}\label{lem_nufunctoriality}
    The set-up is as above. Denote by $\widetilde{f}$ the composition $v'u': T_NY\rightarrow T_MX$.\\
    (1) If $f$ is proper and the diagram is Cartesian, let $\CG\in \Dbzc(Y)$, then there is a canonical isomorphism $\nu_M(f_*\CG)\simeq\widetilde{f}_*\nu_N(\CG)$.\\
    (2) If $f$ and $f_N$ are smooth, let $\CF\in \Dbzc(X)$, then there is a canonical isomorphism $\nu_N(f^*\CF)\simeq\widetilde{f}^*\nu_M(\CF)$.
\end{lemma}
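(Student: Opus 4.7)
The approach is to lift everything from the normal bundles to the total deformation spaces and reduce both statements to the functorial properties of nearby cycles in Lemma \ref{lem_smoothqcqs}. Using the Rees algebra description $\widetilde{T_M X} = \underline{\Spa}_X(\bigoplus_n t^n I_M^{-n})$ (and likewise for $N \hookrightarrow Y$) together with the containment $f^* I_M \subseteq I_N$ \textemdash{} which is just the hypothesis $f(N) \subseteq M$ \textemdash{} the commutative square of rigid analytic varieties induces a natural map $\tilde{f}: \widetilde{T_N Y} \to \widetilde{T_M X}$ of $\AAA^1$-varieties fitting into a commutative square with vertical maps $\bar{p}_Y$, $\bar{p}_X$ and bottom map $f$. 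By construction, the restriction of $\tilde{f}$ to $\{t=0\}$ is $\widetilde{f}: T_N Y \to T_M X$, and its restriction to $\Gm$ is $f \times \id$.

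For (1), the Cartesian hypothesis $N = Y \times_X M$ gives $I_N = I_M \cdot \CO_Y$, and because both $M$ and $N$ are smoothly embedded the ideals $I_M^n$ are generated locally by regular sequences that remain regular after base change; consequently $I_M^n \otimes_{\CO_X} \CO_Y \isoto I_N^n$ and the Rees algebra for $I_N$ base-changes from that for $I_M$. The removed locus $\overline{(X-M) \times 0}$ in the blow-up description also base-changes correctly since $f^{-1}(X-M) = Y - N$. Therefore $\widetilde{T_N Y} \simeq Y \times_X \widetilde{T_M X}$, the displayed square is Cartesian, and $\tilde{f}$ is proper as a base change of $f$. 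Proper base change then gives $\bar{p}_X^* f_* \CG \simeq \tilde{f}_* \bar{p}_Y^* \CG$; applying $\psi_t$ and invoking Lemma \ref{lem_smoothqcqs}(2) to commute $\psi_t$ past $\tilde{f}_*$ yields
\[
\nu_M(f_* \CG) = \psi_t \bar{p}_X^* f_* \CG \simeq \psi_t \tilde{f}_* \bar{p}_Y^* \CG \simeq \widetilde{f}_* \psi_t \bar{p}_Y^* \CG = \widetilde{f}_* \nu_N(\CG).
\]

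For (2), the technical heart is smoothness of $\tilde{f}$. Its restriction to $\Gm$ is $f \times \id$, smooth by hypothesis. Its restriction to $\{t=0\}$ is $\widetilde{f} = v' \circ u'$: the map $v'$ is the base change of the smooth $f_N$ and hence smooth, and $u': T_N Y \to N \times_M T_M X$ is a smooth surjection of vector bundles (its kernel is the normal bundle of $N$ in the smooth variety $f^{-1}(M)$, which exists as a vector bundle since $N \hookrightarrow f^{-1}(M)$ is a closed immersion of smooth $M$-varieties). In local coordinates with $I_M = (x_1, \ldots, x_r)$ and $I_N = (f^* x_1, \ldots, f^* x_r, y'_1, \ldots, y'_s)$, the source $\widetilde{T_N Y}$ is a local complete intersection in a polynomial extension of $Y$, hence Cohen-Macaulay, while the target $\widetilde{T_M X}$ is smooth; all fibers of $\tilde{f}$ over $\AAA^1$ have common dimension $\dim Y - \dim X$. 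Miracle flatness then gives flatness of $\tilde{f}$, and fiberwise smoothness upgrades this to smoothness. Lemma \ref{lem_smoothqcqs}(1) now yields
\[
\nu_N(f^* \CF) = \psi_t \bar{p}_Y^* f^* \CF \simeq \psi_t \tilde{f}^* \bar{p}_X^* \CF \simeq \widetilde{f}^* \psi_t \bar{p}_X^* \CF = \widetilde{f}^* \nu_M(\CF).
\]

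The main obstacle in each case is the geometric analysis of $\tilde{f}$. In (1), the Rees-algebra argument becomes clean once one checks carefully that the removed locus is compatible with base change; this is where both properness and the Cartesian hypothesis enter. In (2), the subtle point is that flatness of $\tilde{f}$ is not manifest from the Rees-algebra presentation \textemdash{} the cleanest route appears to be miracle flatness combined with an explicit fiberwise description on $\Gm$ and on $\{t=0\}$, both of which are smooth of dimension $\dim Y - \dim X$.
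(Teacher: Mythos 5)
Your proof of part (2) is essentially correct. The construction of $\tilde f$ via the Rees algebra and the identification of its restrictions to $\{t=0\}$ and $\Gm$ match the paper exactly, and the surjectivity analysis of $u'$ is the same as in the set-up preceding the lemma. Where you diverge from the paper is in proving smoothness of the total map $\tilde f$: you argue that $\widetilde{T_NY}$ is Cohen--Macaulay, the fibres are equidimensional, apply miracle flatness, and then upgrade flat-with-smooth-fibres to smooth. The paper instead uses the Jacobian criterion directly: since $\tilde f$ is a map over $\AAA^1$ pulling $dt$ back to $dt$, smoothness of both the generic fibre $f\times\mathrm{id}$ and the special fibre $\widetilde f$ forces the differential of $\tilde f$ to be surjective everywhere. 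Both routes are valid; the Jacobian argument is shorter and avoids the dimension-count bookkeeping, but your argument gives the same conclusion.

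Your proof of part (1), however, has a genuine error. You assert that under the Cartesian hypothesis the regular sequence locally generating $I_M$ pulls back to a regular sequence on $\CO_Y$, so that $I_M^n\otimes_{\CO_X}\CO_Y\isoto I_N^n$ and hence $\widetilde{T_NY}\simeq Y\times_X\widetilde{T_MX}$. This is false. Take $X=\PP^2$, $M$ a classical point $p$, $Y=\PP^1$ a line through $p$, $f$ the inclusion (proper), and $N=M\times_X Y=\{p\}$. All four spaces are smooth, the square is Cartesian, and yet $\mathrm{codim}_X M=2$ while $\mathrm{codim}_Y N=1$. Locally $I_M=(x,y)$ with $Y=V(x)$, and the pullback sequence $(f^*x,f^*y)=(0,y)$ is not regular; correspondingly $Y\times_X\widetilde{T_MX}$ and $\widetilde{T_NY}$ have fibres over $p$ of dimensions $2$ and $1$ respectively, so they are not isomorphic. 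With your square failing to be Cartesian, the proper-base-change step $\bar p_X^*f_*\CG\simeq\tilde f_*\bar p_Y^*\CG$ is unjustified. The paper sidesteps this entirely: it factors $\tilde f$ as $\widetilde{T_NY}\xhookrightarrow{\alpha}\widetilde{T_MX}\times_X Y\to\widetilde{T_MX}$, where $\alpha$ is merely a closed immersion (coming from the surjection $f^*I_M\twoheadrightarrow I_N$ on Rees algebras), and concludes that $\tilde f$ is proper. Then one invokes Lemma~\ref{lem_smoothqcqs}(2) to commute $\psi_t$ past $\tilde f_*$, and observes that $\psi_t$ depends only on the restriction to $\{t\neq 0\}$, over which the diagram really \emph{is} Cartesian ($Y\times\Gm\to X\times\Gm$ over $Y\to X$) so that ordinary proper base change applies there. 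Your argument in part (1) needs to be replaced by this factorization-plus-restriction strategy, or by something equivalent; the Rees-algebra base-change claim cannot be repaired.
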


\begin{proof}
    Recall $(\widetilde{T_MX}\xrightarrow{\bar{p}}X)=(\underline{\Spa}_X(\oplus_{n\in\ZZ}t^nI^{-n})\rightarrow X)$ where $I$ is the coherent $\CO_X$-ideal corresponding to $M$ in $X$. Denote by $J$ (resp. $\widetilde{I}$) the $\CO_Y$-ideal corresponding to $N$ (resp. $M':=M\X_XY$) in $Y$. Note $\widetilde{I}$ is the image of $f^*I$ in $\CO_Y$. We have $\widetilde{T_MX}\times_XY=\underline{\Spa}_X(\oplus t^n f^*I^{-n})\times_X Y$, $\widetilde{T_{M'}Y}=\underline{\Spa}_X(\oplus t^n \widetilde{I}^{-n})$ and $\widetilde{T_NY}=\underline{\Spa}_X(\oplus t^n J^{-n})$. The maps of $\mathcal{O}_Y$-modules $f^*I\twoheadrightarrow \widetilde{I}\hookrightarrow J$ induce the following diagrams:
\[\begin{tikzcd}
	{\widetilde{T_NY}} &&&& {T_NY} & {\widetilde{T_NY}} & {Y\times\mathbf{G}_m} \\
	{\widetilde{T_{M'}Y}} & {\widetilde{T_MX}\times_XY} & Y && {T_MX} & {\widetilde{T_MX}} & {X\times\mathbf{G}_m} \\
	& {\widetilde{T_MX}} & X && 0 & {\mathbf{A}^1} & {\mathbf{G}_m}
	\arrow[from=1-1, to=2-1]
	\arrow["{\bar{f}}"', curve={height=10pt}, dotted, from=1-1, to=3-2]
	\arrow[hook, from=1-5, to=1-6]
	\arrow["{\widetilde{f}}"', from=1-5, to=2-5]
	\arrow["\lrcorner"{anchor=center, pos=0.125}, draw=none, from=1-5, to=2-6]
	\arrow["{\bar{f}}"', from=1-6, to=2-6]
	\arrow[hook', from=1-7, to=1-6]
	\arrow["\lrcorner"{anchor=center, pos=0.125, rotate=-90}, draw=none, from=1-7, to=2-6]
	\arrow["{f\times \id}"', from=1-7, to=2-7]
	\arrow["\alpha", hook, from=2-1, to=2-2]
	\arrow[from=2-2, to=2-3]
	\arrow[from=2-2, to=3-2]
	\arrow["\lrcorner"{anchor=center, pos=0.125}, draw=none, from=2-2, to=3-3]
	\arrow["f", from=2-3, to=3-3]
	\arrow[hook, from=2-5, to=2-6]
	\arrow[from=2-5, to=3-5]
	\arrow["\lrcorner"{anchor=center, pos=0.125}, draw=none, from=2-5, to=3-6]
	\arrow["t"', from=2-6, to=3-6]
	\arrow[hook', from=2-7, to=2-6]
	\arrow[from=2-7, to=3-7]
	\arrow["{\bar{p}}", from=3-2, to=3-3]
	\arrow[hook, from=3-5, to=3-6]
	\arrow[""{name=0, anchor=center, inner sep=0}, hook', from=3-7, to=3-6]
	\arrow["\lrcorner"{anchor=center, pos=0.125, rotate=-90}, draw=none, from=2-7, to=0]
\end{tikzcd}\]
In the left diagram $\bar{f}$ denotes the composition map. Note that $\alpha$ is a closed immersion, and $\bar{f}$ is a map over $\AAA^1$, with the $0$-fibre being $\widetilde{f}$.\\\\
(1) As $f$ is proper and $N=M'$, we get $\bar{f}: \widetilde{T_NY}\xhookrightarrow{\alpha}\widetilde{T_MX}\times_XY\rightarrow\widetilde{T_MX}$ is proper. The statement then follows from the commutativity of nearby cycles with proper pushforwards.\\\\
(2) The statement will follow from the commutativity of nearby cycles with smooth pullbacks once we show $\bar{f}$ is smooth. Its restriction to over $\Gm$ is $f\X\id$, hence smooth. Its restriction to the $0$-fibre is $\widetilde{f}$, which is also smooth because of the smoothness of $f$ and $f_N$ (\textit{e.g.}, by the Jacobian criterion of smoothness (\cite[\nopp 1.6.9.(iii)]{huber_etale_1996})). As $\bar{f}$ is a map over $\AAA^1$, $dt$ on $\widetilde{T_MX}$ is pulled-back to $dt$ on $\widetilde{T_NY}$. The Jacobian criterion of smoothness then implies $\bar{f}$ is smooth.
\end{proof}

\begin{remark}
    In (1) above, $N$ being smooth together with the diagram being Cartesian imposes a condition on $f$. For example, the conclusion of (1) does not hold for $f: Y=\AAA^1\hookrightarrow X=\mathbf{A}^2$ $y\mapsto (x,y)$, $N=\{0\}$, and $M=V(x-y^2)$. 
\end{remark}

\section{Microlocalisation}\label{sec_microlocal}

In this section, we study the microlocalisation. It is a version of vanishing cycles for higher codimensions and without the choice of a function. This is made precise by Proposition \ref{lem_microlocalisation_is_vanishingcycle}. The set-up is as in §\ref{sec_intro}.

\begin{definition}[{\cite[\nopp §4.3]{kashiwara_sheaves_1990}}]
    Let $Z\hookrightarrow X$ be a closed immersion of quasi-separated smooth rigid analytic varieties. The \underline{microlocalisation} functor is defined as $\mu_Z$: $\DX\rightarrow D_{mon}(T^*_ZX)$, $\CF\mapsto \F\nu_Z(\CF)$, where $T^*_ZX$ is the conormal bundle of $Z$ in $X$.
\end{definition}

\begin{lemma}
    Let $Z\hookrightarrow X$ be a closed immersion of constant codimension $\delta$ of quasi-separated smooth rigid analytic varieties.\\
    (1) Assume $\Lambda=\FF_{\ell^r}$. Then $\mu_Z[\delta]$ is perverse t-exact.\\
    (2) For $\CF\in\Dbzc(X)$, there is an isomorphism $\DD\mu_Z\CF\cong(\mu_Z\DD\CF)\langle \delta\rangle$.
\end{lemma}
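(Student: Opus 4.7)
Both parts reduce immediately to the factorisation $\mu_Z=\F\circ\nu_Z$, where $\nu_Z:\Dbzc(X)\to D_{mon}(T_ZX)$ is the specialisation and $\F:D_{mon}(T_ZX)\to D_{mon}(T^*_ZX)$ is the monodromic Fourier transform. The normal bundle $T_ZX$ has constant rank equal to the codimension $\delta$, so the rank-$d$ hypotheses in the relevant results for $\F$ apply with $d=\delta$.

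For part (1), the plan is simply to compose two perverse t-exactness results already established. First, $\nu_Z$ is perverse t-exact by Lemma \ref{lem_nu&pervdual}.1. Second, since $T_ZX$ is a rank-$\delta$ vector bundle over the quasi-separated smooth rigid analytic variety $Z$, Corollary \ref{cor_F_perverse_D}.1 (which uses $\Lambda=\FF_{\ell^r}$) gives that $\F[\delta]$ is perverse t-exact. Composing, $\mu_Z[\delta]=\F[\delta]\circ\nu_Z$ is perverse t-exact.

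For part (2), the plan is to chain the two duality compatibilities already available. Starting from $\DD\mu_Z\CF=\DD\F\nu_Z\CF$, apply Corollary \ref{cor_F_perverse_D}.2 (with $d=\delta$) to obtain a canonical identification with $(\F\DD\nu_Z\CF)(-1)\langle\delta\rangle$, and then use the natural isomorphism $\DD\nu_Z\simeq\nu_Z\DD$ from Lemma \ref{lem_nu&pervdual}.2 to rewrite this as $(\F\nu_Z\DD\CF)(-1)\langle\delta\rangle=(\mu_Z\DD\CF)(-1)\langle\delta\rangle$. This yields an isomorphism $\DD\mu_Z\CF\cong(\mu_Z\DD\CF)\langle\delta\rangle$ up to the same Tate twist discrepancy that already appears in Corollary \ref{cor_F_perverse_D}.2; in particular, no new input beyond the two cited results is required.

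There is essentially no obstacle here, since both claims are immediate formal consequences of the corresponding statements for $\nu_Z$ (Lemma \ref{lem_nu&pervdual}) and for $\F$ (Corollary \ref{cor_F_perverse_D}). The only small bookkeeping point is to verify that $T_ZX$ has constant rank $\delta$, which follows from the assumption that $Z\hookrightarrow X$ has constant codimension $\delta$ and that both $Z$ and $X$ are smooth, so that $T_ZX$ is a genuine vector bundle of the expected rank. Once that is noted, both (1) and (2) are one-line arguments.
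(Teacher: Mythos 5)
Your proposal is correct and is exactly the route the paper takes: the paper's own proof is the one-liner ``these follow from the perverse t-exactness and commutativity with duality of $\nu_Z$ (Lemma 5.2) and $\F[\delta]$ (Corollary 4.7),'' which is precisely your factorisation $\mu_Z=\F\circ\nu_Z$ composed with the two cited results. The one thing worth flagging explicitly, which you half-acknowledge, is that chaining Corollary 4.7.2 with Lemma 5.2.2 literally produces $\DD\mu_Z\CF\cong(\mu_Z\DD\CF)(-1)\langle\delta\rangle$, carrying the extra $(-1)$ from Corollary 4.7.2; since $K$ is algebraically closed and the coefficients are $\ell$-power torsion, the Tate twist is non-canonically trivial, and the statement uses ``$\cong$'' (which the paper reserves for isomorphisms without canonicity claims), so the $(-1)$ may be dropped. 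It would sharpen your write-up to state the derived twist $(-1)\langle\delta\rangle$ first and then observe it collapses to $\langle\delta\rangle$ up to non-canonical isomorphism, rather than describing it as a ``Tate twist discrepancy that already appears in Corollary 4.7.2.''
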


\begin{proof}
    These follow from the perverse t-exactness and commutativity with duality of $\nu_Z$ (Lemma \ref{lem_nu&pervdual}) and $\F[\delta]$ (Corollary \ref{cor_F_perverse_D}). 
\end{proof}

\begin{lemma}\label{lem_microlocalisation_is_vanishingcycle}
    Let $i: Z\hookrightarrow X$ be a closed immersion of quasi-separated smooth rigid analytic varieties, $\CF\in \DX$, and $i'_0: Z\hookrightarrow T^*_ZX$ be the 0-section. Then there are canonical isomorphisms $i_0'^*\mu_Z(\CF)\simeq i^!\CF$ and, when $Z$ is of constant codimension $\delta$, $i_0'^!\mu_Z(\CF)\simeq i^*\CF\langle -\delta\rangle$.
\end{lemma}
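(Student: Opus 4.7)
The plan is to reduce the computation of $i'^*_0\mu_Z(\CF)$ and $i'^!_0\mu_Z(\CF)$ to pullbacks of $\nu_Z(\CF)$ along the zero section of $T_ZX$, and then invoke Lemma \ref{lem_sommetdenu} to reach $i^!\CF$ and $i^*\CF$. The key tool is the functoriality of the Fourier transform (Lemma \ref{lem_fourierfunctoriality}) applied to a very degenerate map of vector bundles, combined with hyperbolic localisation for monodromic sheaves (Lemma \ref{lem_hyp_localisation}).

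First, view the projection $p\colon T_ZX\to Z$ as a morphism of vector bundles over $Z$ from the rank-$\delta$ bundle $T_ZX$ to the zero bundle $Z$; its dual morphism is then the zero section $i'_0\colon Z\hookrightarrow T^*_ZX$. Observe that on the zero bundle the monodromic Fourier transform is the identity: in Definition-Lemma \ref{def_fourier} the pairing $m$ becomes the constant map to $0\in\AAA^1$, and $B=u_*u^*\underline{\Lambda}$ has stalk $\Lambda$ at $0$, so $m^*B\simeq\underline{\Lambda}_Z$ and $\F=\id$. With $\widetilde{d}=\delta$ and $d=0$, the shift appearing in Lemma \ref{lem_fourierfunctoriality} equals $\delta$.

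Applying Lemma \ref{lem_fourierfunctoriality}.1 to $\alpha=p$, $\alpha'=i'_0$, and the monodromic sheaf $\nu_Z(\CF)$ yields a canonical isomorphism $p_!\nu_Z(\CF)\simeq i'^*_0\widetilde{\F}\nu_Z(\CF)=i'^*_0\mu_Z(\CF)$. By Lemma \ref{lem_hyp_localisation} applied to $\nu_Z(\CF)$, the left-hand side is canonically isomorphic to $i_0^!\nu_Z(\CF)$, and by Lemma \ref{lem_sommetdenu} the latter is canonically isomorphic to $i^!\CF$. Composing these three canonical isomorphisms gives $i'^*_0\mu_Z(\CF)\simeq i^!\CF$. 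For the second isomorphism, apply Lemma \ref{lem_fourierfunctoriality}.2 in the same set-up to obtain $p_*\nu_Z(\CF)\simeq i'^!_0\widetilde{\F}\nu_Z(\CF)\langle\delta\rangle=i'^!_0\mu_Z(\CF)\langle\delta\rangle$. Now Lemma \ref{lem_hyp_localisation} identifies $p_*\nu_Z(\CF)$ canonically with $i_0^*\nu_Z(\CF)$, which by Lemma \ref{lem_sommetdenu} is canonically isomorphic to $i^*\CF$. Applying the shift $\langle-\delta\rangle$ then yields $i'^!_0\mu_Z(\CF)\simeq i^*\CF\langle-\delta\rangle$.

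I do not anticipate a serious obstacle: once one records the harmless observation that Fourier on a rank-$0$ bundle is the identity, the entire proof is a symbolic concatenation of canonical isomorphisms that have already been established in the earlier sections. The one point that deserves a line of care is simply to verify that the shift conventions match between the statements of Lemma \ref{lem_fourierfunctoriality}.2 and the claim of the present lemma, so that the twist $\langle-\delta\rangle$ ends up on the correct side.
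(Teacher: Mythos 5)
Your proof is correct and follows exactly the same route as the paper: apply the Fourier functoriality of Lemma \ref{lem_fourierfunctoriality} (items 1 and 2) to the projection $p\colon T_ZX\to Z$ viewed as a map to the zero bundle, then Lemma \ref{lem_hyp_localisation} to swap $p_!,p_*$ for $i_0^!,i_0^*$, then Lemma \ref{lem_sommetdenu}. The only difference is cosmetic: you spell out which $\alpha$ is used and record the (correct and worthwhile) observation that the monodromic Fourier transform on the rank-$0$ bundle is the identity, details the paper leaves implicit.
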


\begin{proof}
    Denote by $\pi: T_ZX\rightarrow Z$ the projection, and by $i_0: Z\hookrightarrow T_ZX$ the $0$-section. Use Lemma \ref{lem_fourierfunctoriality}, Lemma \ref{lem_sommetdenu}, and Lemma \ref{lem_hyp_localisation}, we compute: $i_0'^*\mu_Z(\CF)\simeq\pi_!\nu_Z(\CF)\isoot i_0^!\nu_Z(\CF)\isoto i^!\CF$, and $i_0'^!\mu_Z(\CF)\simeq \pi_*\nu_Z(\CF)\langle-\delta\rangle\isoto i_0^*\nu_Z(\CF)\langle-\delta\rangle\isoot i^*\CF\langle-\delta\rangle$.
\end{proof}

\begin{lemma}\label{lem_microlocalisation_is_vanishingcycle}
    Let $Z=V(f)\hookrightarrow X$ be a closed immersion of a smooth hypersurface in a quasi-separated smooth rigid analytic variety. $df$ determines a section $s'_f: Z\hookrightarrow T^*_ZX$, $z\mapsto (z,df_z)$. Then, for $\CF\in\DX$, there is an isomorphism $s'^*_f(\mu_Z(\CF))\cong\Phi_f(\CF)(-1)$, under which the equivariant monodromy of $s'^*_f(\mu_Z(\CF))$ coincides with the usual monodromy of $\Phi_f(\CF)(-1)$.
\end{lemma}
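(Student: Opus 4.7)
The plan is to reduce the statement directly to the rank-one monodromic Fourier computation of Lemma \ref{lem_rank1monbasic_F} applied to the sheaf $\nu_Z(\CF)$, combined with the identification of $\Phi_{\widetilde{f}} \nu_Z(\CF)$ with $\Phi_f(\CF)$ from Proposition \ref{lem_nu&psi}.2.

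First, I would set up the linear coordinates. Since $Z = V(f)$ is a smooth hypersurface, the normal bundle $T_Z X$ is a trivial rank-one bundle over $Z$, and the map $\widetilde{f}: T_Z X \to \AAA^1$ induces a canonical isomorphism $T_Z X \isoto Z \times \AAA^1$ as in Proposition \ref{lem_nu&psi}. Dually, $T^*_Z X \simeq Z \times \AAA^1$ where the 1-section corresponds to the conormal direction $df$; that is, $s'_f: z \mapsto (z,df_z)$ becomes the 1-section of the dual bundle under this trivialisation. By construction of $\F$, this identifies $s'^*_f \mu_Z(\CF) = s'^*_f \F \nu_Z(\CF)$ with the stalk $(\F \nu_Z(\CF))_{1'_Z}$ in the sense of Lemma \ref{lem_rank1monbasic_F}.

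Next, I apply Lemma \ref{lem_rank1monbasic_F} to the monodromic sheaf $\nu_Z(\CF) \in D_{mon}(T_Z X) = D_{mon}(\AAA^1_Z)$ with $t = \widetilde{f}$. This yields an isomorphism
\[
s'^*_f \mu_Z(\CF) \;=\; (\F \nu_Z(\CF))_{1'_Z} \;\cong\; (\Phi_{\widetilde{f}} \nu_Z(\CF))(-1),
\]
under which the equivariant monodromy of the left-hand side is identified with the (usual) nearby-cycle monodromy on $\Phi_{\widetilde{f}} \nu_Z(\CF)(-1)$. Finally, Proposition \ref{lem_nu&psi}.2 provides an isomorphism $\phi_{\widetilde{f}} \nu_Z(\CF) \cong \phi_f(\CF)$, and hence $\Phi_{\widetilde{f}} \nu_Z(\CF) \cong \Phi_f(\CF)$, compatibly with the usual monodromies. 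Composing these isomorphisms gives the desired identification $s'^*_f \mu_Z(\CF) \cong \Phi_f(\CF)(-1)$ with matched monodromies.

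The only genuine bookkeeping is the monodromy matching: one needs to confirm that the two sign-reversals built into Lemma \ref{lem_rank1monbasic_F} (equivariant vs.\ nearby-cycle monodromy) and into Proposition \ref{lem_nu&psi} (equivariant on $\nu_Z$ vs.\ nearby-cycle on $\psi_f$) align so that the \emph{usual} monodromy of $\Phi_f(\CF)(-1)$ corresponds to the \emph{equivariant} monodromy of $s'^*_f \mu_Z(\CF)$. Since both Lemma \ref{lem_rank1monbasic_F} and Proposition \ref{lem_nu&psi}.2 are explicit about which monodromies get identified, this is a direct compatibility check rather than a new computation; I do not anticipate substantial obstacles beyond this accounting.
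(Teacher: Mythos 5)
Your argument is exactly the paper's proof, just written out in more detail: the paper likewise combines Lemma \ref{lem_rank1monbasic_F} applied to $\nu_Z(\CF)$ (with $t=\widetilde f$) and Proposition \ref{lem_nu&psi}.2, and both cited results already carry the monodromy identifications you invoke. Your ``bookkeeping'' worry at the end is already resolved by the way those two results are stated, so the composite directly gives the claimed matching of equivariant and usual monodromies.
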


\begin{proof}
    Use Lemma \ref{lem_rank1monbasic_F} and Proposition \ref{lem_nu&psi}.2 (and the same notations), we compute: $s'^*_f(\mu_Z(\CF))\cong \Phi_{\widetilde{f}}(\nu_Z(\CF))(-1)\cong\Phi_f(\CF)(-1)$.
\end{proof}

\begin{lemma}\label{lem_mu_functoriality}
   Let 
\[\begin{tikzcd}
	N & Y &&& {N\times_MT^*_MX} \\
	M & X && {T^*_NY} && {T^*_MX}
	\arrow[hook, from=1-1, to=1-2]
	\arrow["{f_N}"', from=1-1, to=2-1]
	\arrow["f", from=1-2, to=2-2]
	\arrow["u"', from=1-5, to=2-4]
	\arrow["v", from=1-5, to=2-6]
	\arrow[hook, from=2-1, to=2-2]
\end{tikzcd}\]
be a commutative diagram of quasi-separated smooth rigid analytic varieties and the induced correspondence of conormal bundles.\\
(1) If $f$ is proper and the diagram is Cartesian, let $\CG\in\Dbzc(Y)$, then there is a canonical isomorphism $\mu_M(f_*\CG)\simeq f_\circ\mu_N(\CG)$. Assume furthermore $N$ and $M$ have constant codimensions, let $\delta=\mathrm{codim}_YN-\mathrm{codim}_XM$, then there is a canonical isomorphism $\mu_M(f_*\CG)\simeq f_{\dot\circ}\mu_N(\CG)\langle\delta\rangle$. Here $f_{\circ}:=v_*u^*$ and $f_{\dot\circ}:=v_*u^!$.\\ 
(2) If $f$ and $f_N$ are smooth, let $\CF\in \DX$, then there is a canonical isomorphism $\mu_N(f^!\CF)\simeq f^{\dot\circ}\mu_M(\CF)$, where $f^{\dot\circ}:=u_*v^!$.
\end{lemma}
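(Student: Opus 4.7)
The plan is to factorise $\widetilde{f} = v' \circ u': T_NY \to N\times_M T_MX \to T_MX$ (and dually $u, v$ between the cotangent bundles), and then combine the functoriality of specialisation (Lemma \ref{lem_nufunctoriality}) with that of the Fourier transform along each factor: Lemma \ref{lem_F_base_change} will handle $v'$ (which is the base change of $f_N$ along $T_MX \to M$), and Lemma \ref{lem_fourierfunctoriality} will handle $u'$ (which is a morphism of vector bundles over $N$). Denote Fourier on $T_MX\to M$, $N\times_M T_MX\to N$, $T_NY\to N$ by $\F^X_M$, $\F^{X,N}_M$, $\F^Y_N$.

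For (1), I would begin with $\mu_M(f_*\CG) = \F^X_M\nu_M(f_*\CG) \simeq \F^X_M v'_* u'_* \nu_N(\CG)$ via Lemma \ref{lem_nufunctoriality}.1. Since $f_N: N \to M$ is proper (a base change of the proper $f$), the maps $v'$ and $v$ are both proper, so Lemma \ref{lem_F_base_change} in its $_!$-form (combined with $_! = _*$ for proper maps) rewrites this as $v_* \F^{X,N}_M u'_* \nu_N(\CG)$. The crucial observation is that $\widetilde{f}$ is proper (as shown in the proof of Lemma \ref{lem_nufunctoriality}.1, where $\bar f$ was proper), and $v'$ is separated, so by standard cancellation $u'$ is itself proper; in particular $u'_! = u'_*$. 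Applying Lemma \ref{lem_fourierfunctoriality}.1 with $\alpha = u'$, $\alpha' = u$ yields $\F^{X,N}_M u'_* \simeq u^* \F^Y_N$, giving the first isomorphism $\mu_M(f_*\CG) \simeq v_* u^* \mu_N(\CG) = f_\circ\mu_N(\CG)$. Applying instead Lemma \ref{lem_fourierfunctoriality}.2 (using that the rank difference $\mathrm{rk}(T_NY) - \mathrm{rk}(N\times_M T_MX)$ equals $\delta = \mathrm{codim}_YN - \mathrm{codim}_XM$) yields $\F^{X,N}_M u'_* \simeq u^! \F^Y_N \langle \delta \rangle$, producing the second isomorphism $\mu_M(f_*\CG) \simeq v_* u^! \mu_N(\CG)\langle\delta\rangle = f_{\dot\circ}\mu_N(\CG)\langle\delta\rangle$.

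For (2), write $f^!\CF \simeq f^*\CF\langle r\rangle$ with $r = \dim Y - \dim X$, so that Lemma \ref{lem_nufunctoriality}.2 gives $\nu_N(f^!\CF) \simeq \widetilde{f}^*\nu_M(\CF)\langle r\rangle = u'^* v'^* \nu_M(\CF)\langle r\rangle$. Apply $\F^Y_N$, commute past $v'^*$ using the $^*$-form of Lemma \ref{lem_F_base_change} to produce $v^*\F^X_M$, and commute past $u'^*$ using Lemma \ref{lem_fourierfunctoriality}.4, which yields $u_!$ with a twist $\langle -\delta\rangle$. The net twist is $\langle r - \delta\rangle = \langle r_N\rangle$, where $r_N = \dim N - \dim M$. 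Under the smoothness hypothesis, $u$ is a closed immersion (so $u_! = u_*$) and $v$ is smooth of relative dimension $r_N$ (so $v^*\langle r_N\rangle \simeq v^!$), hence $\mu_N(f^!\CF) \simeq u_* v^!\mu_M(\CF) = f^{\dot\circ}\mu_M(\CF)$.

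The arguments are essentially formal chasings of the functorialities established earlier; the only points requiring care are the cancellation step showing $u'$ is proper in (1), the verification that the Zariski-constructibility and monodromicity hypotheses of Lemmas \ref{lem_F_base_change} and \ref{lem_fourierfunctoriality} hold for the intermediate sheaves (which follows from Lemma \ref{lem_monfunctoriality} and the fact that proper and smooth pushforwards/pullbacks preserve Zariski-constructibility), and the bookkeeping of Tate twists in (2). I do not anticipate any deeper obstacle.
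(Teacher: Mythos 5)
Your proof is correct and follows the paper's own strategy essentially verbatim: decompose $\widetilde{f}=v'\circ u'$, push $\nu$ through via Lemma \ref{lem_nufunctoriality}, then commute $\F$ past $v'$ (Lemma \ref{lem_F_base_change}) and past $u'$ (Lemma \ref{lem_fourierfunctoriality}). The only cosmetic differences are that in (1) you identify $u'_*\simeq u'_!$ by deducing properness of $u'$ from cancellation (the paper instead passes through $\widetilde f_!=v'_!u'_!=v'_*u'_!$ directly, which makes the same properness observations but one level up), and in (2) you track twists through $u'^*$, $v'^*$ rather than converting to $!$-pullbacks via Poincar\'e duality up front; both variants are equivalent.
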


Note $f_\circ$, $f_{\dot\circ}$ and $f^{\dot\circ}$ depend on $N$ and $M$. We omit this dependence in the notation when there is no risk of confusion.

\begin{proof}
    The notations are as in Diagram \ref{eqn_TX_TY_correspondence_diagrams} and Lemma \ref{lem_nufunctoriality}.\\
    (1). Working locally, we may assume $f$ has constant relative dimension, and $N$ and $M$ have constant codimensions. Compute: $\mu_M(f_*\CG)=\F\nu_M(f_*\CG)\simeq\F\widetilde{f}_*\nu_N(\CG)\simeq \F\widetilde{f}_!\nu_N(\CG)\simeq\F v'_*u'_!\nu_N(\CG)\simeq v_*\F u'_!\nu_N(\CG)\simeq v_*u^*\mu_N(\CG)$, where in the second step we used Lemma \ref{lem_nufunctoriality}.1, in the fifth step we used Lemma \ref{lem_F_base_change}, in the sixth step we used Lemma \ref{lem_fourierfunctoriality}.2. Similarly, $\mu_M(f_*\CG)=\F\nu_M(f_*\CG)\simeq\F\widetilde{f}_*\nu_N(\CG)\simeq\F v'_*u'_*\nu_N(\CG)\simeq v_*\F u'_*\nu_N(\CG)\simeq v_*u^!\mu_N(\CG)\langle\delta\rangle$.\\\\
    (2). Working locally, we may assume $f$ has constant relative dimension, and $N$ and $M$ have constant codimensions. Compute: $\mu_N(f^!\CF)\simeq \F\nu_N(f^*\CF)\langle \dim f\rangle\simeq \F\widetilde{f}^*\nu_M(\CF)\langle \dim f\rangle\simeq \F\widetilde{f}^!\nu_M(\CF)\langle \dim f-\dim f\rangle\simeq \F u'^!v'^!\nu_M(\CF)\simeq u_*\F v'^!\nu_M(\CF)\simeq u_*v^!\mu_M(\CF)$. Most steps are similar to the above, in the first (resp. third) (resp. last) step we used the smoothness of $f$ (resp. $\widetilde{f}$) (resp. $f$) and Poincaré duality.
\end{proof}

\section{Micro-hom}\label{sec_muhom}

In this section, we study the micro-hom. It can be viewed as a generalisation of microlocalisation, a version of vanishing cycles over general bases in our context (Lemma \ref{lem_muhom_is_microlocalisation}), and the sheaf-theoretic analogue of $\RHom$ of microdifferential-modules. The set-up is as in §\ref{sec_intro}.\\

Let $f: Y\rightarrow X$ be a map of separated smooth rigid analytic varieties. Denote its graph by $\Gamma$, which is a closed analytic subvariety of $Y\X X$. We make the identification $T_\Gamma^*(Y\X X)\simeq Y\X_X  T^*X$. Denote by $q_Y$ (resp. $q_X$) the projection from $Y\X X$ to $Y$ (resp. $X$).

\begin{definition}[{\cite[\nopp §4.4]{kashiwara_sheaves_1990}}]
    Let $f: Y\rightarrow X$ be a map of separated smooth rigid analytic varieties. Define the following \underline{micro-hom} functors:\\
    (1) $\mu hom((-))\rightarrow(-))$: $\Dbzc(Y)^{op}\times\DX\rightarrow D_{mon}(Y\X_XT^*X)$, $(\CG,\CF)\mapsto \mu hom(\CG\rightarrow\CF):=\mu_\Gamma\RHom_{Y\X X}(q_Y^*\CG,q_X^!\CF)$.\\
    (2) $\mu hom((-))\leftarrow(-))$: $\Dbzc(Y)\times\DX^{op}\rightarrow D_{mon}(Y\X_XT^*X)$, $(\CG,\CF)\mapsto \mu hom(\CG\leftarrow\CF):=a^*\mu_\Gamma\RHom_{Y\X X}(q_X^*\CF,q_Y^!\CG)$, where $a$ is the antipodal map on $Y\X_XT^*X$.\\\\
    For $Y=X$, $f=\id$, define $\mu hom(-,-)$: $\Dbzc(X)^{op}\times\DX\rightarrow D_{mon}(T^*X)$, $(\CF_1,\CF_2)\mapsto \mu hom(\CF_1,\CF_2)$ $:=\mu_\Delta\RHom_{X\X X}(q_1^*\CF_1,q_2^!\CF_2)\simeq a^*\mu_\Delta\RHom_{X\X X}(q_2^*\CF_1,q_1^!\CF_2)$, where $q_1$ (resp. $q_2$) is the first (resp. second) projection from $X\X X$, and $\Delta$ is the diagonal.
\end{definition}

In the analytic context there is no known analogue of vanishing cycles over general bases as in the algebraic context. The micro-hom serves as an alternative: $\mu hom(\CG\rightarrow\CF)$ records information of $\CF$ as “probed” by $\CG$. This can be made precise in the following special case.

\begin{lemma}\label{lem_muhom_is_microlocalisation}
    Let $i: Z\hookrightarrow X$ be a closed immersion of separated smooth rigid analytic varieties and $\CF\in\DX$. Then there is a canonical isomorphism $\mu hom(i_*\underline{\Lambda}_Z,\CF)\simeq\widetilde{i}_*\mu_Z(\CF)$, where $\widetilde{i}$ denotes the closed immersion $T^*_ZX\hookrightarrow T^*X$.
\end{lemma}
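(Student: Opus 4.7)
The plan is to unwind the definition of $\mu hom$, apply the functoriality of microlocalisation to move to the setting of $(Z\times X, \Delta_Z)$, and then combine specialisation with Fourier functoriality to relate the result to $\mu_Z(\CF)$.

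First, by proper base change and the tensor-Hom adjunction applied to $i\times\id: Z\times X \hookrightarrow X\times X$, one has $\RHom_{X\times X}(q_1^* i_*\underline{\Lambda}_Z, q_2^!\CF) \simeq (i\times\id)_* p^!\CF$, where $p := q_2\circ (i\times\id): Z\times X \to X$ is the second projection. The diagonal intersection $\Delta_Z := \Delta\cap(Z\times X) = \{(z,z):z\in Z\}$ is smooth and fits into a Cartesian square with $i\times\id$, which is a closed immersion and hence proper. Applying Lemma \ref{lem_mu_functoriality}(1) gives $\mu hom(i_*\underline{\Lambda}_Z, \CF) \simeq \mu_\Delta\bigl((i\times\id)_* p^!\CF\bigr) \simeq (i\times\id)_\circ \mu_{\Delta_Z}(p^!\CF)$. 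Under the standard identifications $T^*_\Delta(X\times X) \simeq T^*X$ and $T^*_{\Delta_Z}(Z\times X) \simeq T^*X|_Z$ (each arising from the projection $(v_1,v_2)\mapsto v_2-v_1$), a direct tangent computation shows that the differential of $i\times\id$ on normal bundles is the identity on $TX|_Z$, so the correspondence $(i\times\id)_\circ$ reduces to $\iota_{1*}$, where $\iota_1: T^*X|_Z \hookrightarrow T^*X$ is the natural inclusion.

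To compute $\mu_{\Delta_Z}(p^!\CF)$, I observe that $p = \pi_2: (Z\times X, \Delta_Z) \to (X, Z)$ is a smooth map of pairs of relative dimension $\dim Z$, inducing a map $\bar\pi: \widetilde{T_{\Delta_Z}(Z\times X)} \to \widetilde{T_Z X}$ over $\AAA^1$ whose zero fibre is the natural surjection $\bar\pi_0: TX|_Z \twoheadrightarrow T_Z X$. A local-coordinate computation (using linearising variables $u_j = x'_j - z_j$ for $j\le\dim Z$ and $u_i = x'_i$ otherwise, for local coordinates $X = \Spa(K\langle x_1,\ldots,x_n\rangle)$ with $Z = V(x_{m+1},\ldots,x_n)$) verifies that $\bar\pi$ is smooth of relative dimension $\dim Z$. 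Combining $p^!\simeq p^*\langle\dim Z\rangle$ with smooth base change for nearby cycles (Lemma \ref{lem_smoothqcqs}(1)) applied to $\bar\pi$ yields $\nu_{\Delta_Z}(p^!\CF) \simeq \bar\pi_0^*\nu_Z(\CF)\langle\dim Z\rangle$. Fourier functoriality (Lemma \ref{lem_fourierfunctoriality}(4)) applied to the bundle surjection $\bar\pi_0$, whose dual is $\iota_2: T^*_Z X \hookrightarrow T^*X|_Z$ and for which $\delta = \dim Z$, then gives $\F_{TX|_Z}(\bar\pi_0^*\nu_Z\CF)\langle\dim Z\rangle \simeq \iota_{2!}\mu_Z(\CF) = \iota_{2*}\mu_Z(\CF)$ (since $\iota_2$ is a closed immersion), whence $\mu_{\Delta_Z}(p^!\CF) \simeq \iota_{2*}\mu_Z(\CF)$. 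Assembling everything, $\mu hom(i_*\underline{\Lambda}_Z, \CF) \simeq \iota_{1*}\iota_{2*}\mu_Z(\CF) = \widetilde i_*\mu_Z(\CF)$.

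The main obstacle is the explicit verification that $\bar\pi$ is smooth with the claimed zero fibre, and that the differential of $i\times\id$ on normal bundles reduces to the identity under the chosen identifications; both are straightforward Jacobian computations in the local coordinates described above.
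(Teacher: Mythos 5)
Your proof is correct, and its high-level skeleton matches the paper's: first rewrite $\RHom_{X\times X}(q_1^*i_*\underline{\Lambda}_Z, q_2^!\CF)$ as $(i\times\id)_*p^!\CF$ via base change and the $\RHom(f_!A,B)\simeq f_*\RHom(A,f^!B)$ adjunction, then move $\mu_{\Delta_X}$ across the proper pushforward $(i\times\id)_*$ using Lemma~\ref{lem_mu_functoriality}(1), and finally compare $\mu_{\Delta_Z}(p^!\CF)$ with $\mu_Z(\CF)$.

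The one place you diverge is the final comparison. The paper finishes in one stroke by citing Lemma~\ref{lem_mu_functoriality}(2), which says $\mu_N(f^!\CF)\simeq f^{\dot\circ}\mu_M(\CF)$ for a smooth map of pairs, applied to $p=q_X\colon (Z\times X,\Delta_Z)\to(X,Z)$; that plus the observation about which arrows in the two correspondences are isomorphisms gives $\widetilde i_*\mu_Z(\CF)$ immediately. You instead unwind $\mu_{\Delta_Z}=\F\circ\nu_{\Delta_Z}$ by hand: you verify the smoothness of the induced map $\bar\pi$ on deformations to the normal cone (which is exactly the Jacobian argument already carried out in the proof of Lemma~\ref{lem_nufunctoriality}(2)), deduce $\nu_{\Delta_Z}(p^!\CF)\simeq\bar\pi_0^*\nu_Z(\CF)\langle\dim Z\rangle$, and then apply the Fourier functoriality Lemma~\ref{lem_fourierfunctoriality}(4) to the vector-bundle surjection $\bar\pi_0\colon TX|_Z\twoheadrightarrow T_ZX$ to produce $\iota_{2*}\mu_Z(\CF)$. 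This is essentially a re-derivation of Lemma~\ref{lem_mu_functoriality}(2) in the special case at hand, routed through part (4) rather than part (3) of Lemma~\ref{lem_fourierfunctoriality} (the two coincide here since $\bar\pi_0$ is smooth of relative dimension $\dim Z$ and $\iota_2$ is a closed immersion, so $\alpha^!\simeq\alpha^*\langle\dim Z\rangle$ and $\alpha'_!\simeq\alpha'_*$). Your argument is longer, and the local-coordinate checks duplicate work already abstracted into the cited lemmas, but the result is the same; citing Lemma~\ref{lem_mu_functoriality}(2) directly would shorten the proof to the paper's five-line computation without any loss.
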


\begin{proof}
    We will use the following diagrams and compute: 
\[\begin{tikzcd}
	\Gamma & {Y\times X} && \Gamma & {Y\X X} \\
	{\Delta_X} & {X\times X} && Y & X
	\arrow[hook, from=1-1, to=1-2]
	\arrow[hook, from=1-1, to=2-1]
	\arrow["\lrcorner"{anchor=center, pos=0.125}, draw=none, from=1-1, to=2-2]
	\arrow["{i\times \id}", hook, from=1-2, to=2-2]
	\arrow[hook, from=1-4, to=1-5]
	\arrow["\simeq"', from=1-4, to=2-4]
	\arrow["{q_X}", from=1-5, to=2-5]
	\arrow[hook, from=2-1, to=2-2]
	\arrow["i"', hook, from=2-4, to=2-5]
\end{tikzcd}\]
\begin{align*}
\mu hom(i_*\underline{\Lambda}_Z,\CF) &\simeq\mu_{\Delta_X}\RHom((i\X\id)_*\underline{\Lambda}_{Y\X X},q_2^!\CF)\\
&\simeq\mu_{\Delta_X}(i\X\id)_*q_X^!\CF  \,\,(\text{used general fact: $f_*\RHom(A,f^!B)\simeq\RHom(f_!A,B)$})\\
&\simeq (i\X\id)_\circ\mu_\Gamma q_X^!\CF \,\,(\text{Lemma \ref{lem_mu_functoriality} applied to the left diagram})\\
&\simeq (i\X\id)_\circ q_X^{\dot\circ}\mu_Y\CF \,\,(\text{Lemma \ref{lem_mu_functoriality} applied to the right diagram})\\
&\simeq \widetilde{i}_*\mu_Y(\CF)
\end{align*}
In the last step, we used that, for the left diagram, $u$ (resp. $v$) is an isomorphism (resp. closed immersion) (notations as in Lemma \ref{lem_mu_functoriality}), and for the right diagram, $v$ (resp. $u$) is an isomorphism (resp. closed immersion).
\end{proof}

Combined with Proposition \ref{lem_microlocalisation_is_vanishingcycle}, we see that, for $Z=V(f)\hookrightarrow X$ a smooth hypersurface, $\phi_f(\CF)$ can be computed from $\mu hom(i_*\underline{\Lambda}_Z,\CF)$. Thus, $\mu hom$ can be viewed as a “vanishing cycle with general sources”. Note it only records information of “\emph{linear} specialisations”. In the context of real and complex manifolds, linear specialisations are sufficient; while in the positive characteristic algebraic context, they are not, due to wild ramifications. It is our expectation that, in our context, which does not have local wild ramifications, linear specialisations are also sufficient for microlocal purposes.

\begin{lemma}\label{lem_muhom_functoriality}
    Let $f: Y\rightarrow X$ be a map of separated smooth rigid analytic varieties, $\CF\in\DX$ and $\CG\in \Dbzc(Y)$. We have the following diagrams: 
\[\begin{tikzcd}
	{T^*Y\simeq T^*_{\Delta_Y}(Y\times Y)} & {\Delta_Y} & {Y\times Y} \\
	{Y\times_XT^*X\simeq T^*_{\Gamma}(Y\times X)} & \Gamma & {Y\times X} \\
	{T^*X\simeq T^*_{\Delta_X}(X\times X)} & {\Delta_X} & {X\times X}
	\arrow[hook, from=1-2, to=1-3]
	\arrow["\simeq", from=1-2, to=2-2]
	\arrow["{\circled{A}}"{description}, draw=none, from=1-2, to=2-3]
	\arrow["{\id\times f}", from=1-3, to=2-3]
	\arrow["{f\X f}"{description}, curve={height=-40pt}, dotted, from=1-3, to=3-3]
	\arrow["u"', from=2-1, to=1-1]
	\arrow["v", from=2-1, to=3-1]
	\arrow[hook, from=2-2, to=2-3]
	\arrow[from=2-2, to=3-2]
	\arrow["\lrcorner"{anchor=center, pos=0.125}, draw=none, from=2-2, to=3-3]
	\arrow["{\circled{B}}"{description}, draw=none, from=2-2, to=3-3]
	\arrow["{f\times \id}", from=2-3, to=3-3]
	\arrow[hook, from=3-2, to=3-3]
\end{tikzcd}\]
    (1) If $f$ is smooth, then we have canonical isomorphisms:\\
    (1.1) $u_*\mu hom(\CG\rightarrow\CF)\simeq \mu hom(\CG,f^!\CF)$,\\
    (1.2) $u_*\mu hom(\CG\leftarrow\CF)\simeq \mu hom(f^*\CF,\CG)$,\\
    (1.3) $f^{\dot\circ}\mu hom(\CF_1,\CF_2)\simeq \mu hom(f^*\CF_1,f^!\CF_2)$.\\\\
    (2) If $f$ is proper, then we have canonical isomorphisms:\\
    (2.1) $v_*\mu hom(\CG\rightarrow\CF)\simeq \mu hom(f_*\CG,\CF)$,\\
    (2.2) $v_*\mu hom(\CG\leftarrow\CF)\simeq \mu hom(\CF,f_*\CG)$,\\
    (2.3) if $f$ is a closed immersion, then $f_\circ\mu hom(\CG_1,\CG_2)\simeq \mu hom(f_*\CG_1, f_*\CG_2)$.\\\\
    (3) If $f$ is smooth and proper, then we have canonical isomorphisms:\\
    (3.1) $f_\circ\mu hom(\CG,f^!\CF)\simeq \mu hom(f_*\CG, \CF)$,\\
    (3.2) $f_\circ\mu hom(f^*\CF,\CG)\simeq \mu hom(\CF, f_*\CG)$.
\end{lemma}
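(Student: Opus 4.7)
All parts follow a common template: rewrite the $\RHom$-complex on $X\X X$ or $Y\X Y$ defining one side of the desired isomorphism as a pushforward or pullback along $f\X\id$, $\id\X f$, or $f\X f$ of the $\RHom$-complex defining the other side, using standard six-functor identities (smooth $\RHom$-pullback, proper adjunction, proper base change), and then conclude by applying Lemma \ref{lem_mu_functoriality} to the induced closed immersion of diagonals/graphs as encoded in diagrams $\circled{A}$ and $\circled{B}$.

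Concretely, for (1.1), since $q_Y\circ(\id\X f)=q_1$ and $q_X\circ(\id\X f)=f\circ q_2$, I would use the identity $(\id\X f)^!\RHom(A,B)\simeq \RHom((\id\X f)^*A,(\id\X f)^!B)$ to rewrite $\RHom_{Y\X Y}(q_1^*\CG,q_2^!f^!\CF)\simeq(\id\X f)^!\RHom_{Y\X X}(q_Y^*\CG,q_X^!\CF)$. Since $\id\X f$ is smooth (as $f$ is) and its restriction $\Delta_Y\to\Gamma$ is an isomorphism, Lemma \ref{lem_mu_functoriality}.(2) applied to $\circled{A}$ yields $\mu_{\Delta_Y}(\id\X f)^!\simeq u_*\mu_\Gamma$, giving (1.1). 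For (2.1), proper base change gives $q_1^*f_*\CG\simeq (f\X\id)_*q_Y^*\CG$; combined with the proper adjunction $\RHom((f\X\id)_*A,B)\simeq (f\X\id)_*\RHom(A,(f\X\id)^!B)$ and $(f\X\id)^!q_2^!\CF\simeq q_X^!\CF$, we obtain $\RHom_{X\X X}(q_1^*f_*\CG,q_2^!\CF)\simeq (f\X\id)_*\RHom_{Y\X X}(q_Y^*\CG,q_X^!\CF)$. Since $\circled{B}$ is Cartesian and $f\X\id$ is proper, Lemma \ref{lem_mu_functoriality}.(1) then gives $\mu_{\Delta_X}(f\X\id)_*\simeq v_*u^*\mu_\Gamma$, proving (2.1).

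The remaining statements are variants of the above. For (1.2) and (2.2), the same arguments apply to the $\mu hom(-\leftarrow-)$ variant, with $q_Y^*\CG,q_X^!\CF$ replaced by $q_X^*\CF,q_Y^!\CG$; the antipodal $a^*$ built into that definition is precisely what absorbs the antipodal that would otherwise appear. For (1.3), I would repeat the recipe for $f\X f:Y\X Y\to X\X X$, which is smooth and fits into a Cartesian square involving the two diagonals with $\Delta_Y\to\Delta_X$ equal to $f$ (smooth). For (2.3), $f\X f$ is a closed immersion with $\Delta_Y=(f\X f)^{-1}(\Delta_X)$, and Lemma \ref{lem_mu_functoriality}.(1) applies directly. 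For (3.1) and (3.2), one combines the smooth and proper cases, using Poincaré duality $f^!\simeq f^*\langle\dim f\rangle$ to match the two sides. The main technical obstacle I anticipate is tracking the conormal-bundle identifications $T^*_\Gamma(Y\X X)\simeq Y\X_XT^*X$ and $T^*_{\Delta_X}(X\X X)\simeq T^*X$ together with the associated antipodal signs, so as to canonically identify the maps $u$ and $v$ produced by Lemma \ref{lem_mu_functoriality} with those in the statement of the current lemma; in each case this reduces to an elementary computation of conormal pullbacks.
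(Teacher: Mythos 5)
Your proposal matches the paper's argument in its essential structure: for each identity, massage the $\RHom$-complex via six-functor identities (proper/smooth base change, $g_*\RHom(A,g^!B)\simeq\RHom(g_!A,B)$, $g^!\RHom(A,B)\simeq\RHom(g^*A,g^!B)$) into a form to which Lemma \ref{lem_mu_functoriality} applies via diagram $\circled{A}$, $\circled{B}$, or their composite, and use that the auxiliary map in the relevant correspondence is an isomorphism. The paper proves (1.1)--(1.3) and (3.1) explicitly in just this way and declares the rest ``similar''; your (1.1), (2.1) computations are the same chain of isomorphisms written in the reverse direction, and your observation that $\circled{A}$ has $\Delta_Y\to\Gamma$ an isomorphism while $\circled{B}$ is Cartesian with $Y\times_XT^*X\to Y\times_XT^*X$ an isomorphism is exactly what makes the $u_*$ resp. $v_*$ appear. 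Two small points worth flagging. First, for (3.1) and (3.2) the paper does not invoke Poincar\'e duality at this stage; it simply writes $f_\circ=v_*u^*$, applies (1.1) resp. (1.2) to replace $\mu hom(\CG,f^!\CF)$ by $u_*\mu hom(\CG\to\CF)$, uses that $u$ is a closed immersion (for $f$ smooth), so $u^*u_*\simeq\mathrm{id}$, and finishes with (2.1) resp. (2.2). The shift bookkeeping is already internal to Lemma \ref{lem_mu_functoriality}. Second, for (2.3) the reduction $\RHom_{X\times X}(q_1^*f_*\CG_1,q_2^!f_*\CG_2)\simeq(f\times f)_*\RHom_{Y\times Y}(q_1^*\CG_1,q_2^!\CG_2)$ is not a single base-change application: one needs to factor $f\times f=(\mathrm{id}\times f)\circ(f\times\mathrm{id})$, use proper base change on $f\times\mathrm{id}$ (giving $q_1^*f_*\CG_1\simeq(f\times\mathrm{id})_*q_Y^*\CG_1$), adjunction to pull $(f\times\mathrm{id})_*$ outside the $\RHom$, then the identity $q_X^!f_*\CG_2\simeq(\mathrm{id}\times f)_*q_2^!\CG_2$ (from proper base change plus matching shifts) and a second adjunction. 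Equivalently one can run Lemma \ref{lem_mu_functoriality}.(1) separately along $\circled{B}$ and then along $\circled{A}$, as the paper does for (1.3) in the dual case, which avoids identifying $(f\times f)_!q_1^*$ with $q_1^*f_*$ (these are genuinely different sheaves, supported on $Y\times Y$ versus $Y\times X$). Neither point affects the validity of your strategy.
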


\begin{proof}
    The notations are as in Lemma \ref{lem_mu_functoriality}. In the following when we say the $u$ or $v$ for \circled{A} or \circled{B}, we mean the maps in Lemma \ref{lem_mu_functoriality}, to be distinguished from the $u$ and $v$ in the diagram above. We show (1) and (3.1). The others are similar.
    \begin{align*}
(1.1).\,&u_*\mu hom(\CG\rightarrow\CF)\\ 
&\simeq (\id\X f)^{\dot\circ}\mu_\Gamma\RHom(q_Y^*\CG, q_X^!\CF)\,\,(\text{the $v$ for \circled{A} is an isomorphism})\\
&\simeq \mu_{\Delta_Y}(\id\X f)^!\RHom(q_Y^*\CG, q_X^!\CF)  \,\,(\text{Lemma \ref{lem_mu_functoriality} applied to \circled{A}})\\
&\simeq \mu_{\Delta_Y}\RHom((\id\X f)^*q_Y^*\CG, (\id\X f)^!q_X^!\CF) \,\,(\text{general fact $f^!\RHom(A,B)\simeq\RHom(f^*A,f^!B)$})\\
&\simeq \mu_{\Delta_Y}\RHom(q_1^*\CG, q_2^!f^!\CF)=\mu hom(\CG,f^!\CF).
   \end{align*}
   \begin{align*}
(1.2).\,&u_*\mu hom(\CG\leftarrow\CF)\qquad\qquad\qquad\qquad\qquad\qquad\qquad\qquad\qquad\qquad\qquad\qquad\qquad\qquad\qquad\qquad\,\,\,\,\,\,\,\,\,\,\,\\
&\simeq a^*\mu_{\Delta_Y}\RHom((\id\X f)^*q_X^*\CF, (\id\X f)^!q_Y^!\CG) \,\,(\text{as above})\\
&\simeq a^*\mu_{\Delta_Y}\RHom(q_2^*f^*\CF, q_1^!\CG)\simeq\mu hom(f^*\CF,\CG).
    \end{align*}
    \begin{align*}
(1.3).\,&f^{\dot\circ}\mu hom(\CF_1,\CF_2)\\ 
&\simeq (\id\X f)^{\dot\circ}(f\X\id)^{\dot\circ}\mu_{\Delta_X}\RHom(q_1^*\CF_1, q_2^!\CF_2)\\
&\simeq \mu_{\Delta_Y}(\id\X f)^!(f\X\id)^!\RHom(q_1^*\CF_1, q_2^!\CF_2)\,\,(\text{Lemma \ref{lem_mu_functoriality} applied to \circled{B} and \circled{A}})\\
&\simeq \mu_{\Delta_Y}\RHom((f\X f)^*q_1^*\CF_1, (f\X f)^!q_2^!\CF_2)\,\,(\text{general fact $f^!\RHom(A,B)\simeq\RHom(f^*A,f^!B)$})\\
&\simeq \mu_{\Delta_Y}\RHom(q_1^*f^*\CF_1, q_2^!f^!\CF_2)=\mu hom(f^*\CF_1,f^!\CF_2).
   \end{align*}
   \begin{align*}
(3.1).\,&f_\circ\mu hom(\CG,f^!\CF)\qquad\qquad\qquad\qquad\qquad\qquad\qquad\qquad\qquad\qquad\qquad\qquad\qquad\qquad\qquad\qquad\,\,\,\,\,\,\,\,\,\,\,\\ 
&= v_*u^*\mu hom(\CG,f^!\CF)\\
&\simeq v_*u^*u_*\mu hom(\CG\rightarrow\CF)\,\,(\text{(1.1)})\\
&\simeq v_*\mu hom(\CG\rightarrow\CF)\,\,(\text{$u$ is a closed immersion})\\
&\simeq \mu hom(f_*\CG, \CF)\,\,(\text{(2.1)}).
   \end{align*}
\end{proof}

\section{The singular support}\label{sec_SS}
In this last section, we define and study the singular support. This is a closed conical subset of the cotangent bundle recording the directions in which the input sheaf is not locally constant. The classical reference is \cite[Chapter V]{kashiwara_sheaves_1990}, see \cite{beilinson_constructible_2016} for the algebraic setting. The set-up is as in §\ref{sec_intro}.

\begin{definition}\label{def_SS}
    Let $X$ be a smooth rigid analytic variety. For $\CF\in\DX$, the \underline{singular support} $SS(\CF)$ is defined as $\{(x,\xi)\subseteq (T^*X)(K)\,|\,\text{there exists a test function $f$ at $(x,\xi)$ such that}\,\, \phi_f(\CF)_x\neq 0\}^{cl}$. Here, “a test function $f$ at $(x,\xi)$” means a function $f$ on some open neighbourhood of $x$ in $X$ such that $f(x)=0$ and $df(x)=\xi$, and “$cl$” means taking closure in $T^*X$. We will abbreviate the set in “$\{...\}$” by $\{\dddot\CF\}$ (so $SS(\CF)=\{\dddot\CF\}^{cl}$).
\end{definition}

\begin{remark}\label{rmk_SS}
    (1) We emphasise that $\dotF$ is a subset of classical points in $T^*X$, and the closure is taken in the analytic topology rather than the Zariski topology. It is thus \textit{a priori} not clear if $SS(\CF)$ is an analytic subset. See Question \ref{que_ana}.\\
    (2) By Lemma \ref{lem_psietalebasechange}, $SS(\CF)$ is a conical subset of $T^*X$.\\
    (3) The same definition makes sense for all $\CF\in D^{}(X)$, but we have restricted to $\CF\in \DX$ because $\dotF$ does not “see” points other than classical points, so the above definition is not the “correct” one for general $\CF$. For example, let $\CF=j_!\underline{\Lambda}_D$, where $j: D\hookrightarrow \AAA^1$ is the open immersion. Then $SS(\CF)=D^{cl}$ with the above definition. It is desirable to extend the notion of singular support to all sheaves without constructibility restrictions, as has been done in complex analytic as well as algebraic settings (c.f. the footnote to Remark \ref{rmk_to_thm_1.2}.2).
\end{remark}

Here is an equivalent definition of the singular support (\textit{c.f.} \cite[\nopp 1.5]{beilinson_constructible_2016}).

\begin{definition}\label{def_Ctransversal}
    Let $X$ be a smooth rigid analytic variety, $C\subseteq T^*X$ be a conical closed subset, and $f: X\rightarrow \AAA^1$ a map. Denote by $\Gamma_{df}$ the graph of the differential of $f$. We say \underline{$f$ is $C$-transversal} if $((\Gamma_{df}-T^*_XX)\cap C)(K)=\varnothing$, \textit{i.e.}, for every classical point $(x,\xi)\in C$, we have $df(x)\neq \xi$.
\end{definition}

\begin{definition}\label{def_phiLA}
    Let $f: X\rightarrow\AAA^1$ be a map of rigid analytic varieties, and $\CF\in\DX$. We say \underline{$(f,\CF)$ is $\phi$-locally acyclic} ($\phi$-LA) if for every $a\in K$, we have $\phi_{f-a}(\CF)=0$.  
\end{definition}

\begin{remark}
    We choose the terminology “$\phi$-LA” instead of “LA” to distinguish from the notion of local acyclicity introduced in \cite[\nopp IV.2]{fargues_geometrization_2021}. It would be interesting to compare these two notions.
\end{remark}

\begin{definition-lemma}\label{def_microsupport}
    Let $X$ be a smooth rigid analytic variety, and $\CF\in\DX$. Let $\CalC(\CF)$ be the set of conical closed subsets $C$ of $T^*X$ satisfying the following condition: for every open subset $U\subseteq X$ and every function $f: U\rightarrow \AAA^1$ which is $C|_U$-transversal, we have $(f,\CF|_U)$ is $\phi$-LA. $\CalC(\CF)$ is closed under intersections. Denote the intersection of all $C\in\CCC(\CF)$ by $C_{min}(\CF)$.
\end{definition-lemma}

\begin{proof}
    We need to show $\CalC(\CF)$ is closed under intersections. Let $\{C_i\}_{i\in I}\subseteq \CCC(\CF)$ and $C=\cap_{i\in I} C_i$. It suffices to show that for $f: U\rightarrow\AAA^1$ a function which is $C$-transversal, we have $\phi_f(\CF)= 0$. Suppose not, then, by the Zariski-constructibility of $\phi_f(\CF)$, there exists $x\in U(K)$ such that $\phi_f(\CF)_x\neq 0$. As $f$ is $C$-transversal, $(x,df(x))\notin C$, so $(x,df(x))\notin C_i$ for some $i$. Then, there exists an open neighbourhood $V$ of $x$ in $U$ such that $\Gamma_{df}\cap C_i=\varnothing$ over $V$. So $f$ is $C_i$-transversal over $V$, hence $\phi_f(\CF)_x=0$, a contradiction.
\end{proof}

\begin{lemma}\label{lem_alt_def_of_SS}
    Let $X$ be a smooth rigid analytic variety, and $\CF\in\DX$. Then $SS(\CF)=C_{min}(\CF)$.
\end{lemma}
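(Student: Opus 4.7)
The plan is to prove the two inclusions separately.

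For $SS(\CF)\subseteq C_{min}(\CF)$, I will show that $\dotF\subseteq C$ for every $C\in\CCC(\CF)$; since $C$ is closed, this gives $SS(\CF)=\dotF^{cl}\subseteq C$, and intersecting over all such $C$ yields the claim. Given $(x,\xi)\in\dotF$ with witnessing test function $f$ on an open $U'\ni x$, I proceed by contradiction: suppose $(x,\xi)\notin C$. Closedness of $C$ in $T^*X$ together with continuity of the section $s_f\colon y\mapsto (y,df(y))$ of $T^*X|_{U'}$ lets me shrink $U'$ to an open $V\ni x$ with $s_f(V)\cap C=\varnothing$. Then $\Gamma_{df|_V}\cap C|_V=\varnothing$, so $f|_V$ is (vacuously) $C|_V$-transversal; the hypothesis $C\in\CCC(\CF)$ then forces $(f|_V,\CF|_V)$ to be $\phi$-LA, giving $\phi_f(\CF)_x=0$ and contradicting $(x,\xi)\in\dotF$.

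For the reverse inclusion $C_{min}(\CF)\subseteq SS(\CF)$, it suffices to verify $SS(\CF)\in\CCC(\CF)$. Let $f\colon U\to\AAA^1$ be $SS(\CF)|_U$-transversal and suppose toward contradiction that $\phi_{f-a}(\CF|_U)\neq 0$ for some $a\in K$. By Proposition~\ref{prop_phipreserveszc} this sheaf is Zariski-constructible on $U_a:=\{f=a\}$, so its non-empty support contains a classical point $x\in U(K)$; then $g:=f-a$ is a test function at $(x,df(x))$ with $\phi_g(\CF)_x\neq 0$, placing $(x,df(x))\in\dotF\subseteq SS(\CF)$. Provided $df(x)\neq 0$, this point lies in $(\Gamma_{df}-T^*_VV)\cap SS(\CF)|_V$, directly contradicting $SS(\CF)|_U$-transversality.

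The principal technical hurdle is the critical-point case $df(x)=0$: such a witness sits on the zero section, which is excluded from the transversality condition, so it does not by itself produce a contradiction. To handle this case, the plan is to propagate the non-vanishing of $\phi_g(\CF)$ from the critical point $x$ to a nearby non-critical point, using the Milnor-fibre description (Proposition~\ref{prop_nearbyfibinterp}) together with the scaling invariance of $\psi$ (Lemma~\ref{lem_psietalebasechange}): concretely, one exhibits a modified test function $g'$ and a nearby classical point $y$ with $dg'(y)\neq 0$ and $\phi_{g'}(\CF)_y\neq 0$, so that $(y,dg'(y))\in\dotF\subseteq SS(\CF)$ supplies the missing violation of transversality and closes the argument.
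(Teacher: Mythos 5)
Your argument for the inclusion $SS(\CF)\subseteq C_{min}(\CF)$ is correct and is essentially the paper's.

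The reverse inclusion has a genuine gap, and it stems from which of the two non-equivalent phrasings in Definition~\ref{def_Ctransversal} you take as the transversality condition. The displayed formula $((\Gamma_{df}-T^*_XX)\cap C)(K)=\varnothing$ only constrains $df$ off the zero section, whereas the accompanying gloss (for every classical $(x,\xi)\in C$, $df(x)\neq\xi$) also rules out $df(x)=0$ whenever $(x,0)\in C$; by Lemma~\ref{lem_basesupp} the latter happens over all of $\supp\CF$. The paper's proofs consistently use the gloss: the proof of Definition-Lemma~\ref{def_microsupport}, for instance, passes from $C$-transversality of $f$ to $(x,df(x))\notin C$ with no case split on $df(x)$. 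Under the gloss, your argument for $SS(\CF)\in\CalC(\CF)$ is already complete at the sentence placing $(x,df(x))\in\dotF\subseteq SS(\CF)$: this contradicts $SS(\CF)$-transversality of $f$ whether or not $df(x)=0$, so there is no leftover critical-point case. That is precisely the paper's one-line proof.

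Under the formula reading you adopt, the assertion $SS(\CF)\in\CalC(\CF)$ is in fact false, so the propagation step you sketch cannot be filled in. Take $\CF=\underline{\Lambda}$ on $X=D^n=\Spa(K\langle T_1,\ldots,T_n\rangle)$ and $f=T_1^2$. Then $SS(\CF)=T^*_XX$, and $(\Gamma_{df}-T^*_XX)\cap T^*_XX=\varnothing$ trivially, so $f$ is transversal in your sense; yet $\phi_{T_1^2}(\underline{\Lambda})$ is non-zero at every classical point of $\{T_1=0\}$, while $df=2T_1\,dT_1$ vanishes identically on that locus, so there is no nearby classical point with non-vanishing differential to which the non-vanishing of $\phi$ could be moved. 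The critical-point obstruction you flag is real under the formula reading, but under the convention the paper actually uses it never arises.
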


\begin{proof}
    “$\subseteq$”: it suffices to show $\{\dddot{\CF}\}\subseteq C_{min}(\CF)$. Suppose there exists $(x,\xi)\in\{\dddot{\CF}\}-C_{min}(\CF)$. Let $f$ be a test function at $(x,\xi)$ such that $\phi_f(\CF)_x\neq 0$. But $f$ is $C_{min}(\CF)$-transversal over some open neighbourhood of $x$ in $X$, so $\phi_f(\CF)_x=0$, a contradiction.\\\\
    “$\supseteq$”: it suffices to show that for all $SS(\CF)$-transversal $f$, we have $\phi_f(\CF)=0$. Suppose not, then, by the Zariski-constructibility of $\phi_f(\CF)$, there exists $x\in X(K)$ such that $\phi_f(\CF)_x\neq 0$. So $(x,df(x))\in\{\dddot{\CF}\}$, contradicting the $SS(\CF)$-transversality of $f$.
\end{proof}

We now prove some basic properties of the singular support. First recall the following operations on closed conical subsets in the cotangent bundle (\textit{c.f.} \cite[\nopp 1.2]{beilinson_constructible_2016}). Let $f: X\rightarrow Y$ be a map of smooth rigid analytic varieties. It induces a correspondence:
\[\begin{tikzcd}
	& {X\times_YT^*Y} \\
	{T^*X} && {T^*Y}
	\arrow["u", from=1-2, to=2-1]
	\arrow["v"', from=1-2, to=2-3]
\end{tikzcd}\]

If $f$ is proper, then $v$ is proper. For a closed conical subset $C$ in $T^*X$, define $f_\circ C=vu^{-1}(C)$, which is a closed conical subset of $T^*Y$. If $f$ is smooth, then $v$ is smooth and $u$ is a closed immersion. For a closed conical subset $C$ in $T^*Y$, define $f^\circ C=uv^{-1}(C)$, which is a closed conical subset of $T^*X$.\\

Both operations have their corresponding versions when restricted to classical points, we will denote them by the same symbols. For example, for $f$ proper and $C$ in $T^*X(K)$ a closed conical subset, $f_\circ C:=vu^{-1}(C)$, where $(u,v)$ is the correspondence $T^*X(K)\xleftarrow{u} (X\X_YT^*Y)(K)\xrightarrow{v} T^*Y(K)$. 

\begin{lemma}\label{lem_SSpullpush}
    (1) Let $p: Y\rightarrow X$ be a smooth map of smooth rigid analytic varieties, and $\CF\in \DX$. Then $p^\circ SS(\CF)\subseteq SS(p^*\CF)$. If $p$ is \etale, then equality holds.\\
    (2) Let $g: X\rightarrow Y$ be a proper map of smooth rigid analytic varieties, and $\CF\in \DX$. Then $SS(g_*\CF)\subseteq g_\circ SS(\CF)$. If $g$ is a closed immersion, then equality holds.
\end{lemma}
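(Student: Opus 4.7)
My plan is to work with the description $SS(\CF)=\overline{\{\dddot\CF\}}$ and to establish the inclusions first at the level of classical points (where $\phi$ and test functions live) and then propagate them to closures via topological properties of the correspondences $(u,v)$ defining $p^\circ$ and $g_\circ$. The main tools will be Lemma \ref{lem_smoothqcqs} (commutation of vanishing cycles with smooth pullback and qcqs pushforward) together with a careful bookkeeping on how the maps $u,v$ interact with closure in the adic-space topology.

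For part (1), I would first show $p^\circ\{\dddot\CF\}\subseteq\{\dddot{p^*\CF}\}$: if a test function $f$ witnesses $(x_0,\xi_0)\in\{\dddot\CF\}$, then for each $y_0\in p^{-1}(x_0)$ the composite $f\circ p$ is a test function at $(y_0,dp^*_{y_0}\xi_0)$, and Lemma \ref{lem_smoothqcqs}(1) yields $\phi_{fp}(p^*\CF)_{y_0}\simeq\phi_f(\CF)_{x_0}\neq 0$. To pass to closures I use that for smooth $p$ the map $u$ is a closed immersion while $v$ is smooth and hence open on the underlying topological spaces; a formal topological argument then gives $p^\circ\overline{A}=\overline{p^\circ A}$ for any $A\subseteq T^*X$, from which $p^\circ SS(\CF)\subseteq SS(p^*\CF)$. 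For the étale equality, I would invoke the fact that an étale morphism between rigid analytic varieties over the algebraically closed field $K$ is \emph{analytically locally} on the source an open immersion onto its image; given any test function $g$ witnessing $(y_0,\eta_0)\in\{\dddot{p^*\CF}\}$, shrinking to such a neighborhood lets me write $g=f\circ p$ for a function $f$ on a neighborhood of $p(y_0)$, and reading Lemma \ref{lem_smoothqcqs}(1) backwards supplies $\phi_f(\CF)_{p(y_0)}\neq 0$, i.e., $(y_0,\eta_0)\in p^\circ\{\dddot\CF\}$.

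For part (2), the analogous argument using Lemma \ref{lem_smoothqcqs}(2) and proper base change gives $\{\dddot{g_*\CF}\}\subseteq g_\circ\{\dddot\CF\}$: if $\phi_f(g_*\CF)_{y_0}\neq 0$ then $R\Gamma(g^{-1}(y_0),\phi_{fg}(\CF))\neq 0$, and Zariski-constructibility of $\phi_{fg}(\CF)$ together with density of classical points on each Zariski stratum produces a classical $x_0\in g^{-1}(y_0)$ with $\phi_{fg}(\CF)_{x_0}\neq 0$. Since $v$ is proper, $g_\circ$ preserves closed sets, and the inclusion $SS(g_*\CF)\subseteq g_\circ SS(\CF)$ follows by taking closures. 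For the closed-immersion equality I need the reverse inclusion at the level of classical points: given $(x_0,\xi_0=dg^*\eta_0)\in\{\dddot\CF\}$ with test function $f$ on $X$, I extend $f$ to $\tilde f$ on a neighborhood of $y_0=g(x_0)$ in $Y$ satisfying $\tilde f|_X=f$ and $d\tilde f(y_0)=\eta_0$, achievable by working in local coordinates on $Y$ in which $X$ is cut out by a regular sequence of coordinates and adding a linear combination of these coordinates to adjust the conormal component; Lemma \ref{lem_smoothqcqs}(2) then yields $\phi_{\tilde f}(g_*\CF)_{y_0}\simeq\phi_f(\CF)_{x_0}\neq 0$. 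Since for a closed immersion $u$ is smooth (hence open) and $v$ is a closed immersion, $g_\circ$ commutes with closure by the same topological argument as in part (1), completing the equality.

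The principal subtle point is the étale case of (1), resting on the analytic-local triviality of étale morphisms of rigid analytic varieties over $K$---a feature proper to the analytic setting (absent in the algebraic world) and traceable to classical points admitting arbitrarily small affinoid neighborhoods whose behavior matches that of the completed local ring. Everything else is formal once one grants the elementary topological lemma that $v^{-1}(\overline A)=\overline{v^{-1}(A)}$ for $v$ open and $u(\overline B)=\overline{u(B)}$ for $u$ continuous and closed.
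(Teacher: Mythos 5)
Your proposal is correct and follows essentially the same route as the paper's proof: the inclusions at the level of $\{\dddot\CF\}$ come from Lemma \ref{lem_smoothqcqs}, the étale case uses that étale maps are local isomorphisms at classical points, and the closed-immersion case uses extension of test functions with prescribed conormal component. The only difference is that you spell out the topological lemma ($u$ closed and $v$ open, resp.\ $u$ open and $v$ closed, imply $p^\circ$ and $g_\circ$ commute with closure) that the paper takes for granted when writing $p^\circ SS(\CF)=(p^\circ\{\dddot\CF\})^{cl}$ and $(i_\circ\{\dddot\CF\})^{cl}=i_\circ SS(\CF)$, which is a helpful bit of bookkeeping but not a different method.
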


One expects equality holds in (1). See Question \ref{que_smoothpull}.

\begin{proof}
    (1) Let $(y,\zeta)\in p^\circ\{\dddot{\CF}\}$ and $(x,\xi)\in\{\dddot{\CF}\}$ be the unique element such that $x=p(y)$ and $\zeta=dp_y(\xi)$. Then, $\phi_f(\CF)_x\neq 0$ for some test function $g$ at $(x,\xi)$. As vanishing cycles commute with smooth pullbacks (Lemma \ref{lem_smoothqcqs}.1), we get $\phi_{fp}(p^*\CF)_y\simeq\phi_p(\CF)_x\neq 0$, so $(y,\zeta)\in \{\dddot{p^*\CF}\}$. So $p^\circ SS(\CF)=(p^\circ\{\dddot{\CF}\})^{cl}\subseteq \{\dddot{p^*\CF}\}^{cl}=SS(p^*\CF)$. The second statement follows from the fact that \etale maps are locally isomorphisms at classical points.\\\\
    (2) Let $(y,\zeta)\in\{\dddot{g_*\CF}\}$, and $f$ be a test function at $(y,\zeta)$ such that $\phi_f(g_*\CF)_y\neq 0$. Apply proper base change and the fact that vanishing cycles commute with proper pushforwards (Lemma \ref{lem_smoothqcqs}.2), we get $\phi_f(g_*\CF)_y\simeq R\Gamma(X_y,\phi_{fg}(\CF))$. As $\phi_{fg}(\CF)$ is Zariski-constructible, there exists $x\in X_y(K)$ such that $\phi_{fg}(\CF)_x\neq 0$. So $(x,dg_x(\zeta))\in\{\dddot{\CF}\}$. This proves $\{\dddot{g_*\CF}\}\subseteq g_\circ\{\dddot{\CF}\}$. Consequently $SS(g_*\CF)\subseteq (g_\circ\{\dddot{\CF}\})^{cl}\subseteq g_\circ SS(\CF)$.\\\\
    For the last statement, let $g=i$ be a closed immersion. First note that $(i_\circ\{\dddot{\CF}\})^{cl}= i_\circ SS(\CF)$. So, to show $SS(g_*\CF)\supseteq g_\circ SS(\CF)$, it suffices to show $i_\circ\{\dddot{\CF}\}\subseteq\{\dddot{i_*\CF}\}$. Let $(x,\xi)\in\{\dddot{\CF}\}$, and $(x,\zeta)\in T^*Y(K)$ with $\zeta|_X=\xi$. Let $f$ be a test function on $X$ at $(x,\xi)$ such that $\phi_f(\CF)_x\neq 0$. It extends to a test function $\tilde{f}$ on $Y$ at $(x,\zeta)$. Since $\phi_{\tilde{f}}(i_*\CF)_x\simeq\phi_f(\CF)_x$, we get $(x,\zeta)\in\{\dddot{i_*\CF}\}$. 
\end{proof}

\begin{lemma}\label{lem_basesupp}
    Let $X$ be a smooth rigid analytic variety, and $\CF\in\DX$. Then the base of $SS(\CF)$ equals the support of $\CF$.
\end{lemma}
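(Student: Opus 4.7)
The plan is to prove the two inclusions of $\pi(SS(\CF)) = \supp(\CF)$ separately, where $\pi: T^*X \to X$ denotes the projection.

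For $\pi(SS(\CF)) \subseteq \supp(\CF)$, I will argue by contrapositive: if $x \notin \supp(\CF)$, pick an open $U \ni x$ in $X$ with $\CF|_U = 0$. Then $\phi_f(\CF|_U)_y = 0$ for every classical $y \in U$ and every test function $f$ defined near $y$ inside $U$, so $\{\dddot\CF\} \cap T^*U = \varnothing$. Since $T^*U$ is open in $T^*X$, this gives $SS(\CF) \cap T^*U = \varnothing$ and hence $x \notin \pi(SS(\CF))$.

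For the reverse inclusion, the key observation is that the constant function $f \equiv 0: X \to \AAA^1$ qualifies as a test function at every zero covector $(x, 0)$, since $f(x) = 0$ and $df_x = 0$. With this choice $X^\times = X \times_{\AAA^1} \Gm = \varnothing$, so Definition \ref{def_van} immediately yields $\psi_0(\CF) = 0$, and hence $\phi_0(\CF) \simeq \cone(\CF \to 0) \simeq \CF[1]$. Consequently $(x, 0) \in \{\dddot\CF\} \subseteq SS(\CF)$ for every classical $x$ with $\CF_x \neq 0$.

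Since $\CF$ is Zariski-constructible on a smooth rigid analytic variety over the algebraically closed field $K$, the set of classical points with non-vanishing stalk is dense in $\supp(\CF)$. Taking closure in $T^*X$ of the inclusion $\{x \in X(K) : \CF_x \neq 0\} \times \{0\} \subseteq SS(\CF)$, and using that the zero section $T^*_X X \simeq X$ is closed in $T^*X$, yields $\supp(\CF) \times \{0\} \subseteq SS(\CF)$, whence $\supp(\CF) \subseteq \pi(SS(\CF))$. The only substantive point is the identity $\phi_0(\CF) \simeq \CF[1]$; the rest is a routine closure-and-projection argument, and I do not anticipate any serious obstacle.
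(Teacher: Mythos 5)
Your proof is correct and takes essentially the same approach as the paper's: both key steps hinge on using the constant zero function as a test function at $(x,0)$, combined with the density of classical points in the support. The paper merely asserts ``one sees $(x_0,0)\in\{\dddot\CF\}$'' where you spell out the identity $\phi_0(\CF)\simeq\CF[1]$ (which is indeed what Definition \ref{def_van} gives, since $X^\times=\varnothing$ and $X_0=X$), and your closure argument via the closedness of the zero section makes explicit a point the paper passes over quickly; for the forward inclusion the paper uses $\pi(\{\dddot\CF\}^{cl})\subseteq(\pi\{\dddot\CF\})^{cl}$ while you argue by contrapositive with openness of $T^*U$ — an equivalent and equally elementary variant.
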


By “the base of $SS(\CF)$” we mean $\pi(SS(\CF))$, where $\pi: T^*X\rightarrow X$ is the projection. Note, for a Zariski-constructible $\CF$, its support satisfies $\supp(\CF)=\{x\in X(K)\,|\,\CF_x\neq0\}^{cl}$, which is a consequence of the fact that classical points are dense on a rigid analytic variety (\cite[\nopp 4.2]{huber_continuous_1993}).

\begin{proof}
    For $x_0\in\{x\in X(K)\,|\,\CF_x\neq0\}$, consider the test function $0$ at $(x_0,0)$, one sees $(x_0,0)\in\{\dddot{\CF}\}$. So $\supp(\CF)\subseteq \pi (SS(\CF))$. On the other hand, clearly $\pi\{\dddot{\CF}\}\subseteq\supp(\CF)$, so $\pi (SS(\CF))=\pi(\{\dddot{\CF}\}^{cl})\subseteq (\pi\{\dddot{\CF}\})^{cl}\subseteq\supp(\CF)$.
\end{proof}

\begin{lemma}\label{lem_comparisonSS}
    Let $\CX$ be a smooth finite type scheme over $K$, $\CF\in D^b_c(X)$. Let $X$ and $\CF^{an}$ be their analytifications. Then $(SS(\CF))^{an}\subseteq SS(\CF^{an})$.
\end{lemma}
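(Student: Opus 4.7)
The plan is to reduce to a classical-point analysis, transport algebraic witnesses to the analytic setting via Lemma \ref{lem_comparisonphi}, and upgrade the resulting containment of classical points to a containment of analytic subvarieties via density. By the Beilinson--Saito theory of singular support for constructible sheaves on smooth schemes \cite{beilinson_constructible_2016, saito_characteristic_2017} (with torsion coefficients handled in \cite{barrett_singular_2023, umezaki_characteristic_2020}), the algebraic singular support $SS(\CF) \subseteq T^*\CX$ is a closed conical subvariety, pure of dimension $\dim \CX$, with finitely many irreducible components $\{Y_i\}$, each of the form $Y_i = \overline{T^*_{Z_i}\CX}$ for an irreducible smooth locally closed $Z_i \subseteq \CX$. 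The characteristic cycle $CC(\CF) = \sum_i m_i[Y_i]$ has every $m_i \neq 0$, and the Milnor formula provides a Zariski-open dense subset $V_i \subseteq Y_i$ (roughly the smooth locus of $Y_i$ away from all other components, where a generic test function can be chosen properly transversal to every other component of $SS(\CF)$) with the following property: for every $(x,\xi) \in V_i(K)$, there exist a Zariski-open $U \subseteq \CX$ containing $x$ and a regular function $f: U \to \AAA^1$ with $f(x)=0$, $df(x)=\xi$, and $\dim_\Lambda \phi_f(\CF|_U)_x = |m_i| \neq 0$.

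Next I transport each such algebraic witness to an analytic one. Let $U^{\mathrm{an}} \subseteq X$ be the open analytification of $U$ and $f^{\mathrm{an}}: U^{\mathrm{an}} \to \AAA^{1,\mathrm{an}}$ the analytification of $f$. Then $f^{\mathrm{an}}$ is an analytic test function at $(x,\xi)$ in the sense of Definition \ref{def_SS}. By the commutativity of vanishing cycles with analytification (Lemma \ref{lem_comparisonphi}), together with the fact that analytification preserves stalks at classical points of constructible complexes, we obtain
\[
\phi_{f^{\mathrm{an}}}(\CF^{\mathrm{an}}|_{U^{\mathrm{an}}})_x \;\simeq\; (\phi_f(\CF|_U))^{\mathrm{an}}_x \;\simeq\; \phi_f(\CF|_U)_x \;\neq\; 0.
\]
Hence $(x,\xi) \in \{\dddot{\CF^{\mathrm{an}}}\}$, so $V_i(K) \subseteq \{\dddot{\CF^{\mathrm{an}}}\} \subseteq SS(\CF^{\mathrm{an}})$.

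Finally I apply analytic density. Since $V_i \subseteq Y_i$ is Zariski-open dense, $V_i^{\mathrm{an}} \subseteq Y_i^{\mathrm{an}}$ is Zariski-open with complement a proper lower-dimensional analytic subset, hence is analytically open and dense in $Y_i^{\mathrm{an}}$. Classical points $V_i(K)$ are analytically dense in $V_i^{\mathrm{an}}$ by the density of classical points in any rigid analytic variety \cite[4.2]{huber_continuous_1993}, so they are analytically dense in $Y_i^{\mathrm{an}}$ as well. Since $SS(\CF^{\mathrm{an}})$ is analytically closed by Definition \ref{def_SS} and contains $V_i(K)$, it contains $Y_i^{\mathrm{an}}$ for every $i$. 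Taking the union over components yields $(SS(\CF))^{\mathrm{an}} = \bigcup_i Y_i^{\mathrm{an}} \subseteq SS(\CF^{\mathrm{an}})$, as desired.

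The main obstacle is the first step, namely extracting a Zariski-open dense witnessing locus in each irreducible component of $SS(\CF)$ on which an algebraic vanishing cycle stalk is nonzero. This is morally built into the minimal/universal nature of $SS(\CF)$ --- removing any Zariski-open subset of a component would contradict minimality, so witnesses must be Zariski-dense, and a constructibility argument upgrades this to a Zariski-open dense subset --- but the cleanest route is via the Milnor formula for $CC$ as indicated above. Everything else is a routine combination of Lemma \ref{lem_comparisonphi} with the density of classical points and the elementary fact that Zariski-open dense subsets of irreducible varieties remain dense upon analytification.
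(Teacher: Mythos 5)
Your overall strategy is the same as the paper's: reduce to classical points, observe that an algebraic test-function witness $(x,\xi,f)$ transports to an analytic one via Lemma \ref{lem_comparisonphi}, and upgrade the containment of classical witnesses to a containment of closed conical subsets via density of classical points (\cite[\nopp 4.2]{huber_continuous_1993}). The paper compresses all of this into three asserted facts; your proof fills in the same outline with more machinery. So far so good, and your ``transport'' and ``density'' steps are fine.

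The gap is in your first step. You assert that every irreducible component $Y_i$ of $SS(\CF)$ carries a \emph{nonzero} multiplicity $m_i$ in $CC(\CF)$, and you use $|m_i|\neq 0$ together with the Milnor formula to manufacture a classical point on a dense open $V_i\subseteq Y_i$ at which $\phi_f(\CF)_x\neq 0$. This is false for general $\CF\in D^b_c(\CX)$: take $\CF = A\oplus A[1]$ for a perverse sheaf $A$ with nonempty $SS(A)$; then $SS(\CF)=SS(A)$ while $CC(\CF)=CC(A)+CC(A[1])=CC(A)-CC(A)=0$, so every $m_i=0$. Even granting $m_i\neq 0$, the Milnor formula computes the Euler characteristic (total dimension) of $\phi_f(\CF)_x$, so a vanishing $m_i$ would leave you unable to conclude $\phi_f(\CF)_x\neq 0$ or $=0$; your argument only yields witnesses on components with nonzero multiplicity. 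The repair is straightforward but must be stated: first reduce to $\CF$ perverse (in the algebraic theory $SS$ is the union of the $SS$ of perverse constituents, and for perverse $\CF$ the multiplicities in $CC$ are strictly positive), then run your $CC$-argument componentwise. Alternatively, bypass $CC$ entirely, as the paper does implicitly, by using the minimality characterization of the algebraic $SS$: the Zariski closure $C$ of $\{\dddot{\CF}\}^{\mathrm{alg}}$ is itself a micro-support (for any $C$-transversal $f$, $\phi_f(\CF)_x=0$ at every classical $x$, hence $\phi_f(\CF)=0$ by constructibility and density of $K$-points over an algebraically closed field), so minimality forces $SS(\CF)\subseteq C$, and the reverse containment is immediate. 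You gesture at this in your last paragraph but then declare the $CC$ route ``cleanest''; given the counterexample above, the minimality route is actually both cleaner and correct without further hypotheses.
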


One expects equality holds. See Question \ref{que_comparisonSS}.

\begin{proof}
    Let $\{\dddot\CF\}^{alg}=\{(x,\xi)\subseteq T^*\CX(K)\,|$ there exists a test function $f$ at $(x,\xi)$ in an open neighbourhood of $x$ in $\CX$ such that $\phi_f(\CF)_x\neq 0\}$. In the following we view $\{\dddot\CF\}^{alg}$ as a subset of $T^*X(K)$($=T^*\CX(K)$). Consider the following three closed subsets of $T^*X$: $(\{\dddot\CF\}^{alg})^{zcl}$, where “$zcl$” denotes closure in the Zariski topology on $T^*X$; $(\{\dddot\CF\}^{alg})^{cl}$, where “$cl$” denotes closure in the analytic topology; and $\{\dddot{\CF^{an}}\}^{cl}(=SS(\CF^{an}))$. We claim $(SS(\CF))^{an}=(\{\dddot\CF\}^{alg})^{zcl}=(\{\dddot\CF\}^{alg})^{cl}\subseteq\{\dddot{\CF^{an}}\}^{cl}$. Indeed, the first two equalities follow from the following facts: (i) vanishing cycles commute with the analytification, (ii) $(SS(\CF))^{an}$ equals the closure $\{\dddot\CF\}^{alg}$ in $T^*\CX$, and (iii) classical points are dense in a rigid analytic variety. The last inclusion is clear.
\end{proof}

\begin{proposition}\label{prop_perverseaddSS}
    Let $X$ be a smooth rigid analytic variety, and $\CF\in\DX$. Then:\\
    (1) $SS(\CF)=SS(\DD\CF)$;\\
    (2) assume $\Lambda=\FF_{\ell^r}$ and $\CF$ has finitely many irreducible perverse constituents $\{\CF_\alpha\}$ \footnote{This means each $\CF_\alpha$ is an irreducible subquotient of $^p\CH^i(\CF)$ for some $i\in\ZZ$.} (this holds, for example, if $X$ is quasi-compact). Then $SS(\CF)=\cup SS(\CF_\alpha)$. 
\end{proposition}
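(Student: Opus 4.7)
My plan is to reduce both parts to Lemma \ref{lem_alt_def_of_SS}, which identifies $SS(\CF)$ with the minimal element $C_{\min}(\CF)$ of $\CCC(\CF)$, so that an equality of singular supports reduces to an equality of the classes $\CCC(\cdot)$ of conical subsets enforcing $\phi$-local acyclicity.

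For (1), I will use the duality isomorphism $h: \Phi_f \DD\CF \isoto (\DD\Phi_f\CF)(-1)^\tau(1)$ from Theorem \ref{thm_main_psi_phi}.4. Since $\DD$ is a self-equivalence of $\Dbzc$, the vanishing $\Phi_{f-a}(\CF)=0$ holds if and only if $\Phi_{f-a}(\DD\CF)=0$, for every $a\in K$ and every $f$. Hence $(f,\CF)$ is $\phi$-LA if and only if $(f,\DD\CF)$ is $\phi$-LA, so $\CCC(\CF)=\CCC(\DD\CF)$ and therefore $SS(\CF)=SS(\DD\CF)$.

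For (2), I will exploit the perverse t-exactness of $\Phi_{f-a}$ (Theorem \ref{thm_main_psi_phi}.2), which makes $\Phi_{f-a}$ an exact functor on the perverse heart. For the inclusion $\cup_\alpha SS(\CF_\alpha)\subseteq SS(\CF)$: if $f$ is $SS(\CF)$-transversal on an open $U$, then $\Phi_{f-a}(\CF|_U)=0$; perverse exactness gives $\Phi_{f-a}({}^p\CH^i(\CF|_U))={}^p\CH^i(\Phi_{f-a}(\CF|_U))=0$ for each $i$; and since each $\CF_\alpha$ is a perverse subquotient of some ${}^p\CH^i(\CF)$, exactness of $\Phi_{f-a}$ on the perverse heart (applied to the short exact sequence realising $\CF_\alpha$) forces $\Phi_{f-a}(\CF_\alpha|_U)=0$, so $SS(\CF_\alpha)\subseteq SS(\CF)$. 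For the reverse inclusion: set $C:=\cup_\alpha SS(\CF_\alpha)$, which is closed conical as a finite union. For a $C$-transversal $f$ on $U$, we have $\Phi_{f-a}(\CF_\alpha|_U)=0$ for every $\alpha$; an induction on the length of the finite composition series of each ${}^p\CH^i(\CF)$, using exactness of $\Phi_{f-a}$ on the perverse heart, yields $\Phi_{f-a}({}^p\CH^i(\CF|_U))=0$; finally the perverse truncation triangles of $\CF|_U$ assemble these into $\Phi_{f-a}(\CF|_U)=0$.

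The only mildly delicate point is that the perverse cohomology of $\CF$ is concentrated in finitely many degrees, so that the final assembly from perverse truncations terminates; this follows from the hypothesis that $\CF$ has only finitely many irreducible perverse constituents, counted with multiplicity. Beyond this bookkeeping, both statements are formal consequences of the perverse t-exactness and duality recorded in Theorem \ref{thm_main_psi_phi}, and I do not anticipate a serious new obstacle.
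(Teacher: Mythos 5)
Your proof is correct and follows essentially the same approach as the paper: part (1) reduces to the duality isomorphism for vanishing cycles, and part (2) uses the alternative characterisation of $SS$ via the class $\CCC(\cdot)$ together with perverse t-exactness of $\Phi_{f-a}$ and dévissage over perverse constituents. The paper organises the dévissage slightly differently (first proving $SS(\CF)=SS(A)\cup SS(B)$ for short exact sequences of perverse sheaves, then inducting on perverse amplitude), but the underlying mechanism is identical.
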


\begin{proof}
    (1) follows from the fact that vanishing cycles commute with duality. For (2), we use the alternative definition of the singular support (Definition-Lemma \ref{def_microsupport}, Lemma \ref{lem_alt_def_of_SS}). If $\CF$ is perverse, by induction on the length, we reduce to showing $SS(\CF)=SS(A)\cup SS(B)$ for an exact sequence $0\rightarrow A\rightarrow\CF\rightarrow B\rightarrow0$ of perverse sheaves. It is clear that $SS(\CF)\subseteq SS(A)\cup SS(B)$. We claim $SS(A)\subseteq SS(\CF)$ (then $SS(B)\subseteq SS(\CF)$ follows because $SS(B)\subseteq SS(\CF)\cup SS(A)$). Indeed, it suffices to show $A$ is micro-supported on $SS(\CF)$, which is a direct consequence of the shifted perverse t-exactness of vanishing cycles. For general $\CF$, we do induction on the perverse amplitude. The argument is essentially the same.
\end{proof}

\begin{proposition}\label{prop_0seclisse}
    Assume $\Lambda=\FF_{\ell^r}$. Let $X$ be a connected smooth rigid analytic variety, and $\CF\in\DX$. Then $SS(\CF)$ equals the $0$-section if and only if $\CF$ is a non-zero local system.
\end{proposition}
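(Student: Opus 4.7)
The plan is to treat the two directions separately.

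For $(\Leftarrow)$, suppose $\CF$ is a non-zero local system on connected $X$. I would first verify $SS(\CF)\subseteq T^*_XX$ via the criterion of Lemma \ref{lem_alt_def_of_SS}: a function $f:U\to\AAA^1$ is $T^*_XX$-transversal (Definition \ref{def_Ctransversal}) iff $df$ vanishes nowhere on $U$, iff $f$ is smooth, and for such $f$ Example \ref{ex_phicomp}.1 gives $\phi_{f-a}(\CF|_U)=0$ for all $a\in K$, so $T^*_XX\in\CCC(\CF)$. Then, since a non-zero local system on a connected rigid analytic variety has full support, Lemma \ref{lem_basesupp} forces the base of $SS(\CF)$ to be $X$, yielding $SS(\CF)=T^*_XX$.

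For $(\Rightarrow)$, assume $SS(\CF)=T^*_XX$. Lemma \ref{lem_basesupp} gives $\supp(\CF)=X$, so $\CF\neq 0$. The conclusion is local, so I may work over a quasi-compact open and apply Proposition \ref{prop_perverseaddSS}.2 to decompose $\CF$ into finitely many irreducible perverse constituents $\CF_\alpha$ with $SS(\CF_\alpha)\subseteq T^*_XX$; since local constancy of cohomology is closed under extensions, it suffices to show each such $\CF_\alpha$ is a shifted local system. Fix an irreducible perverse $\CG$ with $SS(\CG)\subseteq T^*_XX$. I would first show $\supp(\CG)=X$: if $Z:=\supp(\CG)\subsetneq X$, pick $z$ in the dense smooth locus of $Z$ where $\CG$ is lisse, and choose a local function $f$ with $f|_Z=0$ and $df_z\neq 0$. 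Then $\CG|_{X-V(f)}=0$, so $\psi_f(\CG)=0$, and the defining triangle yields $\phi_f(\CG)\simeq \CG|_{V(f)}[1]$ with non-zero stalk at $z$, producing $(z,df_z)\in SS(\CG)\setminus T^*_XX$, a contradiction. So $\supp(\CG)=X$ and $\CG\simeq j_{!*}\CL[\dim X]$ for an irreducible local system $\CL$ on the maximal dense open $U$ on which $\CG$ is lisse.

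The key remaining step, and expected main obstacle, is showing $U=X$. Assume $D:=X-U\neq\varnothing$ and pick $z$ in a smooth irreducible component of $D$ lying off all others, so that $D$ is locally a smooth submanifold near $z$. If $\mathrm{codim}_X D=1$ there, write $D=V(f)$ with $df_z\neq 0$; the maximality of $U$ forces the monodromy $T$ of $\CL$ around $V(f)$ to be non-trivial. By Proposition \ref{lem_nu&psi} (and the fact that $\psi_f$ depends only on $\CG|_{X-V(f)}\simeq j_!\CL[\dim X]|_{X-V(f)}$), $\psi_f(\CG)$ is, up to shift, the nearby restriction of $\CL$ carrying the action $T$, while the specialisation map $i^*\CG\to\psi_f(\CG)$ is $\mu$-equivariant with trivial source action. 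Beilinson's gluing description of perverse sheaves along a smooth hypersurface (\cite{beilinson_how_1987,morel_beilinson_2018}) then identifies $\phi_f(j_{!*}\CL)$ with the image of $T-\mathrm{id}$ on the nearby-cycle stalk, non-zero when $T\neq\mathrm{id}$, giving $(z,df_z)\in SS(\CG)\setminus T^*_XX$, a contradiction. If $\mathrm{codim}_X D\ge 2$ near $z$, I instead appeal to a Zariski--Nagata-type purity for \etale local systems on smooth rigid analytic varieties (in the spirit of \cite{lutkebohmert_riemanns_1993}) to extend $\CL$ across $D$ near $z$, again contradicting maximality of $U$. The technical heart lies in the hypersurface vanishing-cycle computation for intermediate extensions and the rigid-analytic purity statement; both draw on ingredients slightly beyond what is explicitly developed in the present paper.
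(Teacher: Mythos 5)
Your forward direction and your reduction to an irreducible perverse $\CG$ with full support are correct, and your direct vanishing-cycle argument for $\supp(\CG)=X$ is a legitimate, arguably cleaner, alternative to the paper's invocation of Lemma~\ref{lem_SSpullpush}.2. The codimension-$\ge2$ case you handle by purity is also what the paper does (citing \cite[2.15]{hansen_vanishing_2020}).

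The genuine gap is in the codimension-$1$ step, and you rightly sense it but underestimate it. Two issues. First, speaking of \emph{the} monodromy operator $T$ of $\CL$ around $V(f)$ is not yet meaningful in the rigid-analytic setting: the finite-\'etale fundamental group of a punctured rigid disc is vastly larger than $\hat{\ZZ}(1)$ (cf.\ the remark after Example~\ref{ex_SS_compute}), so there is no single generator to act. You must first invoke a rigid Abhyankar's Lemma (\cite[3.2]{lutkebohmert_riemanns_1993}, or \cite[Prop.\ 2.1]{li_logarithmic_2019}) to shrink the neighbourhood of $z$ so that $\CL$ is trivialised by a Kummer covering; only then does the local monodromy become cyclic and ``$T$'' well-defined. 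Second, even granting that, Beilinson's gluing description of $\phi_f(j_{!*}\CL)$ in terms of $\mathrm{Im}(T-\mathrm{id})$ is not established for Zariski-constructible sheaves on rigid analytic varieties (and the formula as you state it is only correct on the unipotent part of $\Psi$; on the part with eigenvalues $\ne1$ one has $\Phi\simeq\Psi$). The $\mathrm{can}/\mathrm{var}$ duality of Theorem~\ref{thm_main_psi_phi} is available, but the full gluing equivalence and the $j_{!*}$ formula are not developed in this framework.

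The paper avoids both issues by a concrete reduction: after Abhyankar, $j_{!*}\CL\cong f^*j'_{!*}\CL'$ near $z$ for a local system $\CL'$ on a one-dimensional punctured disc with $j'_{!*}\CL'$ not locally constant, and then $\phi_f(\CF)_x\cong\phi_{\mathrm{id}}(j'_{!*}\CL')_0$ by smooth base change (Lemma~\ref{lem_smoothqcqs}.1), which is nonzero by a direct one-variable computation. If you replace your appeal to Beilinson gluing by this Abhyankar-plus-pullback reduction, your argument closes the gap and matches the paper's route.
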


\begin{proof}
    The “if” part is a consequence of Example \ref{ex_phicomp}.1 and Lemma \ref{lem_basesupp}. For the “only if” part, first note we may assume $X$ is quasi-compact. Then, by Proposition \ref{prop_perverseaddSS}.2, it suffices to show the following: if $\CF$ is an irreducible (hence non-zero) perverse sheaf with $SS(\CF)=T^*_XX$, then $\CF$ is a local system. $\CF$ is of the form $j_{!*}\CL$, for $j: U\hookrightarrow X$ a Zariski-locally closed immersion, and $\CL$ an irreducible perverse local system on $U$. By Lemma \ref{lem_SSpullpush}.2, $U$ must be a Zariski-open subset of $X$. By Lemma \ref{lem_max_liss_open_is_zariski_open}, we may assume that $U$ is the maximal open on which $\CF$ is locally constant. If $Z:=X-U$ is everywhere of codimension $\geq2$, then $j_{!*}\CL$ is a local system by purity (\cite[\nopp 2.15]{hansen_vanishing_2020}). Now suppose $Z$ is of codimension $1$ in some open subset of $X$. The regular locus $Z_{reg}$ is dense in $Z$. Let $x\in Z_{reg}(K)$, and $f$ be a function in an open neighbourhood $U$ of $x$ in $X$ such that $Z\cap U=V(f)$. We will show $\phi_f(\CF)_x\neq0$, which contradicts $SS(\CF)=T^*_XX$ and completes the proof.\\\\  By completing $f$ to a coordinate system at $x$ and use the fact that \etale maps are local isomorphisms at classical points, we may identify $Z=V(f)\xhookrightarrow{i} U\xhookleftarrow{j}(U-Z)$ with $D^{n-1}=V(T_1)\xhookrightarrow{i} D^n=\Spa(K\langle T_1,...,T_n\rangle)\xhookleftarrow{j}D^{n-1}\X D^\X$, $x$ with the origin, and $f$ with the projection to the $T_1$-coordinate. Let $Y\rightarrow D^{n-1}\X D^\X$ be some finite \etale covering trivialising $\CL$. Abhyankar’s Lemma in the form of \cite[\nopp 3.2]{lutkebohmert_riemanns_1993} implies that this covering becomes a disjoint union of Kummer coverings over $D^{n-1}(\epsilon)\X D^\X(\eta)$, for small enough $\epsilon,\,\eta\in|K^\X|$.\footnote{Two remarks on details: (i) in \cite[\nopp 3.2]{lutkebohmert_riemanns_1993}, one takes $r\rightarrow 0$ to get the analogous statement where $A(r,R)$ is replaced by a punctured disc (which is what we are actually using). This is legitimate as $d$ does not depend on $r$ and $R$. (ii) Since $D^{n-1}(\epsilon)$ is cofinal in \etale open neighbourhoods of $0$ in $D^{n-1}$, the $\sigma_i$'s in \textit{loc. cit.} always exist over $D^{n-1}(\epsilon)\times D^\X$ for small enough $\epsilon$.} So, on $D^{n-1}(\epsilon)\X D^\X(\eta)$, we have $j_{!*}\CL\cong f^*j'_{!*}\CL'$, where $j'_{!*}: D^\X(\eta)\hookrightarrow D(\eta)$ and $\CL'$ is some local system on $D^\X(\eta)$ such that $j'_{!*}\CL'$ is not locally constant. As vanishing cycles commute with smooth pullbacks, we get $\phi_f(\CF)_x\cong\phi_{\id}(j'_{!*}\CL')_0\neq0$.
\end{proof}

\begin{lemma}\label{lem_max_liss_open_is_zariski_open}
     Let $X$ be a rigid analytic variety, and $\CF\in\DX$. Then the maximal open on which $\CF$ is locally constant is a Zariski-open.
\end{lemma}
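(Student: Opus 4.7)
The plan is to reduce to $\CF$ being a single Zariski-constructible sheaf in degree zero, which is legitimate because $\CF\in\Dbzc(X)$ is locally bounded and local constancy is checked cohomology-by-cohomology. Write $U$ for the maximal open on which $\CF$ is locally constant; the goal is then to prove that $X\setminus U$ is Zariski-closed. I would fix a Zariski stratification $X=\bigsqcup_\alpha X_\alpha$ adapted to $\CF$, with $X_0$ the dense open stratum, so that $X_0\subseteq U$. The argument proceeds by Noetherian induction on Zariski-closed subsets of $X$, with the inductive hypothesis stating that the lemma holds for Zariski-constructible sheaves on any proper Zariski-closed subset of $X$.

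The core of the proof is a local characterisation of lisseness. For a classical point $x\in X\setminus X_0$, after refining the stratification I may assume $x$ lies on a smooth stratum $S$ of codimension $d$, cut out étale-locally by a regular sequence $f_1,\ldots,f_d$. I claim $\CF$ is locally constant in an analytic neighborhood of $x$ if and only if (i) $\CF|_S$ is locally constant near $x$, and (ii) the iterated vanishing cycle $\Phi_{f_d}\cdots\Phi_{f_1}(\CF)$ vanishes at $x$. Condition (i) cuts out a Zariski-open in $S$ near $x$ by the inductive hypothesis applied to $\CF|_S$. Condition (ii) cuts out a Zariski-open in $S$ near $x$ by the Zariski-constructibility of each vanishing-cycle functor (Proposition \ref{prop_phipreserveszc}), combined with smooth base change (Lemma \ref{lem_smoothqcqs}) to make sense of the iteration. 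Taking the union of these Zariski-open lisseness loci across all strata and adjoining $X_0$, the maximal lisse locus $U$ is realised as the complement of a locally finite union of Zariski-closed subsets of strata, hence is Zariski-open.

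The hard part will be justifying the equivalence in the local characterisation, particularly in codimension $\geq 2$. In codimension one the forward direction follows from Example \ref{ex_phicomp}.1, while the reverse direction can be established by an Abhyankar-type argument as in the proof of Proposition \ref{prop_0seclisse}: on a small polydisc transverse to $S$ the sheaf $\CF$ is controlled by Kummer covers of the punctured disc, and (i)+(ii) together force all such monodromies to be trivial, yielding local constancy on a smaller polydisc. In higher codimension, one iterates this argument by slicing, and a subtle point is verifying that the conclusion is independent of the choice and ordering of the $f_i$; this follows from smooth base change for nearby/vanishing cycles applied to the deformation-to-the-normal-cone geometry underlying $\nu_S$. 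Alternatively, one might try to shortcut all of this by invoking the microlocal formalism developed later (in particular $\nu_S$ and $\mu_S$), but since that formalism presupposes this kind of Zariski-openness of lisseness loci, an argument by iterated vanishing cycles appears cleanest.
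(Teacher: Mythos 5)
The key step of your argument—the local characterisation of lisseness via iterated vanishing cycles—is false, not merely hard to prove. Here is a counterexample in codimension $2$. Let $X=\AAA^2$ with coordinates $(x,y)$, let $\CL_0$ be a rank-$1$ local system on $\Gm=\AAA^1_x-\{0\}$ with nontrivial monodromy, let $\CL$ be its pullback to $\Gm\times\AAA^1_y$, and set $\CF=j_!\CL$ where $j$ is the open immersion $\Gm\times\AAA^1_y\hookrightarrow\AAA^2$. Choose a Zariski stratification adapted to $\CF$ which has $S=\{0\}$ as a stratum, cut out by $f_1=x$, $f_2=y$. Then: (i) $\CF|_S=\CF_0=0$ is trivially locally constant; (ii) $\phi_x(\CF)\simeq\psi_x(\CF)$ since $i^*j_!\CL=0$, and by the commutation of nearby cycles with smooth pullbacks (Lemma \ref{lem_smoothqcqs}.1, applied to the first projection $\AAA^2\to\AAA^1_x$), $\psi_x(\CF)$ is a local system on $\{x=0\}\cong\AAA^1_y$ pulled back from a point; hence $\Phi_y\Phi_x(\CF)=0$ by Example \ref{ex_phicomp}.1. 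So (i) and (ii) hold at $0$, yet $\CF$ is not locally constant in any neighbourhood of the origin (it jumps from $0$ on $\{x=0\}$ to a nonzero local system off it). The underlying phenomenon is familiar: iterating vanishing cycles along a generic flag through $S$ detects only ``corner'' ramification and misses ``edge'' ramification along the intermediate hypersurfaces $V(f_1),\dots,V(f_1,\dots,f_{d-1})$. Your claimed independence of the ordering of the $f_i$ is also nontrivial and, I believe, false in general; and even granting both the characterisation and the ordering-invariance, you would still need to argue that a locally finite union of Zariski-closed-in-stratum subsets is Zariski-closed in $X$, which is exactly the sort of analytic-to-Zariski gluing statement that is the real content here.

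The paper's actual proof is much shorter and uses none of this machinery. It introduces $X_{lis}(\CF)$ and $X_{Zlis}(\CF)$, the maximal open and maximal Zariski-open lisse loci, observes that the formation of $X_{lis}$ obviously commutes with restriction to analytic opens, and invokes the proof of Theorem 3.5 in \cite{bhatt_six_2022} to conclude that the same holds for $X_{Zlis}$; the lemma follows at once, since any analytic open $V$ on which $\CF$ is locally constant then satisfies $X_{Zlis}(\CF)\cap V=V_{Zlis}(\CF_V)=V$. The substantive input—that Zariski-openness of the lisse locus can be glued from analytic-local data—is precisely what is delegated to Bhatt--Hansen, and is what your construction would need to reprove from scratch. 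If you want a hands-on route avoiding that citation, the more promising tool is $\mu_S(\CF)$ or $SS(\CF)$ together with Proposition \ref{prop_0seclisse}, rather than iterated vanishing cycles along a single flag.
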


We learned the following argument from Bhargav Bhatt.

\begin{proof}
    Denote by $X_{lis}(\CF)$ (resp. $X_{Zlis}(\CF)$) the maximal open (resp. Zariski-open) in $X$ on which $\CF$ is locally constant. We need to show $X_{lis}(\CF)\subseteq X_{Zlis}(\CF)$. The formation of $(-)_{lis}$ clearly commutes with restricting to open subsets, \textit{i.e.}, $(X_{lis}(\CF)\cap U)=U_{lis}(\CF_U)$ for each open $U$ in $X$. The proof of Theorem 3.5 in \cite{bhatt_six_2022} shows that the same holds for $(-)_{Zlis}$. The conclusion follows.
\end{proof}

\begin{example}\label{ex_SS_compute}
    (1) Let $X$ be a connected smooth rigid analytic variety of dimension $1$. A Zariski-constructible sheaf $\CF$ on $X$ is locally constant except at a discrete subset $S=\{s_i\}$ of classical points. $SS(\CF)$ equals $T_X^*X\cup (\cup_i T^*_{s_i}X)$ if $\CF$ is non-zero on $X-S$, and $(\cup_i T^*_{s_i}X)$ otherwise.\\
    (2) Let $X$ be a smooth rigid analytic variety, and $H$ be a simple normal crossings divisor, \textit{i.e.}, $H=H_1\cup H_2\cup...\cup H_r$ where $H_i$ are smooth divisors such that for each subset $I\subseteq \{1,2,...,r\}$, we have $H_I:=\cap_{i\in I}H_i$ is smooth (possibly empty). Let $\CL$ be a non-zero local system on $X-H$, and $j: X-H\hookrightarrow X$ be the open immersion. Then $SS(j_!\CL)=SS(j_*\CL)=\cup T^*_{H_I}X$, where $I$ ranges through subsets of $\{1,2,...,r\}$ (for $I=\varnothing$, $T^*_{H_I}X:=T_X^*X$). Furthermore, assume $\Lambda=\FF_{\ell^r}$, then $SS(j_{!*}\CL)\subseteq SS(j_!\CL)=SS(j_*\CL)=\cup T^*_{H_I}X$. 
\end{example}

\begin{proof}[Proof of (2)]
    It suffices to show $SS(j_!\CL)=\cup T^*_{H_I}X$, then $SS(j_*\CL)=\cup T^*_{H_I}X$ follows from Proposition \ref{prop_perverseaddSS}.1, and the “Furthermore” part follows from Proposition \ref{prop_perverseaddSS}.2. Let $x\in X(K)$, suppose it is contained in $r'$ components of $H$, $r'\in\{0,1,...,r\}$. We may choose coordinates $\{T_1,...,T_n\}$ in an open neighbourhood $U$ of $x$ in $X$ such that $U$ is identified with $D^n=\Spa(K\langle T_1,...,T_n\rangle)$ and $H\cap U$ is identified with $V(T_1...T_{r'})$. By Abhyankar’s Lemma in the form of \cite[Proposition 2.1]{li_logarithmic_2019}, after restricting to $D^n(\epsilon)$ for a small enough $\epsilon\in|K^\X|$, $\CL$ is trivialised by some Kummer covering in the $\{T_1,...,T_{r'}\}$ coordinates. We have shown that every $x\in X(K)$ has an open neighbourhood $W$ on which we are in the following situation: $W\cong D^n=\Spa(K\langle T_1,...,T_n\rangle)$, $H\cap W=V(T_1...T_{r'})$, and $\CL_W$ is a local system on $U_W\cong D^{n-r}\X(D^r-V(T_1...T_{r'}))$ trivialised by a Kummer covering in the $\{T_1,...,T_{r'}\}$ coordinates. The same covering and descent data defines a local system $\CL'$ on $\AAA^n-V(T_1...T_{r'})$ whose restriction to $D^n$ is isomorphic to $\CL_W$. $(\AAA^n,\CL')$ is algebraisable, so Lemma \ref{lem_comparisonSS} and \cite[\nopp 4.11]{saito_characteristic_2017} imply $SS(j_{W!}\CL_W)\supseteq (\cup T^*_{H_I}X)|_W$. As this is the case at every classical point in $X$ and $\cup T^*_{H_I}X$ is the closure of $(\cup T^*_{H_I}X)(K)$ in $T^*X$, we get $SS(j_!\CL)\supseteq \cup T^*_{H_I}X$.\\\\
    It remains to show $SS(j_!\CL)\subseteq\cup T^*_{H_I}X$. It suffices to show $\{\dddot{j_!\CL}\}\subseteq\cup T^*_{H_I}X$. Let $(x,\xi)\in (T^*X-\cup T^*_{H_I}X)(K)$, and $f$ be a test function at $(x,\xi)$. Necessarily $\xi$ is non-zero, so $f$ is smooth in an open neighbourhood $U$ of $x$ in $X$, and $H\cap U$ is a simple normal crossings divisor relative to $f: U\rightarrow\AAA^1$. Let $j: (U-H)\hookrightarrow U$ be the inclusion, and denote the restriction of $\CL$ to $U-H$ by the same letter. We want to show $\phi_f(j_!\CL)_x=0$. This follows from the same method as in \cite[\nopp XIII, 2.1.11]{SGA7}: by induction on the amplitude we may assume $\CL$ is in degree $0$. By Abhyankar as above, after possibly shrinking $U$ towards $x$, we may assume $\CL$ is trivialised by some Kummer covering $g: Y\rightarrow (U-H)$. This induces a resolution $\CL\isoto (\CL_1\rightarrow\CL_2\rightarrow\cdots)$, where $\CL_1=g_*g^*\CL$, $\CL_2=g_*g^*\mathrm{coker}(\CL\rightarrow\CL_1)$, and so on. It suffices to show $\phi_f(j_!\CL_i)_x=0$ for all $i$. Note each $\CL_i$ becomes constant after $g^*$-pullback. As $g$ is Kummer, it extends to a ramified covering $\bar{g}: \overline{Y}\rightarrow U$ such that $\bar{g}^{-1}(U\cap H)$ is a simple normal crossings divisor relative to $f\bar{g}$. Apply the commutativity of vanishing cycles with proper pushforwards, we reduce to showing $\phi_f(j_!\CL)_x=0$ for $\CL$ constant. By standard dévissage, we further reduce to showing $\phi_f(\text{a constant sheaf on } U\cap H_I)_x=0$, which is clear (Example \ref{ex_phicomp}.1). 
\end{proof}

\begin{remark}
    We record a distinction between the real or complex analytic and the rigid analytic settings. Namely, the Microlocal Morse Lemma as in \cite[\nopp 5.4.19]{kashiwara_sheaves_1990} does not hold in the rigid world. Let $\CL$ be a local system on $D$, and consider $R\Gamma(D(r),\CL)$ for $r\in|K^\X|$ increasing to $1$. By \cite[\nopp 10.6.(iv)]{huber_swan_2001}, the ramification of $\CL$ at the “out-pointing” rank-$2$ point of $D(r)$ contributes to $R\Gamma(D(r),\CL)$ (see \cite[\nopp 10.5]{huber_swan_2001} for an explicit computation), while $SS(\CL)=T^*_{D}D$ does not “see” these rank-$2$ points. This distinction roots in the fact that the finite-\etale fundamental group of a rigid annulus is much more complicated than $\hat{\ZZ}(1)$.
\end{remark}

\section{Questions and conjectures}\label{sec_open}

\begin{question}[Smooth pullback of $SS$]\label{que_smoothpull}
    Let $p: Y\rightarrow X$ be a smooth map of smooth rigid analytic varieties, and $\CF\in \DX$. Is $p^\circ SS(\CF)=SS(p^*\CF)$? “$\subseteq$” is known from Lemma \ref{lem_SSpullpush}.(1). Further, one can show $SS(p^*\CF)$ does not contain any covectors with non-zero “vertical” components (reduce to the case $p: X\times D \rightarrow X$, then for any test function $f: X\times D\rightarrow \AAA^1$ at $(x,\xi)\in T^*(X\times D)(K)$ with $\xi(\partial_T)\neq 0$, consider the projection $X\times D\times \AAA^1\rightarrow X\times\AAA^1$ and the function $f-z$ on $X\times D\times \AAA^1$, where $z$ is the coordinate on $\AAA^1$. Use the fact that \etale maps are local isomorphisms at classical points). In particular, the answer is yes when $\mathrm{dim}(X)=1$.
\end{question}

\begin{question}[Comparison of $SS$]\label{que_comparisonSS}
    Let $\CX$ be a smooth finite type scheme over $K$, and $\CF\in D^b_c(X)$. Let $X$ and $\CF^{an}$ be their analytifications. Is $(SS(\CF))^{an}= SS(\CF^{an})$? “$\subseteq$” is known from Lemma \ref{lem_comparisonSS}. The converse can be proved if one can prove the following statement:
    \begin{statement}
        Let $D^n=\Spa(K\langle T_1,...,T_n\rangle)$, $\CF\in D^{b}_{zc}(D^n)$, and $f\in K\langle T_1,...,T_n\rangle$ be such that $f(0)=0$ and $df(0)\neq 0$. If $\phi_f(\CF)_0\neq 0$, then there exists an $N\in\ZZ_{\geq 1}$ such that $\phi_{f_N}(\CF)_0\neq 0$, where $f_N$ denotes $f$ with terms of order $>N$ removed.
    \end{statement}
\end{question}

\begin{question}[Analyticity of $SS$]\label{que_ana}
    Let $X$ be a smooth rigid analytic variety, and $\CF\in \DX$. Is $SS(\CF)$ an analytic closed subset of $T^*X$? We know its base equals $supp(\CF)$ by Lemma \ref{lem_basesupp} so is analytic. We can also show that the maximal open subset of $X$ on which $\CF$ is lisse is Zariski open. Together with Proposition \ref{prop_0seclisse} (assuming $\Lambda=\FF_{l^r}$), this implies that the base of $SS(\CF)-X$ is analytic.
\end{question}

\begin{conjecture}
    Let $X$ be a smooth rigid analytic variety, and $\CF\in\DX$. Then $SS(\CF)=\supp\,\mu hom(\CF,\CF)$. Note this would give positive answers to Questions \ref{que_smoothpull} and \ref{que_ana}.
\end{conjecture}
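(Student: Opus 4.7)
The plan is to prove the two inclusions $\supp\,\mu hom(\CF,\CF)\subseteq SS(\CF)$ and $SS(\CF)\subseteq\supp\,\mu hom(\CF,\CF)$ separately, exploiting the functoriality machinery of §\ref{sec_special}--§\ref{sec_muhom}.

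For the inclusion $\supp\,\mu hom(\CF,\CF)\subseteq SS(\CF)$, the strategy is to establish the more general estimate $\supp\,\mu hom(\CG_1,\CG_2)\subseteq SS(\CG_1)\cap SS(\CG_2)$ for all $\CG_1,\CG_2\in\Dbzc(X)$. To show $\mu hom(\CG_1,\CG_2)_{(x,\xi)}=0$ whenever $(x,\xi)\notin SS(\CG_2)$, I would dévissage on $\CG_2$: by the covariance of $\mu hom$ in the second variable and Proposition \ref{prop_perverseaddSS}.2, it suffices to treat the case where $\CG_2$ is an irreducible perverse sheaf, and ultimately (via further reduction to pushforwards of constant sheaves from smooth hypersurfaces) the case $\CG_2=i_*\underline{\Lambda}_Z$ for $Z=V(f)$. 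Then Lemma \ref{lem_muhom_is_microlocalisation} and Proposition \ref{lem_microlocalisation_is_vanishingcycle} identify the stalk with $\Phi_f(\CG_2)_x(-1)$, which vanishes by hypothesis. The symmetric inclusion in $SS(\CG_1)$ follows by swapping arguments via the $a^*$-identification in the definition of $\mu hom$. Specialising to $\CG_1=\CG_2=\CF$ then yields the desired inclusion.

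For the reverse inclusion $SS(\CF)\subseteq\supp\,\mu hom(\CF,\CF)$, I would (under the assumption $\Lambda=\FF_{\ell^r}$) first reduce to the case of an irreducible perverse $\CF=j_{!*}\CL$ via Proposition \ref{prop_perverseaddSS}.2. The identity $\id_\CF$ defines a canonical global section $e_\CF\in\mathrm{Hom}(\CF,\CF)\simeq H^0(T^*X,\pi_*\mu hom(\CF,\CF))$, whose restriction to the zero section $T^*_XX$ is non-zero. The task is to show that $e_\CF$ has non-zero germ at every $(x,\xi)\in SS(\CF)$. For $(x,\xi)\in\{\dddot\CF\}$ with test function $f$ satisfying $\phi_f(\CF)_x\neq 0$, I would attempt to construct a non-zero morphism $\alpha:i_*\underline{\Lambda}_{V(f)}\to\CF$ locally near $x$ (whose existence reflects the non-vanishing of $\phi_f(\CF)_x$), and then extract from the diagram $\alpha\circ\id_\CF$ a non-zero class in $\mu hom(\CF,\CF)_{(x,\xi)}$ by factoring through $\mu hom(i_*\underline{\Lambda}_{V(f)},\CF)_{(x,\xi)}\cong\Phi_f(\CF)_x(-1)$.

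The hard part will be this second inclusion, and specifically the propagation step from the stalk at the zero section (where $e_\CF$ is manifestly non-zero) to the rest of $SS(\CF)$. In the classical setting, the Microlocal Morse Lemma \cite[\nopp 5.4.19]{kashiwara_sheaves_1990} is the key tool for this propagation, but it fails in the rigid analytic world (see the remark following Example \ref{ex_SS_compute}), because the ramification at higher-rank points of the Huber spectrum is not detected by $SS(\CF)$. I would explore three complementary workarounds: (i) combine Lemma \ref{lem_comparisonSS} with Beilinson's results in \cite{beilinson_constructible_2016} to handle $\CF$ of algebraic origin directly; (ii) apply Abhyankar's lemma as in Example \ref{ex_SS_compute}.(2) to locally algebraise the situation in tame cases; and (iii) equip $\mu hom(\CF,\CF)$ with a microlocal composition product in the spirit of \cite[ch.\ XI]{kashiwara_sheaves_1990} and use an involutivity-type argument to spread the non-vanishing. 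A successful resolution would, as noted, also answer Questions \ref{que_smoothpull} and \ref{que_ana}.
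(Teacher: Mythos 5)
This statement is a \emph{conjecture} in the paper: the author explicitly offers no proof, and states it as an open problem (noting that a proof would resolve Questions \ref{que_smoothpull} and \ref{que_ana}). So there is no ``paper's proof'' for your plan to be compared against; what you have written is a speculative programme for something the paper itself does not resolve.

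Beyond that, your sketch has concrete problems even in the direction you present as the easier one. Your dévissage for $\supp\,\mu hom(\CG_1,\CG_2)\subseteq SS(\CG_1)\cap SS(\CG_2)$ tries to reduce $\CG_2$ to $i_*\underline{\Lambda}_Z$ and then invoke Lemma \ref{lem_muhom_is_microlocalisation}, but that lemma computes $\mu hom(i_*\underline{\Lambda}_Z,\CF)\simeq\widetilde{i}_*\mu_Z(\CF)$ with the skyscraper in the \emph{first} slot, not the second; there is no analogous statement in the paper for the second argument, and the asymmetry is essential (the two slots enter $\mu hom$ through $q_1^*$ and $q_2^!$ respectively). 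Moreover ``further reduction to pushforwards of constant sheaves from smooth hypersurfaces'' is not a valid dévissage of an irreducible perverse sheaf: a general $j_{!*}\CL$ does not fit into any exact or triangulated resolution by such objects, and Proposition \ref{prop_perverseaddSS}.2 only controls $SS$ under perverse filtration, not under arbitrary replacements. Even granting the reduction, the classical proof of the first inclusion rests on estimates like $\supp\,\mu_Z(\CF)\subseteq T^*_ZX\cap SS(\CF)$ and $SS$-bounds for $\boxtimes$ and $\RHom$, none of which are established in §\ref{sec_SS}; those estimates themselves rely on the noncharacteristic deformation and microlocal Morse machinery, which the paper explicitly notes fails in the rigid world (Remark after Example \ref{ex_SS_compute}).

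For the reverse inclusion you have correctly identified the central obstruction (propagation away from the zero section without a Microlocal Morse Lemma), and your three proposed workarounds are reasonable directions, but they are admittedly only sketches; as such they do not close the gap. If you want to make concrete progress, the most tractable piece of your plan is probably item (i): using Lemma \ref{lem_comparisonSS} together with Beilinson's holonomicity results to handle the algebraisable case, after establishing a comparison of $\mu hom$ with its algebraic counterpart under analytification (which would itself be a new lemma not present in the paper). A sharp statement and proof of that comparison would already be a publishable partial result on the conjecture.
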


The following conjecture makes precise a conjecture of Bhatt and Hansen (\cite[last sentence]{bhatt_six_2022}). Note that Proposition \ref{prop_perverseaddSS} and Example \ref{ex_SS_compute} imply the conjecture is true (assuming $\Lambda=\FF_{\ell^r}$) on smooth surfaces after possibly removing a discrete set of classical points.

\begin{conjecture}
    Let $X$ be a smooth rigid analytic variety, and $\CF\in\DX$. Then $SS(\CF)$ is an analytic Lagrangian subspace of $T^*X$, \textit{i.e.}, it is an analytic closed subset and is Lagrangian on the regular locus. In particular, if $X$ is connected, then each irreducible component of $SS(\CF)$ is of dimension $\dim(X)$. 
\end{conjecture}

\section{Appendix: $\infty$-categorical characterisations of monodromic sheaves}
We give an $\infty$-categorical characterisation of monodromic sheaves. We are grateful to Bhargav Bhatt for suggesting this to us and for discussion on its proof. A similar characterisation is known in the algebraic setting using representation theoretic methods, see \cite{eteve_free_2025}. The set-up is as in §\ref{sec_intro}, we further assume $p>0$, this is because the formalism of v-stacks in \cite{scholze_etale_2022} that we will use is developed under this assumption.\\

Let $E\rightarrow X$ be a vector bundle over a rigid analytic variety over $K$, with no assumptions on $X$. Consider the pro-system of $n$-th power coverings $\Gm(n)(=\Gm)\rightarrow\Gm, z\mapsto z^n, n\in\ZZ_{\geq 1}$. The limit $\widetilde{\Gm}$ exists as an adic space, and is in fact a perfectoid group. Each $\Gm(n)$ acts on $E$ via $\Gm(n)\rightarrow\Gm$ and the scaling action of $\Gm$ on $E$. Denote by $E/\Gm(n)$ the quotient stack. More precisely, we take the diamond $E^\diamondsuit$ (resp. $\Gm(n)^\diamondsuit$) associated to $E$ (resp. $\Gm$) (\cite[\nopp 15.5]{scholze_etale_2022}) and form the small v-stack $E^\diamondsuit/\Gm(n)^\diamondsuit$. We will drop $^\diamondsuit$ in the notations for convenience. Similarly, we have the small v-stack $E/\widetilde{\Gm}$.\\

For a small v-stack $Y$, the stable $\infty$-category $\mathcal{D}_{\et}(Y)$ is defined in \cite[\nopp 14.13, 17.1]{scholze_etale_2022}. By \cite[\nopp 14.14]{scholze_etale_2022}, $\mathcal{D}_{\et}(-)$ is a v-sheaf (valued in $\infty$-categories) on the category of small v-stacks. For a locally spatial diamond $Y$, $\mathcal{D}_{\et}(Y)$ is the left completion of $\mathcal{D}(Y_{\et})$, the latter denotes the stable $\infty$-category of sheaves on $Y$ with the \etale topology (\cite[\nopp 14.15]{scholze_etale_2022}). If $Y$ is (the diamond associated to) a rigid analytic variety, then $\mathcal{D}_{\et}(Y)\simeq\mathcal{D}(Y_{\et})$ as $\mathcal{D}(Y_{\et})$ is left complete (\textit{c.f.} \cite[\nopp 3.29]{bhatt_six_2022}). We denote by $\cDb(E/\Gm(n))$ (resp. $\mathcal{D}^b(E/\Gm(n))$) (resp. $\cDbzc(E/\Gm(n))$) the $\infty$-fullsubcategory of $\mathcal{D}_{\et}(E/\Gm(n))$ consisting of those $\CF$ such that $g^*\CF$ is in $\cDb(E)$ (resp. $\mathcal{D}^b(E)$) (resp. $\cDbzc(E)$), where $g: E\rightarrow E/\Gm(n)$ is the quotient map, and $\cDb(E)$ (resp. $\mathcal{D}^b(E)$) (resp. $\cDbzc(E)$) is the usual $\infty$-category of locally bounded \etale sheaves (resp. bounded \etale sheaves) (resp. locally bounded Zariski-constructible \etale sheaves) on the rigid analytic variety $E$ (we are implicitly using \cite[\nopp 15.6]{scholze_etale_2022}). Similarly we have $\cDb(E/\widetilde\Gm)$ (resp. $\mathcal{D}^b(E/\widetilde\Gm)$) (resp. $\cDbzc(E/\widetilde\Gm)$).\\

Consider the ind-system $\{\mathcal{D}^b(E/\Gm(n))\}_{n\in\ZZ_{\geq1}}$ with transition maps induced by pullbacks along quotient maps $E/\Gm(m)\rightarrow E/\Gm(n)$ for $n$ dividing $m$. Let $\varinjlim_n\mathcal{D}^b(E/\Gm(n))$ be its colimit in $\mathrm{Cat}_{\infty}$. Note then $RHom_{\varinjlim\mathcal{D}^b_{zc}(E/\Gm(n))}\simeq\varinjlim RHom_{\mathcal{D}^b_{zc}(E/\Gm(n))}$.\\

We have the functors $\varinjlim_n\mathcal{D}^b(E/\Gm(n))\rightarrow\cDb(E/\widetilde{\Gm})\rightarrow\cDb(E)$ induced by pullbacks along quotient maps.

\begin{proposition}
    Let $E\rightarrow X$ be a vector bundle over a rigid analytic variety. Then:\\
    (1) the above functors are fully faithful embeddings: $\varinjlim_n\mathcal{D}^b(E/\Gm(n))\hookrightarrow\cDb(E/\widetilde{\Gm})\hookrightarrow\cDb(E)$. These restrict to fully faithful embeddings: $\varinjlim_n\mathcal{D}^b_{zc}(E/\Gm(n))\hookrightarrow\cDbzc(E/\widetilde{\Gm})\hookrightarrow\cDbzc(E)$.\\
    (2) The essential image of $\cDbzc(E/\widetilde{\Gm})\hookrightarrow\cDbzc(E)$ is $\mathcal{D}_{mon}(E)$.\\
    (3) For $X$ quasi-compact, $\varinjlim_n\mathcal{D}^b_{zc}(E/\Gm(n))\hookrightarrow\mathcal{D}^b_{zc}(E/\widetilde{\Gm})$ is an equivalence.
\end{proposition}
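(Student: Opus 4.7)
The plan is to handle the three parts in order, with the key input being the computation $R\Gamma(\widetilde\Gm^k, \Lambda) = \Lambda$ in degree $0$ for all $k \geq 0$. This follows from $R\Gamma(\Gm, \Lambda) = \Lambda \oplus \Lambda(-1)[-1]$ with the $n$-th power pullback acting by multiplication by $n$ on $H^1$, continuity of étale cohomology along the cofiltered limit $\widetilde\Gm = \lim_n \Gm(n)$ (which kills $H^1$ in the colimit since $\Lambda$ is $\ell$-torsion and $n$ ranges over all positive integers), together with Künneth for products. This computation is what makes the entire proposition possible.

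For (1), I would compute mapping spaces via v-descent. For the second embedding, descent along $g: E \to E/\widetilde\Gm$ (the Čech nerve being $E \times \widetilde\Gm^\bullet$) gives $\mathrm{Map}_{E/\widetilde\Gm}(\CF, \CG) = \mathrm{Tot}(\mathrm{Map}_{E \times \widetilde\Gm^\bullet}(p^*\CF, p^*\CG))$; using that internal Hom commutes with the relevant pullback, Künneth identifies each cosimplicial level with $\mathrm{Map}_E(g^*\CF, g^*\CG) \otimes R\Gamma(\widetilde\Gm^\bullet, \Lambda) = \mathrm{Map}_E(g^*\CF, g^*\CG)$, and the totalization of this constant cosimplicial object is itself. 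For the first embedding, the analogous computation at finite level gives $\mathrm{Map}_{E/\Gm(m)}(\CF, \CG) = \mathrm{Map}_E(g^*\CF, g^*\CG) \otimes R\Gamma(B\Gm(m), \Lambda) = \mathrm{Map}_E(g^*\CF, g^*\CG) \otimes \Lambda[c]$ with $|c|=2$; the transition $m \mid m'$ multiplies $c$ by $m'/m$, so the filtered colimit over $m$ collapses $\Lambda[c]$ to $\Lambda$, matching the RHS. The Zariski-constructible refinement is automatic from the definition of $\cDbzc(E/G)$ as the preimage of $\cDbzc(E)$.

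For (2), the inclusion essential image $\subseteq \mathcal{D}_{mon}(E)$ is direct: a $\widetilde\Gm$-equivariant structure on $\CF$ restricts, via any lift $\widetilde\lambda \in \widetilde\Gm(K)$ of $\lambda \in K^\times$, to $\theta_\lambda^*\CF \simeq \CF$, which is Corollary \ref{cor_fundamental_of_mon}.(2). For the converse, given monodromic $\CF$, the strategy is to assemble a $\widetilde\Gm$-equivariant descent datum along $E \to E/\widetilde\Gm$. Locally on each quasi-compact open $U \subseteq X$, Corollary \ref{cor_fundamental_of_mon}.(3) supplies a 1-cocycle $\iota_U(n_U): \theta(n_U)^*\CF|_{E_U} \isoto \pr^*\CF|_{E_U}$ restricting to $\mathrm{id}$ on $1 \times E_U$, which pulls back along $\widetilde\Gm \to \Gm(n_U)$ to the level-$1$ datum of a $\widetilde\Gm$-equivariant structure on $\CF|_{E_U}$. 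The cocycle condition and higher coherences live in Ext-groups on $E_U \times \widetilde\Gm^k$ that, by Künneth and the computation above, are pulled back from $E_U$; since $\iota_U(n_U)|_{1 \times E_U} = \mathrm{id}$, all of these conditions hold tautologically after restriction to the identity section, hence everywhere. The local data then glue over $X$ via Lemma \ref{lem_mon_ess_uni_fun_of_iota} (essential uniqueness) combined with the gluing Lemma \ref{lem_glue_morph}.

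For (3), fully faithfulness follows from (1), so the content is essential surjectivity. Given $\CF \in \mathcal{D}^b_{zc}(E/\widetilde\Gm)$ with $X$ quasi-compact, $g^*\CF$ is monodromic by (2), so Proposition \ref{prop_fundamental_of_mon_qc} supplies some $n$ and a 1-cocycle $\iota(n)$. The question is whether the given $\widetilde\Gm$-equivariant structure on $\CF$ descends to a $\Gm(n)$-equivariant one, equivalently whether the kernel $K_n := \ker(\widetilde\Gm \twoheadrightarrow \Gm(n))$ acts trivially on $\CF$. Since $K_n \subseteq \mu$ acts trivially on $E$, this $K_n$-action is a continuous representation of a pro-finite group in the automorphism $\infty$-group of $\CF$ in $\mathcal{D}^b_{zc}(E)$. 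The hardest step I anticipate is showing this automorphism group is sufficiently ``pro-finitely controlled'' when $X$ is quasi-compact—i.e.\ that the continuous $K_n$-action factors through some finite quotient $\mu \twoheadrightarrow \mu_N$, after which enlarging $n$ to $nN$ gives the desired descent. I expect this to follow from the finiteness of the relevant RHom modules for quasi-compact $X$ (via Huber's finiteness theorem \cite{huber_finiteness_1998}, as invoked in the proof of Lemma \ref{lem_glue_morph}), which forces the discrete automorphism groups of the constructible cohomology sheaves to be finite in each degree.
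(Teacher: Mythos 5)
Your core input for (1)---that $R\Gamma(\widetilde{\Gm},\Lambda)$ collapses to $\Lambda$ because $H^1(\Gm,\Lambda)=\Lambda(-1)$ is killed in the colimit over $n$-th power pullbacks once $\ell\mid n$---is exactly the computation the paper relies on (and the collapse of $R\Gamma(B\Gm(m),\Lambda)$ in the colimit for the first embedding is also right). The paper, however, does not merely assert a Künneth isomorphism over $\widetilde{\Gm}$: since $\widetilde{\Gm}$ is a perfectoid, not a rigid analytic variety, and the relevant \'etale site is of a locally spatial diamond, it reduces the claim $A\isoto\pi_*\pi^*A$ (for $\pi:Y\X_K\widetilde{\Gm}\to Y$) to bounded sheaves and covers $\widetilde{\Gm}$ by qcqs pieces $\widetilde{D}_j=\varprojlim_m D(\sqrt[m]{r_j},\sqrt[m]{1/r_j})$, then applies the continuity theorems of \cite[\nopp VI.8.7]{SGA4} and \cite[\nopp 11.22--23]{scholze_etale_2022} to reduce to the annulus computation. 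You should not skip this step; it is where the work in (1) really is.

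For (2) there is a genuine gap in your gluing strategy. You want to assemble the locally constructed $\widetilde{\Gm}$-equivariant structures into a global one by combining Lemma~\ref{lem_mon_ess_uni_fun_of_iota} with Lemma~\ref{lem_glue_morph}, but Lemma~\ref{lem_glue_morph} (a) requires $X$ to be quasi-separated with a filtered cover by qcqs opens, whereas the proposition is stated for a general rigid analytic variety---indeed removing the quasi-separatedness hypothesis is the entire point of the appendix---and (b) only glues a single morphism in a triangulated category, not an $\infty$-categorical descent datum with all its higher coherences. The correct tool is that $\cDbzc(E_U/\widetilde{\Gm})$ is a stack on $X$ for the usual topology (a consequence of $\mathcal{D}_{\et}(-)$ being a v-sheaf, \cite[\nopp 14.14]{scholze_etale_2022}), which reduces membership in the essential image to the quasi-compact case, period. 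Your further claim that the higher coherences ``hold tautologically'' because the obstruction spaces are pulled back from $E_U$ is in the right spirit but needs an argument; the paper sidesteps the issue entirely by noting that $\CCC'$, $\CCC$, and $\mathcal{D}_{mon}(E)$ are closed under truncations and cones, so one can induct on amplitude and reduce to abelian sheaves $\CF\in\Sh_{mon}(E)$, where $\Gm(n)$-equivariance is a classical $1$-cocycle condition with no higher coherences to worry about (cf.\ \cite[\nopp 12.6]{jantzen_nilpotent_2004}).

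For (3) your ``hardest step'' is not actually needed, and chasing it would be wasted effort. You do not have to show directly that the $\widetilde{\Gm}$-equivariant structure descends, i.e.\ that the kernel $K_n$ acts through a finite quotient. Once you know (i) full faithfulness of $\cDbzc(E/\widetilde{\Gm})\hookrightarrow\cDbzc(E)$ and (ii) that for $X$ quasi-compact every monodromic $g^*\CF$ lifts to some $\widetilde{\CF}\in\mathcal{D}^b_{zc}(E/\Gm(n))$ (which is the reduction to $\Sh$ above), you are done: the isomorphism between the pullbacks of $\CF$ and $g_n^*\widetilde{\CF}$ on $E$ automatically lifts, by full faithfulness, to an isomorphism $\CF\simeq g_n^*\widetilde{\CF}$ in $\cDbzc(E/\widetilde{\Gm})$. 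This is why the paper proves (2) and (3) simultaneously: both reduce to the single statement that, over quasi-compact $X$, every object of $\Sh_{mon}(E)$ comes from $\Sh(E/\Gm(n))$ for some $n$.
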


Here $\mathcal{D}_{mon}(E)$ denotes the $\infty$-fullsubcategory of $\cDbzc(E)$ whose objects are $\CF\in \cDbzc(E)$ such that there exists an isomorphism $\theta_\lambda^*\CF\isoto \CF$ for all $\lambda\in K^\X$. It follows from this Proposition that $\mathcal{D}_{mon}(E_U)$ is the stackification of $\varinjlim_n\mathcal{D}^b_{zc}(E_U/\Gm(n))$ for $U$ ranging over open subsets in $X$ with the usual topology.

\begin{proof}
    We make two preliminary observations. Denote by $\mathcal{D}_{MON}(E)$ the $\infty$-fullsubcategory in $\cDb(E)$ consisting of $\CF\in \cDb(E)$ such that there exists an isomorphism $\theta_\lambda^*\CF\isoto \CF$ for all $\lambda\in K^\X$. Then, (i) $\cDb(E/\widetilde{\Gm})$ lands in $\mathcal{D}_{MON}(E)$: for each $\lambda\in K^\X$, any choice of a pro-system of $n$-th roots of $\lambda$ gives an isomorphism $\theta_\lambda^*\CF\isoto \CF$ induced by the $\widetilde{\Gm}$-equivariance structure; (ii) for $X$ quasi-compact, $\mathcal{D}_{MON}(E)\subseteq\mathcal{D}^b(E)$: this is because every $\CF\in\mathcal{D}_{MON}(E)$ is bounded in some open neighbourhood $U$ of the $0$-section in $E$, and every point in $E-U$ can be moved into $U$ by a scaling. We now proceed with the proof.\\\\
    \underline{Step 1}. We show $\cDb(E/\widetilde{\Gm})\rightarrow\cDb(E)$ is fully faithful.\\\\ 
    Consider the following diagram of small v-stacks, where $f$ and $g$ are the quotient maps:
\[\begin{tikzcd}
	{E\X_K\Gm} & E \\
	{E/\mu} & {E/\widetilde{\Gm}}
	\arrow["{g'}", from=1-1, to=1-2]
	\arrow["{f'}", from=1-1, to=2-1]
	\arrow["\lrcorner"{anchor=center, pos=0.125}, draw=none, from=1-1, to=2-2]
	\arrow["f", from=1-2, to=2-2]
	\arrow["g"', from=2-1, to=2-2]
\end{tikzcd}\]  
    Let $\CF, \CG\in \cDb(E/\widetilde{\Gm})$. We need to show $RHom(\CF,\CG)\isoto RHom(f^*\CF,f^*\CG)$. As $RHom(f^*\CF,f^*\CG)\simeq RHom(\CF,f_*f^*\CG)$, it suffices to show $\CG\isoto f_*f^*\CG$. The question being local, we may assume $X$ is quasi-compact and so $\CG\in\mathcal{D}^b(E/\widetilde{\Gm})$. Induction on the amplitude then reduces to the case $\CG\in\Sh(E/\widetilde{\Gm})$ The isomorphism can be checked on a v-cover (\cite[\nopp 14.10]{scholze_etale_2022}), in particular $g: E/\mu\rightarrow E/\widetilde{\Gm}$. Using \cite[\nopp 23.15, 24.4]{scholze_etale_2022}, one checks that $g$ is $\ell$-cohomologically smooth. So $g^*f_*f^*\CG\isoto f'_*g'^*f^*\CG$ by \cite[\nopp 23.16.(ii)]{scholze_etale_2022}, so it suffices to show: for every $A\in\Sh(E/\mu)$, we have $A\isoto f'_*f'^*A$. Consider the following resolutions:
\[\begin{tikzcd}
	{\cdots E\X_K\widetilde{\Gm}\X_K\mu} & {E\X_K\widetilde{\Gm}} & {E\X_K\Gm} \\
	{\cdots E\X_K\mu} & E & {E/\mu}
	\arrow[shift right, from=1-1, to=1-2]
	\arrow[shift left, from=1-1, to=1-2]
	\arrow["{a_1}", curve={height=18pt}, dotted, from=1-1, to=1-3]
	\arrow[from=1-1, to=2-1]
	\arrow["{a_0}", from=1-2, to=1-3]
	\arrow[from=1-2, to=2-2]
	\arrow["\lrcorner"{anchor=center, pos=0.125}, draw=none, from=1-2, to=2-3]
	\arrow["{f'}", from=1-3, to=2-3]
	\arrow[shift right, from=2-1, to=2-2]
	\arrow[shift left, from=2-1, to=2-2]
	\arrow["{a_1}", curve={height=18pt}, dotted, from=2-1, to=2-3]
	\arrow["{a_0}", from=2-2, to=2-3]
\end{tikzcd}\]
Denote $(E\X_K\widetilde{\Gm}\X_K\mu\X_K\cdots\X_K\mu)\rightarrow (E\X_K\mu\X_K\cdots\X_K\mu)$ ($n$-copies of $\mu$) by $f'_n: E_n\X_K\widetilde{\Gm}\rightarrow E_n$. First note that it suffices to show the \underline{Claim}: $B\isoto f'_{n*}f'^*_nB$ for $B\in\Sh(E_n)$ and each $n$. Indeed, accepting this claim, we have, for every $C\in\mathcal{D}^+(E/\mu)$:
\begin{align*}
RHom(C,f'_*f'^*A)&\simeq RHom(\varprojlim C_n,f'_*f'^*\varprojlim A_n)\\
&\simeq RHom(f'^*\varprojlim C_n,f'^* \varprojlim A_n)\\
&\simeq RHom(\varprojlim f'^*_nC_n, \varprojlim f'^*_nA_n)\\
&\simeq \varprojlim RHom(f'^*_nC_n, f'^*_nA_n)\\
&\simeq \varprojlim RHom(C_n, f'_{n*}f'^*_nA_n)\\
&\simeq \varprojlim RHom(C_n, A_n)\\
&\simeq RHom(\varprojlim C_n, \varprojlim A_n)\simeq RHom(C, A).
   \end{align*}
Here, in the first step, we used $\mathcal{D}_{\et}(E/\mu)\simeq \varprojlim \mathcal{D}_{\et}(E_n)$ (which follow from the fact that $\mathcal{D}_{\et}(-)$ is a v-stack) to write $A$ as $\varprojlim A_n$, where $A_n=a_n^*A$, similarly for $C$; in the third step we used that the pullback on the simplicial resolution is computed termwise; in the fourth and seventh steps we used that $RHom$ of a limit is the limit of $RHom$'s.\\\\
It remains to show the Claim. This follows from a more general statement.
\begin{lemma*}
    Let $Y$ be a locally spatial diamond which admits a cover by open subdiamonds of the form $U=\varprojlim_i U_i$, where $i$ is indexed by positive integers ordered by divisibility, $\{U_i\}$ are (diamonds associated to) quasi-compact quasi-separated (qcqs) rigid analytic varieties over $K$ with qcqs transition maps. Let $\pi: Y\X_K\widetilde{\Gm}\rightarrow Y$ be the projection. Then, for every $A\in\mathcal{D}^{(b)}(Y)$, we have $A\isoto \pi_*\pi^*A$. Here $\mathcal{D}^{(b)}(Y)$ denotes the $\infty$-fullsubcategory of $\mathcal{D}_{\et}(Y)$ consisting of those objects that are bounded locally on $Y$.
\end{lemma*}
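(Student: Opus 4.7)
The plan is to reduce to the key cohomology computation $R\Gamma(\widetilde{\Gm}_{\Spa(C,C^+)}, \Lambda) \simeq \Lambda$ for algebraically closed perfectoid $(C,C^+)$, which follows from the $\ell$-power torsion of $\Lambda$ together with the classical computation of $H^*(\Gm, \Lambda)$. Since the assertion that $A \isoto \pi_*\pi^*A$ is local on $Y$, I may replace $Y$ by one of the open subdiamonds $U = \varprojlim_i U_i$ appearing in the hypothesised cover. Then by the projection formula $\pi_*\pi^*A \simeq (\pi_*\underline{\Lambda}) \otimes^L A$, it suffices to prove $\pi_*\underline{\Lambda}_{Y\times\widetilde{\Gm}} \simeq \underline{\Lambda}_Y$.

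For the key computation, I cover $\widetilde{\Gm}_{\Spa(C,C^+)}$ by qcqs perfectoid opens of the form $\widetilde{V} = \varprojlim_n V^{(n)}$, where each $V^{(n)} \subseteq \Gm(n)_C$ is the preimage of a qcqs Laurent annulus in $\Gm(1)_C$. By continuity of étale cohomology under cofiltered limits of qcqs diamonds with qcqs transition maps (\cite{scholze_etale_2022}, Proposition 14.9),
\[ R\Gamma(\widetilde{V}, \Lambda) \simeq \varinjlim_n R\Gamma(V^{(n)}, \Lambda). \]
Each annulus $V^{(n)}$ has cohomology $\Lambda \oplus \Lambda(-1)[-1]$, and the transition $e_{n,m}^*$ acts as multiplication by $n/m$ on $H^1 \simeq \Lambda(-1)$. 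Hence any $\alpha \in H^1(V^{(n)}, \Lambda)$ is transported to $\ell^r \alpha = 0$ in $H^1(V^{(n\ell^r)}, \Lambda)$, so the colimit vanishes in degree $\geq 1$ and $R\Gamma(\widetilde{V}, \Lambda) \simeq \Lambda$. Mayer--Vietoris across such a cover then gives $R\Gamma(\widetilde{\Gm}_{\Spa(C,C^+)}, \Lambda) \simeq \Lambda$.

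To globalise, I verify $\pi_*\underline{\Lambda} \simeq \underline{\Lambda}_Y$ stalkwise. At each geometric point $\bar y: \Spa(C,C^+) \to Y$ with $C$ algebraically closed perfectoid, smooth base change---applicable because $\pi$ is the base change along $Y \to \Spa(K)$ of $\widetilde{\Gm} \to \Spa(K)$---gives $(\pi_*\underline{\Lambda})_{\bar y} \simeq R\Gamma(\widetilde{\Gm}_{\bar y}, \Lambda) \simeq \Lambda = (\underline{\Lambda}_Y)_{\bar y}$, completing the proof.

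The main technical obstacle is justifying smooth base change and the projection formula rigorously for $\pi$ in the v-stack framework, since $\widetilde{\Gm}$ is a perfectoid group rather than a rigid analytic variety. These are accommodated by Scholze's formalism of $\ell$-cohomologically smooth morphisms of small v-stacks (\cite{scholze_etale_2022}, Proposition 23.16 and its variants). A more elementary alternative is to work at each finite level $\pi_n: Y \times \Gm(n) \to Y$, where classical smooth base change and the projection formula apply directly to yield $\pi_{n*}\pi_n^*A \simeq A \otimes^L R\Gamma(\Gm, \Lambda)$, and then to pass to the limit via continuity of $\mathcal{D}_{\et}$ under $Y \times \widetilde{\Gm} = \varprojlim_n Y \times \Gm(n)$, identifying $\pi_*\pi^*A$ with $\varinjlim_n \pi_{n*}\pi_n^*A \simeq A \otimes^L \varinjlim_n R\Gamma(\Gm(n), \Lambda) \simeq A$.
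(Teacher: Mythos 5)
Your overall strategy---reduce to the stalkwise cohomology computation $R\Gamma(\widetilde{\Gm}_{\Spa(C,C^+)},\Lambda)\simeq\Lambda$ via the $\ell$-torsion trick on the colimit of $H^1$'s of annuli---is the right idea and is close in spirit to the paper's. The computation in your second paragraph is essentially sound (modulo a typo: the transition on $H^1$ is multiplication by $m/n$, not $n/m$). However, the reductions you invoke to get from the global assertion to this stalkwise computation contain genuine gaps, and these gaps are precisely what the paper's proof is designed to circumvent.

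The first problem is the projection formula. You write $\pi_*\pi^*A\simeq(\pi_*\underline{\Lambda})\otimes^L A$, but $\pi\colon Y\times_K\widetilde{\Gm}\to Y$ is not quasi-compact (even after shrinking $Y$, the fiber $\widetilde{\Gm}$ is not qcqs), so the projection formula for $\pi_*$ is not available. Worse, this step is circular: once you have established $\pi_*\underline{\Lambda}\simeq\underline{\Lambda}_Y$, the projection-formula comparison map $\pi_*\underline{\Lambda}\otimes A\to\pi_*\pi^*A$ becomes the unit $A\to\pi_*\pi^*A$, i.e. exactly the map you are trying to show is an isomorphism. So the reduction ``it suffices to show $\pi_*\underline{\Lambda}\simeq\underline{\Lambda}_Y$'' presupposes the very isomorphism you want.

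The second problem is the base change you use to compute $(\pi_*\underline{\Lambda})_{\bar y}$. Neither available base change theorem applies: qcqs base change (\cite[17.6]{scholze_etale_2022}) needs $\pi$ to be qcqs, which it is not; smooth base change (\cite[23.16]{scholze_etale_2022}) needs the map one pulls back along---here $\bar y\colon\Spa(C,C^+)\to Y$, or $Y\to\Spa(K)$ in your reformulation---to be $\ell$-cohomologically smooth, which it is not. The fact that $\pi$ itself is $\ell$-cohomologically smooth gives you control on $\pi^!$ versus $\pi^*$, but it does not give base change for $\pi_*$ against an arbitrary map. Your ``elementary alternative'' has the same defect: $\pi_n\colon Y\times\Gm(n)\to Y$ is still not qcqs (the rigid analytic $\Gm$ is a rising union of annuli), so ``classical smooth base change and the projection formula apply directly'' is not true, and the continuity statement $\pi_*\pi^*\simeq\varinjlim_n\pi_{n*}\pi_n^*$ also needs justification for a non-qcqs $\pi$.

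The paper handles this by doing the qcqs reduction one step earlier: it covers $\widetilde{\Gm}$ by qcqs open sub\emph{diamonds} $\widetilde{D}_j$ (preimages of annuli) and proves $A\isoto\pi_{j*}\pi_j^*A$ for each $j$ separately, with $\pi_j\colon Y\times\widetilde{D}_j\to Y$ now qcqs. The global statement then follows because a constant $R\varprojlim_j$ is trivial. For fixed $j$ one still has to pass from the perfectoid $\widetilde{D}_j$ to rigid analytic levels, which the paper does with the SGA\ 4 fibred-topos machinery (VI.8.7.4--5), reducing to Huber's computation of $R^i\pi^m_{n*}\pi^{m*}_nB$ for maps of actual rigid analytic varieties. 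Your argument would become correct if you inserted the cover of $\widetilde{\Gm}$ by the $\widetilde{D}_j$ \emph{before} attempting base change or any projection formula, rather than after having already reduced to a geometric fiber.
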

\begin{proof}
    The question being local, we may assume $Y=\varprojlim_i Y_i$ is of the same form as $U$. Note $Y$ is then qcqs and the maps $Y\rightarrow Y_i$ are also qcqs, and the topos $Y_{\et}\!\!\tilde{}$ is the limit of the fibred topoi $Y_{i,\et}\!\!\!\!\!\tilde{}$ (\cite[\nopp 11.22, 11.23]{scholze_etale_2022}). We have $A\in\mathcal{D}^{b}(Y)$ and induction on the amplitude further reduces to the case $A\in\Sh(Y_\et)$. Choose a decreasing sequence $r_j\in\sqrt{|K^\X|}$, $j\in\ZZ_{\geq 1}$ converging to $0$, consider the cover of $\widetilde{\Gm}$ by qcqs open subsets $\widetilde{D}_j$, where $\widetilde{D}_j:=\varprojlim_m D(^m\!\!\sqrt{r_j},^m\!\!\!\!\sqrt{1/r_j})$, $m\in\ZZ_{\geq 1}$ is the limit of the annuli centred at $0$ of radius $(^m\!\!\sqrt{r_j},^m\!\!\!\!\sqrt{1/r_j})$. (In other words, $\widetilde{D}_j$ is the pre-image of $D(r_j,1/r_j)$ under $\widetilde{\Gm}\rightarrow\Gm$.) We have $\pi_*\pi^*A\simeq R\varprojlim_j \pi_{j*}\pi^*_jA$, where $\pi_j$ is the restriction of $\pi$ to $\widetilde{D}_j$. It suffices to show $A\isoto \pi_{j*}\pi^*_jA$ for each $j$.\\\\ 
    For a fixed $j$, we abbreviated $\widetilde{D}_j=\varprojlim_m D(^m\!\!\sqrt{r_j},^m\!\!\!\!\sqrt{1/r_j})$ as $\widetilde{D}=\varprojlim D_m$, and $\pi_j$ as $\pi$. Consider the following Cartesian diagram:
\[\begin{tikzcd}
	{Y\X_K\widetilde{D}} & {Y_{n}\X_K \widetilde{D}} \\
	Y & {Y_{n}}
	\arrow[from=1-1, to=1-2]
	\arrow["{f_{n}}"', curve={height=12pt}, dotted, from=1-1, to=1-2]
	\arrow["\pi"', from=1-1, to=2-1]
	\arrow["{\pi_{n}}", from=1-2, to=2-2]
	\arrow[from=2-1, to=2-2]
	\arrow["{f_{n}}"', curve={height=12pt}, dotted, from=2-1, to=2-2]
\end{tikzcd}\]
By \cite[\nopp VI.8.7.4]{SGA4}, $R^i\pi_*\pi^*A\simeq \varinjlim_nf_n^*R^i\pi_{n*}$$^o\!f_{n*}\pi^*A\simeq \varinjlim_nf_n^*R^i\pi_{n*}\pi_n^*$$^o\!f_{n*}A$. Here we have explicated the derived functors (and will continue until the end of the proof of this Lemma), and in the second step we used qcqs base change (\cite[\nopp 17.6]{scholze_etale_2022}). It suffices to show ($\ast$) $R^i\pi_{n*}\pi_n^* B$ is $0$ unless $i=0$ and $B\isoto R^0\pi_{n*}\pi_n^* B$, for $B\in\Sh(Y_{n,\et})$. Indeed, then $\varinjlim_nf_n^*R^i\pi_{n*}\pi_n^*$$^o\!f_{n*}A\simeq \varinjlim_nf_n^*$$^o\!f_{n*}A\simeq A$ by \cite[\nopp VI.8.5.2]{SGA4}.\\\\
It remains to show ($\ast$). Again by \cite[\nopp VI.8.7.5]{SGA4}, $R^i\pi_{n*}\pi_n^* B\simeq \varinjlim_m R^i\pi^m_{n*}\pi_n^{m*} B$, where $\pi^m_n$ is the map $Y_n\X_KD_m\rightarrow Y_n$ induced by $D_m\rightarrow\pt$. We have finally reduced to the rigid world. Using \cite[\nopp 3.2.4, 3.9.1.(II)]{huber_etale_1996}, one can show that $R^i\pi^m_{n*}\pi_n^{m*} B$ is $B$ for $i=0$ with identify transition maps, $B(-1)$ for $i=1$ with transition maps being multiplications, and $0$ otherwise. It follows that $\varinjlim_m R^i\pi^m_{n*}\pi_n^{m*} B$ is $B$ for $i=0$ and $0$ otherwise\footnote{Alternatively, one can prove this using the same method as in \cite[176, 177, 178]{huber_etale_1996}.}.
\end{proof}
\noindent\underline{Step 2}. We show $\varinjlim_n\mathcal{D}^b(E/\Gm(n))\rightarrow\cDb(E)$ is fully faithful. This completes the proof of (1).\\\\
    Let $\CF,\CG\in\mathcal{D}^b(E/\Gm(n))$. Denote the quotient maps $E\rightarrow E/\Gm(n)$ and $E/\Gm(m)\rightarrow E/\Gm(n)$ (for $m$ divisible by $n$) by $g$ and $g_{m}$ respectively. We need to show $\varinjlim_{n|m}RHom(g_m^*\CF,g_m^*\CG)\isoto RHom(g^*\CF,g^*\CG)$. One checks that $g$ is $\ell$-cohomologically smooth, so $\RHom(g^*\CF,g^*\CG)\isoto g^*\RHom(\CF,\CG)$ (\cite[\nopp 23.17]{scholze_etale_2022}). We claim that we also have $\RHom(g_m^*\CF,g_m^*\CG)\isoto g_m^*\RHom(\CF,\CG)$. To see this, one can either note that $g_m$ is $\ell$-cohomologically smooth in the more general sense of \cite[\nopp 1.3.(ii)]{gulotta_enhanced_2022} and apply \cite[\nopp 4.14]{gulotta_enhanced_2022}, or one can argue directly, as in the displayed formula in \underline{Step 1}, by reducing to the corresponding statement for each term in the following simplicial resolution. We omit the details.   
\[\begin{tikzcd}
	{\cdots E\X_K\Gm(m)} & E & {E/\Gm(m)} \\
	{\cdots E\X_K\Gm(n)} & E & {E/\Gm(n)}
	\arrow[shift right, from=1-1, to=1-2]
	\arrow[shift left, from=1-1, to=1-2]
	\arrow["{a_1}", curve={height=18pt}, dotted, from=1-1, to=1-3]
	\arrow[from=1-1, to=2-1]
	\arrow["{a_0=g}", from=1-2, to=1-3]
	\arrow[equals, from=1-2, to=2-2]
	\arrow["{g_m}", from=1-3, to=2-3]
	\arrow[shift right, from=2-1, to=2-2]
	\arrow[shift left, from=2-1, to=2-2]
	\arrow["{a_1}"', curve={height=18pt}, dotted, from=2-1, to=2-3]
	\arrow["{a_0=g}", from=2-2, to=2-3]
\end{tikzcd}\]
    We need to show $\varinjlim R\Gamma(E/\Gm(m),g_m^*\RHom(\CF,\CG))\simeq R\Gamma(E,g^*\RHom(\CF,\CG))$. As $\RHom(\CF,\CG)\in \mathcal{D}_{\et}^+(E/\Gm(n))$, we have:
    \begin{align*}
&\varinjlim R\Gamma(E/\Gm(m),g_m^*\RHom(\CF,\CG))\\
&\simeq \varinjlim_m\varprojlim_{\Delta} R\Gamma(E\X_K\Gm(m)\X_K\cdots\X_K\Gm(m),a_i^*g_m^*\RHom(\CF,\CG))\\
&\simeq \varprojlim_{\Delta}\varinjlim_m R\Gamma(E\X_K\Gm(m)\X_K\cdots\X_K\Gm(m),a_i^*g_m^*\RHom(\CF,\CG)).
   \end{align*}
    Denote $a_0^*\RHom(\CF,\CG)$ by $A$. Then $a_i^*g_m^*\RHom(\CF,\CG)\simeq p^*A$ for $p: E\X_K\Gm(m)\X_K\cdots\X_K\Gm(m)\rightarrow E$ the projection. By smooth base change, $R\Gamma(E\X_K\Gm(m)\X_K\cdots\X_K\Gm(m),p^*A)\simeq R\Gamma(\Gm(m)\X_K\cdots\X_K\Gm(m),\underline{R\Gamma(E,A)})$. One may then use Lemma \ref{lem_coh_purity_consequence} and induction to see that the last term is isomorphic to $R\Gamma(\Gm(m)\X_K\cdots\X_K\Gm(m),\Lambda)\otimes_\Lambda \underline{R\Gamma(E,A)}$. Consequently:
    \begin{align*}
&\varprojlim_{\Delta}\varinjlim_m R\Gamma(E\X_K\Gm(m)\X_K\cdots\X_K\Gm(m),a_i^*g_m^*\RHom(\CF,\CG))\\
&\simeq\varprojlim_{\Delta}\varinjlim_m (R\Gamma(\Gm(m)\X_K\cdots\X_K\Gm(m),\Lambda)\otimes_\Lambda \underline{R\Gamma(E,A)})\\
&\simeq\varprojlim_{\Delta}\underline{R\Gamma(E,A)}\simeq R\Gamma(E,A).
   \end{align*}
    As $R\Gamma(E,A)\simeq R\Gamma(E,g^*\RHom(\CF,\CG))\simeq R\Gamma(E,\RHom(g^*\CF,g^*\CG))=RHom(g^*\CF,g^*\CG)$, this finishes \underline{Step 2}.\\\\  
    \underline{Step 3}. We show (2) and (3).\\\\  
    Denote the essential image of $\cDbzc(E/\widetilde{\Gm})\hookrightarrow\cDbzc(E)$ (resp. $\varinjlim_n\mathcal{D}^b_{zc}(E/\Gm(n))\hookrightarrow\cDbzc(E)$) by $\CCC$ (resp. $\CCC'$). It is clear that $\CCC\subseteq\mathcal{D}_{mon}(E)$. Let $\CF\in\mathcal{D}_{mon}(E)$, to show $\CF$ lies in $\CCC$, it suffices to show this locally on $X$ because $\cDbzc(E_U/\widetilde{\Gm})$ is a stack for $U$ ranging over open subsets in $X$ with the usual topology. In other words, we have reduced to showing (2) and (3) for $X$ quasi-compact. Note $\CCC'$, $\CCC$ and $\mathcal{D}_{mon}(E)$ are stable under truncations and taking cones: this is clear for $\CCC'$ (using $RHom_{\varinjlim\mathcal{D}^b_{zc}(E/\Gm(n))}\simeq\varinjlim RHom_{\mathcal{D}^b_{zc}(E/\Gm(n))}$) and $\CCC$, for $\mathcal{D}_{mon}(E)$ this follows from Proposition \ref{prop_fundamental_of_mon_qc}. So, by induction on the amplitude, we may assume $\CF\in\Sh_{mon}(E)$, thus reducing (2) and (3) to the following statement: for $X$ quasi-compact, every $\CF\in \Sh_{mon}(E)$ comes from $\Sh(E/\Gm(n))$ for some $n$ depending on $\CF$.\\\\  
    It is well-known that to endow an object $\CG\in\Sh(Y)$ with a $G$-equivariant structure with respect to a connected algebraic group $G$ acting on a space $Y$, it suffices to provide an isomorphism $a^*\CG\isoto \pr^*\CG$, where $a$ (resp. $\pr$) is the action (resp. projection) map $G\X Y\rightarrow Y$ (see, for example, \cite[\nopp 12.6]{jantzen_nilpotent_2004}). In our situation, let $n$ be such that $\theta(n)^*\CF$ is constant on $\Gm\X v$ for every $v\in E$ (notations as in Notation \ref{notations}), then the composition $\theta(n)^*\CF\rightarrow \pr^!\pr_!(\theta(n)^*\CF)\rightarrow \CH^0(\pr^!\pr_!(\theta(n)^*\CF))$ gives a desired  isomorphism $\theta(n)^*\CF\isoto \pr^*\CF$. This is the same argument as the second paragraph in the proof of Proposition \ref{prop_fundamental_of_mon_qc}. We refer to that paragraph for details.
\end{proof}

We can now generalise the definitions of monodromic sheaves and their monodromy (Definitions \ref{def_monsheav}, \ref{def_equimonodromy}) to the case over a general rigid analytic variety without quasi-separatedness assumptions.

\begin{definition}\label{def_infinity_monodromic}
    Let $E\rightarrow X$ be a vector bundle over a rigid analytic variety. A sheaf $\CF\in\cDbzc(E)$ is called \underline{monodromic} if there exists an isomorphism $\theta_\lambda^*\CF\isoto \CF$ for all $\lambda\in K^\X$. The $\infty$-fullsubcategory in $\cDb(E)$ of monodromic sheaves is denoted by  $\mathcal{D}_{mon}(E)$. Equivalently, $\mathcal{D}_{mon}(E)$ is the essential image of $\cDbzc(E/\widetilde{\Gm})\hookrightarrow\cDbzc(E)$. For $\CF\in\mathcal{D}_{mon}(E)$, its \underline{monodromy} is the $\mu$-action induced by $\mu\hookrightarrow \widetilde{\Gm}(K)$ and the $\widetilde{\Gm}$-equivariance structure on $\CF$.
\end{definition}


\printbibliography

\Addresses

\end{document}